\documentclass[final,onefignum,onetabnum]{siamart171218}
\usepackage{lineno}
\usepackage{xfp}

\newsavebox{\ReferenceTableBox}

\usepackage{lipsum}
\usepackage{amsfonts,amssymb}
\usepackage{graphicx}
\usepackage{epstopdf}
\usepackage{bm}
\usepackage{algorithmic}
\usepackage{graphicx}  
\usepackage{float}  
\usepackage{subfigure}  
\usepackage{mathrsfs}
\usepackage{booktabs}
\usepackage{multirow}
\usepackage{makecell}
\usepackage{comment}
\usepackage{lineno}
\AtBeginDocument{\nolinenumbers}

\ifpdf
  \DeclareGraphicsExtensions{.eps,.pdf,.png,.jpg}
\else
  \DeclareGraphicsExtensions{.eps}
\fi

\newsiamremark{remark}{Remark}
\newsiamremark{hypothesis}{Hypothesis}
\crefname{hypothesis}{Hypothesis}{Hypotheses}
\newsiamthm{claim}{Claim}
\theoremstyle{plain}
\theoremheaderfont{\normalfont\scshape}
\theorembodyfont{\normalfont\itshape}
\theoremseparator{.}
\theoremsymbol{}

\newtheorem{thm}{Theorem}[section]
\newtheorem{lem}{Lemma}[section]
\newtheorem{cor}{Corollary}[section]
\newtheorem{rmk}{Remark}[section]
\newtheorem{prop}{Proposition}[section]

\theoremstyle{plain}
\theoremheaderfont{\normalfont\itshape}
\theorembodyfont{\normalfont}
\theoremseparator{.}
\theoremsymbol{}

\definecolor{teal}{RGB}{0,128,128}
\newcommand{\wll}[1]{{\color{blue}#1}}
\headers{Birkhoff-Preconditioners}{S. Li, Z. Gao, Y. Jiao and L. Wang}

\title{Well-Conditioned Birkhoff-Collocation Methods for  Elliptic-type Problems in Multiple Dimensions\thanks{%
\textbf{Funding:} The research of the second and fourth authors is partially supported by  Singapore MOE AcRF Tier 2 Grant: MOE-T2EP20224-0012. The third author is partially supported by Nature Science Foundation of China grant No. 12271365. The first author would like to acknowledge the support of China Scholarship Council (CSC, No. 202508310113) to visit Nanyang Technological University, Singapore, and work on this topic.}}
\newcommand{\LiAuthorNotes}{%
  \thanks{Department of Mathematics, Shanghai Normal University, Shanghai 200234, China.}%
}
\newcommand{\GaoAuthorNotes}{%
  \thanks{Division of Mathematical Sciences, School of Physical and Mathematical Sciences, Nanyang Technological University, 637371, Singapore.}%
}

\author{%
Shunchang Li\LiAuthorNotes
\and Zixuan Gao\GaoAuthorNotes
\and Yujian Jiao\footnotemark[2]
\and Li-Lian Wang\footnotemark[3]\kern0.45em\thanks{Corresponding author. Email: lilian@ntu.edu.sg (L. Wang). The first two authors (S. Li and Z. Gao) contribute equally to this work.}
}

\usepackage{cite}
\usepackage{amsopn}

\newsavebox{\snapshotbox}
\makeatletter
\def\refstepcounter@optarg[#1]#2{%
  \cref@old@refstepcounter{#2}%
  \cref@constructprefix{#2}{\cref@result}%
  \@ifundefined{cref@#1@alias}%
    {\def\@tempa{#1}}%
    {\def\@tempa{\csname cref@#1@alias\endcsname}}%
  \protected@edef\cref@currentlabel{%
    [\@tempa][\arabic{#2}][\cref@result]%
    \csname p@#2\endcsname\csname the#2\endcsname}%
}
\makeatother
\ifpdf
\hypersetup{
  pdftitle={Well-Conditioned Birkhoff-Collocation Methods for Elliptic-type Problems in Multiple Dimensions},
  pdfauthor={Shunchang Li, Zixuan Gao, Yujian Jiao, Li-Lian Wang}
}
\fi

\begin{document}

\maketitle

\begin{abstract} Collocation methods based on Birkhoff interpolation at Gaussian-type points have been proven to be well-conditioned for initial and boundary value problems [Wang et al., {\em SIAM J. Sci. Comput.} 36 (2014)]. This development was regarded as revolutionary in pseudospectral (PS) optimal control, effectively rendering earlier PS methods using Lagrange interpolation ``obsolete'' [I. Ross, {\em J. Guid. Control Dyn.} 47 (2024)]. However, extending this one-dimensional construction, which relies on inverting the leading differential operator in the discrete sense, to multiple dimensions is not possible. 
This paper aims to resolve the long-standing issue of ill-conditioning in multidimensional collocation methods for second-order elliptic-type problems.  The key is to show that the second-order differentiation matrix (DM) and its inverse, namely, the PS integration matrix (PSIM) constructed from Birkhoff interpolation at Legendre-Gauss-Lobatto (LGL) points, are both similar to symmetric negative definite matrices. This, in turn, enables stable diagonalisation of the non-symmetric, dense and ill-conditioned  PSIM and  DM even with thousands of LGL points, and then construct optimal Birkhoff-preconditioners for well-conditioned collocation methods for elliptic problems. Notably, we find that it is essential to properly incorporate the variable coefficients into the diagonalisation and the construction of preconditioners, particularly for highly anisotropic elliptic problems involving high-contrast, oscillatory, or degenerating coefficients. We carry out  spectral analysis and present extensive numerical results demonstrating the well-conditioning and stability of the proposed Birkhoff-collocation methods for various challenging second-order elliptic-type problems, including large-scale problems in two and three dimensions. In a nutshell, the proposed Birkhoff-collocation schemes make collocation methods practical and establish them as compelling high-order solvers of choice.   
\end{abstract}

\begin{keywords}  
Birkhoff interpolation, stable diagonalisation, Birkhoff-preconditioners,  well-conditioned Birkhoff-collocation schemes, 
spectral analysis
\end{keywords}

\begin{AMS}
  65N35, 65N22, 65F05, 35J05
\end{AMS}

\section{Introduction}\label{sec: introduction}

The spectral collocation method can be viewed as a global finite-difference method on Gaussian-type points \cite{Fornberg1996Pseudospectral,Fornberg2025FiniteDifference}, with an appealing plug-and-play feature: derivatives at the collocation points are simply replaced by the corresponding differentiation matrices. This makes the method particularly attractive for variable-coefficient and nonlinear problems, and it has also proved advantageous as a local solver in hierarchical Poincar\'e-Steklov (HPS) spectral element methods \cite{Martinsson2013HPS,Fortunato2024SurfacePDE}. However, the resulting linear systems are typically dense and ill-conditioned, rendering high-order computations increasingly expensive and potentially unstable. Indeed, it was observed in \cite{Fortunato2024SurfacePDE} that collocation-based HPS spectral element methods become less efficient and even unstable as the number of collocation points within each element increases. Ill-conditioning thus remains a major obstacle to the practical use of spectral collocation methods.

This issue in one dimension was successfully  resolved by Wang et al. \cite{wang2014well}, where the key was to introduce suitable Birkhoff interpolation problems that led to the so-called Birkhoff basis polynomials for constructing perfect preconditioners. This work has had a far-reaching impact on the subsequent developments of well-conditioned spectral collocation and pseudospectral methods, with important applications to pseudospectral (PS) optimal control (see \cite{McCoidTrummer2018,Koeppen2019FastMesh,RossProulxBorges2023,Ross2024Birkhoff} and \cite[Chapter 4]{SongZhaoTheil2023}).
In particular, Ross \cite{Ross2024Birkhoff} described the work \cite{wang2014well} as ``pioneering'' and ``revolutionary'', effectively rendering earlier Lagrange-based PS methods ``obsolete''.
It is noteworthy that very recently, Javeed et al. \cite{JaveedKouriRidzalSteinman2025} extended this integration-preconditioning approach to an \(\mathcal{O}(n)\)-applicable preconditioner with $n+1$ collocation points for one-dimensional first-order variable-coefficient problems.

In the past decades, several attempts have been made to alleviate the ill-conditioning of multidimensional spectral collocation systems using finite-difference or finite-element preconditioners \cite{DevilleMund1985,KimParter1996,KimParter1997,Parter2001FD,Parter2001FE}.  In particular, Fang et al. \cite{fang2018preconditioning} developed efficient finite-difference preconditioners for self-adjoint elliptic problems with separable variable coefficients. 
However,  it is more natural and practical to consider the preconditioners based on consistent collocation discretisation, but
unfortunately, the one-dimensional  construction in \cite{wang2014well} cannot be generalized to multiple dimensions. In fact, a direct spectral analysis (see Remark~\ref{Rmk:spCI}) shows that tensor-product Birkhoff basis functions do not alleviate the ill-conditioning of the resulting collocation systems, but only alter the eigenvalue distributions of the usual Lagrange collocation system.  In this paper, we introduce new tools to address the long-standing ill-conditioning of multidimensional Legendre collocation methods.
The main idea and contributions are as follows.
\smallskip 
\begin{itemize}
    \item[(i)] We derive two important identities for the second-order DM $\mathbf D^{(2)}$ and its inverse PSIM $\mathbf B=(\mathbf D^{(2)})^{-1};$ see Lemma \ref{WinD2in_eqA}.  These identities allow us to  identify  a symmetric negative definite matrix $\mathbf S_c,$ similar to $\mathbf{B}_c:= \mathbf{C}^{-\frac{1}{2}}\mathbf{B}   \mathbf{C}^{-\frac{1}{2}},$ where the positive diagonal matrix $\mathbf  C$ is to incorporate the variable coefficients. The symmetric matrix $\mathbf S_c$ admits a stable orthogonal diagonalisation, which in turn yields stable diagonalisation of the non-symmetric matrices
    $\mathbf B$ and $\mathbf D^{(2)};$ see Theorem \ref{WinD2in_eq_D2inTWin}.
    \medskip
    
    \item[(ii)] The stable diagonalisation provides the key ingredients for the construction of multidimensional Birkhoff preconditioners. The basic idea is to approximate the principal part of the elliptic operator by a separable one. For example, in two dimensions, we consider
\begin{equation}\label{mathL0}
    {\mathcal L}_0
    :=-a_{11}(x,y)\partial_x^2-a_{22}(x,y)\partial_y^2
    \;\;\;\Rightarrow\;\;\;
    \widehat{\mathcal L}_0
    :=-a(x)\partial_x^2-b(y)\partial_y^2,
\end{equation}
on $\Omega=(-1,1)^2$, where
\begin{equation}\label{mathab}
    a(x)=\frac{1}{2}\int_{-1}^1 a_{11}(x,y)\,{\rm d}y,
    \qquad
    b(y)=\frac{1}{2}\int_{-1}^1 a_{22}(x,y)\,{\rm d}x.
\end{equation}
The explicit inversion of the Lagrange or Birkhoff collocation systems associated with $\widehat{\mathcal L}_0$, obtained through the stable diagonalisation in (i) with $c=a,b$, then yields the corresponding (right) Birkhoff preconditioners for ${\mathcal L}_0$, and more generally for second-order elliptic-type operators $\mathcal L$; see \eqref{defn38} and Theorem~\ref{prop:explicit_preconditioners}. We further carry out a spectral analysis of the preconditioned collocation matrices and establish eigenvalue clustering under suitable conditions. In particular, we prove the well-conditioning of the preconditioned collocation systems in some special setting.

    

    \medskip
\item[(iii)]  We apply the proposed collocation methods to a broad range of challenging elliptic-type problems in two and three dimensions, including (a) problems with highly oscillatory coefficients or strongly anisotropic operators, with coefficient contrasts up to \(O(10^{10})\); (b) elliptic problems on deformed triangles and tetrahedra obtained through singular Duffy transformations; and (c) elliptic problems on domains with curved boundaries or surfaces using Gordon-Hall transformations. In all these cases, we find that it is essential to properly incorporate the variable coefficients into the construction of the preconditioners, as outlined in (ii). The proposed preconditioners remain robust and stable across all the challenging test cases, yielding either uniformly bounded or substantially reduced condition numbers together with nearly \(N\)-independent GMRES iteration counts. By contrast, the original unpreconditioned collocation schemes fail to converge in most of these cases. As a by-product, the collocation system for the Helmholtz operator
\(
{\mathcal L}[u]=-\Delta u+\gamma u
\),
with a real constant \(\gamma\), on \((-1,1)^d\), \(d=2,3\), can be explicitly inverted through the diagonalisation developed in (i). As an application, we employ the  stable collocation solver for the spatial discretisation of the time-dependent Allen-Cahn equation with a very small phase-separation parameter. The method is competitive with existing adaptive spectral methods and, for complicated initial data, can outperform the adaptive approach. 
\end{itemize}


\medskip

The remainder of this paper is organized as follows. In
Section~\ref{sec: inv_diag}, we introduce and develop two key one-dimensional tools for  forthcoming 
multidimensional constructions in  Sections~\ref{sec: 2D_Preconditioner} and \ref{sec:extension}. In Section~\ref{sec: 2D_Preconditioner}, we present the detailed construction of the preconditioners in two dimensions, conduct the spectral analysis, and provide ample convincing numerical evidence.   
We then extend the framework to three dimensions
in Section~\ref{sec:extension}.
 Section~\ref{sec:conclusion} is 
 for conclusion and final remarks.

\smallskip

We conclude this introductory section with some notation to be used throughout this paper.  The scalars, vectors, and matrices are denoted by
italic, bold italic, and bold upright letters, respectively; for
example, \(f\), \(\bm{u}=(u_i)\), and
\(\mathbf{A}=(A_{ij})\).
Let \(\mathbb{P}_N\) denote the space of polynomials of degree at most
\(N\), and let \(L_k\) be the Legendre polynomial of degree \(k\).
We use \(\mathbf{I}_n\) for the \(n\times n\) identity matrix.
For \(\mathbf{M}\in\mathbb{R}^{n\times n}\),
\(\mathbf{M}^{\intercal}\), \(\|\mathbf{M}\|_2\), and
\(
\kappa_2(\mathbf{M})
=
\|\mathbf{M}\|_2\|\mathbf{M}^{-1}\|_2
\)
denote its transpose, Euclidean norm, and \(2\)-norm condition number,
respectively.
We also use \(\otimes\) for the Kronecker product and
\(\operatorname{vec}(\cdot)\) for the vectorization operator.

\section{Two key tools for constructing multidimensional preconditioners}\label{sec: inv_diag} 
In this section, we introduce two indispensable techniques for the construction of   multidimensional preconditioners in the forthcoming sections. The first is to compute the inverses of differentiation matrices by introducing suitable Birkhoff interpolation problems \cite{wang2014well}.
The second is to diagonalise the differentiation matrices and their inverse matrices in a stable manner. 


\subsection{Stable inversion of differential matrices  via Birkhoff interpolation}\label{subsec:PDM-unique}  
Let $\{x_j, \omega_j\}_{j=0}^{N}$ (with $x_0 = -1, x_N = 1$) be the Legendre-Gauss-Lobatto (LGL) nodes and quadrature weights. 
Then the LGL quadrature rule enjoys the exactness 
\begin{equation}\label{exactLGL}
    \int_{-1}^{1}\phi(x){\rm d}x = \sum_{j=0}^{N}\phi(x_j)\omega_j,\quad \forall \phi\in \mathbb{P}_{2N-1}.
\end{equation}
Let $\{\ell_j(x)\}_{j=0}^N$ be  the Lagrange interpolation basis polynomials such that 
$\ell_j(x)\in\mathbb{P}_N$ and $\ell_j(x_i)=\delta_{ij}$.  Define the corresponding  differentiation matrices of order $k\ge 1$ as 
\begin{equation}\label{Definition: Lagrange_Differentiation_Matrix}
    \mathbf{D}^{(k)} = \big(d_{ij}^{(k)}:=\ell_{j}^{(k)}(x_i)\big)_{1\le i,j \le N-1},\quad  \widetilde{\mathbf{D}}^{(k)} = \big(d_{ij}^{(k)}:=\ell_{j}^{(k)}(x_i)\big)_{0\le i,j\le N},
\end{equation}
where the former involves only the interior LGL points.
In particular, we denote $\mathbf{D} := \mathbf{D}^{(1)}$ and $\widetilde{\mathbf{D}}:=\widetilde{\mathbf{D}}^{(1)}.$ 
In what follows, we focus on $k=2.$
It is known that  the matrix $\mathbf{D}^{(2)}$ is nonsingular and its 
$(N-1)$ eigenvalues are all negative and distinct,  satisfying  (see e.g., \cite{boulmezaoud2007eigenvalues, welfert1994eigenvalues}): 
\begin{equation}\label{eig_bound_D2}
    -c_N \frac{N^4}{4\pi^2} = \lambda_{1}< \dots < \lambda_{N-1} < -\frac{\pi^2}{4} < 0,
\end{equation}
where $c_N \approx 1$ for sufficiently large $N$.  As a result, the condition number of $\mathbf{D}^{(2)}$ grows like $\mathcal O(N^4).$ 

Wang et al.~\cite{wang2014well} introduced a stable way to compute the inverse of $\mathbf{D}^{(2)}$ through suitable  Birkhoff interpolation~\cite{Lorentz1983}.  More precisely, we consider the Birkhoff interpolation problem: 
Find $p\in\mathbb{P}_N$ such that for any  $u\in C^2(-1,1),$
\begin{equation}\label{Birkhoff-2nd}
    p(-1) = u(-1); \quad p''(x_j)=u''(x_j),\quad 1\le j\le N-1; \quad p(1)=u(1).
\end{equation}
 According to \cite[Section 3]{wang2014well}, the interpolant  $p$  is  uniquely determined by
\begin{equation*}
    p(x)=u(-1)B_0(x)+\sum_{j=1}^{N-1}u''(x_j)B_j(x)+u(1)B_N(x),\quad x\in[-1,1],
\end{equation*}
where
\(
    B_0(x)=(1-x)/2, B_N(x)=(1+x)/2,
\)
and $B_j\in {\mathbb P}_N$ satisfy 
\begin{equation*}
    B_j(\pm 1)= 0,\quad B''_j(x_i) = \delta_{ij}, \quad 1\le i,j\le N-1.
\end{equation*}
This implies  $B_j(x)$ solve  the boundary value problems: 
\begin{equation}\label{BjODE}
B_j''(x)=\tilde \ell_j(x),\quad x\in (-1,1);\quad B_j(\pm 1)=0,
\end{equation}
where $\tilde \ell_j\in {\mathbb P}_{N-2}$ are the Lagrange interpolation polynomials only involving the interior LGL points. The relation \eqref{BjODE} is essential for stable computation of $B_j$  via integration in 
\cite{wang2014well}: 
\begin{equation}\label{B-formula}
B_j(x) = (\beta_{1j} - \beta_{0j})\frac{x+1}{2} + \sum_{k=0}^{N-2}\beta_{kj}\frac{\partial_{x}^{-2}L_k(x)}{\gamma_k},\quad 1\le j\le N-1,
\end{equation}
where $\gamma_k = 2/(2k+1),\, \partial_{x}^{-2}L_k(x)=\int_{-1}^ x \int_{-1}^t L_k(s){\rm d} s {\rm d} t,$  and
\begin{equation*}
    \beta_{kj} = \Big( L_k(x_j) - \frac{1-(-1)^{N+k}}{2}L_{N-1}(x_j) - \frac{1+(-1)^{N+k}}{2}L_N(x_j) \Big)\omega_j.
\end{equation*}
Correspondingly, we introduce the $k$th-order integration matrices as the counterparts of the differentiation matrices defined in \eqref{Definition: Lagrange_Differentiation_Matrix}: 
\begin{equation*}
    \mathbf B^{(k)}=(b_{ij}^{(k)}:=B_j^{(k)}(x_i))_{1\le i,j\le N-1},\quad \widetilde{\mathbf B}^{(k)}=(b_{ij}^{(k)}:=B_j^{(k)}(x_i))_{0\le i,j\le N}.
\end{equation*}
Remarkably, $\mathbf{B} = \mathbf{B}^{(0)}$ is the  inverse of $\mathbf D^{(2)}$ (see 
 \cite[Theorem 3.3]{wang2014well}):
\begin{equation}\label{D2B_eq_I}
    \mathbf{D}^{(2)}\mathbf{B}= \mathbf{B}\mathbf{D}^{(2)} = \mathbf{I}_{N-1}.
\end{equation}
In fact, the formula \eqref{B-formula} provides a very stable way to compute
$\mathbf{B}=(\mathbf{D}^{(2)})^{-1}$ for large $N.$  The implication of 
\eqref{D2B_eq_I} is two-fold \cite{wang2014well}: (i) it provides a perfect preconditioner for the ill-conditioned collocation method for the second-order BVPs; and (ii) it offers a basis for constructing well-conditioned collocation methods.  For example, consider the simple BVP: $-u''+\gamma\, u=f$ in $(-1,1)$ with $u(\pm 1)=h_{\pm},$ where $\gamma, h_{\pm}$ are given constants and $f\in C(-1,1).$ The coefficient matrix of the preconditioned  system reads
\begin{equation}\label{BD-1D-precond}
\mathbf B (-\mathbf D^{(2)}+\gamma \mathbf I_{N-1})=-\mathbf I_{N-1}+\gamma \mathbf B.
\end{equation}
We infer from \eqref{eig_bound_D2} and \eqref{D2B_eq_I} that its eigenvalues are $-1-\gamma/\lambda_j=\mathcal O(1),$ so it is well-conditioned. In fact,  $-\mathbf I_{N-1}+\gamma \mathbf B$ corresponds to the collocation scheme directly using $\{B_j\}$ as basis functions.

\subsection{Stable diagonalisation of differentiation and integration matrices}\label{subsection: Stable_Diagonalization}
As demonstrated in \eqref{BD-1D-precond}, the stable inversion of $\mathbf{D}^{(2)}$ yields optimal preconditioners for second-order BVPs in one dimension. However, this strategy cannot be extended  to multiple dimensions. Indeed, as to be shown in Remark \ref{Rmk:spCI}, the use of a Birkhoff interpolating basis does not improve the conditioning of the collocation system, but instead alters its eigenvalue distribution. 
In higher dimensions, the key lies in the stable diagonalisation of the dense, nonsymmetric, and ill-conditioned matrices $\mathbf{D}^{(2)}$ and $\mathbf{B}$, while properly incorporating the variable coefficients in the principal part of the elliptic operator.     

We begin by establishing the following key identities. 
\begin{lem}\label{WinD2in_eqA}
    Let $\{x_i,\omega_i\}_{i=0}^{N}$ be the Legendre-Gauss-Lobatto nodes and weights as before, and let 
    $\mathbf W={\rm diag}(\omega_1,\ldots, \omega_{N-1}).$ Then we have 
\begin{equation}\label{WD2-Id}
\mathbf{W}\mathbf{D}^{(2)} =\mathbf{D}^{(2)\intercal}\mathbf{W},\quad \mathbf{W}\mathbf{B} = \mathbf{B}^{\intercal}\mathbf{W}.
    \end{equation}
     \end{lem}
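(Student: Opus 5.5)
The first identity in \eqref{WD2-Id} is equivalent to the symmetry of the matrix $\mathbf{W}\mathbf{D}^{(2)}$, i.e.
\[
\omega_i\,\ell_j''(x_i)=\omega_j\,\ell_i''(x_j),\qquad 1\le i,j\le N-1 .
\]
I will obtain this from the LGL quadrature exactness \eqref{exactLGL} together with integration by parts. For interior indices $1\le i,j\le N-1$, the product $\ell_i\ell_j''$ has degree $2N-2\le 2N-1$, so the quadrature is exact:
\[
\int_{-1}^1 \ell_i(x)\ell_j''(x)\,{\rm d}x=\sum_{k=0}^N \ell_i(x_k)\ell_j''(x_k)\omega_k=\omega_i\,\ell_j''(x_i).
\]
Next I integrate by parts twice. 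Because $i,j$ are interior indices, $\ell_i(\pm1)=\ell_j(\pm1)=0$, so every boundary term vanishes:
\[
\int_{-1}^1 \ell_i\ell_j''\,{\rm d}x=\big[\ell_i\ell_j'-\ell_i'\ell_j\big]_{-1}^{1}+\int_{-1}^1 \ell_i''\ell_j\,{\rm d}x=\int_{-1}^1 \ell_i''\ell_j\,{\rm d}x .
\]
Applying \eqref{exactLGL} again to the right-hand side gives $\omega_j\ell_i''(x_j)$. This is exactly the $(i,j)$ versus $(j,i)$ symmetry of $\mathbf W\mathbf D^{(2)}$, so $\mathbf{W}\mathbf{D}^{(2)}=(\mathbf{W}\mathbf{D}^{(2)})^\intercal=\mathbf{D}^{(2)\intercal}\mathbf{W}$.

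The second identity then follows algebraically from \eqref{D2B_eq_I}, using $\mathbf B=(\mathbf D^{(2)})^{-1}$. Multiply the first identity on the left by $\mathbf B^\intercal=(\mathbf D^{(2)\intercal})^{-1}$ and on the right by $\mathbf B$:
\[
\mathbf B^\intercal\mathbf W\mathbf D^{(2)}\mathbf B=\mathbf B^\intercal\mathbf D^{(2)\intercal}\mathbf W\mathbf B .
\]
The left side simplifies to $\mathbf B^\intercal\mathbf W$ and the right side to $\mathbf W\mathbf B$, which is the claim. Equivalently, the inverse of the symmetric matrix $\mathbf W\mathbf D^{(2)}$, namely $\mathbf B\mathbf W^{-1}$, is symmetric, and the same conclusion follows.

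The only step needing care is the degree count for quadrature exactness. The product $\ell_i\ell_j''$ has degree $N+(N-2)=2N-2$, which is within the range $2N-1$ of \eqref{exactLGL}, so the step is safe. The vanishing of the boundary terms uses only that the indices are interior, and this is precisely why $\mathbf D^{(2)}$ is restricted to interior nodes.
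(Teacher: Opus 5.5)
Your proposal is correct and follows essentially the paper's argument. The paper proves $\bm u^\intercal\mathbf W\mathbf D^{(2)}\bm v=\bm u^\intercal\mathbf D^{(2)\intercal}\mathbf W\bm v$ for general interpolants $P,Q\in\mathbb P_N^0$ via LGL exactness and integration by parts. You evaluate the same identity on the basis pairs $\ell_i,\ell_j$, and then obtain $\mathbf W\mathbf B=\mathbf B^\intercal\mathbf W$ from $\mathbf D^{(2)}\mathbf B=\mathbf I_{N-1}$ in the same algebraic way.
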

\begin{proof}
 For any vectors $\bm{u} = (u_1,\dots,u_{N-1})^\intercal\in \mathbb{R}^{N-1}$ and $\bm{v} = (v_1,\dots,v_{N-1})^\intercal\in \mathbb{R}^{N-1}$, let $P(x),Q(x)\in\mathbb{P}_N^0=\{\phi\in \mathbb P_N\,:\, \phi(\pm 1)=0\}$ be the corresponding Lagrange interpolating polynomials satisfying 
    \begin{equation*}
       P(x_i)= u_i,\quad  Q(x_i)=v_i,\quad 1\le i\le N-1.
    \end{equation*}
Then one verifies readily that 
    \begin{equation*}
    \mathbf{D}^{(2)}\bm{u} = \big(P''(x_1), \ldots, P''(x_{N-1})\big)^\intercal,\quad \mathbf{D}^{(2)}\bm{v} = \big(Q''(x_1), \ldots, Q''(x_{N-1})\big)^\intercal.
    \end{equation*}
Since $PQ'',\, P''Q  \in\mathbb{P}_{2N-2}$ and $P(\pm 1)=Q(\pm 1)=0$, we derive immediately from the exactness of the LGL quadrature \eqref{exactLGL} and integration by parts  that 
    \begin{equation}\label{uWDv}
    \begin{split}
        \bm{u}^\intercal \mathbf{W}(\mathbf{D}^{(2)}\bm v) & = \sum_{i=1}^{N-1}\omega_i P(x_i)Q''(x_i)= \sum_{i=0}^{N}\omega_i P(x_i)Q''(x_i)  \\
        &= \int_{-1}^{1} P(x)Q''(x)\,{\rm d}x =  \int_{-1}^{1} P''(x)Q(x)\,{\rm d}x\\
        & = \sum_{i=1}^{N-1}\omega_i P''(x_i)Q(x_i)= (\mathbf{D}^{(2)}\bm u)^\intercal \mathbf{W}\bm v= \bm{u}^\intercal \, \mathbf{D}^{(2)\intercal}\,\mathbf{W} \bm v,
        \end{split}
    \end{equation}
   for any $\bm u,\bm v\in\mathbb{R}^{N-1},$  which implies $\mathbf{W}\mathbf{D}^{(2)}=\mathbf{D}^{(2)\intercal}\,\mathbf{W}$.
In view of \eqref{D2B_eq_I}, we have
     $$
     \mathbf{W}\mathbf{B}^{-1} = \mathbf{B}^{-\intercal}\mathbf{W}\quad \Rightarrow \quad 
       \mathbf{B}^{\intercal}\mathbf{W}= \mathbf{W}\mathbf{B},
     $$ 
     which completes the proof. 
\end{proof}
 
\begin{rmk}\label{LGL}
  It follows from the derivations in \eqref{uWDv} that the identities in \eqref{WD2-Id}  hold only for the Legendre-Gauss-Lobatto case, since integration by parts may not extend to the general Jacobi case with a nonuniform weight function. \qed
\end{rmk}

Remarkably, the identities \eqref{WD2-Id} enable us to identify a similar, symmetric negative definite matrix  that admits stable diagonalisation of the differentiation and integration matrices, with a positive diagonal matrix naturally incorporated into the construction.    
\begin{thm}\label{WinD2in_eq_D2inTWin}
    Let 
    $\mathbf W={\rm diag}(\omega_1,\ldots, \omega_{N-1})$ be the same   as above, 
    and $\mathbf{C} = {\rm diag}(c_1,\ldots, c_{N-1})$ with all $c_i>0.$
Introduce the matrices 
\begin{equation}\label{Definition:2order-Bc}
 \mathbf{B}_c:= \mathbf{C}^{-\frac{1}{2}}\mathbf{B}  \mathbf{C}^{-\frac{1}{2}}, \quad \mathbf{S}_c := \mathbf{W}^{\frac{1}{2}} \mathbf{B}_c\mathbf{W}^{-\frac{1}{2}}.
 \end{equation}
 Then  the matrix $\mathbf{S}_c $ is  symmetric negative definite,
which admits  the orthogonal diagonalisation:
\begin{equation}\label{eq:Sc_eig_var_coeff}
\mathbf{S}_c=\mathbf{Q}_c\mathbf{\Sigma}_c\mathbf{Q}_c^{\intercal}, \;\;\; {\rm where} \;\;\; \mathbf{Q}_c^{\intercal}  \mathbf{Q}_c=\mathbf{I}_{N-1}, \;\;\mathbf{\Sigma}_c = {\rm diag}(\sigma_{c,1},\ldots, \sigma_{c,N-1}),
\end{equation}
leading to the diagonalisations:  
\begin{equation}\label{eq:DcD2_diag_var_coeff}
    \mathbf{B} \mathbf{C}^{-1} = \mathbf{V}_{c} \mathbf{\Sigma}_c \mathbf{V}_{c}^{-1}, \quad 
    \mathbf C\mathbf D^{(2)}
=
\mathbf V_c\mathbf\Sigma_c^{-1}\mathbf V_c^{-1},
\end{equation}
where
 \begin{equation}\label{VcVc}
  \mathbf{V}_{c} = \mathbf{C}^{\frac{1}{2}}\mathbf{W}^{-\frac{1}{2}} \mathbf{Q}_c, \quad   \mathbf{V}_c^{-1} = \mathbf{Q}_c^{\intercal}\mathbf{W}^{\frac{1}{2}}\mathbf{C}^{-\frac{1}{2}}.
  \end{equation}
\end{thm}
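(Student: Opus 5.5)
We will use Lemma~\ref{WinD2in_eqA}, together with the fact that diagonal matrices commute, first to establish symmetry and then negative definiteness. The diagonalisations in \eqref{eq:DcD2_diag_var_coeff} are then obtained by undoing the similarity transformations.

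\emph{Symmetry.} Write
\[
\mathbf S_c=\mathbf W^{\frac12}\mathbf C^{-\frac12}\mathbf B\mathbf C^{-\frac12}\mathbf W^{-\frac12}.
\]
Since all diagonal factors commute, $\mathbf S_c=\mathbf C^{-\frac12}\big(\mathbf W^{\frac12}\mathbf B\mathbf W^{-\frac12}\big)\mathbf C^{-\frac12}$, so it suffices to show that $\mathbf W^{\frac12}\mathbf B\mathbf W^{-\frac12}$ is symmetric. Multiplying the identity $\mathbf W\mathbf B=\mathbf B^\intercal\mathbf W$ of \eqref{WD2-Id} by $\mathbf W^{-\frac12}$ on both sides gives
\[
\mathbf W^{\frac12}\mathbf B\mathbf W^{-\frac12}=\mathbf W^{-\frac12}\mathbf B^\intercal\mathbf W^{\frac12}=\big(\mathbf W^{\frac12}\mathbf B\mathbf W^{-\frac12}\big)^\intercal .
\]
Hence $\mathbf S_c=\mathbf S_c^\intercal$.

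\emph{Negative definiteness.} This is the substantive step. Here $\mathbf S_c$ is congruent (not similar) to $\mathbf S:=\mathbf W^{\frac12}\mathbf B\mathbf W^{-\frac12}$, because $\mathbf S_c=\mathbf C^{-\frac12}\mathbf S\,\mathbf C^{-\frac12}$ with $\mathbf C^{-\frac12}$ symmetric and nonsingular. By Sylvester's law of inertia it therefore suffices to show that $\mathbf S$ is negative definite. The matrix $\mathbf S$ is symmetric and similar to $\mathbf B=(\mathbf D^{(2)})^{-1}$, whose eigenvalues are $1/\lambda_j<0$ by \eqref{eig_bound_D2}. A symmetric matrix with all eigenvalues negative is negative definite.

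As a self-contained alternative that avoids citing \eqref{eig_bound_D2}, one can argue via quadrature. For $\bm v\ne 0$, set $\bm u=\mathbf B\bm v$. With $P\in\mathbb P_N^0$ interpolating $\bm u$, we have $\mathbf D^{(2)}\bm u=\bm v$, and the computation in \eqref{uWDv} gives
\[
\bm u^\intercal\mathbf W\mathbf D^{(2)}\bm u=\int_{-1}^1 PP''\,{\rm d}x=-\int_{-1}^1 (P')^2\,{\rm d}x<0,
\]
since $P\not\equiv0$ and $P(\pm1)=0$. This shows that $\mathbf W\mathbf D^{(2)}$ is symmetric negative definite, hence so is its inverse $\mathbf B\mathbf W^{-1}$. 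Then
\[
\mathbf S_c=\mathbf W^{\frac12}\mathbf C^{-\frac12}\big(\mathbf B\mathbf W^{-1}\big)\mathbf W^{\frac12}\mathbf C^{-\frac12}
\]
is a congruence of $\mathbf B\mathbf W^{-1}$ by the nonsingular diagonal matrix $\mathbf W^{\frac12}\mathbf C^{-\frac12}$, so $\mathbf S_c$ is negative definite. The spectral theorem then gives \eqref{eq:Sc_eig_var_coeff} with all $\sigma_{c,j}<0$.

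\emph{Diagonalisations.} From the definitions,
\[
\mathbf B\mathbf C^{-1}=\mathbf C^{\frac12}\mathbf B_c\mathbf C^{-\frac12}=\mathbf C^{\frac12}\mathbf W^{-\frac12}\mathbf S_c\mathbf W^{\frac12}\mathbf C^{-\frac12}=\mathbf V_c\mathbf\Sigma_c\mathbf V_c^{-1},
\]
where $\mathbf V_c$ is given by \eqref{VcVc}. That the stated $\mathbf V_c^{-1}$ is indeed the inverse follows directly from $\mathbf Q_c^\intercal\mathbf Q_c=\mathbf I_{N-1}$. Since $\mathbf\Sigma_c$ is invertible, we can invert and use \eqref{D2B_eq_I}:
\[
(\mathbf B\mathbf C^{-1})^{-1}=\mathbf C\mathbf D^{(2)}=\mathbf V_c\mathbf\Sigma_c^{-1}\mathbf V_c^{-1}.
\]

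The only real obstacle is the definiteness step, which requires either invoking \eqref{eig_bound_D2} or reusing the integration-by-parts computation of \eqref{uWDv} with $Q=P$. Everything else is algebra with commuting diagonal matrices.
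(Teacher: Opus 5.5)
Your proof is correct and follows the paper's route: symmetry from $\mathbf W\mathbf B=\mathbf B^\intercal\mathbf W$ together with commuting diagonal factors, and then undoing the similarity transformations to obtain \eqref{eq:DcD2_diag_var_coeff}. Where you go beyond the paper is negative definiteness. The paper simply asserts that $\mathbf B_c$ is negative definite because $\mathbf C$ is positive diagonal, which is loose since $\mathbf B_c$ is nonsymmetric. You note that $\mathbf S_c=\mathbf C^{-\frac12}\mathbf S\,\mathbf C^{-\frac12}$ is a congruence of the symmetric negative definite matrix $\mathbf S$ (equivalently of $\mathbf B\mathbf W^{-1}=(\mathbf W\mathbf D^{(2)})^{-1}$), so the sign transfers by Sylvester's law. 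That supplies the justification the paper's one-line step leaves implicit.
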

\begin{proof} 
  Using $\mathbf W\mathbf B=\mathbf B^\intercal\mathbf W$ from
Lemma~\ref{WinD2in_eqA}, we obtain
\begin{equation*}
\begin{split}
    \mathbf W\mathbf B_c
& =
\mathbf W\mathbf C^{-\frac 12}\mathbf B\mathbf C^{-\frac 12}
=
\mathbf C^{-\frac 12} \mathbf W\mathbf B\mathbf C^{-\frac 12}
=
\mathbf C^{-\frac 12}\mathbf B^\intercal\mathbf W\mathbf C^{-\frac 12}
\\&=
\mathbf C^{-\frac 12}\mathbf B^\intercal\mathbf C^{-\frac 12} \mathbf W
=
\mathbf B_c^\intercal\mathbf W ,
\end{split}
\end{equation*}
which implies 
\begin{equation*}
 \begin{split}
        \mathbf{S}^\intercal_c & = \mathbf{W}^{-\frac 12}\mathbf{B}^\intercal_c \mathbf{W}^{\frac{1}{2}} =  \mathbf{W}^{-\frac{1}{2}}(\mathbf{W}\mathbf{B}_c\mathbf{W}^{-1})\mathbf{W}^{\frac{1}{2}}
        = \mathbf{W}^{\frac 12}\mathbf{B}_c\mathbf{W}^{-\frac{1}{2}} = \mathbf{S}_c.
        \end{split}
    \end{equation*}
Thus, $\mathbf S_c$ is a symmetric matrix. As $\mathbf C$ is
positive diagonal,  $\mathbf B_c$ is negative definite, so is 
$\mathbf S_c$.
Hence it admits the orthogonal eigendecomposition \eqref{eq:Sc_eig_var_coeff}. 
Then by the definition \eqref{Definition:2order-Bc}, we find 
$$
\mathbf{B}_c = \mathbf{W}^{-\frac{1}{2}} \mathbf{S}_c\mathbf{W}^{\frac{1}{2}}=
\mathbf{W}^{-\frac{1}{2}} \mathbf{Q}_c\mathbf{\Sigma}_c\mathbf{Q}_c^{\intercal}\mathbf{W}^{\frac{1}{2}},
$$
so 
$$
\mathbf{B}\mathbf C^{-1} = \mathbf C^{\frac 1 2} \mathbf{W}^{-\frac{1}{2}} \mathbf{S}_c\mathbf{W}^{\frac{1}{2}}\mathbf{C}^{-\frac{1}{2}}=
\mathbf C^{\frac 1 2}  \mathbf{W}^{-\frac{1}{2}} \mathbf{Q}_c\mathbf{\Sigma}_c\mathbf{Q}_c^{\intercal}\mathbf{W}^{\frac{1}{2}} \mathbf C^{-\frac 1 2}= \mathbf{V}_{c} \mathbf{\Sigma}_c \mathbf{V}_{c}^{-1},
$$
where  $\mathbf{V}_{c}, \mathbf{V}_{c}^{-1}$ are given in \eqref{VcVc}.  
Therefore, the diagonalisation of $\mathbf C \mathbf D^{(2)}$ is a direct consequence of  \eqref{D2B_eq_I}.
\end{proof}

\begin{rmk}\label{C=IN} 
The matrix $\mathbf C$ in Theorem \ref{WinD2in_eq_D2inTWin} is to incorporate the variable coefficients in the diagonalisation, which turns to be essential for the highly anisotropic or degenerating  elliptic problems. 
\end{rmk}

It is clear that if $\mathbf C=\mathbf I_{N-1},$ we have the diagonalisation of  $\mathbf D^{(2)}$ and $\mathbf B$ from the above. In this case, we omit the subscript $c$  and simply denote $\mathbf S= \mathbf S_c, \mathbf Q=\mathbf Q_c$ and similarly for other ones.
According to the Davis-Kahan perturbation theorem
\cite{davis1970rotation}, the diagonalisation of $\mathbf B$ through the similar matrix $\mathbf S$ is nearly independent of the conditioning of $\mathbf B,$ but  
more on the perturbation of the
invariant eigenspaces governed by the size of
the matrix perturbation relative to the spectral gap. In Figure~\ref{fig:stable}, 
we depict the relative symmetry error
\(
\|\mathbf S-\mathbf S^{\intercal}\|_2/\|\mathbf S\|_2
\) and orthogonality error \(\|\mathbf Q^\intercal\mathbf Q-\mathbf I\|_2\)
for $N$ up to $16384,$ which shows this diagonalisation process is stable even for very large $N.$ 
\begin{figure}[htbp]
    \centering
\includegraphics[width=0.35\linewidth]{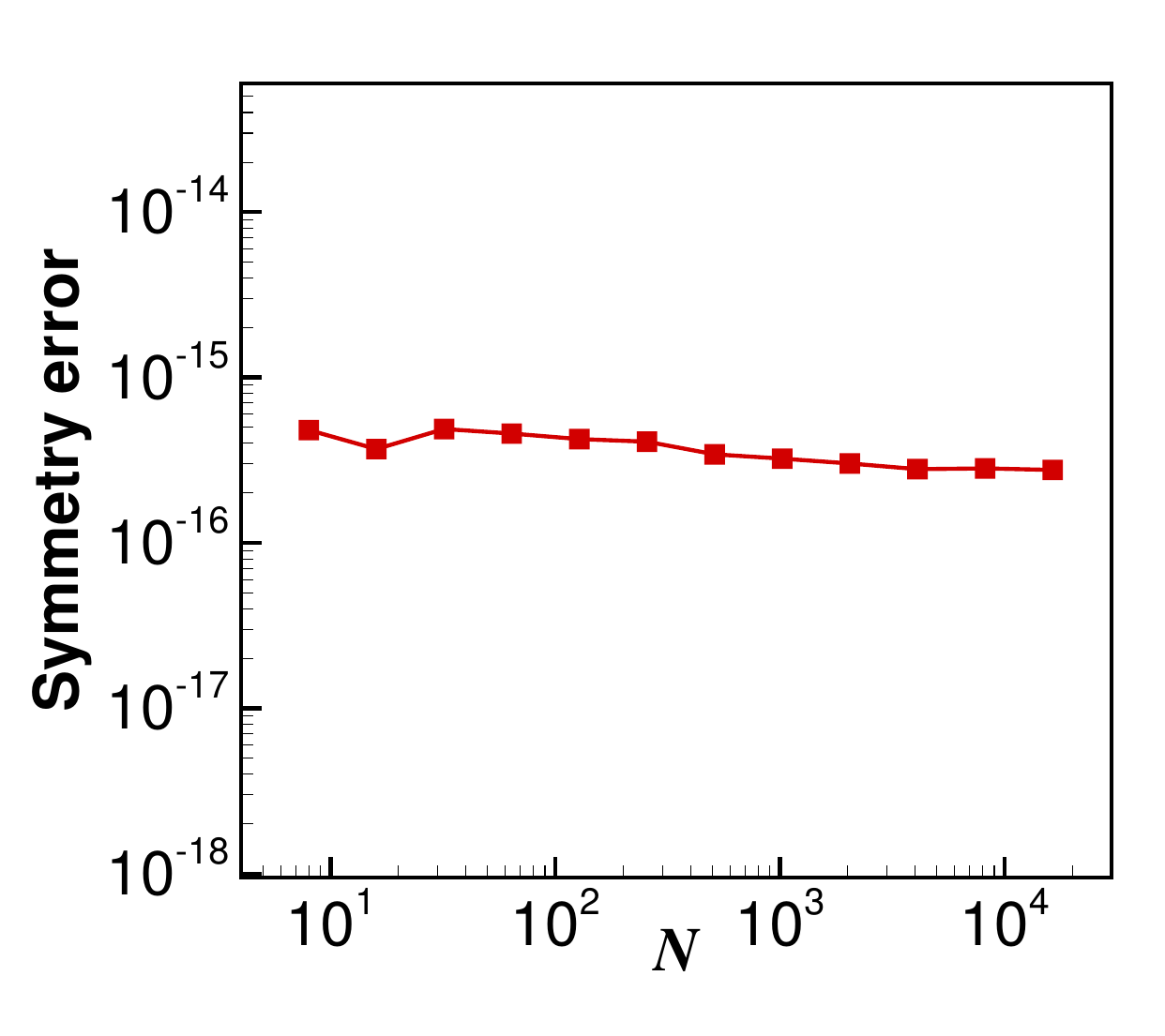}
    \qquad 
\includegraphics[width=0.35\linewidth]{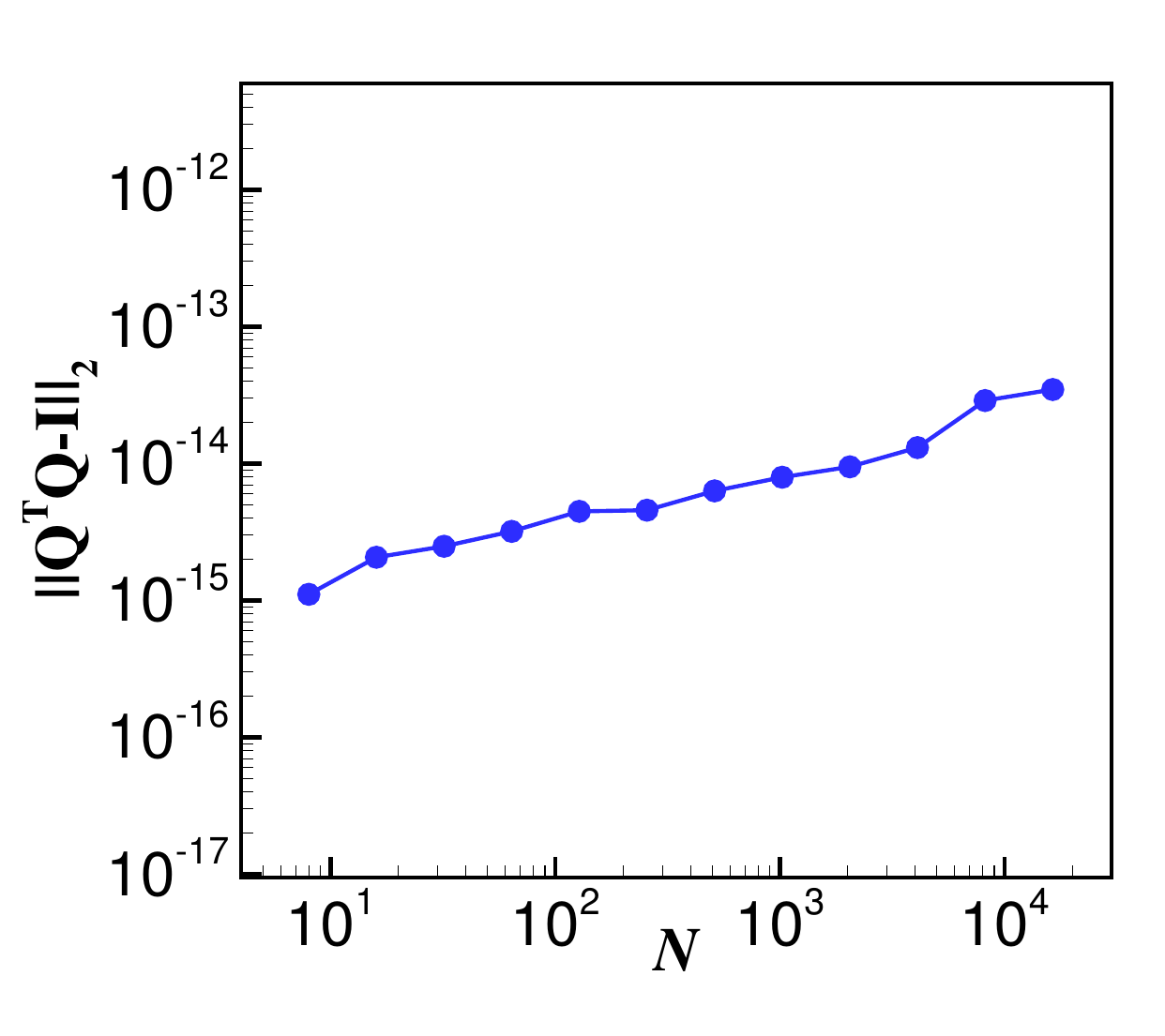}  
    \caption{
 Symmetry and orthogonality errors against  polynomial degree \(N\). Left:  \(
\|\mathbf S-\mathbf S^{\intercal}\|_2/\|\mathbf S\|_2
\). Right: \(\|\mathbf Q^\intercal\mathbf Q-\mathbf I\|_2\).}
    \label{fig:stable}
\end{figure}



We conclude this section with the following result on the conditioning of the  matrix $\mathbf V$ in the diagonalisation. 
\begin{cor}
    \label{Proposition:Cond_S_V}
    Let $\mathbf W$ and $\mathbf V$ be the matrices as defined in Theorem \ref{WinD2in_eq_D2inTWin} with $\mathbf C=\mathbf I_{N-1}$. Then their condition numbers grow like
    $\kappa_2(\mathbf{V}) = \sqrt{\kappa_2(\mathbf{W})}=\mathcal{O}(\sqrt{N}).$
\end{cor}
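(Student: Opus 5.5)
With $\mathbf C=\mathbf I_{N-1}$, Theorem~\ref{WinD2in_eq_D2inTWin} gives $\mathbf V=\mathbf W^{-\frac12}\mathbf Q$ and $\mathbf V^{-1}=\mathbf Q^{\intercal}\mathbf W^{\frac12}$, where $\mathbf Q$ is orthogonal. Since the $2$-norm is invariant under multiplication by orthogonal matrices, $\|\mathbf V\|_2=\|\mathbf W^{-\frac12}\|_2$ and $\|\mathbf V^{-1}\|_2=\|\mathbf W^{\frac12}\|_2$. For the positive diagonal matrix $\mathbf W$ these norms are
\[
\|\mathbf W^{\frac12}\|_2=\Big(\max_{1\le i\le N-1}\omega_i\Big)^{1/2},\qquad \|\mathbf W^{-\frac12}\|_2=\Big(\min_{1\le i\le N-1}\omega_i\Big)^{-1/2}.
\]
Hence
\[
\kappa_2(\mathbf V)=\Big(\frac{\max_i\omega_i}{\min_i\omega_i}\Big)^{1/2}=\sqrt{\kappa_2(\mathbf W)}.
\]
This identity is exact.

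It remains to show $\kappa_2(\mathbf W)=\mathcal O(N)$, that is, to estimate the interior LGL weights. I would use the standard explicit formula
\[
\omega_j=\frac{2}{N(N+1)}\,\frac{1}{L_N^2(x_j)},\qquad 1\le j\le N-1,
\]
together with the known asymptotic behaviour of these weights: $\omega_j\simeq \frac{\pi}{N}\sqrt{1-x_j^2}$ uniformly for interior nodes (see e.g. Shen--Tang--Wang). Equivalently, one can use the bound $|L_N(x)|\lesssim (N\sqrt{1-x^2})^{-1/2}$ and its reverse near the extrema.

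The maximal interior weight sits near the center, where $\sqrt{1-x_j^2}\approx1$, so $\max_j\omega_j\simeq \pi/N$. The minimal interior weights are at $j=1$ and $j=N-1$. There $x_1=-\cos\theta_1$ with $\theta_1\simeq c/N$, since the interior nodes are the zeros of $L_N'$ and behave like $\cos$ of angles roughly equispaced in $\theta$. This gives $\sqrt{1-x_1^2}\simeq \theta_1\simeq N^{-1}$, hence $\min_j\omega_j\simeq N^{-2}$.

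The ratio is therefore $\kappa_2(\mathbf W)\simeq N$, which yields $\kappa_2(\mathbf V)=\mathcal O(\sqrt N)$.

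The only nontrivial step is the two-sided estimate of the weights near the endpoints, specifically the lower bound $\omega_1\gtrsim N^{-2}$. I would obtain it by bounding $|L_N(x_1)|$ from above through Bernstein-type inequalities, or simply cite the established asymptotic $\omega_j\sim\frac{\pi}{N}\sin\theta_j$ with $\theta_j\simeq j\pi/N$. A numerical check of $\max\omega/\min\omega$ against $N$ could confirm the constant.
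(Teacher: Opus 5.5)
Your proposal is correct and follows essentially the same route as the paper. Both arguments use the orthogonality of $\mathbf Q$ to get $\kappa_2(\mathbf V)=\|\mathbf W^{-\frac12}\|_2\|\mathbf W^{\frac12}\|_2=\sqrt{\kappa_2(\mathbf W)}$, and then invoke the standard LGL weight bounds $c_1N^{-2}\le\omega_j\le c_2N^{-1}$ (which the paper simply cites from Szeg\H{o}) to conclude $\kappa_2(\mathbf W)=\mathcal O(N)$. Your asymptotic description $\omega_j\simeq\frac{\pi}{N}\sqrt{1-x_j^2}$ is the same fact stated in more detail.
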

\begin{proof} 
    Since $\mathbf{Q}$ is orthogonal and
$\mathbf{V}=\mathbf{W}^{-\frac{1}{2}}\mathbf{Q}$, we have
    \(
        \|\mathbf{V}\|_2 = \|\mathbf{W}^{-\frac{1}{2}}\|_2\) and \( \|\mathbf{V}^{-1}\|_2 = \|\mathbf{W}^{\frac{1}{2}}\|_2.
    \)
    Hence,
    \begin{equation*}
        \kappa_2(\mathbf{V}) = \|\mathbf{V}\|_2\|\mathbf{V}^{-1}\|_2 = \|\mathbf{W}^{-\frac{1}{2}}\|_2\|\mathbf{W}^{\frac{1}{2}}\|_2 = \sqrt{\kappa_2(\mathbf{W})}.
    \end{equation*}
    Recall the following bounds for the LGL weights (see e.g. \cite{Szego1975}): 
    \begin{equation*}
        \frac{c_1}{N^2} \le \omega_j \le \frac{c_2}{N},\quad 0\le j\le N,
    \end{equation*}
    where $c_1$ and $c_2$ are positive constants independent of $N$. Hence, 
    $\kappa_2({\mathbf{W}})=\mathcal{O}(N).$
     This completes the proof.
     \end{proof}

\section{Birkhoff preconditioners in two dimensions}\label{sec: 2D_Preconditioner}

With the two building blocks in place, we are now ready to construct Birkhoff preconditioners in multiple dimensions.    To fix ideas, we focus on two dimensions in this section.


\subsection{General setup} We begin with a general setup, and consider
\begin{equation}\label{2order_operator}
\mathcal{L}u=f
\quad \text{in }\;  \Omega=(-1,1)^2;
\quad
u=g
\quad \text{on }\; \partial\Omega,
\end{equation}
where \(\mathcal{L}\) is a second-order differential operator. Since the
preconditioners are constructed from the equations at the interior
collocation points, we assume \(g=0\); nonhomogeneous boundary data can
be incorporated into the right-hand side.

Let 
\(\{x_i\}_{i=0}^{N}\) and \(\{y_j\}_{j=0}^{N}\) be the
LGL points in  two coordinate directions. Then it is standard to formulate the collocation scheme on the tensorial grids, and represent 
the numerical solution in terms of the tensorial Lagrange and
Birkhoff bases as
\begin{equation}\label{eq:2d_Lagrange_Birkhoff_expansions}
u_N(x,y)
=
\sum_{i,j=1}^{N-1}
u_{ij}\ell_i(x)\ell_j(y)
=
\sum_{i,j=1}^{N-1}
\widehat{u}_{ij}B_i(x)B_j(y).
\end{equation}
Let
$$
\bm{u}
=
\operatorname{vec}\bigl((u_{ij})_{i,j=1}^{N-1}\bigr),\quad 
\widehat{\bm{u}}
=
\operatorname{vec}\bigl((\widehat{u}_{ij})_{i,j=1}^{N-1}\bigr),\quad 
\bm{f}
=
\operatorname{vec}\bigl((f(x_i,y_j))_{i,j=1}^{N-1}\bigr).
$$
The corresponding Lagrange- and Birkhoff-collocation systems are
\begin{equation}\label{systems:2order_operator}
 \mathbf{A}_{\rm L}\bm{u}=\bm{f}\;\;\; \text{(LCOL)}; 
\quad
\mathbf{A}_{\rm B}\widehat{\bm{u}}=\bm{f}\;\;\; \text{(BCOL)},
\end{equation}
where \(\mathbf A_{\rm L}\) and \(\mathbf A_{\rm B}\) denote the
coefficient matrices of LCOL and BCOL, respectively. Note that we can recover the nodal values from  BCOL via $\bm u
=
(\mathbf B\otimes\mathbf B)\widehat{\bm u}.$
The main objective of this paper is to construct suitable right preconditioners: $\mathbf P_{\rm L}, \mathbf P_{\rm B}$ from the stable orthogonalisation in Theorem \ref{WinD2in_eq_D2inTWin}, leading to the preconditioned system:  
\begin{equation}\label{BP-systems} 
 \mathbf{A}_{\rm L} \mathbf{P}_{\rm L}\bm{v}=\bm{f}\;\;\; \text{(PLCOL)}; 
\quad
\mathbf{A}_{\rm B} \mathbf{P}_{\rm B}\widehat{\bm{v}}=\bm{f}\;\;\; \text{(PBCOL)}.
\end{equation}
Then we evaluate  
\begin{equation}\label{BP-systems-uu} 
 \bm u= \mathbf{P}_{\rm L}\bm{v}\;\;\; \text{(PLCOL)}; 
\quad
 \bm u
=
(\mathbf B\otimes\mathbf B)(\mathbf{P}_{\rm B}\widehat{\bm{v}})\;\;\; \text{(PBCOL)}.
\end{equation}

We shall present the explicit forms of the matrices and linear systems once the operator ${\mathcal L}$ is specified for the cases to be considered later.  


\subsection{Elliptic problems with separable variable coefficients}\label{subsec:var_coeff_sep_principal} 
We first consider  the operator
$\mathcal L$ \eqref{2order_operator} in the following separable form: 
\begin{equation}\label{var-coeff}
\mathcal{L}[u]=-a(x)\partial_{xx}u-b(y)\partial_{yy} u+c(x)d(y)u \quad\text{in }\; \Omega=(-1,1)^2,
\end{equation}
where \(a\), \(b\), \(c\), \(d\) are continuous functions, and there exists a constant $\beta$
such that
\begin{equation}\label{abeta}
0< a(x),\,b(y)\leq\beta,
\quad
c(x)d(y)\geq0
\quad\text{in }\; \Omega.
\end{equation}
Note that here the coefficients $a,b$ may be  degenerate, i.e., $a(\pm 1)=b(\pm 1)=0.$    
As highlighted in \eqref{mathL0}–\eqref{mathab}, our Birkhoff preconditioners for more general problems will be constructed by inverting the principal part, $-a(x)\partial_{xx}u-b(y)\partial_{yy}u$, using the stable diagonalisation established in Theorem \ref{WinD2in_eq_D2inTWin}. We therefore start with this motivating case in \eqref{var-coeff}-\eqref{abeta}.



Introduce the diagonal coefficient matrices
\begin{equation}\label{def:coeff_matrices}
\mathbf{C}_{a}
=
\operatorname{diag}(a(x_1),\ldots,a(x_{N-1})),
\quad
\mathbf{C}_{b}
=
\operatorname{diag}(b(y_1),\ldots,b(y_{N-1})),
\end{equation}
and likewise for \(\mathbf{C}_{c}\) and
\(\mathbf{C}_{d}\). 
The Lagrange and Birkhoff collocation matrices in
\eqref{systems:2order_operator} are respectively given by
\begin{subequations}\label{eq:var_coeff_matrices}
\begin{align}
\mathbf{A}_{\rm L}
&=-\mathbf{I}_{N-1}\otimes
(\mathbf{C}_{a}\mathbf{D}^{(2)})
-(\mathbf{C}_{b}\mathbf{D}^{(2)})
\otimes\mathbf{I}_{N-1}+\mathbf{C}_{d}\otimes\mathbf{C}_{c}=
\mathbf K_{\rm L}
+\mathbf{C}_{d}\otimes\mathbf{C}_{c},
\label{AL-var-coeff}\\[4pt]
\mathbf{A}_{\rm B}
&=-\mathbf{B}\otimes\mathbf{C}_{a}
-\mathbf{C}_{b}\otimes\mathbf{B} +(\mathbf{C}_{d}\mathbf{B})
\otimes
(\mathbf{C}_{c}\mathbf{B})=
\mathbf K_{\rm B}
+(\mathbf{C}_{d}\mathbf{B})
\otimes
(\mathbf{C}_{c}\mathbf{B}),
\label{AB-var-coeff}
\end{align}
\end{subequations}
where $\mathbf K_{\rm L}$ and $\mathbf K_{\rm B}$ denote the matrices of the
 corresponding principal parts. 


We define the inverses of
$\mathbf K_{\rm L}$ and $\mathbf K_{\rm B}$ as the right
preconditioners in \eqref{BP-systems}, that is, 
\begin{subequations}\label{defn38}
\begin{align}
\mathbf{P}_{\rm L}
&:= \mathbf K_{\rm L}^{-1} = 
-\big(
\mathbf{I}_{N-1}\otimes
(\mathbf{C}_{a}\mathbf{D}^{(2)})
+
(\mathbf{C}_{b}\mathbf{D}^{(2)})
\otimes\mathbf{I}_{N-1}\big)^{-1},\label{def:PL_PB_var_coeff}\\[4pt]
\mathbf{P}_{\rm B}
&:= \mathbf K_{\rm B}^{-1} =
-\big(
\mathbf{B}\otimes\mathbf{C}_{a}
+
\mathbf{C}_{b}\otimes\mathbf{B}\big)^{-1}.
\end{align}
\end{subequations}
Remarkably, they admit explicit tensor-product representations in terms of the matrices arising from the stable diagonalisation  in Theorem~\ref{WinD2in_eq_D2inTWin}. 
Such representations
can be implemented
without explicitly forming any two-dimensional Kronecker-product
matrices. By reshaping the input vector into an
\((N-1)^2\) array, the Kronecker-product transformations are
reduced to one-dimensional matrix multiplications, while the diagonal
factors are applied entry-wise. 
More precisely, we define the following matrices as in  \eqref{Definition:2order-Bc}:
\begin{equation}\label{BSnu}
 \mathbf B_\nu
=
\mathbf C_\nu^{-\frac{1}{2}}\mathbf B\mathbf C_\nu^{-\frac{1}{2}},
\quad
\mathbf S_\nu
=
\mathbf W^{\frac{1}{2}}\mathbf B_\nu\mathbf W^{-\frac{1}{2}},\quad \nu=a,b.  
\end{equation}
Then by Theorem~\ref{WinD2in_eq_D2inTWin}, we have the orthogonal diagonalisation:
$\mathbf S_\nu
=
\mathbf Q_\nu\mathbf\Sigma_\nu\mathbf Q_\nu^\intercal,$ where
\begin{equation}\label{QnuOrth}
   \mathbf Q_\nu^\intercal\mathbf Q_\nu
=
\mathbf I_{N-1},
\quad  
\mathbf\Sigma_\nu
=
\operatorname{diag}
(\sigma_{\nu,1},\ldots,\sigma_{\nu,N-1}), \quad  
\sigma_{\nu,k}<0.
\end{equation}
Then we have the following explicit representations of the Birkhoff preconditioners.
\begin{thm}\label{prop:explicit_preconditioners}
The preconditioners given in
\eqref{defn38} admit the explicit representations:
\begin{subequations}\label{defn38-inv}
\begin{align}
\mathbf P_{\rm L}
&=\mathbf K_{\rm L}^{-1}=
-(\mathbf V_b\otimes\mathbf V_a)\,
\mathbf\Lambda_{\rm L}\,
(\mathbf V_b^{-1}\otimes\mathbf V_a^{-1}),
\label{eqn39a}
\\[4pt]
\mathbf P_{\rm B}
&=\mathbf K_{\rm B}^{-1}=
-
(\mathbf C_b^{-1}\otimes\mathbf C_a^{-1})\, 
(\mathbf V_b\otimes\mathbf V_a)\,
\mathbf\Lambda_{\rm B}\,
(\mathbf V_b^{-1}\otimes\mathbf V_a^{-1}), \label{eqn39b}
\end{align}
\end{subequations}
where
\begin{equation}\label{LambdaLB}
    \mathbf\Lambda_{\rm L}
=
\operatorname{diag}
\big(
\tfrac{\sigma_{a,i}\sigma_{b,j}}
{\sigma_{a,i}+\sigma_{b,j}}
\big)_{i,j=1}^{N-1},  \quad 
\mathbf\Lambda_{\rm B}
=
\operatorname{diag}
\big(
\tfrac{1}
{\sigma_{a,i}+\sigma_{b,j}}
\big)_{i,j=1}^{N-1},
\end{equation}
and 
$$
\mathbf V_\nu
=
\mathbf C_\nu^{\frac{1}{2}}\mathbf W^{-\frac{1}{2}}\mathbf Q_\nu,\quad  \mathbf V_\nu^{-1}
=
\mathbf Q_\nu^\intercal\,\mathbf W^{\frac{1}{2}} \,\mathbf C_\nu^{-\frac{1}{2}},\quad \nu=a,b.
$$
\end{thm}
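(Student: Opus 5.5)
The plan is to diagonalise both Kronecker sums simultaneously using the factorisations of Theorem~\ref{WinD2in_eq_D2inTWin}, applied with $\mathbf C=\mathbf C_a$ and $\mathbf C=\mathbf C_b$. That theorem gives
\[
\mathbf C_\nu\mathbf D^{(2)}=\mathbf V_\nu\mathbf\Sigma_\nu^{-1}\mathbf V_\nu^{-1}, \qquad \mathbf B\mathbf C_\nu^{-1}=\mathbf V_\nu\mathbf\Sigma_\nu\mathbf V_\nu^{-1}, \qquad \nu=a,b,
\]
with $\mathbf V_\nu,\mathbf V_\nu^{-1}$ exactly as in the statement.

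For $\mathbf P_{\rm L}$, substitute these into $\mathbf K_{\rm L}$. Writing $\mathbf I=\mathbf V_\nu\mathbf V_\nu^{-1}$ and using the mixed-product rule $(\mathbf A\otimes\mathbf B)(\mathbf C\otimes\mathbf D)=\mathbf A\mathbf C\otimes\mathbf B\mathbf D$, I obtain
\[
\mathbf K_{\rm L}=-(\mathbf V_b\otimes\mathbf V_a)\big(\mathbf I\otimes\mathbf\Sigma_a^{-1}+\mathbf\Sigma_b^{-1}\otimes\mathbf I\big)(\mathbf V_b^{-1}\otimes\mathbf V_a^{-1}).
\]
The middle factor is diagonal. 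With the vec ordering (the $x$-index $i$ runs fastest), its entry at position $(i,j)$ is
\[
\sigma_{a,i}^{-1}+\sigma_{b,j}^{-1}=\frac{\sigma_{a,i}+\sigma_{b,j}}{\sigma_{a,i}\sigma_{b,j}}.
\]
Since all $\sigma$'s are negative by \eqref{QnuOrth}, the sum $\sigma_{a,i}+\sigma_{b,j}$ is nonzero, so this factor is invertible. Inverting it, and using $(\mathbf V_b\otimes\mathbf V_a)^{-1}=\mathbf V_b^{-1}\otimes\mathbf V_a^{-1}$, gives \eqref{eqn39a}.

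For $\mathbf P_{\rm B}$, factor the coefficient matrices out on the right:
\[
\mathbf B\otimes\mathbf C_a=(\mathbf B\mathbf C_b^{-1}\otimes\mathbf I)(\mathbf C_b\otimes\mathbf C_a), \qquad \mathbf C_b\otimes\mathbf B=(\mathbf I\otimes\mathbf B\mathbf C_a^{-1})(\mathbf C_b\otimes\mathbf C_a).
\]
Hence
\[
\mathbf K_{\rm B}=-\big(\mathbf B\mathbf C_b^{-1}\otimes\mathbf I+\mathbf I\otimes\mathbf B\mathbf C_a^{-1}\big)(\mathbf C_b\otimes\mathbf C_a).
\]
Inserting the diagonalisations of $\mathbf B\mathbf C_\nu^{-1}$ gives
\[
\mathbf K_{\rm B}=-(\mathbf V_b\otimes\mathbf V_a)\big(\mathbf\Sigma_b\otimes\mathbf I+\mathbf I\otimes\mathbf\Sigma_a\big)(\mathbf V_b^{-1}\otimes\mathbf V_a^{-1})(\mathbf C_b\otimes\mathbf C_a).
\]
The diagonal middle factor has entries $\sigma_{a,i}+\sigma_{b,j}<0$. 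Inverting the product in reverse order then yields \eqref{eqn39b}, with $\mathbf\Lambda_{\rm B}$ as in \eqref{LambdaLB} and the factor $\mathbf C_b^{-1}\otimes\mathbf C_a^{-1}$ appearing on the left.

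The only delicate points are bookkeeping. First, the vec ordering must be kept consistent, so that the $y$-factor sits on the left of each Kronecker product and the diagonal index $(i,j)$ pairs $\sigma_{a,i}$ with $\sigma_{b,j}$ as in \eqref{LambdaLB}. Second, $\mathbf C_\nu$ must be placed on the correct side; this is why the similarity for $\mathbf B\mathbf C_\nu^{-1}$, rather than $\mathbf C_\nu^{-1}\mathbf B$, is the one to use. Invertibility follows directly from the negativity of the eigenvalues established in Theorem~\ref{WinD2in_eq_D2inTWin}.
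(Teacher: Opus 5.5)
Your proposal is correct and follows the paper's proof essentially step for step. You substitute the similarity relations from Theorem~\ref{WinD2in_eq_D2inTWin} into $\mathbf K_{\rm L}$ and $\mathbf K_{\rm B}$, use the mixed-product rule to factor each as $-(\mathbf V_b\otimes\mathbf V_a)(\cdot)(\mathbf V_b^{-1}\otimes\mathbf V_a^{-1})$, with an extra $\mathbf C_b\otimes\mathbf C_a$ on the right for $\mathbf K_{\rm B}$, and then invert the diagonal middle factor.

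Pulling $\mathbf C_b\otimes\mathbf C_a$ out before diagonalising, instead of substituting $\mathbf B=\mathbf V_\nu\mathbf\Sigma_\nu\mathbf V_\nu^{-1}\mathbf C_\nu$ as the paper does, is only a cosmetic reordering. Your explicit check that $\sigma_{a,i}+\sigma_{b,j}<0$ supplies the invertibility remark that the paper leaves implicit.
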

\begin{proof}
 From Theorem~\ref{WinD2in_eq_D2inTWin}, we have  
\begin{equation}\label{similarM}
    \mathbf B\mathbf C_\nu^{-1}
=
\mathbf V_\nu\mathbf\Sigma_\nu\mathbf V_\nu^{-1},
\quad
\mathbf C_\nu\mathbf D^{(2)}
=
\mathbf V_\nu\mathbf\Sigma_\nu^{-1}\mathbf V_\nu^{-1}.
\end{equation}
We first consider $\mathbf P_{\rm L}$. 
Recall that for any conformable matrices $\mathbf{M}_1$, $\mathbf{M}_2$, $\mathbf{N}_1$ and $\mathbf{N}_2$, we have
\begin{equation}\label{Kron-prop_MatMat}
    (\mathbf{M}_1\otimes \mathbf{N}_1)(\mathbf{M}_2\otimes \mathbf{N}_2) = (\mathbf{M}_1 \mathbf{M}_2)\otimes (\mathbf{N}_1\mathbf{N}_2).
\end{equation}
Thus, by \eqref{AL-var-coeff}, 
\[
\begin{aligned}
\mathbf K_{\rm L}
&=
-\mathbf I_{N-1}\otimes
\bigl(\mathbf V_a\mathbf\Sigma_a^{-1}\mathbf V_a^{-1}\bigr)
-
\bigl(\mathbf V_b\mathbf\Sigma_b^{-1}\mathbf V_b^{-1}\bigr)
\otimes\mathbf I_{N-1}
\\
&= -(\mathbf V_b\mathbf V_b^{-1}) 
\otimes
\bigl(\mathbf V_a\mathbf\Sigma_a^{-1}\mathbf V_a^{-1}\bigr)-
\bigl(\mathbf V_b\mathbf\Sigma_b^{-1}\mathbf V_b^{-1}\bigr)
\otimes(\mathbf V_a\mathbf V_a^{-1}) \\
&=
-(\mathbf V_b\otimes\mathbf V_a)
\big(
\mathbf\Sigma_b^{-1}\otimes\mathbf I_{N-1}
+
\mathbf I_{N-1}\otimes\mathbf\Sigma_a^{-1}
\big)
(\mathbf V_b^{-1}\otimes\mathbf V_a^{-1}).
\end{aligned}
\]
Hence, using the property:  
\begin{equation}\label{Kron-prop_MatMat2} 
(\mathbf M\otimes \mathbf N)^{-1}
=
\mathbf M^{-1}\otimes \mathbf N^{-1},
\end{equation}
we find readily that
\[
\mathbf P_{\rm L}=\mathbf K_{\rm L}^{-1}
=
-(\mathbf V_b\otimes\mathbf V_a)
\big(
\mathbf\Sigma_b^{-1}\otimes\mathbf I_{N-1}
+
\mathbf I_{N-1}\otimes\mathbf\Sigma_a^{-1}
\big)^{-1}
(\mathbf V_b^{-1}\otimes\mathbf V_a^{-1}).
\]
Since $\mathbf\Sigma_a$ and $\mathbf\Sigma_b$ are diagonal,
\[
\big(
\mathbf\Sigma_b^{-1}\otimes\mathbf I_{N-1}
+
\mathbf I_{N-1}\otimes\mathbf\Sigma_a^{-1}
\big)^{-1}
=
\mathbf\Lambda_{\rm L}.
\]
This gives the stated representation of $\mathbf P_{\rm L}$ in \eqref{eqn39a}.

We next consider $\mathbf P_{\rm B}$. Using
\(
\mathbf B\mathbf C_\nu^{-1}
=
\mathbf V_\nu\mathbf\Sigma_\nu\mathbf V_\nu^{-1},
\)
we obtain
\(
\mathbf B
=
\mathbf V_\nu\mathbf\Sigma_\nu\mathbf V_\nu^{-1}\mathbf C_\nu.
\)
Therefore, by \eqref{Kron-prop_MatMat}, 
\begin{equation}\label{KBLong}
\begin{aligned}
\mathbf K_{\rm B}
&=
-\mathbf B\otimes\mathbf C_a
-\mathbf C_b\otimes\mathbf B
=
-(\mathbf V_b\mathbf\Sigma_b\mathbf V_b^{-1}\mathbf C_b)
 \otimes\mathbf C_a
-\mathbf C_b\otimes
(\mathbf V_a\mathbf\Sigma_a\mathbf V_a^{-1}\mathbf C_a)
\\
&=
-(\mathbf V_b\otimes\mathbf V_a)
(
\mathbf\Sigma_b\otimes\mathbf I_{N-1}
+
\mathbf I_{N-1}\otimes\mathbf\Sigma_a
)
(
\mathbf V_b^{-1}\mathbf C_b
\otimes
\mathbf V_a^{-1}\mathbf C_a
)
\\
&=
-(\mathbf V_b\otimes\mathbf V_a)
(
\mathbf\Sigma_b\otimes\mathbf I_{N-1}
+
\mathbf I_{N-1}\otimes\mathbf\Sigma_a
)
(\mathbf V_b^{-1}\otimes\mathbf V_a^{-1})
(\mathbf C_b\otimes\mathbf C_a).
\end{aligned}
\end{equation}
Consequently, by \eqref{Kron-prop_MatMat2},
\begin{equation}\label{PBLong}
\mathbf P_{\rm B}
=
-(\mathbf C_b^{-1}\otimes\mathbf C_a^{-1})
(\mathbf V_b\otimes\mathbf V_a)
(
\mathbf\Sigma_b\otimes\mathbf I_{N-1}
+
\mathbf I_{N-1}\otimes\mathbf\Sigma_a
)^{-1}
(\mathbf V_b^{-1}\otimes\mathbf V_a^{-1}).
\end{equation}
Since
\(
(
\mathbf\Sigma_b\otimes\mathbf I_{N-1}
+
\mathbf I_{N-1}\otimes\mathbf\Sigma_a
)^{-1}
=
\mathbf\Lambda_{\rm B},
\)
the stated representation of $\mathbf P_{\rm B}$ follows.
\end{proof}
\begin{rmk}\label{DirectInv_constant} Following the lines in the proof of Theorem \ref{prop:explicit_preconditioners}, we can show that the collocation matrices $\mathbf{A}_{\rm L}$ and $\mathbf{A}_{\rm B} $ in \eqref{eq:var_coeff_matrices} for the simple operator: 
$\mathcal{L}[u] = -\Delta u + \gamma u$ with constant $\gamma,$  can be explicitly inverted through the diagonalisation in Theorem \ref{WinD2in_eq_D2inTWin} with $\mathbf C=\mathbf I_{N-1}$ and corresponding   
 $\mathbf{\Sigma} = \operatorname{diag}(\sigma_1,\ldots,\sigma_{N-1})$ and $\mathbf{V}$ in \eqref{VcVc}.
 Then we have
\[
\mathbf A_{\rm L}^{-1} = (\mathbf V\otimes\mathbf V) \mathbf{\Lambda}_{\rm L} (\mathbf V^{-1}\otimes\mathbf V^{-1}),
\quad
\mathbf A_{\rm B}^{-1} = (\mathbf V\otimes\mathbf V) \mathbf{\Lambda}_{\rm B} (\mathbf V^{-1}\otimes\mathbf V^{-1}).
\]
where 
\[
\mathbf{\Lambda}_{\rm L} = \operatorname{diag}\big( \tfrac{\sigma_i\sigma_j}{\gamma\sigma_i\sigma_j-\sigma_i-\sigma_j}\big)_{i,j=1}^{N-1},
\quad
\mathbf{\Lambda}_{\rm B} =
\operatorname{diag}\big( \tfrac{1}{\gamma\sigma_i\sigma_j-\sigma_i-\sigma_j}\big)_{i,j=1}^{N-1},
\]
if $\gamma\sigma_i\sigma_j\not=\sigma_i+\sigma_j$ for any pair $(i,j).$ Thus, PLCOL and PBCOL schemes reduce to the direct solvers.  
\end{rmk}

\subsection{Elliptic problems with general  variable coefficients}\label{sec:genvar}
 In general, we consider the variable coefficient  problem
\begin{equation}\label{eq:general_elliptic}
{\mathcal L}[u]:=-\nabla\cdot(\mathbf A(x,y)\nabla u)
+\mathbf r(x,y)\cdot\nabla u
+s(x,y)u
=
f(x,y),
\quad (x,y)\in\Omega,
\end{equation}
subject to homogeneous Dirichlet boundary conditions, where we assume 
\(\mathbf A=(a_{ij})_{i,j=1}^{2}\) is symmetric and positive definite (SPD) and $a_{ij}\in C^{1}(\Omega), r_i, s, f\in C(\Omega)$. Define
\[
q_1=r_1-\partial_xa_{11}-\partial_ya_{12},\quad 
q_2=r_2-\partial_xa_{12}-\partial_ya_{22}.
\]
The corresponding Lagrange  and Birkhoff collocation matrices 
in \eqref{systems:2order_operator} are
\begin{subequations}\label{eq:general_matrices}
\begin{align}
\mathbf A_{\rm L}
={}&
-\mathbf C_{11}
(\mathbf I_{N-1}\otimes\mathbf D^{(2)})
-2\mathbf C_{12}
(\mathbf D\otimes\mathbf D)
-\mathbf C_{22}
(\mathbf D^{(2)}\otimes\mathbf I_{N-1})
\notag\\
&+
\mathbf Q_1
(\mathbf I_{N-1}\otimes\mathbf D)
+
\mathbf Q_2
(\mathbf D\otimes\mathbf I_{N-1})
+\breve{\mathbf S},
\label{eq:general_AL}\\[4pt]
\mathbf A_{\rm B}
={}&
-\mathbf C_{11}
(\mathbf B\otimes\mathbf I_{N-1})
-2\mathbf C_{12}
(\mathbf D\mathbf B\otimes\mathbf D\mathbf B)
-\mathbf C_{22}
(\mathbf I_{N-1}\otimes\mathbf B)
\notag\\
&+
\mathbf Q_1
(\mathbf B\otimes\mathbf D\mathbf B)
+
\mathbf Q_2
(\mathbf D\mathbf B\otimes\mathbf B)
+
\breve{\mathbf S}
(\mathbf B\otimes\mathbf B),
\label{eq:general_AB}
\end{align}
\end{subequations}
where \(\mathbf C_{ij}\), \(\mathbf Q_i\), and \(\breve{\mathbf S}\) are the
diagonal matrices obtained by evaluating \(a_{ij}\), \(q_i\), and \(s\),
respectively, at the interior tensorial LGL points. Recall that 
$\mathbf D$ is the first-order DM defined in \eqref{Definition: Lagrange_Differentiation_Matrix}.

To  construct the preconditioners, we set 
\begin{equation}\label{eq:general_avg_coeff}
a(x):=\bar a_{11}(x)
=
\frac12\int_{-1}^{1}a_{11}(x,y)\,{\rm d}y,
\quad
b(y):=\bar a_{22}(y)
=
\frac12\int_{-1}^{1}a_{22}(x,y)\,{\rm d}x,
\end{equation}
and define the diagonal matrices  $\mathbf C_a, \mathbf C_b$ as in \eqref{def:coeff_matrices}.
Then the corresponding right preconditioners  \(\mathbf P_{\rm L}\) and \(\mathbf P_{\rm B}\) are given  in \eqref{defn38} but with these coefficient matrices.

\subsection{Eigenvalue analysis}\label{Subsec:eigenanl}

We now carry out spectral analysis of the preconditioned collocation matrices $\mathbf A_{\rm L}\mathbf P_{\rm L}$ and 
$\mathbf A_{\rm B}\mathbf P_{\rm B}$ in \eqref{BP-systems} for
both separable and general variable-coefficient problems. 

For any square matrix \(\mathbf A\), let \(\lambda(\mathbf A)\) denote
its spectrum. It is clear that by \eqref{eq:DcD2_diag_var_coeff},  \eqref{BSnu} and \eqref{similarM}, 
\begin{equation}\label{eigensBDS}
   \lambda(\mathbf B \mathbf C_\nu^{-1})=
   \{\sigma_{\nu,k}\}_{k=1}^{N-1};\quad    \lambda(\mathbf C_\nu \mathbf D^{(2)})=\{\sigma_{\nu,k}^{-1}\}_{k=1}^{N-1},\quad \nu=a,b.
\end{equation}
Moreover, by \eqref{eqn39a}, 
\begin{equation}\label{eigensKLPL}
   \lambda(\mathbf K_{\rm L})=
   \{-(\sigma_{a,i}^{-1}+\sigma_{b,j}^{-1})\}_{i,j=1}^{N-1};\quad    \lambda(\mathbf P_{\rm L})=
   \{-(\sigma_{a,i}^{-1}+\sigma_{b,j}^{-1})^{-1}\}_{i,j=1}^{N-1},
\end{equation}
and by \eqref{KBLong}-\eqref{PBLong},
\begin{equation}\label{eigensKBPB}
\begin{split}
   & \lambda(\mathbf K_{\rm B} (\mathbf C_b\otimes\mathbf C_a)^{-1})=
   \{-(\sigma_{a,i}+\sigma_{b,j})\}_{i,j=1}^{N-1};\quad  \lambda((\mathbf C_b\otimes\mathbf C_a)\mathbf P_{\rm B})=
   \{-(\sigma_{a,i}+\sigma_{b,j})^{-1}\}_{i,j=1}^{N-1}.
   \end{split}
\end{equation}

\begin{rmk}\label{Rmk:spCI}
If $\mathbf C_a=\mathbf C_b=\mathbf  I_{N-1},$ we find from 
    Theorem \ref{WinD2in_eq_D2inTWin} that $\sigma_{a,k}=\sigma_{b,k}=\sigma_k=\lambda_k^{-1}$ with $\{\lambda_k\}$ given by \eqref{eig_bound_D2}. Furthermore, if $cd=\gamma$ is a positive constant in  
    \eqref{var-coeff} {\rm(}i.e., ${\mathcal L}=-\Delta+\gamma${\rm)}, 
    then $\mathbf{A}_{\rm L}, \mathbf{A}_{\rm B}$ in
\eqref{eq:var_coeff_matrices} reduce to  
\begin{equation}\label{ALAB-Lap}
\mathbf{A}_{\rm L}= \mathbf{K}_{\rm L}+\gamma \mathbf{I}_{N-1}\otimes \mathbf{I}_{N-1}, \quad \mathbf{A}_{\rm B}= \mathbf{K}_{\rm B}+\gamma \mathbf{B}\otimes \mathbf{B},
\end{equation}
    whose spectra are respectively  
\begin{equation}\label{4matrices}
\begin{split}
   & \lambda(\mathbf A_{\rm L})=
   \big\{\gamma-(\lambda_{i}+\lambda_{j})\big\}_{i,j=1}^{N-1};\quad  \lambda(\mathbf A_{\rm B})=
   \big\{\tfrac \gamma {\lambda_i\lambda_j}-\tfrac1{\lambda_i}-\tfrac 1 {\lambda_j}\big\}_{i,j=1}^{N-1}.
   \end{split}
\end{equation}
Then we infer from \eqref{eig_bound_D2} that both coefficient matrices of the Lagrange and Birkhoff collocation schemes have the conditioning ${\mathcal O}(N^4).$  However, their eigenvalue distributions are nearly reversed:
the eigenvalues of $\mathbf{A}_{\rm L}$ range from $\mathcal O(1)$
to $\mathcal O(N^4)$, whereas those of $\mathbf{A}_{\rm B}$ range from
$\mathcal O(N^{-4})$ to $\mathcal O(1)$.  \qed 
\end{rmk}

\begin{prop}\label{prop:preconditioned_identity}
For both  separable  and  variable-coefficient
problems, the matrices of PLCOL and PBCOL in \eqref{eq:var_coeff_matrices} and \eqref{eq:general_matrices}  are identical:
\begin{equation}\label{eq:preconditioned_identity}
\mathbf A_{\rm L}\mathbf P_{\rm L}
=
\mathbf A_{\rm B}\mathbf P_{\rm B}.
\end{equation}
\end{prop}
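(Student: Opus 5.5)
The identity should follow from a single structural fact: the Birkhoff collocation matrix is the Lagrange collocation matrix composed with the change of basis $\bm u=(\mathbf B\otimes\mathbf B)\widehat{\bm u}$, that is,
\begin{equation*}
\mathbf A_{\rm B}=\mathbf A_{\rm L}(\mathbf B\otimes\mathbf B),\qquad \mathbf K_{\rm B}=\mathbf K_{\rm L}(\mathbf B\otimes\mathbf B).
\end{equation*}
Granting these, I would use the invertibility of $\mathbf B$ from \eqref{D2B_eq_I} and \eqref{Kron-prop_MatMat2} to get $\mathbf P_{\rm B}=\mathbf K_{\rm B}^{-1}=(\mathbf B\otimes\mathbf B)^{-1}\mathbf K_{\rm L}^{-1}=(\mathbf B\otimes\mathbf B)^{-1}\mathbf P_{\rm L}$. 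Then
\begin{equation*}
\mathbf A_{\rm B}\mathbf P_{\rm B}=\mathbf A_{\rm L}(\mathbf B\otimes\mathbf B)(\mathbf B\otimes\mathbf B)^{-1}\mathbf P_{\rm L}=\mathbf A_{\rm L}\mathbf P_{\rm L},
\end{equation*}
which is \eqref{eq:preconditioned_identity}. Conceptually, $\mathbf A_{\rm B}\widehat{\bm u}$ is $\mathcal L u_N$ evaluated at the interior nodes, with $u_N$ expanded in the Birkhoff basis \eqref{eq:2d_Lagrange_Birkhoff_expansions}, and those nodal values are $(\mathbf B\otimes\mathbf B)\widehat{\bm u}$.

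For the principal part $\mathbf K_{\rm L}$ in \eqref{AL-var-coeff}, I would verify the factorisation term by term with \eqref{Kron-prop_MatMat} and $\mathbf D^{(2)}\mathbf B=\mathbf I_{N-1}$:
\begin{equation*}
\big(\mathbf I_{N-1}\otimes\mathbf C_a\mathbf D^{(2)}\big)(\mathbf B\otimes\mathbf B)=\mathbf B\otimes\mathbf C_a,\qquad \big(\mathbf C_b\mathbf D^{(2)}\otimes\mathbf I_{N-1}\big)(\mathbf B\otimes\mathbf B)=\mathbf C_b\otimes\mathbf B.
\end{equation*}
This gives $\mathbf K_{\rm L}(\mathbf B\otimes\mathbf B)=\mathbf K_{\rm B}$. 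The same holds in the general case \eqref{eq:general_avg_coeff}, since $\mathbf P_{\rm L},\mathbf P_{\rm B}$ are still defined by \eqref{defn38}, only with averaged coefficients. For the separable reaction term, $(\mathbf C_d\otimes\mathbf C_c)(\mathbf B\otimes\mathbf B)=\mathbf C_d\mathbf B\otimes\mathbf C_c\mathbf B$, which matches \eqref{AB-var-coeff}.

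For the general operator \eqref{eq:general_matrices}, I would check each of the six terms in the same way. The diagonal factors $\mathbf C_{ij},\mathbf Q_i,\breve{\mathbf S}$ act on the left and play no role. On the right:
\begin{itemize}
\item $(\mathbf I\otimes\mathbf D^{(2)})(\mathbf B\otimes\mathbf B)=\mathbf B\otimes\mathbf I$;
\item $(\mathbf D\otimes\mathbf D)(\mathbf B\otimes\mathbf B)=\mathbf D\mathbf B\otimes\mathbf D\mathbf B$;
\item $(\mathbf I\otimes\mathbf D)(\mathbf B\otimes\mathbf B)=\mathbf B\otimes\mathbf D\mathbf B$;
\item $(\mathbf D\otimes\mathbf I)(\mathbf B\otimes\mathbf B)=\mathbf D\mathbf B\otimes\mathbf B$;
\item $(\mathbf D^{(2)}\otimes\mathbf I)(\mathbf B\otimes\mathbf B)=\mathbf I\otimes\mathbf B$.
\end{itemize}
These reproduce \eqref{eq:general_AB}.

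The one point needing care is the bookkeeping of the Kronecker ordering, i.e.\ which factor acts in $x$ and which in $y$. The right approach is to take the Birkhoff matrix to be, by definition, the discretisation of $\mathcal L$ acting on the Birkhoff expansion of $u_N$. With that definition, $\mathbf A_{\rm B}=\mathbf A_{\rm L}(\mathbf B\otimes\mathbf B)$ holds automatically, whatever ordering convention is used. Any apparent mismatch in the displayed formulas then just reflects the $\operatorname{vec}$ convention and does not affect the argument. The rest is routine.
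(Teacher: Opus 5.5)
Your proposal is correct and is essentially the paper's proof: the paper also sets $\mathbf T=\mathbf B\otimes\mathbf B$, obtains $\mathbf A_{\rm B}=\mathbf A_{\rm L}\mathbf T$ and $\mathbf K_{\rm B}=\mathbf K_{\rm L}\mathbf T$ from $\mathbf D^{(2)}\mathbf B=\mathbf I_{N-1}$ and \eqref{Kron-prop_MatMat}, and concludes $\mathbf A_{\rm B}\mathbf P_{\rm B}=\mathbf A_{\rm L}\mathbf T(\mathbf K_{\rm L}\mathbf T)^{-1}=\mathbf A_{\rm L}\mathbf P_{\rm L}$. Your term-by-term checks, which the paper leaves as ``one verifies readily'', are all correct, and the displayed formulas in \eqref{eq:var_coeff_matrices} and \eqref{eq:general_matrices} already agree under a single ordering convention, so the caveat in your last paragraph is not actually needed.
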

\begin{proof}
Let
\(
\mathbf T=\mathbf B\otimes\mathbf B.
\)
Using
\(
\mathbf D^{(2)}\mathbf B=\mathbf I_{N-1}
\) and \eqref{Kron-prop_MatMat}-\eqref{Kron-prop_MatMat2},
one verifies readily from the corresponding collocation matrices that
\[
\mathbf A_{\rm B}
=
\mathbf A_{\rm L}\mathbf T,
\quad
\mathbf K_{\rm B}
=
\mathbf K_{\rm L}\mathbf T,
\]
where
\(
\mathbf K_{\rm L}=\mathbf P_{\rm L}^{-1}
\)
and
\(
\mathbf K_{\rm B}=\mathbf P_{\rm B}^{-1}.
\)
Therefore,
\(
\mathbf A_{\rm B}\mathbf P_{\rm B}
=
\mathbf A_{\rm L}\mathbf T
(\mathbf K_{\rm L}\mathbf T)^{-1}
=
\mathbf A_{\rm L}\mathbf P_{\rm L}.
\)
\end{proof}
\begin{rmk}\label{rmk:identityA}
    It is noteworthy  that, in exact arithmetic, the identity \eqref{eq:preconditioned_identity} holds, whereas in finite-precision arithmetic, computations involving $\mathbf  B$ and $\mathbf A_{\rm B}\mathbf P_{\rm B}$  are relatively stable when  $N$ is very large.
\end{rmk}

Importantly, we can estimate  eigenvalue distributions  of the  preconditioned collocation matrices. Again, we first consider the separable coefficient case.
\begin{thm}\label{thm:separable_spectral_bound}
Consider the separable variable-coefficient problem
\eqref{var-coeff}. If 
\begin{equation}\label{var-coeff-assumption}
a,b,c,d\in C[-1,1],\quad 
a(x),b(y)>0, \quad 
c(x)d(y)\ge 0,
\end{equation}
then all eigenvalues
\(
\lambda\in
\lambda(\mathbf A_{\rm L}\mathbf P_{\rm L})
=
\lambda(\mathbf A_{\rm B}\mathbf P_{\rm B})
\)
are real and $\lambda\ge 1.$  If in addition
\begin{equation}\label{var-coeff-assumption2}
0<\alpha\leq a(x),\,b(y)\leq\beta,
\quad
0\leq q_{\min}\leq c(x)d(y)\leq q_{\max},
\end{equation}
then we have the bounds
\begin{equation}\label{eq:separable_spectral_bound}
1+
\frac{2\pi^2 q_{\min}}
{\beta c_N N^4}
\leq
\lambda
<
1+
\frac{2q_{\max}}
{\alpha\pi^2},
\end{equation}
where $c_N$ is the constant appearing in the eigenvalue bound
\eqref{eig_bound_D2} for $\mathbf D^{(2)}$.
\end{thm}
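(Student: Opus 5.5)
By Proposition~\ref{prop:preconditioned_identity} it suffices to work with $\mathbf A_{\rm L}\mathbf P_{\rm L}$. By \eqref{AL-var-coeff} and \eqref{defn38},
\[
\mathbf A_{\rm L}\mathbf P_{\rm L}=(\mathbf K_{\rm L}+\mathbf G)\mathbf K_{\rm L}^{-1}=\mathbf I+\mathbf G\mathbf K_{\rm L}^{-1},\qquad \mathbf G:=\mathbf C_d\otimes\mathbf C_c .
\]
So every eigenvalue has the form $\lambda=1+\mu$ with $\mu\in\lambda(\mathbf G\mathbf K_{\rm L}^{-1})=\lambda(\mathbf K_{\rm L}^{-1}\mathbf G)$. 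Under \eqref{var-coeff-assumption}, $\mathbf G$ is diagonal with nonnegative entries, since $c(x_i)d(y_j)\ge0$. The plan is to show that $\mathbf K_{\rm L}$ is self-adjoint and positive definite in a suitable diagonal inner product. Then $\mu$ becomes a generalized Rayleigh quotient, which is real and nonnegative.

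\textbf{Step 1: a diagonal weight symmetrising $\mathbf K_{\rm L}$.} Set $\mathbf H:=(\mathbf W\mathbf C_b^{-1})\otimes(\mathbf W\mathbf C_a^{-1})$, a positive diagonal matrix. By Lemma~\ref{WinD2in_eqA}, $\mathbf W\mathbf D^{(2)}$ is symmetric, so
\[
\mathbf H(\mathbf I\otimes\mathbf C_a\mathbf D^{(2)})=(\mathbf W\mathbf C_b^{-1})\otimes(\mathbf W\mathbf D^{(2)}),\qquad
\mathbf H(\mathbf C_b\mathbf D^{(2)}\otimes\mathbf I)=(\mathbf W\mathbf D^{(2)})\otimes(\mathbf W\mathbf C_a^{-1}).
\]
Both are symmetric, hence $\mathbf H\mathbf K_{\rm L}$ is symmetric. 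It is positive definite because $-\mathbf W\mathbf D^{(2)}$ is SPD: its eigenvalues are those of $-\mathbf W^{1/2}\mathbf D^{(2)}\mathbf W^{-1/2}$, i.e.\ $-\lambda_k>0$ by \eqref{eig_bound_D2}. A Kronecker product of SPD matrices is SPD, and so is the sum of two SPD matrices.

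\textbf{Step 2: reality and $\lambda\ge1$.} The eigenproblem $\mathbf G\bm x=\mu\mathbf K_{\rm L}\bm x$ is equivalent to $\mathbf H\mathbf G\bm x=\mu\,\mathbf H\mathbf K_{\rm L}\bm x$. Here $\mathbf H\mathbf G$ is diagonal, symmetric and positive semidefinite, and $\mathbf H\mathbf K_{\rm L}$ is SPD. Hence every $\mu$ is real and
\[
\mu=\frac{\bm x^\intercal\mathbf H\mathbf G\bm x}{\bm x^\intercal\mathbf H\mathbf K_{\rm L}\bm x}\ge0 ,
\]
which gives the first claim, $\lambda=1+\mu\ge1$.

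\textbf{Step 3: two-sided bounds.} Substitute $\bm y=\mathbf H^{1/2}\bm x$ and use that the diagonal matrices $\mathbf H$ and $\mathbf G$ commute. This gives
\[
\mu=\frac{\bm y^\intercal\mathbf G\bm y}{\bm y^\intercal\widetilde{\mathbf K}\bm y},\qquad \widetilde{\mathbf K}:=\mathbf H^{1/2}\mathbf K_{\rm L}\mathbf H^{-1/2}.
\]
The matrix $\widetilde{\mathbf K}$ is symmetric and similar to $\mathbf K_{\rm L}$. By \eqref{eigensKLPL} its eigenvalues are $-(\sigma_{a,i}^{-1}+\sigma_{b,j}^{-1})$. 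Therefore
\[
\frac{q_{\min}}{\lambda_{\max}(\mathbf K_{\rm L})}\le\mu\le\frac{q_{\max}}{\lambda_{\min}(\mathbf K_{\rm L})}.
\]

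\textbf{Step 4: bounding $\sigma_{\nu,k}^{-1}$ (main obstacle).} The key step is to bound $\sigma_{\nu,k}^{-1}\in\lambda(\mathbf C_\nu\mathbf D^{(2)})$ by the constant-coefficient eigenvalues in \eqref{eig_bound_D2}. Conjugating by $\mathbf W^{1/2}$ and then by $\mathbf C_\nu^{1/2}$ (all diagonal), $\mathbf C_\nu\mathbf D^{(2)}$ is similar to $\mathbf C_\nu^{1/2}\mathbf S_0\mathbf C_\nu^{1/2}$, where $\mathbf S_0:=\mathbf W^{1/2}\mathbf D^{(2)}\mathbf W^{-1/2}$ is symmetric with spectrum $\{\lambda_k\}$. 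For $\bm z=\mathbf C_\nu^{1/2}\bm w$,
\[
\frac{\bm w^\intercal\mathbf C_\nu^{1/2}\mathbf S_0\mathbf C_\nu^{1/2}\bm w}{\bm w^\intercal\bm w}
=\frac{\bm z^\intercal\mathbf S_0\bm z}{\bm z^\intercal\bm z}\cdot\frac{\bm z^\intercal\bm z}{\bm z^\intercal\mathbf C_\nu^{-1}\bm z}.
\]
The first factor lies in $[\lambda_1,\lambda_{N-1}]$ and the second in $[\alpha,\beta]$, so Courant--Fischer (an Ostrowski-type argument) gives
\[
-\sigma_{\nu,k}^{-1}\in\Bigl(\alpha\frac{\pi^2}{4},\,\beta c_N\frac{N^4}{4\pi^2}\Bigr].
\]
The left endpoint is strict because $\lambda_{N-1}<-\pi^2/4$.

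\textbf{Step 5: conclusion.} Summing over the two directions,
\[
\lambda(\mathbf K_{\rm L})\subset\Bigl(\frac{\alpha\pi^2}{2},\,\frac{\beta c_N N^4}{2\pi^2}\Bigr].
\]
Inserting this into Step 3 yields $\mu\ge 2\pi^2 q_{\min}/(\beta c_N N^4)$ and $\mu<2q_{\max}/(\alpha\pi^2)$, which is \eqref{eq:separable_spectral_bound}.
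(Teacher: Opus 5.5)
Your proposal is correct and is essentially the paper's proof: the same weight $\mathbf H=(\mathbf W\mathbf C_b^{-1})\otimes(\mathbf W\mathbf C_a^{-1})$ symmetrising $\mathbf K_{\rm L}$, the same generalised Rayleigh quotient for $\lambda-1$, and the same $\mathbf C_\nu^{1/2}$-congruence of $\mathbf W^{1/2}\mathbf D^{(2)}\mathbf W^{-1/2}$ yielding $\lambda(-\mathbf C_\nu\mathbf D^{(2)})\subset(\alpha\pi^2/4,\,\beta c_N N^4/(4\pi^2)]$. Two small points: in Step 1, $-\mathbf W\mathbf D^{(2)}=\mathbf W^{1/2}(-\mathbf S_0)\mathbf W^{1/2}$ is congruent to $-\mathbf S_0$, not similar to it, so its eigenvalues are not the $-\lambda_k$ (positive definiteness still follows from Sylvester's law of inertia, or directly from \eqref{uWDv} as in the paper); and the strict upper bound in Step 5 needs $q_{\max}>0$, since $c\,d\equiv0$ gives $\lambda\equiv1$, a degenerate case the statement itself also overlooks.
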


\begin{proof}
By Proposition~\ref{prop:preconditioned_identity}, it suffices to consider
\(\mathbf A_{\rm L}\mathbf P_{\rm L}\). Define 
\(
\mathbf C=\mathbf C_d\otimes\mathbf C_c
\)
and
\begin{equation}\label{Hinner}
\mathbf H=(\mathbf W\mathbf C_b^{-1})\otimes
(\mathbf W\mathbf C_a^{-1}),
\quad
(\boldsymbol u,\boldsymbol v)_{\mathbf H}
=\boldsymbol v^\intercal\mathbf H\boldsymbol u,
\quad
\|\boldsymbol u\|_{\mathbf H}
=\sqrt{\boldsymbol u^\intercal\mathbf H\boldsymbol u}.
\end{equation}
By \eqref{var-coeff-assumption} and the positivity of the LGL quadrature
weights, \(\mathbf W\mathbf C_a^{-1}\) and
\(\mathbf W\mathbf C_b^{-1}\) are positive diagonal matrices, and hence
\(\mathbf H\) is positive definite. Let \((\lambda,\boldsymbol v)\) be an eigenpair of
\(\mathbf A_{\rm L}\mathbf P_{\rm L}\), and set
\(\boldsymbol u=\mathbf P_{\rm L}\boldsymbol v\). 
As \(\mathbf P_{\rm L}=\mathbf K_{\rm L}^{-1}\) given in \eqref{def:PL_PB_var_coeff}, one has
\[
\mathbf A_{\rm L}\bm u=\mathbf A_{\rm L}\mathbf P_{\rm L}\bm v=\lambda\bm v=\lambda \mathbf K_{\rm L}\bm u. 
\]
Using \eqref{AL-var-coeff}, one obtains \(\mathbf A_{\rm L}\boldsymbol u=\mathbf K_{\rm L}\boldsymbol u+\mathbf C\boldsymbol u\),
which implies 
\begin{equation}\label{defCu}
(\lambda-1)\mathbf K_{\rm L}\boldsymbol u
=\mathbf C\boldsymbol u, \quad (\lambda-1)(\mathbf H\mathbf K_{\rm L})\boldsymbol u
=
(\mathbf H\mathbf C)\boldsymbol u.
\end{equation}
Then by \eqref{AL-var-coeff} and \eqref{Kron-prop_MatMat}, we have 
\[
\mathbf H\mathbf K_{\rm L}
=
-(\mathbf W\mathbf C_b^{-1})\otimes(\mathbf W\mathbf D^{(2)})
-(\mathbf W\mathbf D^{(2)})\otimes(\mathbf W\mathbf C_a^{-1}).
\]
By Lemma~\ref{WinD2in_eqA},
\(\mathbf W\mathbf D^{(2)}\) is symmetric, and the 
identity \eqref{uWDv} shows that
\(-\mathbf W\mathbf D^{(2)}\) is positive definite.
Using  \eqref{var-coeff-assumption},
\(\mathbf W\mathbf C_a^{-1}\) and
\(\mathbf W\mathbf C_b^{-1}\) are positive diagonal matrices.
Therefore, both Kronecker-product terms above are symmetric positive
definite,   so is \(\mathbf H\mathbf K_{\rm L}\).
Moreover,  as \(\mathbf C\) is diagonal and positive semidefinite, the matrix \(\mathbf H\mathbf C\) is also symmetric
positive semidefinite. Let
\(
\boldsymbol w=(\mathbf H\mathbf K_{\rm L})^{\frac12}\boldsymbol u.
\)
Multiplying the  generalized eigenvalue equation in \eqref{defCu} by
\((\mathbf H\mathbf K_{\rm L})^{-\frac12}\), we obtain
\[
(\lambda-1)\boldsymbol w
=
(\mathbf H\mathbf K_{\rm L})^{-\frac12}
(\mathbf H\mathbf C)
(\mathbf H\mathbf K_{\rm L})^{-\frac12}\boldsymbol w.
\]
Note that the matrix on the right-hand side is symmetric positive semidefinite,
so 
\(\lambda\) is real and \(\lambda\geq1\).

We next derive bounds in \eqref{eq:separable_spectral_bound}.
Using the identity $\mathbf{W}\mathbf{D}^{(2)} =\mathbf{D}^{(2)\intercal}\mathbf{W}$ in  Lemma~\ref{WinD2in_eqA}, one verifies readily  that the matrix 
\(
\widehat{\mathbf S}:=
\mathbf W^{\frac12}\mathbf D^{(2)}\mathbf W^{-\frac12}
\) similar to \(\mathbf D^{(2)}\) 
is symmetric.  Hence,
\(\lambda(\widehat{\mathbf S})=\lambda(\mathbf D^{(2)})\). By
\eqref{eig_bound_D2},  the Rayleigh quotient of
\(-\widehat{\mathbf S}\) satisfies
\begin{equation*}\label{rayquo}
\frac{\pi^2}{4}
<
\frac{
\boldsymbol y^\intercal(-\widehat{\mathbf S})\boldsymbol y
}{
\boldsymbol y^\intercal\boldsymbol y
}
\leq
\frac{c_NN^4}{4\pi^2},\quad \forall\, \bm 0\not= 
\bm y \in \mathbb R^{N-1},
\end{equation*}
so taking
\(\boldsymbol y=\mathbf C^{\frac12}_\nu\boldsymbol z\) yields
\begin{equation}\label{CWDzeigen}
\frac{\pi^2}{4}
<
\frac{
-\boldsymbol z^\intercal
\mathbf C_\nu^{\frac12}
\mathbf W^{\frac12}\mathbf D^{(2)}\mathbf W^{-\frac12}
\mathbf C_\nu^{\frac12}
\boldsymbol z
}{
\boldsymbol z^\intercal\mathbf C_\nu\boldsymbol z
}
\leq
\frac{c_NN^4}{4\pi^2}.
\end{equation}
Since \(\mathbf W\) and \(\mathbf C_\nu\) are diagonal, we have
\begin{equation}\label{CWDidentity}
\mathbf C_\nu^{\frac12}\mathbf W^{\frac12}\mathbf D^{(2)}
\mathbf W^{-\frac12}\mathbf C_\nu^{\frac12}
=
\bigl(\mathbf W^{\frac12}\mathbf C_\nu^{-\frac12}\bigr)
\bigl(\mathbf C_\nu\mathbf D^{(2)}\bigr)
\bigl(\mathbf C_\nu^{\frac12}\mathbf W^{-\frac12}\bigr).
\end{equation}
From \eqref{var-coeff-assumption2}, we find  
\(
\alpha
\boldsymbol z^\intercal\boldsymbol z
\leq
\boldsymbol z^\intercal\mathbf C_\nu\boldsymbol z
\leq
\beta
\boldsymbol z^\intercal\boldsymbol z
\), 
which, together with \eqref{CWDzeigen}-\eqref{CWDidentity}, leads to 
\[
\frac{\alpha\pi^2}{4}
<
\frac{
-\boldsymbol z^\intercal
\bigl(\mathbf W^{\frac12}\mathbf C_\nu^{-\frac12}\bigr)
\bigl(\mathbf C_\nu\mathbf D^{(2)}\bigr)
\bigl(\mathbf C_\nu^{\frac12}\mathbf W^{-\frac12}\bigr)
\boldsymbol z
}{
\boldsymbol z^\intercal\boldsymbol z
}
\leq
\frac{\beta c_NN^4}{4\pi^2}.
\]
As the matrix  $-(\mathbf W^{\frac12}\mathbf C_\nu^{-\frac12})
(\mathbf C_\nu\mathbf D^{(2)})
(\mathbf C_\nu^{\frac12}\mathbf W^{-\frac12})$ is similar to $-\mathbf C_\nu\mathbf D^{(2)},$  we have  from 
\eqref{eigensBDS} that
\[
\lambda(-\mathbf C_\nu\mathbf D^{(2)})=\{-\sigma_{\nu,k}^{-1}\}_{k=1}^{N-1}
\subseteq
(
\tfrac{\alpha\pi^2}{4},
\tfrac{\beta c_NN^4}{4\pi^2}],
\quad \nu=a,b.
\]
Then by \eqref{eigensBDS}-\eqref{eigensKLPL},
\begin{equation}\label{spectralKL}
\lambda(\mathbf K_{\rm L})=\big\{-(\sigma_{a,i}^{-1}+\sigma_{b,j}^{-1})\big\}_{i,j=1}^{N-1} \subseteq
(
\tfrac{\alpha\pi^2}{2},
\tfrac{\beta c_NN^4}{2\pi^2}].
\end{equation}
Since \(\mathbf H\mathbf K_{\rm L}\) is SPD,
the matrix
\(
\widehat{\mathbf K}_{\rm L}
:=\mathbf H^{\frac12}\mathbf K_{\rm L}\mathbf H^{-\frac12}
\)
is SPD and has the same spectrum as
\(\mathbf K_{\rm L}\). Similarly, we define the symmetric positive semidefinite matrix $\widehat{\mathbf C}:=\mathbf H^{\frac12}\mathbf C\mathbf H^{-\frac12}$. Taking
\(\boldsymbol \xi=\mathbf H^{\frac12}\boldsymbol u\), we obtain from 
 \eqref{var-coeff-assumption2} and \eqref{spectralKL}   that
\begin{equation}\label{rayquo2}
\frac{\alpha\pi^2}{2}
<
\frac{
\boldsymbol \xi^\intercal
\widehat{\mathbf K}_{\rm L}
\boldsymbol \xi
}{
\boldsymbol \xi^\intercal\boldsymbol \xi
}
\leq
\frac{\beta c_NN^4}{2\pi^2},\quad 
q_{\min}
\leq
\frac{
\boldsymbol\xi^\intercal
\widehat{\mathbf C}
\boldsymbol\xi
}{
\boldsymbol\xi^\intercal\boldsymbol\xi
}
\leq
q_{\max}.
\end{equation}
By \eqref{defCu}, we have
\begin{equation}\label{rayquo3}
(\lambda-1)\boldsymbol \xi^\intercal
\widehat{\mathbf K}_{\rm L}
\boldsymbol \xi=(\lambda-1)\boldsymbol \xi^\intercal\mathbf H^{\frac12}\mathbf K_{\rm L}\bm u=\boldsymbol \xi^\intercal\mathbf H^{\frac12}\mathbf C\bm u=\boldsymbol\xi^\intercal
\widehat{\mathbf C}
\boldsymbol\xi.
\end{equation}
Combining \eqref{rayquo2}-\eqref{rayquo3} yields
\[
\frac{2\pi^2q_{\min}}{\beta c_NN^4}
\leq
\lambda-1=\frac{
\boldsymbol\xi^\intercal
\widehat{\mathbf C}
\boldsymbol\xi
}{
\boldsymbol\xi^\intercal\boldsymbol\xi
} \frac{
\boldsymbol \xi^\intercal\boldsymbol \xi
}{
\boldsymbol \xi^\intercal
\widehat{\mathbf K}_{\rm L}
\boldsymbol \xi
}
<
\frac{2q_{\max}}{\alpha\pi^2},
\]
which completes the proof of \eqref{eq:separable_spectral_bound}.
\end{proof}

As a direct consequence of Theorem \ref{thm:separable_spectral_bound}, the eigenvalues of the preconditioned matrices are confined in a finite interval for  the following special case, in sharp contrast to the eigenvalue distributions of the unpreconditioned collocation matrices described in Remark \ref{Rmk:spCI}. Indeed, in this setting, we recover the same perfect preconditioning property as in the one-dimensional case \eqref{BD-1D-precond}.
\begin{cor}\label{Laplace-Special} Consider the LCOL and BCOL collocation matrices $\mathbf A_{\rm L}, \mathbf A_{\rm B}$ in \eqref{ALAB-Lap}, and the corresponding Birkhoff preconditioners in \eqref{defn38} for ${\mathcal L}[u]=-\Delta u+\gamma u$ with constant $\gamma\ge 0$, that is, 
\begin{equation}\label{defn3822_1}
\mathbf{P}_{\rm L}
 = 
-(
\mathbf{I}_{N-1}\otimes
\mathbf{D}^{(2)}
+
\mathbf{D}^{(2)}
\otimes\mathbf{I}_{N-1})^{-1},\quad 
\mathbf{P}_{\rm B}
=
-(
\mathbf{B}\otimes\mathbf{I}_{N-1}
+
\mathbf{I}_{N-1}\otimes\mathbf{B})^{-1}.
\end{equation}
 Then all eigenvalues
\(
\lambda\in
\lambda(\mathbf A_{\rm L}\mathbf P_{\rm L})
=
\lambda(\mathbf A_{\rm B}\mathbf P_{\rm B})
\) are real and concentrated around $1:$ 
\begin{equation}\label{eq:separable-Lap-Eigen}
1+
\frac{2\pi^2\gamma}
{c_N N^4}
\leq
\lambda
<
1+
\frac{2\gamma}
{\pi^2},
\end{equation}
where $c_N\approx 1$ for $N\gg 1.$
\end{cor}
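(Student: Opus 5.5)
This is a direct specialisation of Theorem~\ref{thm:separable_spectral_bound}. I would take $a\equiv b\equiv 1$ and $c\equiv \gamma$, $d\equiv 1$ in \eqref{var-coeff}, so that $\mathcal L[u]=-\Delta u+\gamma u$. With these choices $\mathbf C_a=\mathbf C_b=\mathbf I_{N-1}$ and $\mathbf C_d\otimes\mathbf C_c=\gamma\,\mathbf I_{N-1}\otimes\mathbf I_{N-1}$. The collocation matrices \eqref{eq:var_coeff_matrices} then reduce to \eqref{ALAB-Lap}, since $(\mathbf C_d\mathbf B)\otimes(\mathbf C_c\mathbf B)=\gamma\,\mathbf B\otimes\mathbf B$. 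The preconditioners \eqref{defn38} reduce to \eqref{defn3822_1}; the only care needed is matching the Kronecker ordering, but both orders appear in the sum, so the expressions coincide.

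Next I would verify the hypotheses. Clearly \eqref{var-coeff-assumption} holds because the coefficients are constant, $a=b=1>0$ and $cd=\gamma\ge0$. Hence Theorem~\ref{thm:separable_spectral_bound} gives that every $\lambda\in\lambda(\mathbf A_{\rm L}\mathbf P_{\rm L})$ is real with $\lambda\ge1$. The equality $\lambda(\mathbf A_{\rm L}\mathbf P_{\rm L})=\lambda(\mathbf A_{\rm B}\mathbf P_{\rm B})$ follows from Proposition~\ref{prop:preconditioned_identity}. For \eqref{var-coeff-assumption2} I would take $\alpha=\beta=1$ and $q_{\min}=q_{\max}=\gamma$. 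Substituting into \eqref{eq:separable_spectral_bound} yields exactly \eqref{eq:separable-Lap-Eigen}. The remark that $c_N\approx1$ is inherited from \eqref{eig_bound_D2}.

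There is no real obstacle. The one point to check is the degenerate case $\gamma=0$, where the upper bound becomes the strict inequality $\lambda<1$ while $\lambda\ge1$. Here $\mathbf A_{\rm L}=\mathbf K_{\rm L}$, so $\mathbf A_{\rm L}\mathbf P_{\rm L}=\mathbf I$ and every $\lambda=1$. The strict upper bound in the theorem should therefore be read as non-strict when $q_{\max}=0$. Indeed, the proof of Theorem~\ref{thm:separable_spectral_bound} gives $\lambda-1\le q_{\max}\cdot 2/(\alpha\pi^2)$, with strictness only when $q_{\max}>0$. I would state this case separately so the displayed bound is not misread.

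For a self-contained check I would also compute the spectrum explicitly. By \eqref{eigensKLPL} and Remark~\ref{Rmk:spCI} with $\sigma_k=\lambda_k^{-1}$, the preconditioned matrix is $\mathbf I+\gamma\mathbf P_{\rm L}$. Its eigenvalues are
\[
1-\frac{\gamma}{\lambda_i+\lambda_j}.
\]
Inserting the bounds \eqref{eig_bound_D2}, namely $-c_NN^4/(2\pi^2)\le\lambda_i+\lambda_j<-\pi^2/2$, recovers \eqref{eq:separable-Lap-Eigen} directly.
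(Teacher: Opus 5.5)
Your proposal is correct and follows the paper's route: the corollary is obtained by specialising Theorem~\ref{thm:separable_spectral_bound} with $\alpha=\beta=1$ and $q_{\min}=q_{\max}=\gamma$, and your explicit check via $\mathbf A_{\rm L}\mathbf P_{\rm L}=\mathbf I+\gamma\mathbf P_{\rm L}$, with eigenvalues $1-\gamma/(\lambda_i+\lambda_j)$, confirms the bounds directly. Your remark that the strict upper bound fails at $\gamma=0$, where every eigenvalue equals $1$, is also right and points to a genuine imprecision in the statement as written.
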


The eigenvalue analysis of the preconditioning for the general elliptic problem
\eqref{eq:general_elliptic} is, however, more involved than that for the separable variable-coefficient case. After all, the preconditioners are built upon certain separable forms. For completeness, we feel compelled to present the main result here, with the detailed proof given in Appendix~\ref{app:general_spectral_bound}.
Assume that the variable coefficients in 
\eqref{eq:general_elliptic} satisfy the following conditions: 
\begin{itemize}
    \item[{\rm (i)}] 
    $\mathbf{A}=(a_{ij})$ with $a_{ij}\in C^1(\overline\Omega)$ is 
    uniformly symmetric positive definite on $\overline\Omega,$ and $ \bar a_{11}(x),\bar a_{22}(y)$ in \eqref{eq:general_avg_coeff} are uniformly bounded: $0<\alpha_0
    \leq
    \bar a_{11}(x),\bar a_{22}(y)
    \leq
    \beta_0.$ Moreover, we have 
     \(s\in C(\overline{\Omega})\) and
    \(0<s_{\min}\leq s(x,y)\leq s_{\max}\).
    \item[{\rm (ii)}]  $f\in L^2(\Omega)$ and \(
\mathcal{L}[u]=f,
\) with homogeneous Dirichlet boundary conditions has the 
     $H^2$-solution such that   
\begin{equation}\label{eq:continuous_H2_stability}
\|u\|_{H^2(\Omega)}
\leq
C_A\|f\|_{L^2(\Omega)},
\end{equation}
where the constant  \(C_A>0\), independent of \(f\).
\end{itemize}
In general, we have the following uniform bounds 
for the preconditioned collocation matrices, and refer to Appendix~\ref{app:general_spectral_bound} for the detailed proof. 
\begin{thm}\label{thm:general_spectral_bound} Assume the elliptic problem
\eqref{eq:general_elliptic} satisfy the above conditions {\rm (i)}-{\rm (ii)}. Then
there exist constants \(C_1,C_2>0\), independent of \(N\), such that for every
\(
\lambda
\in
\lambda(\mathbf{A}_{\rm L}\mathbf{P}_{\rm L})
=
\lambda(\mathbf{A}_{\rm B}\mathbf{P}_{\rm B}),
\)
one has
\begin{equation}\label{eq:general_preconditioned_annular_bound}
0<C_1
\leq
|\lambda|
\leq
C_2.
\end{equation}
\end{thm}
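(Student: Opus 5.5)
Since Proposition~\ref{prop:preconditioned_identity} gives $\mathbf A_{\rm B}\mathbf P_{\rm B}=\mathbf A_{\rm L}\mathbf P_{\rm L}$, I will work only with $\mathbf A_{\rm L}\mathbf P_{\rm L}$. Let $(\lambda,\bm v)$ be an eigenpair and set $\bm u=\mathbf P_{\rm L}\bm v$. Then
\[
\mathbf A_{\rm L}\bm u=\lambda\mathbf K_{\rm L}\bm u .
\]
The target bounds follow from two one-sided discrete estimates, both measured in a natural energy norm.
\begin{itemize}
\item[(a)] $\|\mathbf A_{\rm L}\bm u\|\le C\|\mathbf K_{\rm L}\bm u\|$. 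This gives $|\lambda|\le C_2$.
\item[(b)] $\|\mathbf K_{\rm L}\bm u\|\le C\|\mathbf A_{\rm L}\bm u\|$. This gives $|\lambda|\ge C_1$.
\end{itemize}
For the norm I take the discrete $L^2$ norm $\|\bm g\|_{\mathbf W}^2:=\bm g^\intercal(\mathbf W\otimes\mathbf W)\bm g$. For $u_N\in\mathbb P_N^0\otimes\mathbb P_N^0$ interpolating $\bm u$, this norm is equivalent to $\|I_N g\|_{L^2}$ for the grid function $g$. I identify $\mathbf K_{\rm L}\bm u$ with the grid values of $\widehat{\mathcal L}_0u_N=-a\partial_x^2u_N-b\partial_y^2u_N$, and $\mathbf A_{\rm L}\bm u$ with the grid values of $\mathcal L u_N$.

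\textbf{Step 1: discrete $H^2$ control by $\mathbf K_{\rm L}$.} I first show $\|u_N\|_{H^2}\lesssim\|\mathbf K_{\rm L}\bm u\|_{\mathbf W}$, treating the pure and mixed second derivatives separately.
\begin{itemize}
\item \emph{Pure second derivatives.} Use the energy identity of Theorem~\ref{thm:separable_spectral_bound} with $\mathbf H=(\mathbf W\mathbf C_b^{-1})\otimes(\mathbf W\mathbf C_a^{-1})$. Expand $\|\mathbf K_{\rm L}\bm u\|_{\mathbf H}^2$ into the two squared terms plus the cross term
\[
2\,(\mathbf D^{(2)}\otimes\mathbf I)\bm u\cdot(\mathbf W\otimes\mathbf W)(\mathbf I\otimes\mathbf D^{(2)})\bm u .
\]
By exactness of the LGL quadrature and Lemma~\ref{WinD2in_eqA}, this cross term equals $\int\partial_x^2u_N\,\partial_y^2u_N$, which integrates by parts to $\|\partial_{xy}u_N\|^2\ge0$ on the square with zero boundary values. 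The cancellation is exact with the $\mathbf W$ weight, i.e.\ for $a\equiv b\equiv1$. For general $a,b$ I would instead use the diagonalisation of Theorem~\ref{prop:explicit_preconditioners}: $\mathbf K_{\rm L}$ is a Kronecker sum of the commuting-in-tensor factors $-\mathbf C_a\mathbf D^{(2)}$ and $-\mathbf C_b\mathbf D^{(2)}$. In the $\mathbf H$-inner product these are each self-adjoint and positive, so $\|(X\otimes\mathbf I+\mathbf I\otimes Y)\bm u\|_{\mathbf H}^2\ge\|X\bm u\|_{\mathbf H}^2+\|Y\bm u\|_{\mathbf H}^2$.
\item \emph{Combining.} Together with $\alpha_0\le a,b\le\beta_0$, this bounds $\|\partial_x^2u_N\|$ and $\|\partial_y^2u_N\|$ in the discrete norm, which is equivalent to $L^2$ on $\mathbb P_N$ for quantities of degree $\le N$ in each variable. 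The mixed derivative then follows from the integration-by-parts identity above, and Poincar\'e inequalities give the lower-order terms.
\end{itemize}

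\textbf{Step 2: upper bound.} Write
\[
\mathbf A_{\rm L}\bm u-\mathbf K_{\rm L}\bm u
\]
as grid values of $-(a_{11}-a)\partial_x^2u_N-2a_{12}\partial_{xy}u_N-(a_{22}-b)\partial_y^2u_N+q\cdot\nabla u_N+su_N$. Each coefficient is bounded, so the discrete norm of this is $\lesssim\|u_N\|_{H^2}$, up to the aliasing issue below. With Step 1 this gives $\|\mathbf A_{\rm L}\bm u\|_{\mathbf W}\le C\|\mathbf K_{\rm L}\bm u\|_{\mathbf W}$, and hence $|\lambda|\le C_2$.

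One technical point is that the pointwise values of derivatives of $u_N$ must be converted to $L^2$ norms. I would handle this by using $\sum_{ij}\omega_i\omega_jp(x_i,y_j)^2\le 3\|p\|_{L^2}^2$ for $p\in\mathbb P_N\otimes\mathbb P_N$, the standard LGL norm equivalence.

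\textbf{Step 3: lower bound (main obstacle).} I need $\|u_N\|_{H^2}\lesssim\|\mathbf A_{\rm L}\bm u\|_{\mathbf W}$, i.e.\ uniform-in-$N$ $H^2$ stability of the collocation scheme for $\mathcal L$. Once this holds, $\|\mathbf K_{\rm L}\bm u\|\lesssim\|u_N\|_{H^2}\lesssim\|\mathbf A_{\rm L}\bm u\|$, giving $|\lambda|\ge C_1$. I would derive it from assumption (ii) by a perturbation argument.
\begin{itemize}
\item Let $f_N:=I_N(\mathcal Lu_N)$ be the interpolant of the collocation residual, so $\|f_N\|_{L^2}\simeq\|\mathbf A_{\rm L}\bm u\|_{\mathbf W}$.
\item Let $w$ solve $\mathcal Lw=f_N$. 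By \eqref{eq:continuous_H2_stability}, $\|w\|_{H^2}\le C_A\|f_N\|$.
\item The difference $e=u_N-w$ satisfies $\mathcal Le=\mathcal Lu_N-I_N\mathcal Lu_N$, an interpolation (aliasing) error of $\mathcal L u_N$ with variable coefficients.
\end{itemize}
The hardest part is bounding this aliasing term by $\epsilon_N\|u_N\|_{H^2}$ with $\epsilon_N\to0$, or at least small. Since the coefficients are only $C^1$ or $C$, this is delicate. I would first try splitting the coefficients into a polynomial approximation of degree $M$ plus a small remainder. For the polynomial part, the aliasing acts only on high modes, where Step 1 gives a strong $N^{-1}$-scaled inverse estimate. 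Absorbing the result gives the stability for $N\ge N_0$. The finitely many $N<N_0$ are covered by the nonsingularity of $\mathbf A_{\rm L}$, provided I can show it is nonsingular for each such $N$, e.g.\ from the positivity in (i). I would take the minimum over these finitely many cases to fix $C_1$.
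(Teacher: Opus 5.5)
There is a genuine gap, and it starts in Step 1: the claim $\|u_N\|_{H^2}\lesssim\|\mathbf K_{\rm L}\bm u\|_{\mathbf W}$ is false.

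\textbf{Why Step 1 fails.} The vector $\mathbf K_{\rm L}\bm u$ samples $\partial_x^2u_N$ only at interior nodes, and $\partial_x^2u_N$ need not vanish on $x=\pm1$. For $g\in\mathbb P_{N-2}$, exactness gives $\sum_{i=1}^{N-1}\omega_ig(x_i)^2=\|g\|_{L^2}^2-\omega_0g(-1)^2-\omega_Ng(1)^2$, and this is not uniformly equivalent to $\|g\|_{L^2}^2$.

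Take the reproducing kernel $g=\sum_{k=0}^{N-2}\tfrac{2k+1}{2}L_k=\tfrac12(L_{N-1}'+L_{N-2}')$. Then $g(1)=\|g\|_{L^2}^2=\tfrac{(N-1)^2}{2}$, so the interior sum is at most $\tfrac{3N-1}{N(N+1)}\|g\|_{L^2}^2$. Now let $P''=g$ with $P(\pm1)=0$, so that $P'=\tfrac12(L_{N-1}+L_{N-2})$ is bounded, and set $u_N=P(x)(1-y^2)$ with $a=b=1$. A direct computation gives $\|\mathbf K_{\rm L}\bm u\|_{\mathbf W}^2=\mathcal O(N)$, while $\|\partial_x^2u_N\|_{L^2}^2\sim N^2$.

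\textbf{The upper bound can be repaired.} It survives if you never leave the discrete norm, which is essentially the paper's route.
\begin{itemize}
\item Your Kronecker-sum inequality controls the interior-node norms of $\partial_x^2u_N$ and $\partial_y^2u_N$.
\item In the $\mathbf H$-norm the cross term equals $2\int_\Omega|\partial_{xy}u_N|^2$ for all $a,b$, not only for $a\equiv b\equiv1$. This dominates the interior-node sum of $|\partial_{xy}u_N|^2$.
\item The first-order part is bounded through $(\mathbf K_{\rm L}\bm u,\bm u)_{\mathbf H}$, which is a full-grid discrete $H^1$ seminorm.
\end{itemize}

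\textbf{The lower bound cannot be repaired this way.} The target of Step 3, $\|u_N\|_{H^2}\lesssim\|\mathbf A_{\rm L}\bm u\|_{\mathbf W}$, is false already for $\mathcal L=-\Delta+1$. That operator satisfies (i)--(ii), and there $\|\mathbf A_{\rm L}\bm u\|_{\mathbf W}\le(1+2/\pi^2)\|\mathbf K_{\rm L}\bm u\|_{\mathbf W}$, so the counterexample above applies. Put differently, the collocation residual carries no information about $\mathcal Lu_N$ at boundary nodes. So either $\|f_N\|_{L^2}\simeq\|\mathbf A_{\rm L}\bm u\|_{\mathbf W}$ fails, or the remainder $\mathcal Lu_N-f_N$ contains those boundary values and is as large as $\|u_N\|_{H^2}$. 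No perturbation of (ii) can absorb it. Separately, positivity of $s$ alone does not make the nonsymmetric $\mathbf A_{\rm L}$ nonsingular for small $N$.

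\textbf{What is missing.} You need a lower bound for the full principal part $\mathbf A_{{\rm L},2}$, including $a_{12}$ and the mismatches $a_{11}-\bar a_{11}$ and $a_{22}-\bar a_{22}$, measured directly in the interior-node norm. The paper obtains it by a Cordes-type argument.
\begin{itemize}
\item Set $\mathbf R=\mathbf A_0^{-1/2}\mathbf A\mathbf A_0^{-1/2}$ and $\varphi=\operatorname{tr}\mathbf R/\|\mathbf R\|_F^2$. In two dimensions, $\|\varphi\mathbf R-\mathbf I_2\|_F^2=(\mu_1-\mu_2)^2/(\mu_1^2+\mu_2^2)\le\rho_A^2<1$ uniformly.
\item At each node, $|(\varphi\mathbf A_{{\rm L},2}\bm v-\mathbf K_{\rm L}\bm v)_{ij}|\le\rho_A\|\mathscr H_{0,h}v_N(x_i,y_j)\|_F$. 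Here $\mathscr H_{0,h}v_N$ is the scaled nodal Hessian with entries $\bar a_{11}\partial_x^2v_N$, $\sqrt{\bar a_{11}\bar a_{22}}\,\partial_{xy}v_N$, $\bar a_{22}\partial_y^2v_N$.
\item Your cross-term identity is exactly the discrete Miranda--Talenti bound $\|\mathscr H_{0,h}v_N\|_{\mathbf H,F}\le\|\mathbf K_{\rm L}\bm v\|_{\mathbf H}$.
\end{itemize}
Together these give $\|\mathbf A_{{\rm L},2}\bm v\|_{\mathbf H}\ge c_1\|\mathbf K_{\rm L}\bm v\|_{\mathbf H}$ with $c_1>0$ independent of $N$.

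The lower-order terms are then removed by compactness rather than by a quantitative stability estimate. Suppose $|\lambda_k|\to0$ with $\|\mathbf K_{\rm L}\bm v_k\|_{\mathbf H_k}=1$. Then the $v_k$ are bounded in $H_0^1$, a weak limit solves $\mathcal Lv=0$, and it vanishes by (ii). Hence $\|\bm v_k\|_{\mathbf H_k}\to0$, so the first- and zeroth-order parts tend to zero, contradicting the principal-part lower bound. In this route (ii) is used only qualitatively, as uniqueness, and no aliasing rates or extra coefficient smoothness are needed.
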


\begin{rmk}\label{rmk:DiffC1}
Theorem~\ref{thm:separable_spectral_bound} 
is a special case of 
Theorem~\ref{thm:general_spectral_bound}, 
but in the former setting, we can better characterize the dependence of the bounds on the parameters with a simpler and different proof. 
\end{rmk}



It is noteworthy that eigenvalue clustering and \(N\)-independent bounds do not by themselves guarantee good conditioning for a nonsymmetric matrix, but they can  be indicative of rapid GMRES convergence, particularly when the matrix is nearly normal or possesses a well-conditioned eigenvector basis. Indeed, extensive numerical tests in Subsection~\ref{results} below demonstrate that the GMRES solver converges within only a small number of iterations. 

At this stage, we can only show the well-conditioning in a relatively simpler scenario. Its proof is given in Appendix~\ref{app:uniform_condition_number}.
\begin{thm}\label{thm:uniform_condition_number} Under the setting and conditions of Theorem \ref{thm:separable_spectral_bound}, we further assume  $c(x)d(y)\equiv s$ for some constant $s\ge 0.$   
Then we have
\begin{equation}\label{kapp2A}
\kappa_2(\mathbf A_{\rm L}\mathbf P_{\rm L})
=
\kappa_2(\mathbf A_{\rm B}\mathbf P_{\rm B})
={\mathcal O}(1),
\end{equation}
which holds uniformly for \(N\).
\end{thm}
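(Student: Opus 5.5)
The plan is to exploit the special structure $c(x)d(y)\equiv s$. In \eqref{AL-var-coeff} this gives $\mathbf A_{\rm L}=\mathbf K_{\rm L}+s\,\mathbf I$, where $\mathbf I=\mathbf I_{N-1}\otimes\mathbf I_{N-1}$. Hence
\[
\mathbf M:=\mathbf A_{\rm L}\mathbf P_{\rm L}=\mathbf I+s\,\mathbf K_{\rm L}^{-1},\qquad
\mathbf M^{-1}=\mathbf K_{\rm L}(\mathbf K_{\rm L}+s\mathbf I)^{-1}=\mathbf I-s(\mathbf K_{\rm L}+s\mathbf I)^{-1}.
\]
By Proposition~\ref{prop:preconditioned_identity} the same matrix is $\mathbf A_{\rm B}\mathbf P_{\rm B}$, so only $\mathbf M$ needs treating. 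It follows that
\[
\kappa_2(\mathbf M)\le\bigl(1+s\|\mathbf K_{\rm L}^{-1}\|_2\bigr)\bigl(1+s\|(\mathbf K_{\rm L}+s\mathbf I)^{-1}\|_2\bigr).
\]
The whole statement therefore reduces to proving $N$-uniform Euclidean bounds on $\|(\mathbf K_{\rm L}+\mu\mathbf I)^{-1}\|_2$ for $\mu\in\{0,s\}$.

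In the $\mathbf H$-inner product \eqref{Hinner} this is easy. From the proof of Theorem~\ref{thm:separable_spectral_bound}, $\widehat{\mathbf K}_{\rm L}=\mathbf H^{1/2}\mathbf K_{\rm L}\mathbf H^{-1/2}$ is SPD with spectrum in $(\alpha\pi^2/2,\beta c_NN^4/(2\pi^2)]$. Hence $\|(\mathbf K_{\rm L}+\mu\mathbf I)^{-1}\|_{\mathbf H}<2/(\alpha\pi^2)$, and $\mathbf M$ is similar to a symmetric matrix with spectrum in $[1,1+2s/(\alpha\pi^2))$. However, passing from $\|\cdot\|_{\mathbf H}$ to $\|\cdot\|_2$ naively costs $\kappa_2(\mathbf H)^{1/2}=\mathcal O(N)$ by Corollary~\ref{Proposition:Cond_S_V}, which is too lossy. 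The main work is to avoid this loss.

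To that end I would use the commuting Kronecker structure $\mathbf K_{\rm L}=-(\mathbf I\otimes\mathbf C_a\mathbf D^{(2)})-(\mathbf C_b\mathbf D^{(2)}\otimes\mathbf I)$ to write
\[
(\mathbf K_{\rm L}+\mu\mathbf I)^{-1}=\int_0^\infty e^{-\mu t}\,e^{t\mathbf C_b\mathbf D^{(2)}}\otimes e^{t\mathbf C_a\mathbf D^{(2)}}\,{\rm d}t .
\]
This reduces matters to one-dimensional semigroup estimates, via $\|e^{t\mathbf C_b\mathbf D^{(2)}}\otimes e^{t\mathbf C_a\mathbf D^{(2)}}\|_2=\|e^{t\mathbf C_b\mathbf D^{(2)}}\|_2\,\|e^{t\mathbf C_a\mathbf D^{(2)}}\|_2$. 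By Theorem~\ref{WinD2in_eq_D2inTWin}, $e^{t\mathbf C_\nu\mathbf D^{(2)}}=\mathbf V_\nu e^{t\mathbf\Sigma_\nu^{-1}}\mathbf V_\nu^{-1}$. This is a contraction in the $\mathbf W\mathbf C_\nu^{-1}$-norm with decay rate $e^{-t\alpha\pi^2/4}$, by \eqref{eigensBDS} and the bound on $-\sigma_{\nu,k}^{-1}$ derived in the proof of Theorem~\ref{thm:separable_spectral_bound}.

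The key step, and the one I expect to be the main obstacle, is a uniform bound $\|e^{t\mathbf C_\nu\mathbf D^{(2)}}\|_2\le K e^{-\delta t}$ with $K$ independent of $N$. The crude transfer through $\kappa_2(\mathbf V_\nu)=\mathcal O(\sqrt N)$ would only give $K=\mathcal O(\sqrt N)$. I would try first to establish a discrete maximum-type estimate $\|e^{t\mathbf C_\nu\mathbf D^{(2)}}\|_\infty\le K$, mimicking the continuous maximum principle for $u_t=\nu(x)u_{xx}$, together with an $\ell^1$ bound obtained by duality, using $\mathbf W\mathbf C_\nu^{-1}(\mathbf C_\nu\mathbf D^{(2)})=(\mathbf C_\nu\mathbf D^{(2)})^\intercal\mathbf W\mathbf C_\nu^{-1}$ from Lemma~\ref{WinD2in_eqA}. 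I would then combine the two through $\|\cdot\|_2\le\sqrt{\|\cdot\|_1\|\cdot\|_\infty}$. Interpolating the decay from the weighted $\ell^2$ estimate, the time integral converges to a constant depending only on $\alpha,\beta,s$. This yields uniform bounds on both resolvent norms and hence \eqref{kapp2A}.
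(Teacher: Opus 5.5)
Your reduction is the same as the paper's. With $c(x)d(y)\equiv s$ one has $\mathbf A_{\rm L}\mathbf P_{\rm L}=\mathbf I+s\mathbf K_{\rm L}^{-1}$ and $(\mathbf A_{\rm L}\mathbf P_{\rm L})^{-1}=\mathbf I-s\mathbf A_{\rm L}^{-1}$, so everything rests on $N$-uniform bounds for $\|(\mathbf K_{\rm L}+\mu\mathbf I)^{-1}\|_2$, $\mu\in\{0,s\}$. That bound is the whole content of the theorem, and your route to it has a genuine gap.

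(a) The discrete maximum principle you want to invoke has no available proof route. At interior LGL points, $\ell_j''(x_i)=-2L_N(x_i)/\bigl(L_N(x_j)(x_i-x_j)^2\bigr)$ for $i\neq j$, and $L_N$ alternates in sign at consecutive nodes. Hence the off-diagonal entries of $\mathbf C_\nu\mathbf D^{(2)}$ alternate in sign, and $e^{t\mathbf C_\nu\mathbf D^{(2)}}$ has negative entries for small $t>0$. The usual positivity argument for $\|e^{t\mathbf M}\|_\infty\le 1$ therefore fails, and an $N$-uniform $\infty$-norm bound for this semigroup would be a new and delicate result.

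(b) The duality step gives the wrong norm. Lemma~\ref{WinD2in_eqA} yields $(\mathbf C_\nu\mathbf D^{(2)})^\intercal=\mathbf H_\nu(\mathbf C_\nu\mathbf D^{(2)})\mathbf H_\nu^{-1}$ with $\mathbf H_\nu=\mathbf W\mathbf C_\nu^{-1}$, so $\|e^{t\mathbf C_\nu\mathbf D^{(2)}}\|_1=\|\mathbf H_\nu e^{t\mathbf C_\nu\mathbf D^{(2)}}\mathbf H_\nu^{-1}\|_\infty$. This is the operator norm in the weighted max-norm $\max_i(\omega_i/c_i)|v_i|$, not the unweighted one. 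Converting between the two costs $\kappa_\infty(\mathbf H_\nu)=\mathcal O(N)$, and $\sqrt{\|\cdot\|_1\|\cdot\|_\infty}$ then returns exactly the $\mathcal O(\sqrt N)$ loss you set out to avoid.

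(c) Suppose you did have uniform $1$- and $\infty$-norm bounds $K$. The only $2$-norm decay you can transfer from the $\mathbf H$-norm is $\mathcal O(\sqrt N)e^{-\delta t}$, and $\int_0^\infty\min\bigl(K,C\sqrt N e^{-\delta t}\bigr)^2\,{\rm d}t=\mathcal O(\log N)$. So an $N$-independent decay rate would need yet another argument.

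The missing idea is to skip the semigroup entirely and bound the resolvent from weighted-$\ell^2$ input to pointwise output. Done this way, only the upper bound on the LGL weights is ever used. The paper proves the following for the $\mathbf H_\nu$-orthonormal eigenpairs $(\lambda_k^\nu,\bm\phi_k^\nu)$ of $-\mathbf C_\nu\mathbf D^{(2)}$, where $p\in\mathbb P_N^0$ denotes the interpolant of $\bm v$:
\[
\sum_{k=1}^{N-1}\frac{|\phi_k^\nu(i)|^2}{\lambda_k^\nu}=\sup_{\bm v\neq\bm 0}\frac{|v_i|^2}{\int_{-1}^1|p'|^2\,{\rm d}x}\le\frac{1-x_i^2}{2}.
\]
This uses $-\bm v^\intercal\mathbf W\mathbf D^{(2)}\bm v=\int_{-1}^1|p'|^2$ from \eqref{uWDv} and the elementary bound $|p(x)|^2\le\frac{1-x^2}{2}\int_{-1}^1|p'|^2$.

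Next, expand $\bm u=(\mathbf K_{\rm L}+\mu\mathbf I)^{-1}\bm f$ in the tensor eigenbasis $\bm\phi_m^b\otimes\bm\phi_n^a$. Cauchy--Schwarz together with $(\lambda_m^b+\lambda_n^a+\mu)^2\ge4\lambda_m^b\lambda_n^a$ splits the double sum into two one-dimensional sums. This gives $|u_{ij}|^2\le\frac{1}{16}(1-x_i^2)(1-y_j^2)\|\bm f\|_{\mathbf H}^2$.

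Finally, sum over the $(N-1)^2$ nodes and use $\|\bm f\|_{\mathbf H}\le\omega_{\max}\alpha^{-1}\|\bm f\|_2$ with $\omega_{\max}\le c_2/N$. This yields $\|(\mathbf K_{\rm L}+\mu\mathbf I)^{-1}\|_2\le c_2/(4\alpha)$. The small weights $\omega_i\sim N^{-2}$ near $\pm1$ never enter, which is exactly why no $\kappa_2(\mathbf W)$ factor appears.
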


In Figure~\ref{fig:spectrum_preconditioned}, we present numerical illustrations of the eigenvalue distributions and condition numbers of the original LCOL and BCOL systems and their corresponding preconditioned systems for two sets of variable coefficients. The first row corresponds to the separable problem \eqref{var-coeff} with
\[
a(x)=2-\sin x,\quad
b(y)=2+\cos y,\quad
c(x)=x^2+1,\quad
d(y)=y^2+2,
\]
whereas the second row corresponds to the general elliptic problem \eqref{eq:general_elliptic} with
\[
\mathbf A(x,y)=
\begin{bmatrix}
3+\sin(\pi x)\cos(\pi y) & 0.2x^2y^2\\
0.2x^2y^2 & 2+e^{xy}
\end{bmatrix},
\quad
\mathbf r(x,y)=
\begin{bmatrix}
-\sin(\pi y)\cos(\pi x)\\
\sin(\pi x)\cos(\pi y)
\end{bmatrix},
\]
and \(s(x,y)=1+x^2+y^2\). In both cases, the spectra of the preconditioned matrices remain uniformly bounded away from the origin. For the separable problem, the eigenvalues are real and tightly clustered, whereas for the general problem they may be complex, in agreement with Theorems~\ref{thm:separable_spectral_bound}-\ref{thm:general_spectral_bound}. For the separable problem in the first row, the coefficient assumptions satisfy the conditions of Theorem~\ref{thm:uniform_condition_number}, and the condition numbers of the preconditioned systems remain of order \(\mathcal O(1)\), as predicted by the theorem. For the general variable-coefficient problem in the second row, although the analysis for uniform well-conditioning analytically appears challenging and open, the numerical results likewise exhibit \(\mathcal O(1)\) condition-number behavior.


\begin{figure}[htbp]
    \centering
    \includegraphics[width=0.24\linewidth]{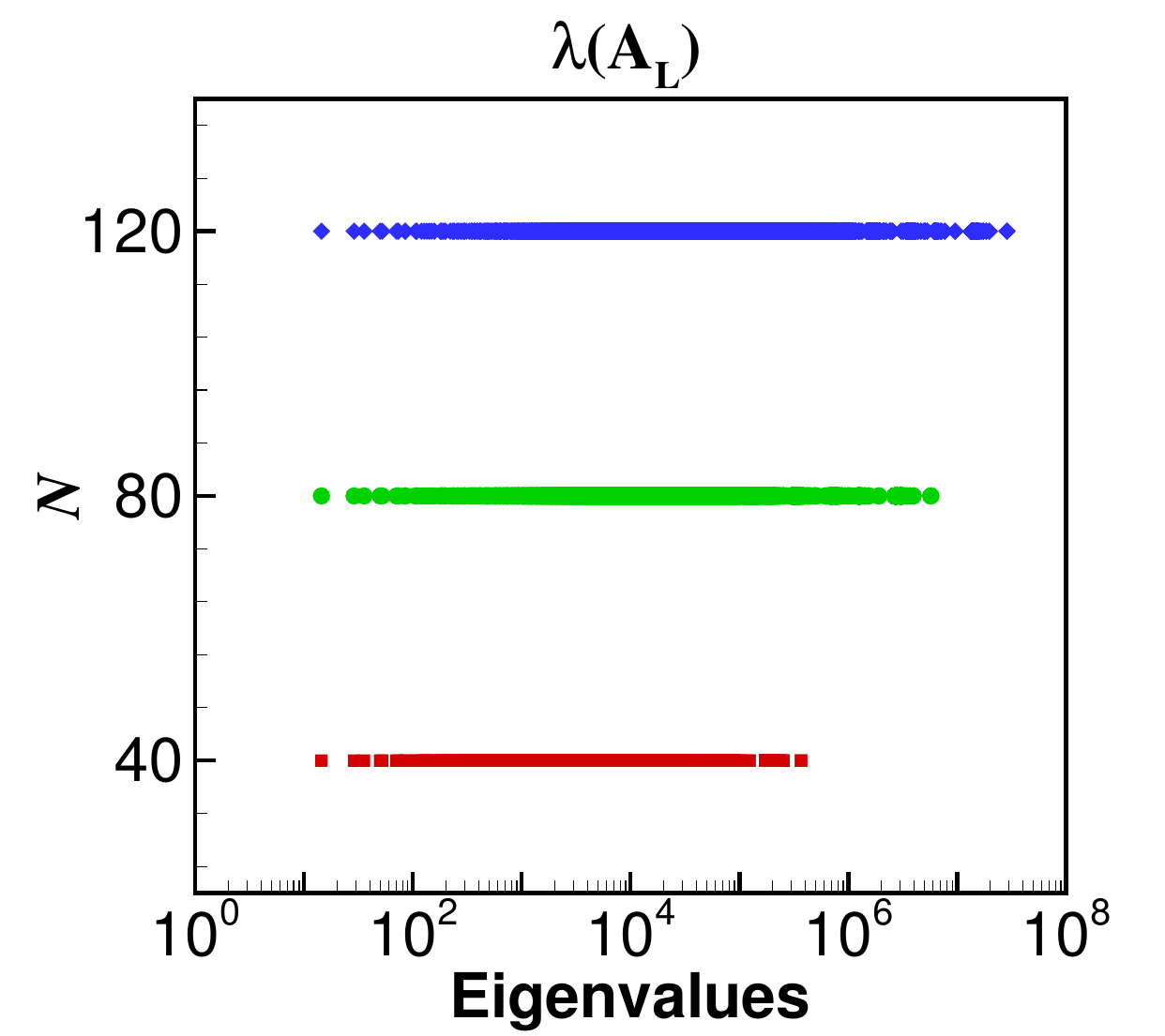} 
    \includegraphics[width=0.24\linewidth]{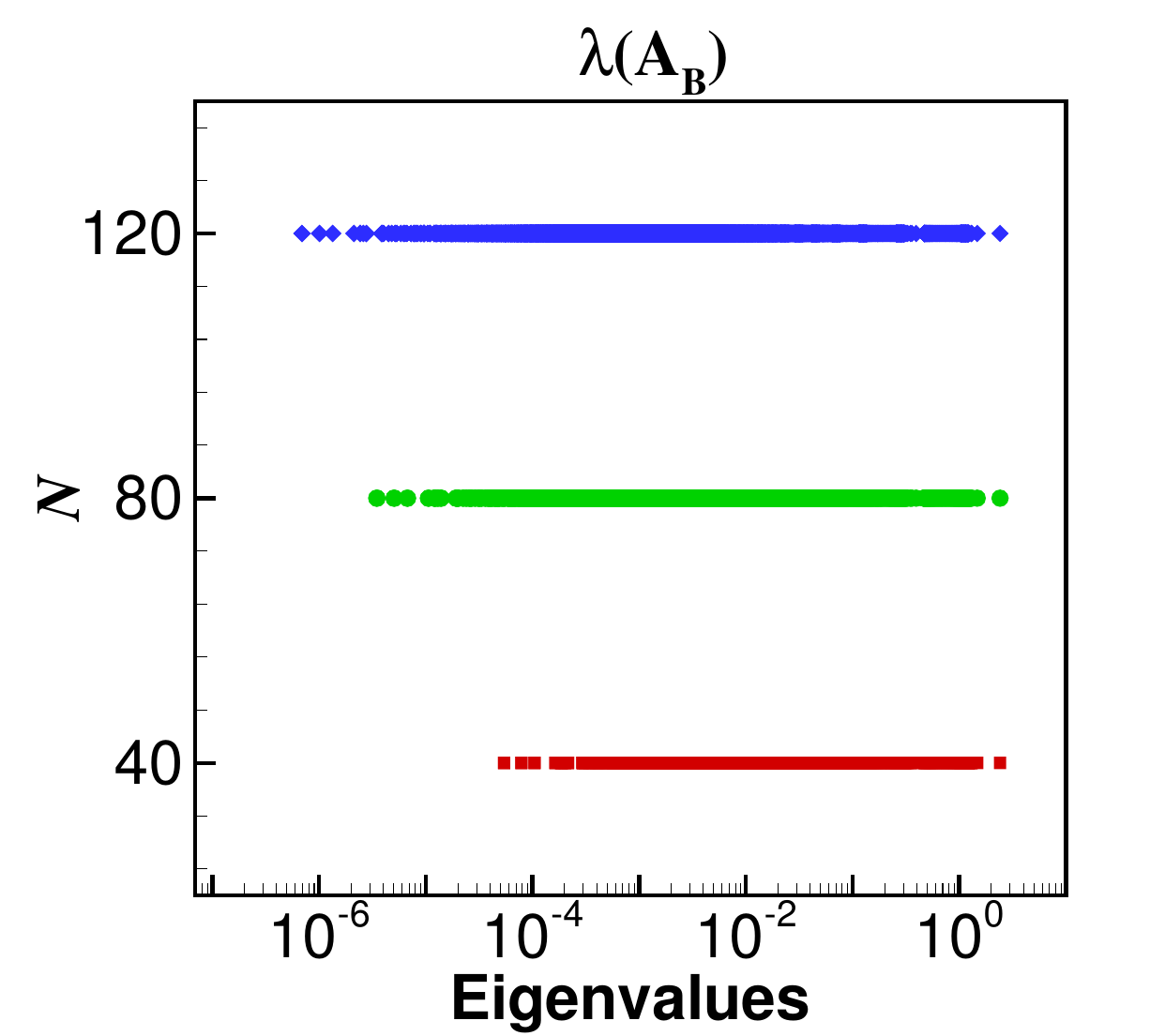} 
    \includegraphics[width=0.24\linewidth]{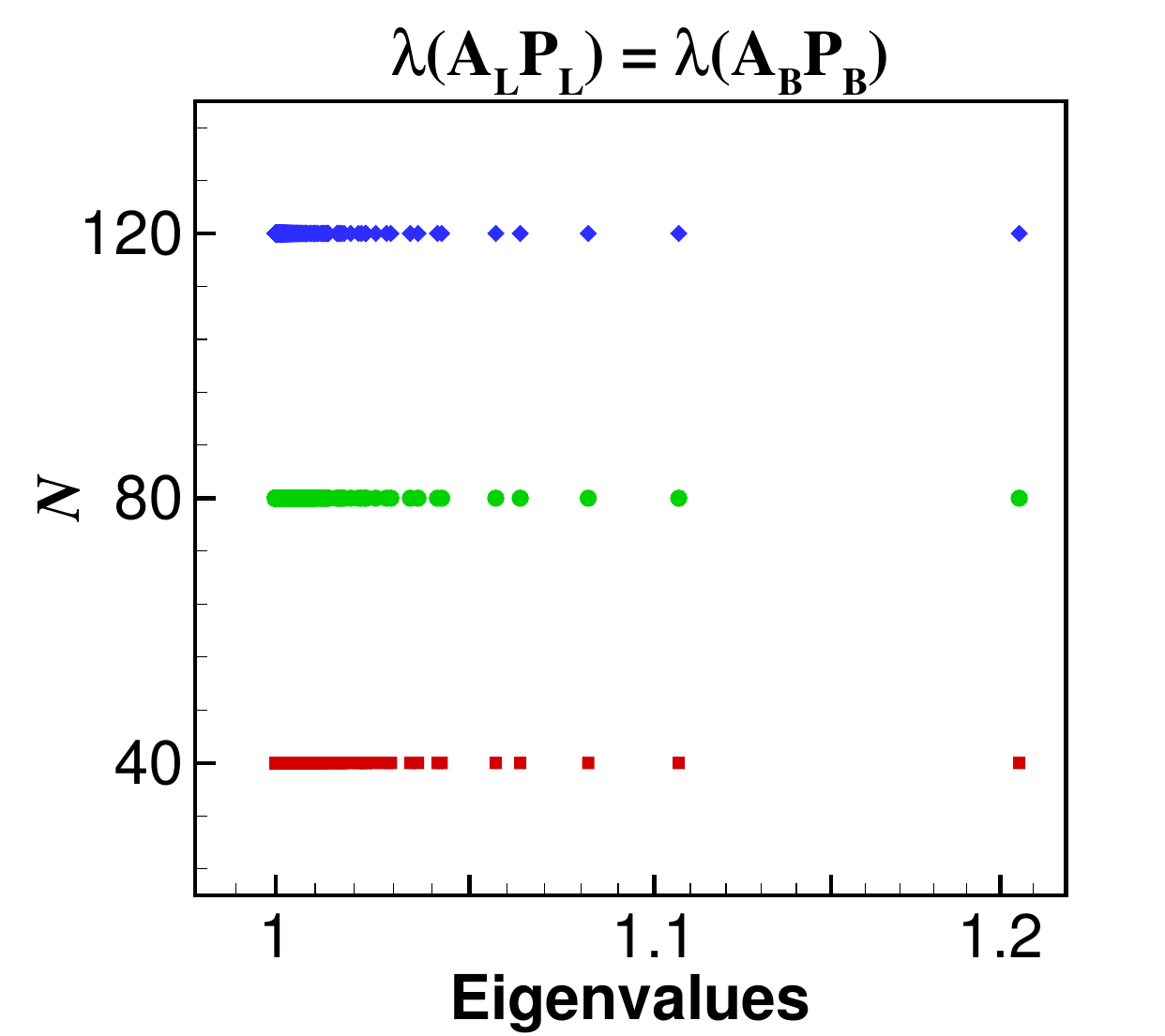}
    \includegraphics[width=0.24\linewidth]{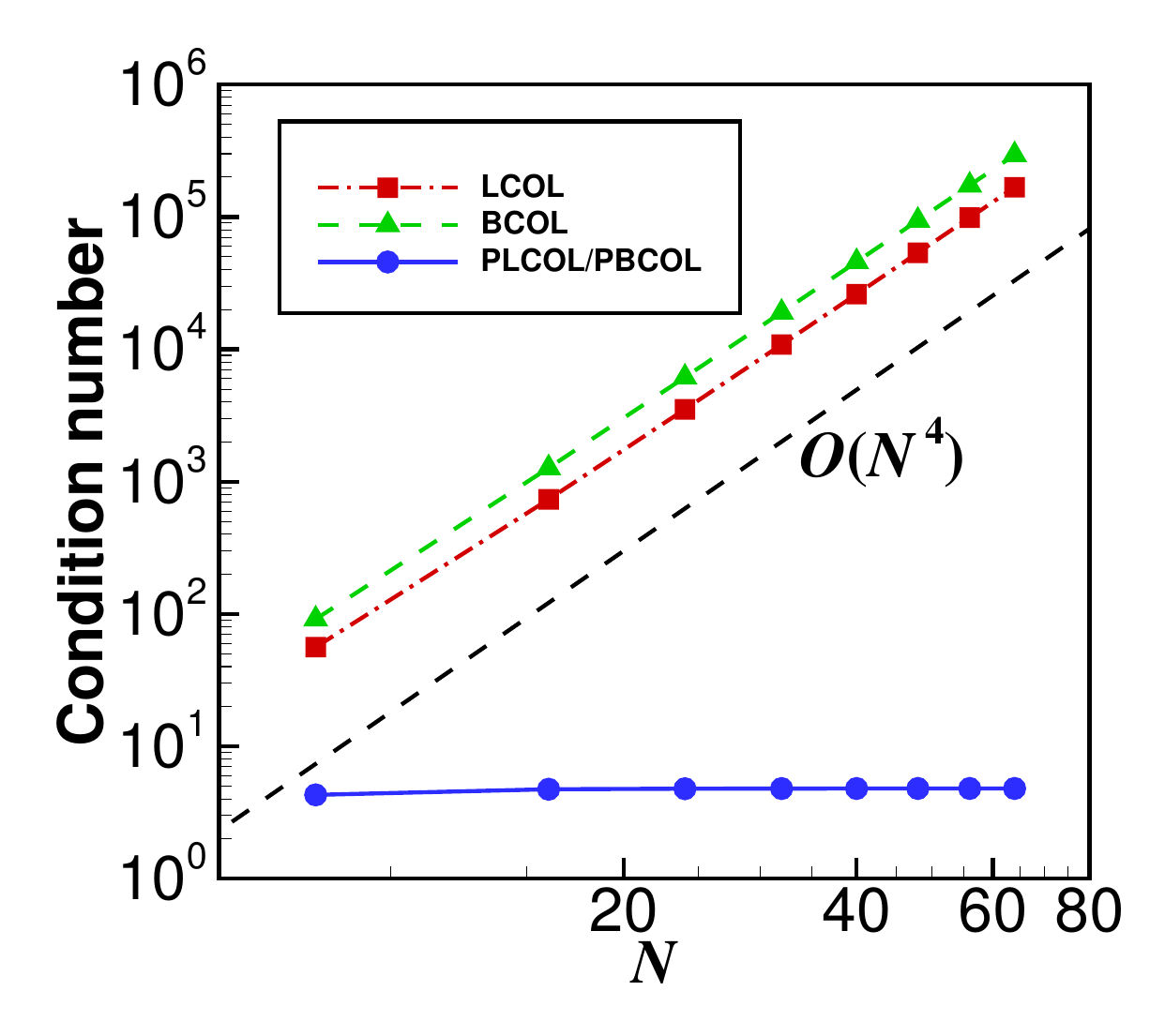}\\
    \includegraphics[width=0.24\linewidth]{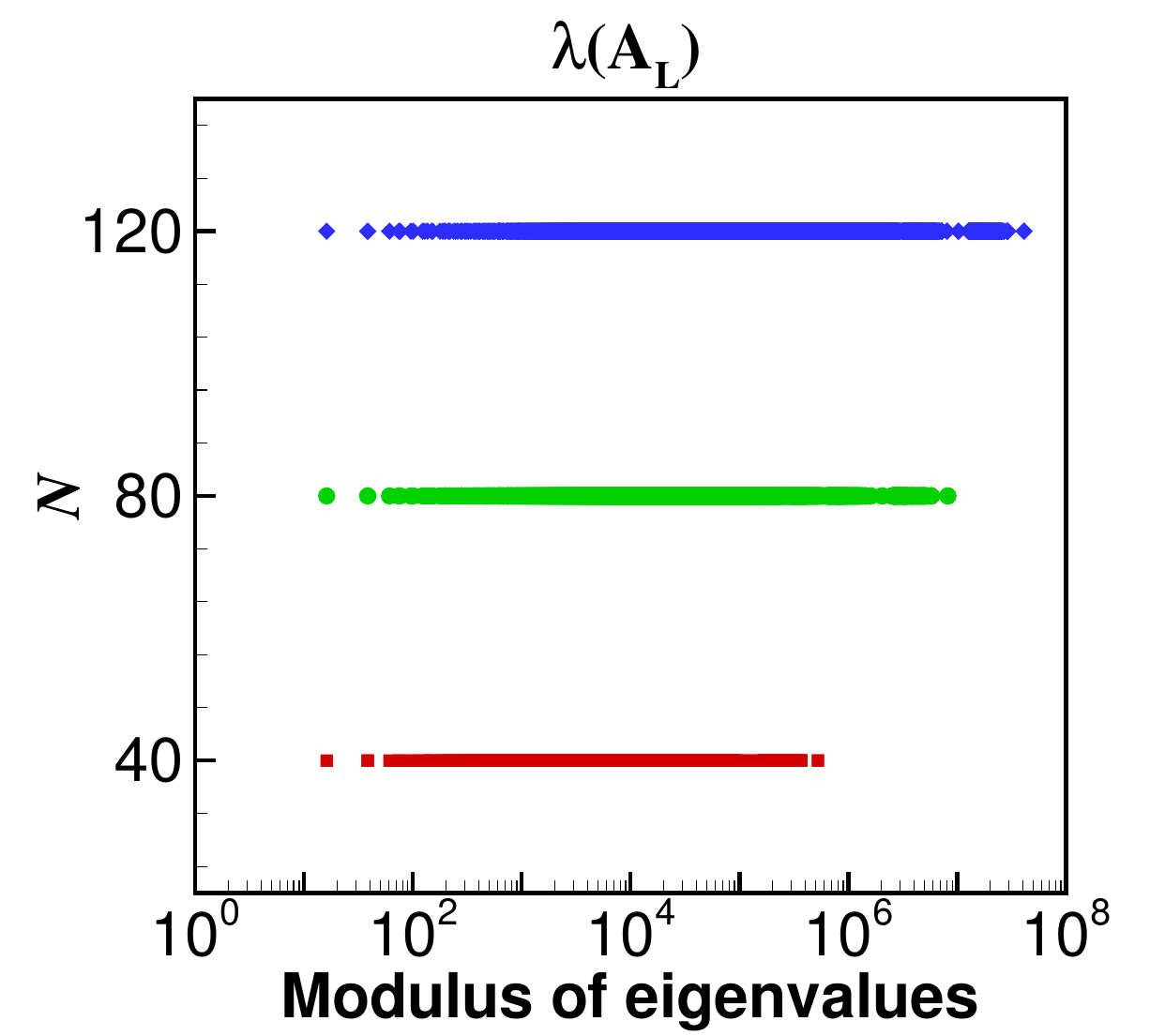} 
    \includegraphics[width=0.24\linewidth]{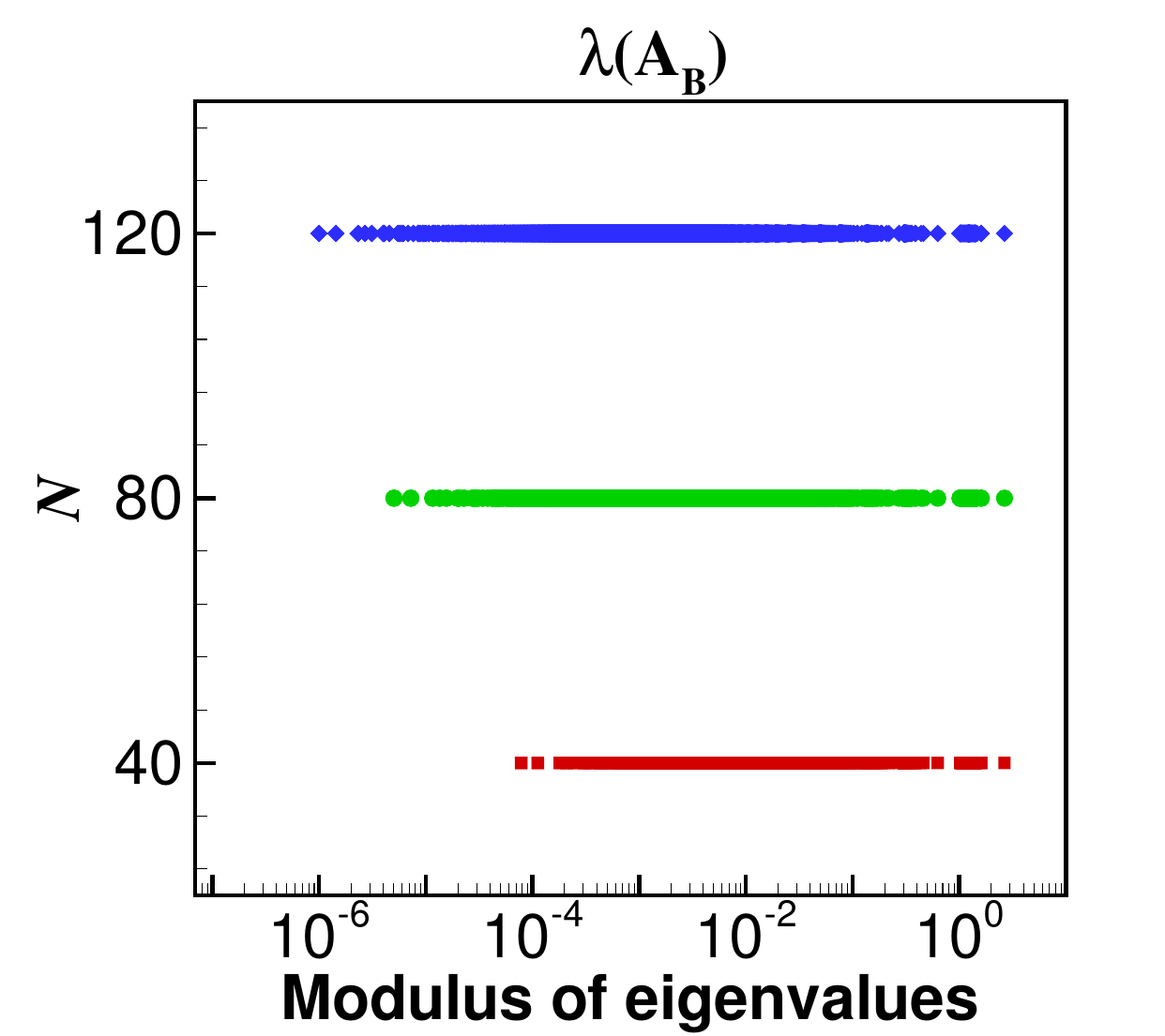} 
    \includegraphics[width=0.24\linewidth]{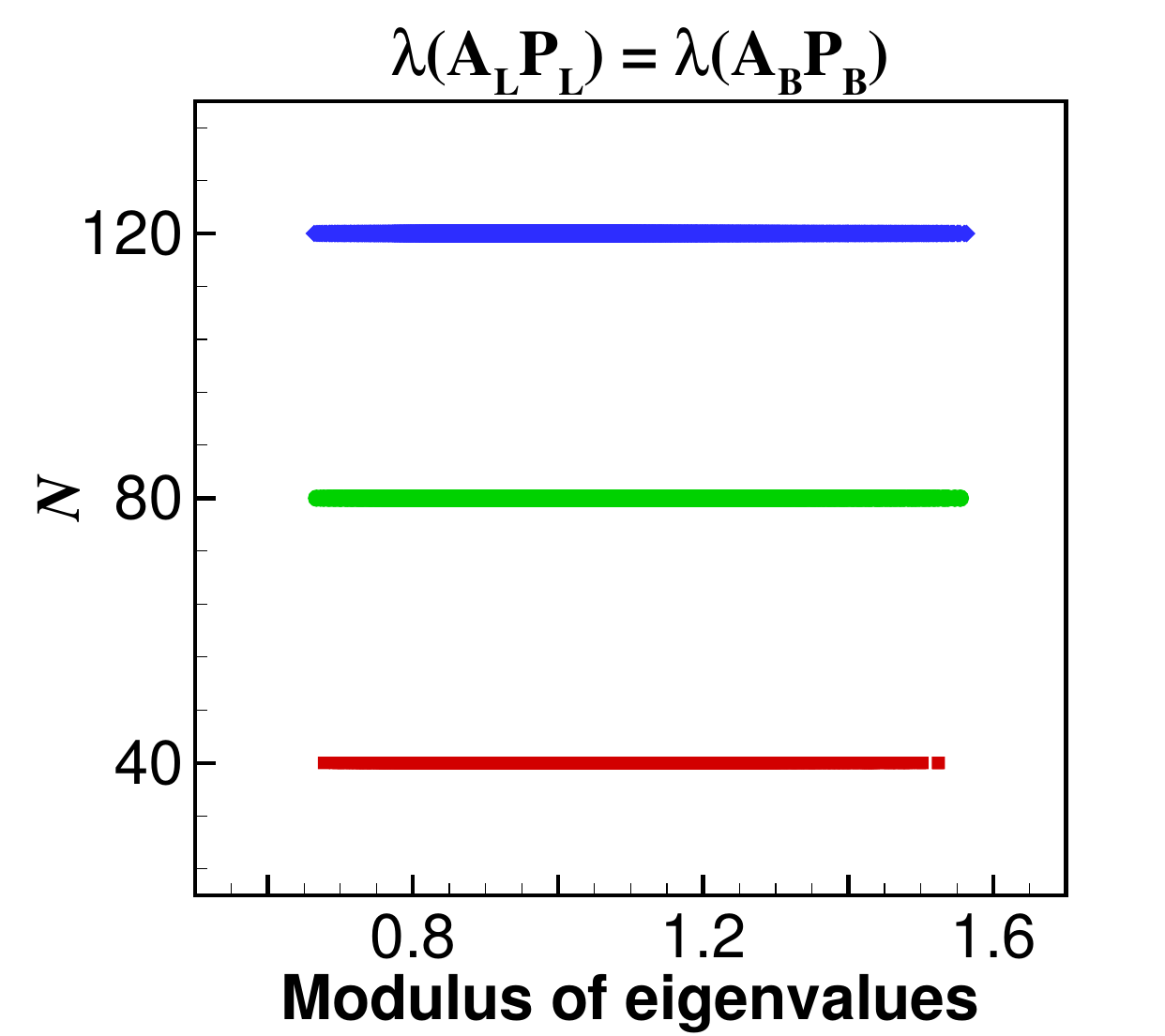}
    \includegraphics[width=0.24\linewidth]{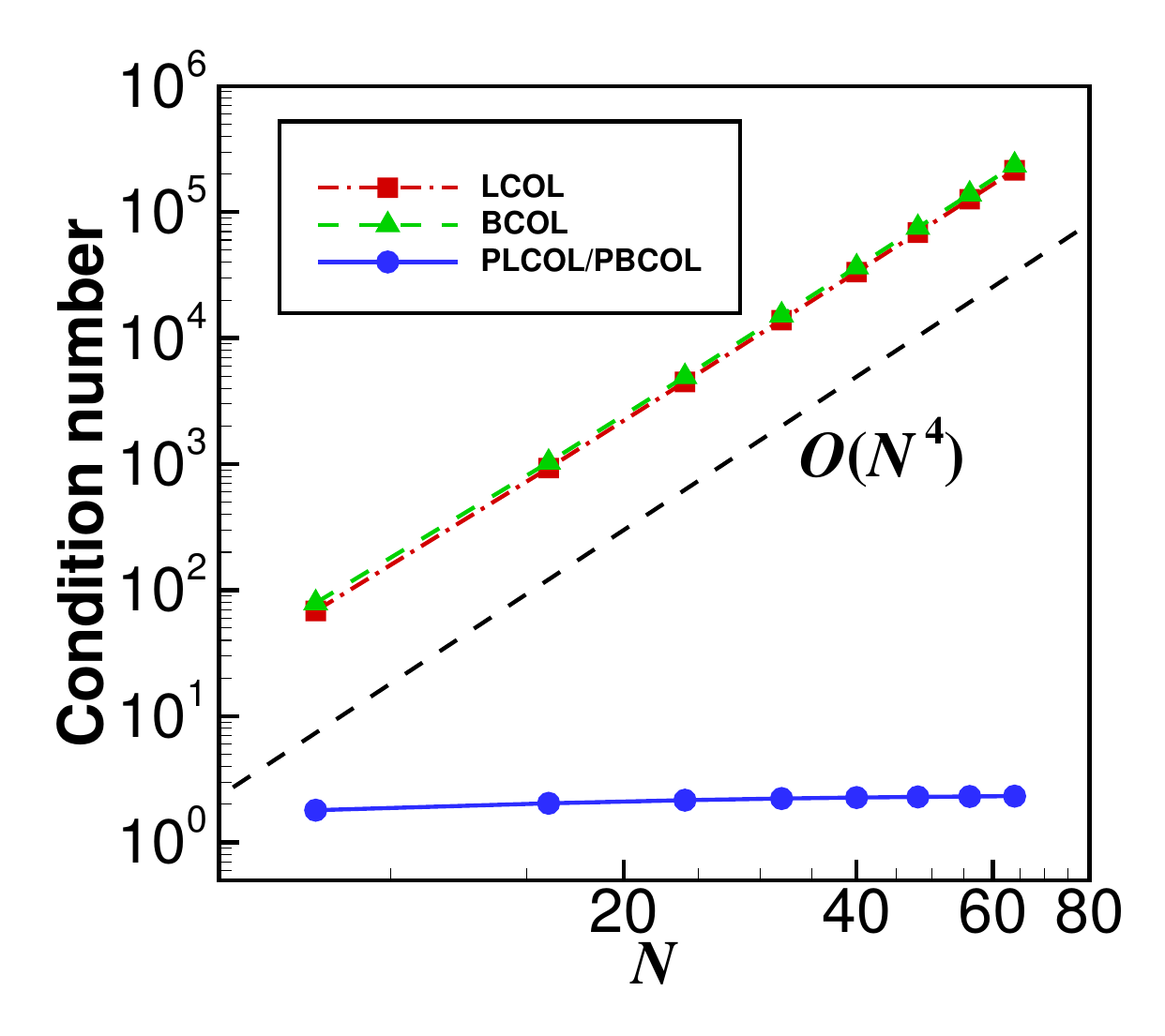}
    \caption{Eigenvalue distributions and condition numbers for the separable (top row) and general variable-coefficient (bottom row) problems. The first three columns show the eigenvalue distributions at \(N=40,80,120\), from left to right: \(\lambda(\mathbf A_{\rm L})\), \(\lambda(\mathbf A_{\rm B})\), and \(\lambda(\mathbf A_{\rm L}\mathbf P_{\rm L})=\lambda(\mathbf A_{\rm B}\mathbf P_{\rm B})\). The fourth column shows the corresponding Euclidean \(2\)-norm condition numbers against  \(N\).}
\label{fig:spectrum_preconditioned}
\end{figure}

\subsection{Numerical results}\label{results}
In what follows, we present numerical experiments to assess the accuracy and efficiency of the  preconditioned collocation schemes through comparison with unpreconditioned counterparts. We employ the GMRES method with a tolerance of $10^{-12}$ to solve all collocation systems. 
The tests cover two-dimensional separable problems with highly oscillatory and high-contrast coefficients, Duffy-transformed problems with geometric singularities, and a nonlinear Allen-Cahn problem, providing a range of tests for the robustness and effectiveness of the proposed preconditioners. All computations are performed in MATLAB on a desktop with an Intel(R) Core(TM) i7-11700F processor at 2.50 GHz and 32 GB RAM. Here, ``Error'' denotes the relative infinity-norm error, while ``/'' indicates failure to converge within 6000 iterations. These computational settings and conventions are used throughout all subsequent numerical experiments unless otherwise stated.

\subsubsection{Highly oscillatory and high contrast variable coefficients}
We consider \eqref{var-coeff} with
$f\equiv 1$ and homogeneous Dirichlet boundary conditions on $\partial \Omega$ with $\Omega=(-1,1)^2,$ and  
 take two sets of coefficients $a,b, c, d:$  (i):
\begin{align}
a_1(x)&=1+10e^{-\cos x}
\int_{-1}^{x}e^{\cos t}\sin(500t^2)\,{\rm d}t, &&b_1(y) = 0.002+0.001\cos(32\pi y),\nonumber\\
c_1(x) &= 5 + 4\cos(32\pi x), && d_1(y) = 5 + 4\sin(32\pi y),\nonumber
\end{align}
with highly oscillatory $a_1(x)$; and (ii): 
\(
a_2(x)=\exp(12x),b_2(y)=c_2(x)=d_2(y)=1,
\)
where the 
contrast between $a_2, b_2$ is
\(e^{24} \approx 2.7\times 10^{10}.\) 

We compare the performance of six collocation schemes: LCOL, BCOL, PLCOL, and PBCOL in \eqref{systems:2order_operator}-\eqref{BP-systems}, where the coefficient matrices ${\mathbf A}_{\rm L},\,{\mathbf A}_{\rm B}$ are defined in \eqref{eq:var_coeff_matrices} and the Birkhoff preconditioners ${\mathbf P}_{\rm L},\,{\mathbf P}_{\rm B}$ in \eqref{defn38}, respectively; the other two schemes are the Laplace-preconditioned variants LPLCOL and LPBCOL. In the latter two schemes, the variable coefficients $a_i,\,b_i$ ($i=1,2$) are not incorporated into the orthogonal diagonalisation in Theorem \ref{WinD2in_eq_D2inTWin}, so 
\begin{equation}\label{defn3822}
\begin{split}
\widetilde{\mathbf P}_{\rm L}
& = 
-\big(
\mathbf{I}_{N-1}\otimes
\mathbf{D}^{(2)}
+
\mathbf{D}^{(2)}
\otimes\mathbf{I}_{N-1}
\big)^{-1}, 
\quad 
\widetilde{\mathbf P}_{\rm B}
 =
-(
\mathbf{B}\otimes\mathbf{I}_{N-1}
+
\mathbf{I}_{N-1}\otimes\mathbf{B}
)^{-1}.
\end{split}
\end{equation}

\begin{figure}[h]
    \centering
    \includegraphics[width=0.26\linewidth]{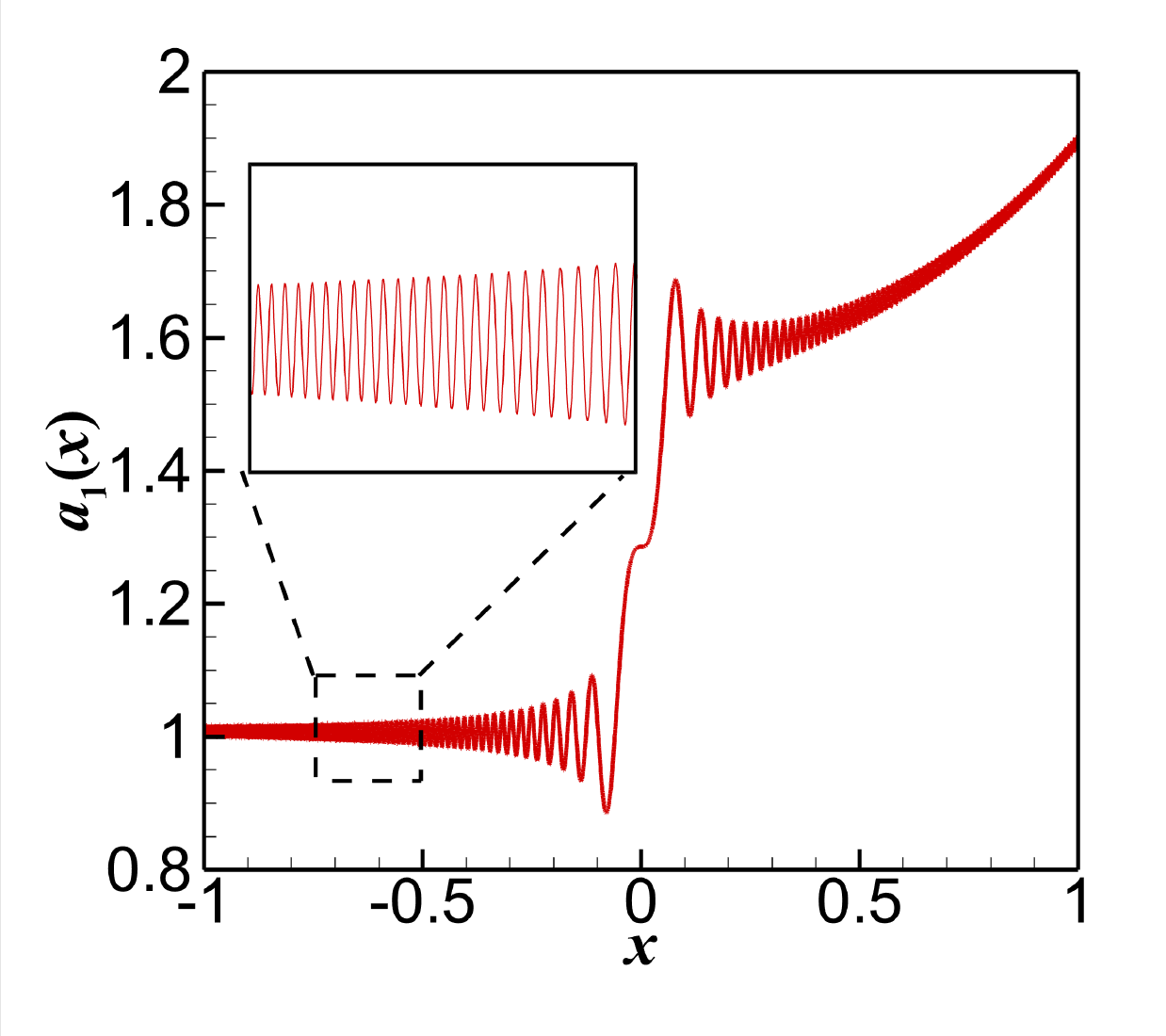} 
    \includegraphics[width=0.26\linewidth]{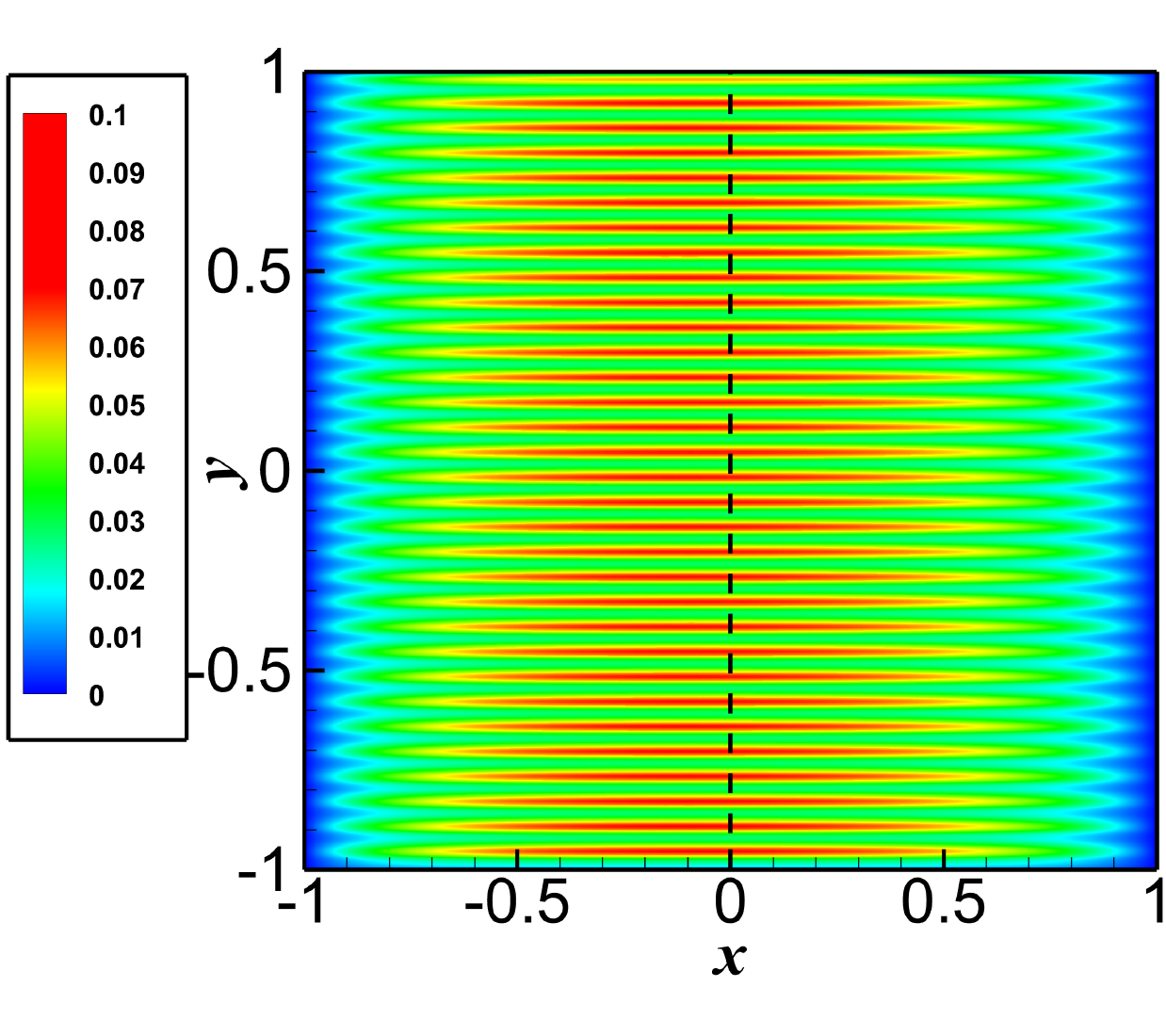} 
    \includegraphics[width=0.26\linewidth]{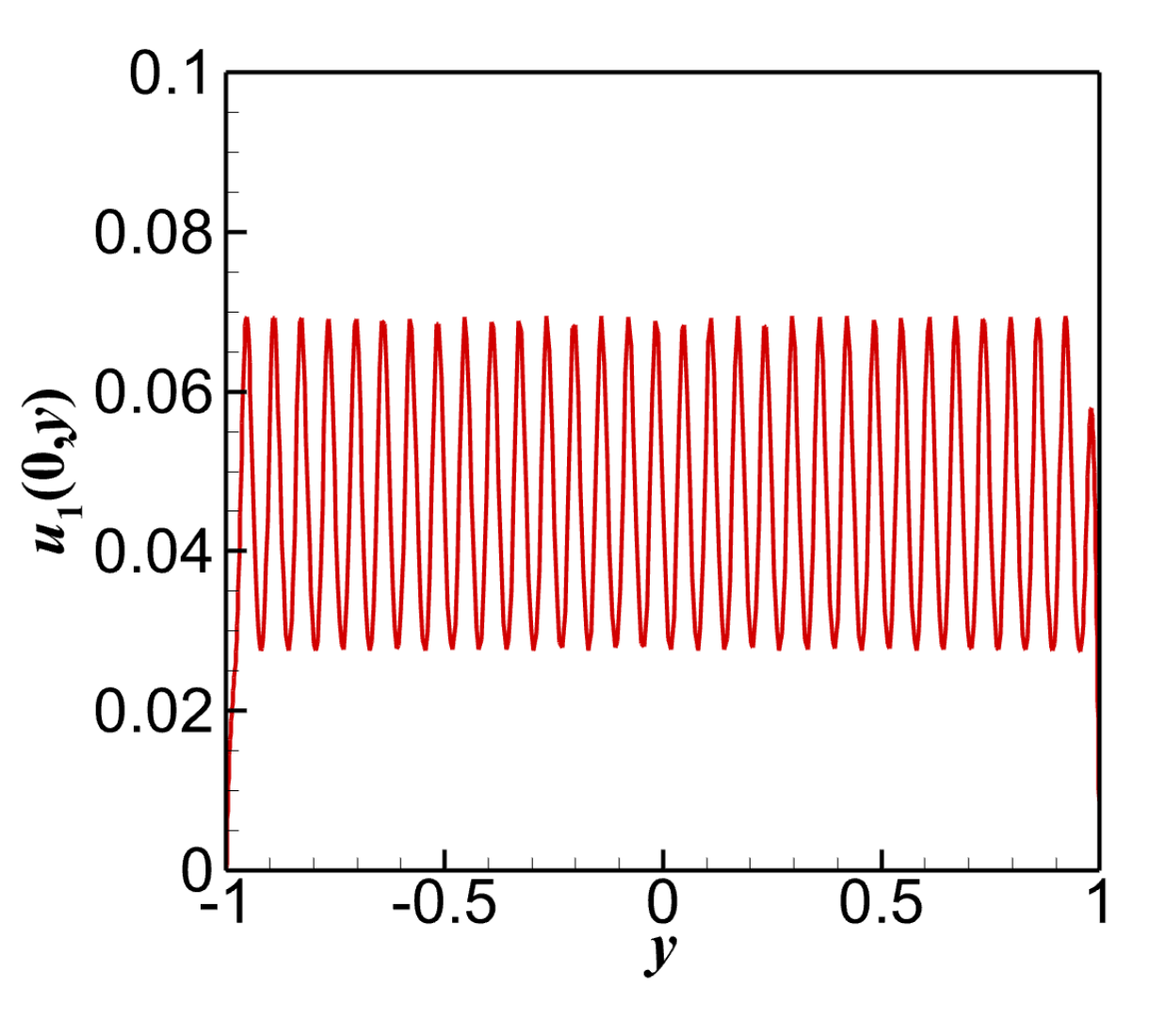} \\
    \includegraphics[width=0.26\linewidth]{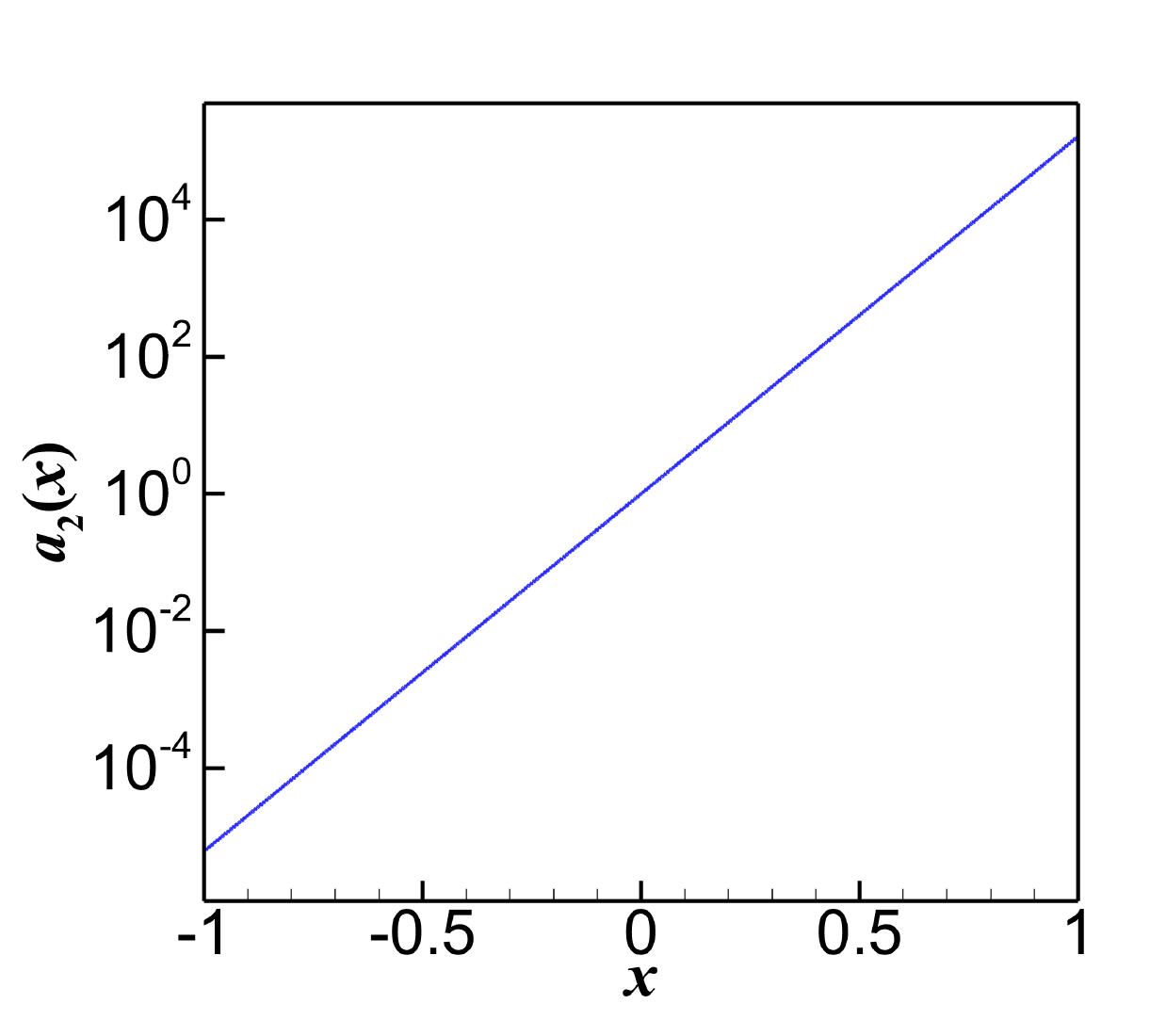} 
    \includegraphics[width=0.26\linewidth]{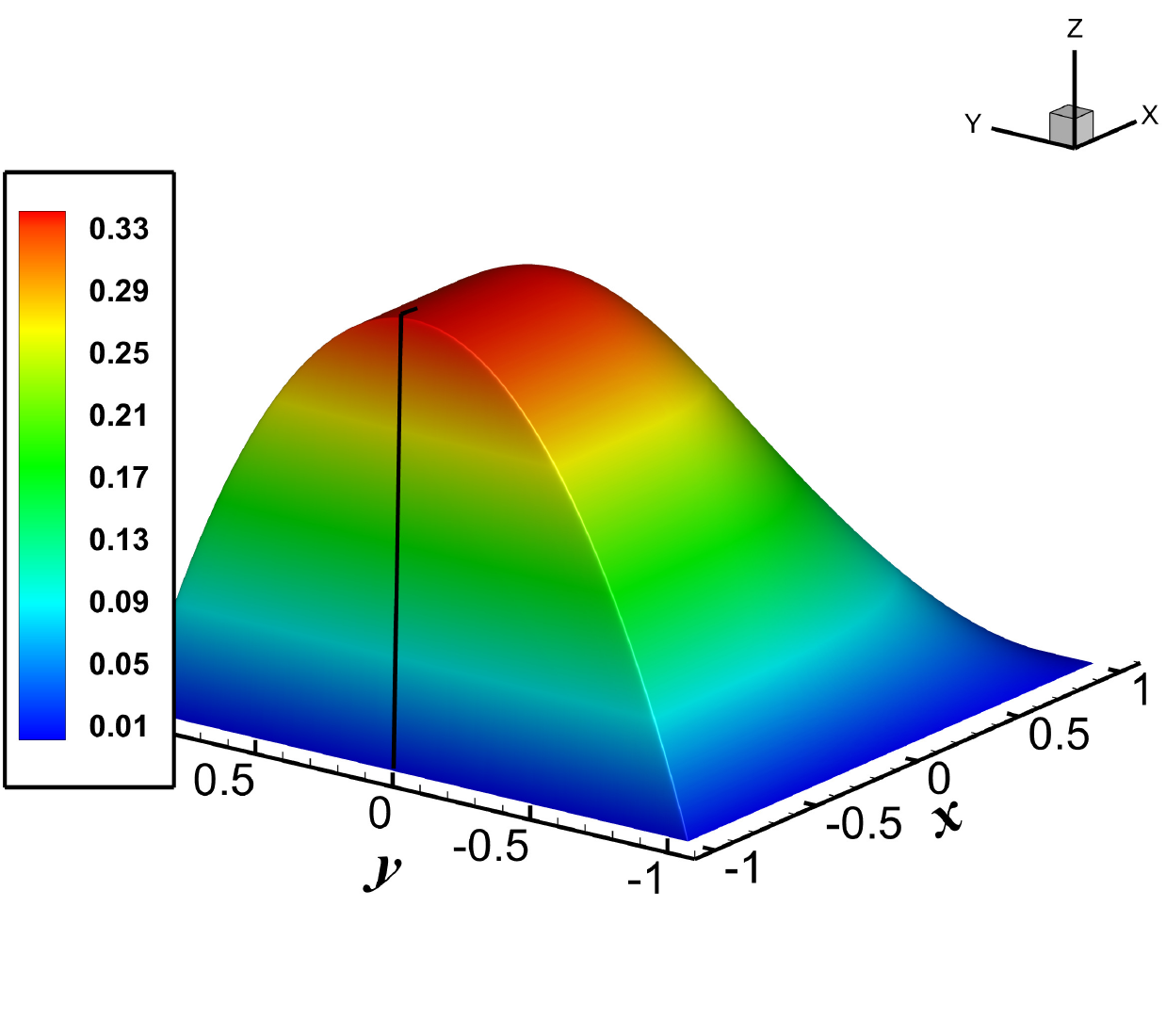} 
    \includegraphics[width=0.26\linewidth]{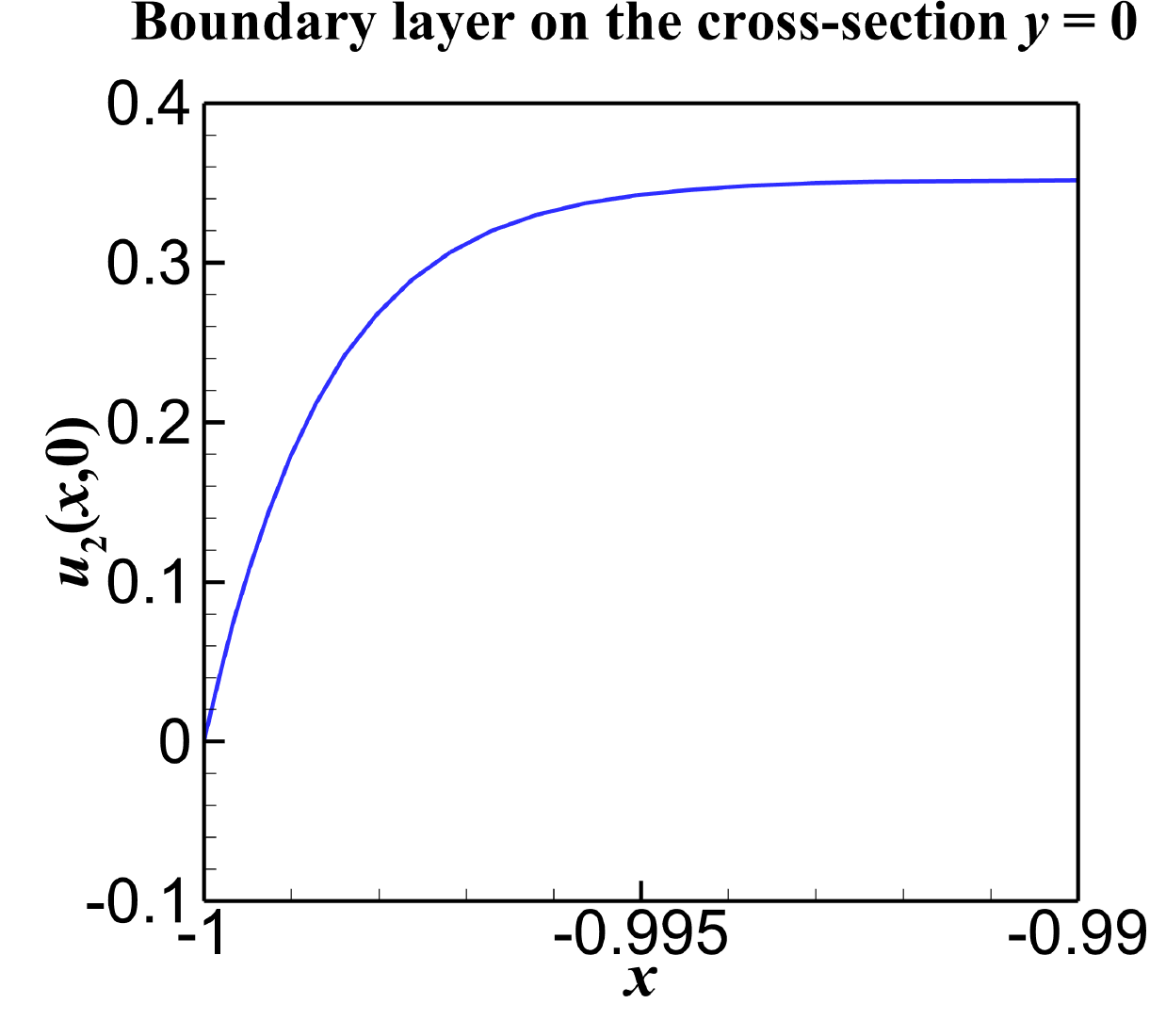} 
 \caption{
Representative features of the two separable test problems.
Top: \(a_1(x)\), \(u_1(x,y)\), and \(u_1(x,0)\).
Bottom: \(a_2(x)\), \(u_2(x,y)\), and \(u_2(x,0)\).
}
    \label{fig:sepproblem}
\end{figure}

As the exact solutions are unavailable, we compute reference solutions using PBCOL, the best-performing scheme, with \(N=4096\). In Figure~\ref{fig:sepproblem}, we display the coefficients \(a_1\) and \(a_2\), together with the corresponding reference solutions. The highly oscillatory coefficients induce pronounced small-scale oscillations in \(u_1\), whereas the large coefficient contrast in \(a_2(x)=\exp(12x)\) gives rise to a sharp boundary layer in \(u_2\) near \(x=-1\). Resolving these fine-scale structures requires stable computations at large values of \(N\).



\begin{figure}[htbp]
    \centering
    \includegraphics[width=0.3\linewidth]{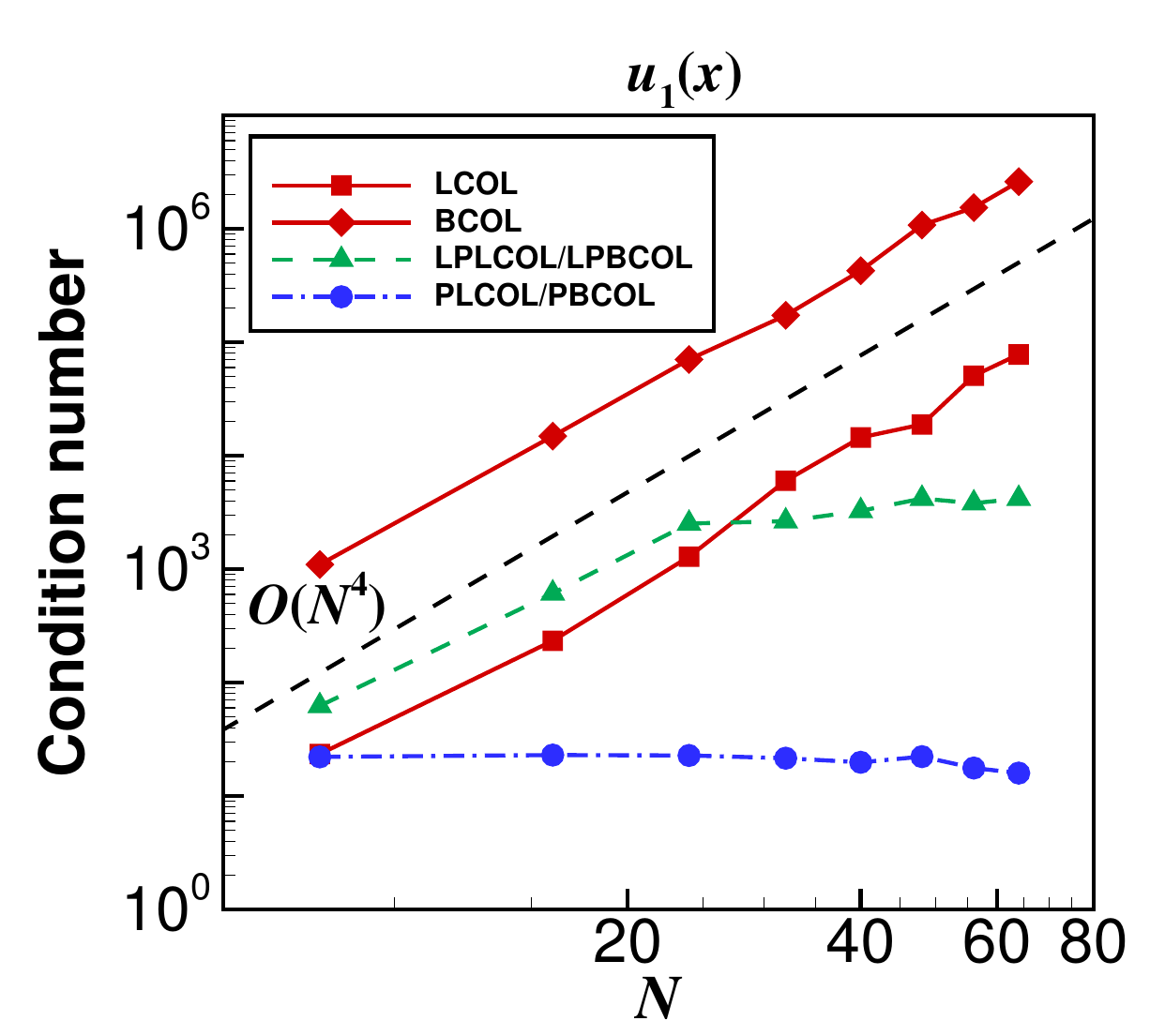} \quad 
    \includegraphics[width=0.3\linewidth]{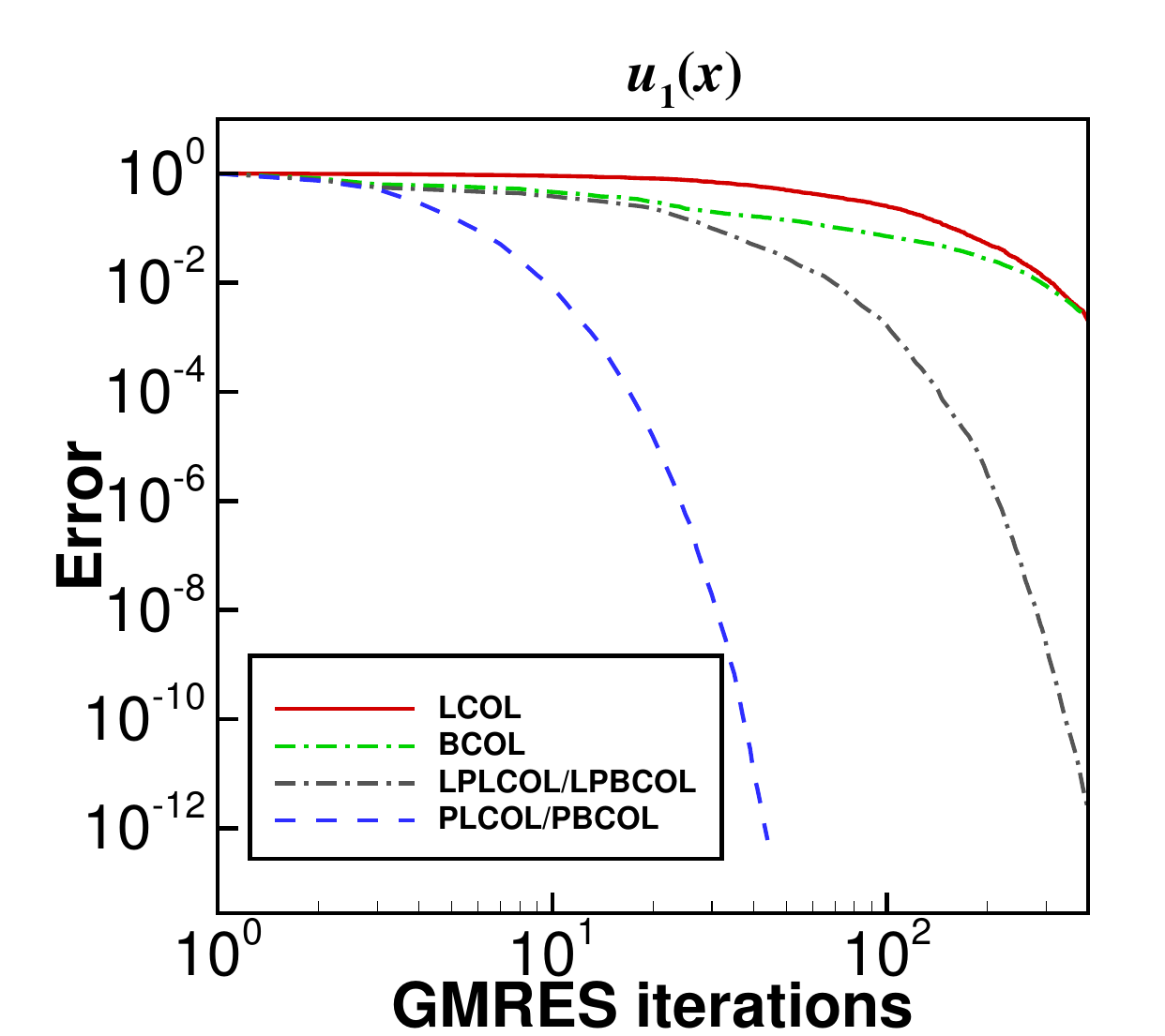}\\[6pt]
    \includegraphics[width=0.3\linewidth]{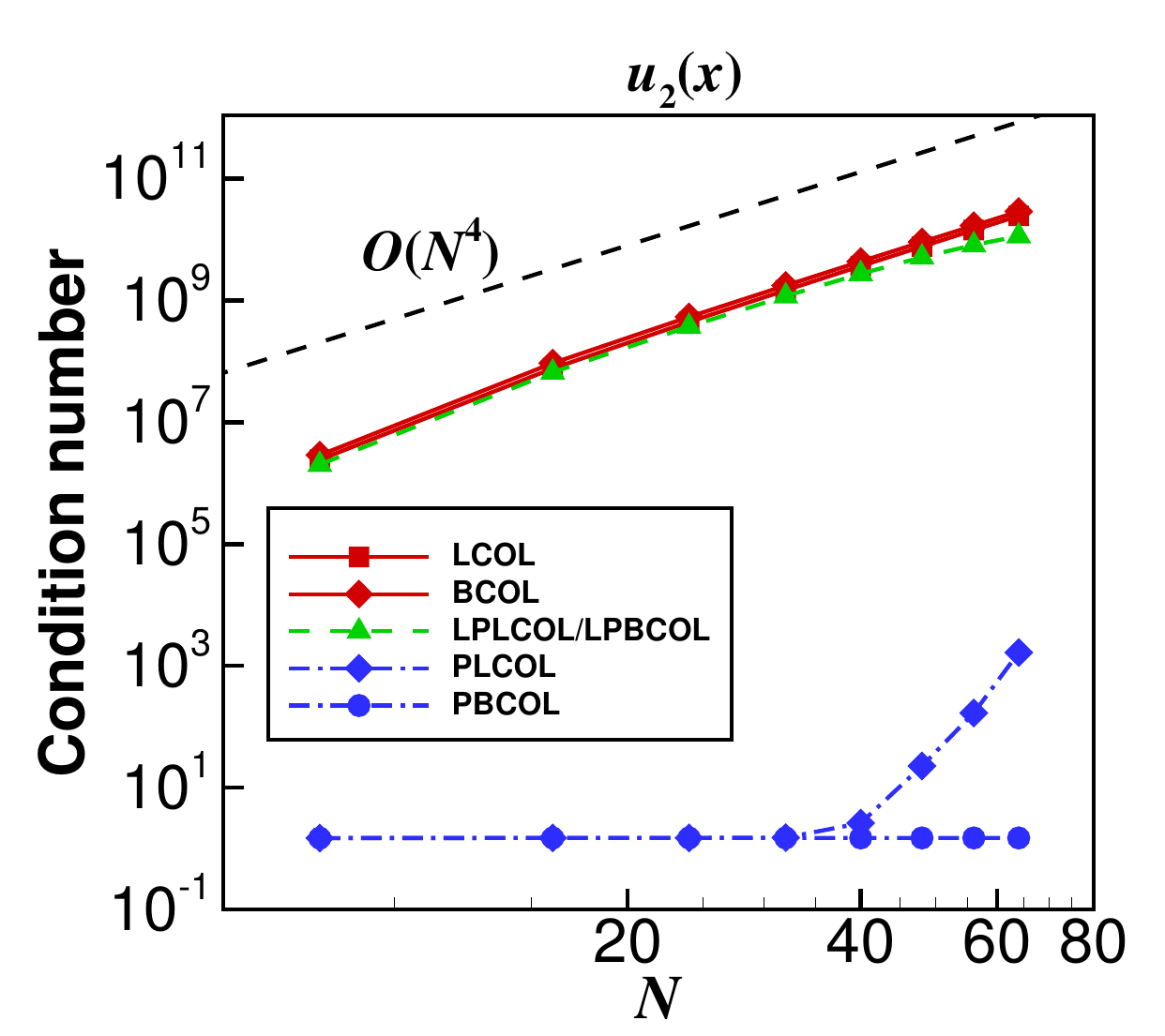} \quad 
    \includegraphics[width=0.3\linewidth]{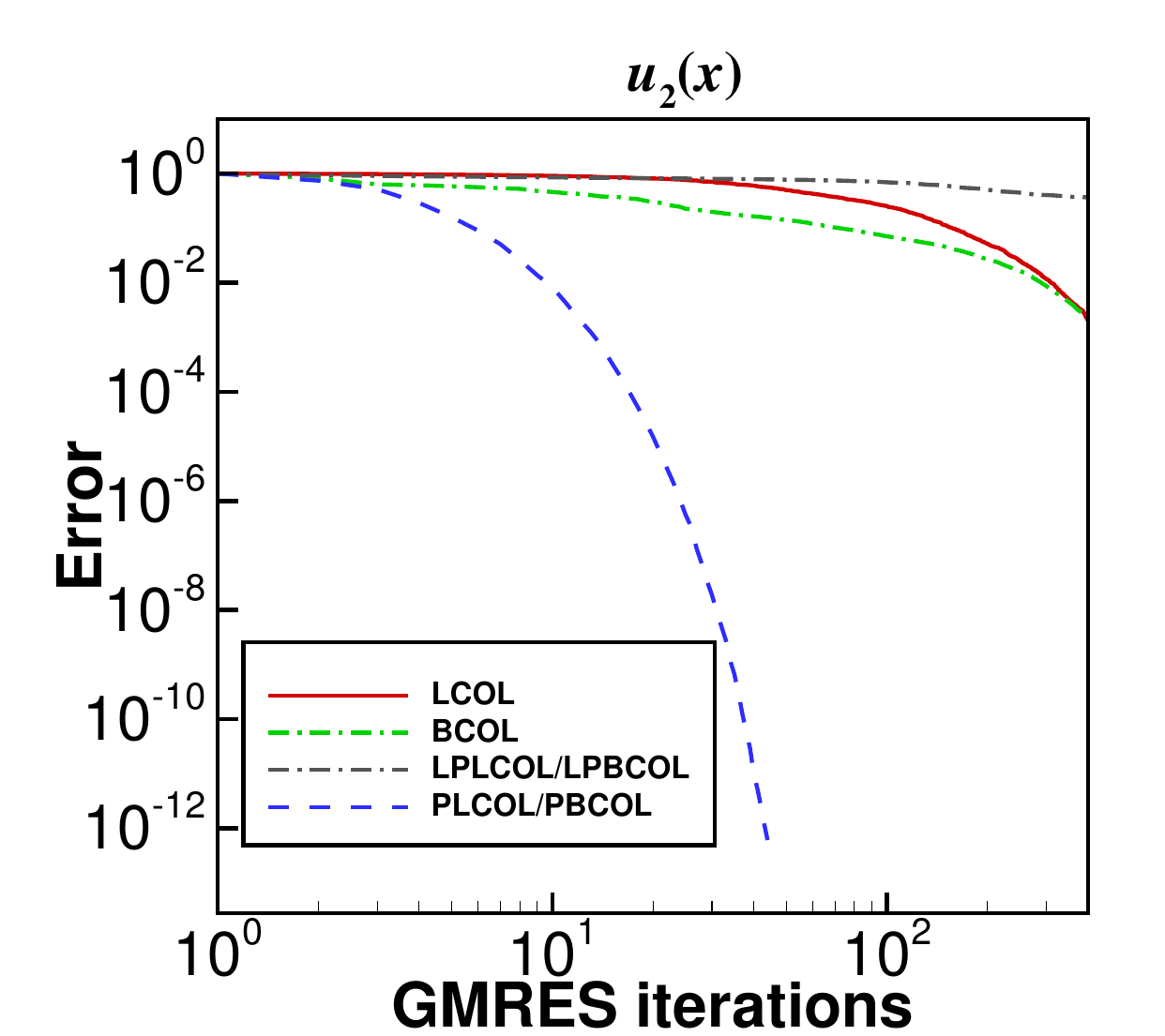}
    \caption{
Condition numbers (left column) and GMRES residual histories (right column)
for the highly oscillatory problem \(u_1\) (top row) and the
high-contrast problem \(u_2\) (bottom row).
}
    \label{fig:sep_cond_res}
\end{figure}

 In Figure \ref{fig:sep_cond_res}, we compare the conditioning and  decay of GMRES residuals against iteration numbers of six schemes for two examples. In both cases,
 the conventional Laplace preconditioners in \eqref{defn3822} fail to improve the conditioning of the resulting systems, which behave like the original unpreconditioned  LCOL and BCOL. 
In contrast, PBCOL and PLCOL exhibit nearly identical condition numbers, consistent with the algebraic equivalence established in Proposition~\ref{prop:preconditioned_identity}. PBCOL remains well conditioned, while PLCOL shows only a mild deterioration for large \(N\). The GMRES residual histories in Figure~\ref{fig:sep_cond_res} (right column) display the same overall trend. LCOL, BCOL, LPLCOL, and LPBCOL converge very slowly, whereas the residuals for PLCOL and PBCOL decay rapidly, with the two methods exhibiting nearly identical convergence behavior,  also illustrate that, for nonsymmetric systems, GMRES convergence is not strongly correlated with the matrix condition number alone; spectral distribution and nonnormality also matter.

In Table~\ref{tab:2d_separable_iter_cpu}, we compare the performance of six schemes in a more quantitative manner, where GMRES is terminated when
the stopping tolerance \(10^{-12}\) is reached or when the number of
iterations exceeds \(6000\). Again, we observe that the original LCOL and
BCOL schemes, as well as the Laplace-preconditioned LPLCOL and LPBCOL
schemes, fail to converge for moderately large \(N\), whereas PLCOL and
PBCOL remain convergent, with PBCOL exhibiting greater stability and the
best overall performance.  Indeed, for the first example, the PBCOL errors exhibit exponential decay once the oscillations are adequately resolved for \(N>64\).

We reiterate that properly incorporating the variable principal coefficients into the diagonalisation used to construct the preconditioners is essential for both good conditioning and rapid iterative convergence. 

\begin{table}[htbp]
\centering 
\caption{\footnotesize Errors, GMRES iteration counts, and CPU time (s) for the
two-dimensional separable problems using the six collocation methods.}
\label{tab:2d_separable_iter_cpu}
\footnotesize
\setlength{\tabcolsep}{4pt}
\begin{tabular}{
cc
cc cc cc cc cc ccc
}
\toprule
&
&
\multicolumn{2}{c}{LCOL}
& \multicolumn{2}{c}{LPLCOL}
& \multicolumn{2}{c}{PLCOL}
& \multicolumn{2}{c}{BCOL}
& \multicolumn{2}{c}{LPBCOL}
& \multicolumn{3}{c}{PBCOL}
\\

\cmidrule(lr){3-4}
\cmidrule(lr){5-6}
\cmidrule(lr){7-8}
\cmidrule(lr){9-10}
\cmidrule(lr){11-12}
\cmidrule(lr){13-15}

& $N$
& Iter & Time
& Iter & Time
& Iter & Time
& Iter & Time
& Iter & Time
& Iter & Time & Error
\\
\midrule

\multirow{8}{*}{$u_1$}
& 8
& 40   & 0.024
& 39   & 0.006
& 28   & 0.026
& 48   & 0.021
& 39   & 0.007
& 28   & 0.021
& 1.07e+00
\\

& 16
& 100  & 0.032
& 114  & 0.012
& 38   & 0.028
& 148  & 0.030
& 114  & 0.012
& 38   & 0.023
& 8.77e$-$01
\\

& 32
& 359  & 0.120
& 304  & 0.079
& 40   & 0.033
& 398  & 0.132
& 305  & 0.071
& 40   & 0.030
& 1.62e+00
\\

& 64
& 1020 & 5.852
& 408  & 0.961
& 43   & 0.078
& 1091 & 6.906
& 408  & 0.997
& 43   & 0.078
& 1.14e+00
\\

& 128
& 2633 & 180.9
& 497  & 6.605
& 44   & 0.253
& 3028 & 250.0
& 497  & 6.659
& 41   & 0.202
& 1.88e$-$01
\\

& 256
& / & /
& 540  & 22.60
& 38   & 0.833
& / & /
& 540  & 23.40
& 39   & 0.773
& 1.52e$-$02
\\

& 512
& / & /
& 549  & 55.04
& 37   & 2.584
& / & /
& 549  & 59.54
& 38   & 2.552
& 6.59e$-$05
\\

& 1024
& / & /
& 551  & 352.0
& 36   & 13.51
& / & /
& 551  & 379.0
& 37   & 13.78
& 1.82e$-$08
\\

\midrule

\multirow{8}{*}{$u_2$}
& 8
& 39   & 0.004
& 37   & 0.010
& 8    & 0.005
& 41   & 0.004
& 37   & 0.006
& 9    & 0.006
& 8.86e$-$01
\\

& 16
& 175  & 0.014
& 164  & 0.023
& 9    & 0.008
& 176  & 0.016
& 164  & 0.018
& 10   & 0.005
& 7.03e$-$01
\\

& 32
& 668  & 0.273
& 681  & 0.288
& 10   & 0.006
& 648  & 0.269
& 690  & 0.346
& 11   & 0.013
& 3.41e$-$01
\\

& 64
& 2517 & 32.65
& 2919 & 41.27
& 10   & 0.027
& 2329 & 25.03
& 2919 & 49.86
& 11   & 0.024
& 2.56e$-$02
\\

& 128
& / & /
& /    & /
& 11   & 0.033
& / & /
& /    & /
& 11   & 0.020
& 1.39e$-$03
\\

& 256
& / & /
& / & /
& 20   & 0.136
& / & /
& / & /
& 11   & 0.068
& 8.69e$-$05
\\

& 512
& / & /
& / & /
& 44   & 1.590
& / & /
& / & /
& 11   & 0.559
& 5.46e$-$06
\\

& 1024
& / & /
& / & /
& 162  & 44.90
& / & /
& / & /
& 11   & 3.904
& 3.40e$-$07
\\
\bottomrule
\end{tabular}
\end{table}

\subsubsection{Elliptic problems on triangles using Duffy transformations}\label{subsub:Duffy2D}
As an important testing case, we  consider the elliptic problem  on 
 a triangle $\mathcal T=\triangle ABC$ with vertices $\{(x_i,y_i)\}_{i=1}^3$: 
\begin{equation}\label{eq:PDEs_physicalDomain}
{\mathcal L}[u]:=-\nabla\cdot\bigl(\mathbf A\nabla u\bigr)
+\mathbf r\cdot\nabla u
+su
=
f
\quad\text{in }\mathcal T;
\quad
u=0
\quad\text{on }\partial\mathcal T,
\end{equation}
where
\(
\mathbf A
\)
is SPD,
\(
\mathbf r=(r_1,r_2)^{\intercal}
\)
is bounded,  and \(s\ge0\) that satisfy the conditions in Subsection \ref{sec:genvar}.  We adapt the Duffy transformation \cite{Duffy1982} to map \(\mathcal T\) onto \(\Omega=(-1,1)^2\), and then apply the proposed preconditioned collocation solver to the transformed problem corresponding to \eqref{eq:PDEs_physicalDomain}. This approach is analogous to the use of tensor-product Legendre polynomial bases on rectangles in \cite{shenwangli2009triangular}, rather than the orthogonal Koornwinder polynomials  employed in \cite{Dubiner1991}.  Such a singular collapsed-coordinate mapping leads to a clustering of collocation points near the singular vertex  (see Figure~\ref{fig:Duffy_transformations}). This substantially worsens the conditioning of the collocation system compared with that on a rectangular domain. We next demonstrate that the proposed preconditioners effectively overcome this difficulty and remain robust even for a highly deformed obtuse triangle with an interior angle of \(160^\circ\) (see Figure~\ref{fig:Duffy_transformations} (left)).

We first present some details on 
the transformed elliptic problems and form of preconditioners. 
The Duffy transformation takes the form:   
\begin{equation}\label{eq:Duffy_transformation}
    \mathbf x(\xi_1, \xi_2)=\begin{bmatrix}
        x(\xi_1, \xi_2)\\[6pt]
        y(\xi_1, \xi_2)
    \end{bmatrix} = \begin{bmatrix}
        x_1 + (x_2-x_1)\frac{(1+\xi_1)(1-\xi_2)}{4} + (x_3-x_1)\frac{1+\xi_2}{2} \\[4pt]
        y_1 + (y_2-y_1)\frac{(1+\xi_1)(1-\xi_2)}{4} + (y_3-y_1)\frac{1+\xi_2}{2}
    \end{bmatrix},
\end{equation}
which collapses the edge $\xi_2=1$ of $\Omega$ to the vertex $C$ of $\mathcal T$. In fact, it is a composition of Duffy transformation: $\Omega\to \mathcal T_{\rm ref}$ (with vertices $(0,0), (1,0), (0,1)$) and an affine mapping: $\mathcal T_{\rm ref}\to \mathcal T,$ 
as shown in Figure~\ref{fig:Duffy_transformations}.
Accordingly, the Jacobian matrix can be factorized as
\[
\mathbf J=\mathbf J_{\rm a}\,\mathbf J_{\rm r}
=\begin{bmatrix}
x_2-x_1 & x_3-x_1\\
y_2-y_1 & y_3-y_1
\end{bmatrix}
\begin{bmatrix}
\tfrac{1-\xi_2}{4} & -\tfrac{1+\xi_1}{4}\\[2pt]
0 & \tfrac12
\end{bmatrix}. 
\]
Thus, one verifies readily that
\begin{equation}\label{JinvA}
\mathbf J^{-1}
=\mathbf J_{\rm r}^{-1}\,\mathbf J_{\rm a}^{-1}=
\frac{1}{\mathbb J_{\rm r}} 
\begin{bmatrix}
\tfrac12 & \tfrac{1+\xi_1}{4}\\[2mm]
0 & \tfrac{1-\xi_2}{4}
\end{bmatrix}\times \frac{1}{\mathbb J_{\rm a}} 
\begin{bmatrix}
y_3-y_1 & x_1-x_3\\
y_1-y_2 & x_2-x_1
\end{bmatrix}, 
\end{equation}
where  the Jacobian  determinants are
\begin{equation}\label{JinvADet}
\mathbb J_{\rm r}= \tfrac {1-\xi_2} 8,\quad \mathbb J_{\rm a}= 2 {|\mathcal T|},
\quad \mathbb J=\mathbb J_{\rm a}\, \mathbb J_{\rm r}=\tfrac{(1-\xi_2) |\mathcal T|}{4},
\end{equation}
with $|\mathcal T|$ being the area of $\mathcal T.$ 

\begin{figure}[htbp]
    \centering
    \includegraphics[width=1.0\linewidth]{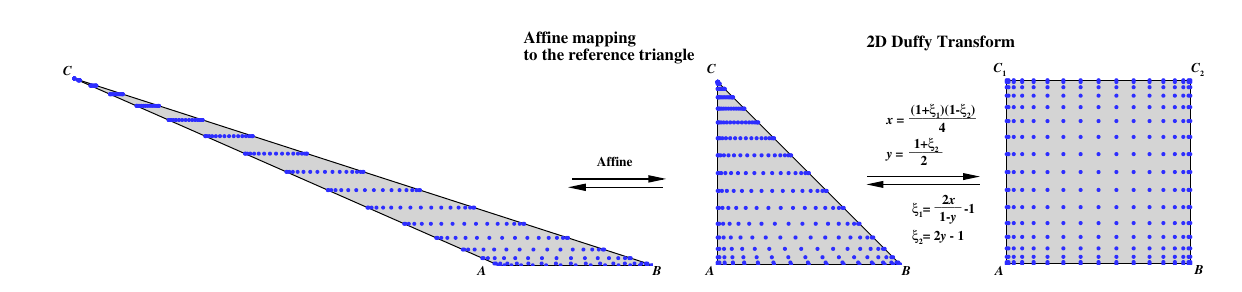}  
    \caption{Affine and Duffy transformations from a general triangular domain
to the reference square. The vertex \(C\) is mapped to the edge
\(C_1C_2\) under the inverse Duffy transformation. The obtuse triangle with $\angle A=160^{\circ}$ is of interest in the test.}
    \label{fig:Duffy_transformations}
\end{figure} 

For clarity, we denote $\hat u(\boldsymbol\xi)
=
u(\mathbf x)$ and likewise for $\hat f$ and $\hat s$. 
Also denote
\(
\widehat{\nabla}
=
(\partial_{\xi_1},\partial_{\xi_2})^{\intercal}.
\)
Then the  problem
\eqref{eq:PDEs_physicalDomain} on $\mathcal T$ is converted to the following problem on $\Omega:$ 
\begin{equation}\label{eq:Transformed_PDEs_referenceDomain}
\hat {\mathcal L}[\hat u]:=-\tfrac{1}{\mathbb J}
\widehat{\nabla}\cdot
\big(
\mathbb J\hat{\mathbf A}
\widehat{\nabla}\hat u
\big)
+
\hat{\mathbf r}\cdot\widehat{\nabla}\hat u
+
\hat s\,\hat u
=
\hat f \quad\text{in }\Omega;
\quad
\hat u=0
\quad\text{on }\partial\Omega,
\end{equation}
where
\(\hat{\mathbf A}
=
\mathbf J^{-1}
\mathbf A\bigl(\mathbf x(\boldsymbol\xi)\bigr)
\mathbf J^{-\intercal}\) and \(
\hat{\mathbf r}
=
\mathbf J^{-1}
\mathbf r\bigl(\mathbf x(\boldsymbol\xi)\bigr).
\)
In view of the singular factors, we multiply
\eqref{eq:Transformed_PDEs_referenceDomain} by \(\mathbb J^2\), and formulate it  in the non-divergence form:
\begin{equation}\label{eq:scaled_transformed_PDE}
{\mathbb J}^2\hat {\mathcal L}[\hat u]=-\sum_{i,j=1}^{2}
\tilde a_{ij}
\partial_{\xi_i\xi_j}\hat u
+
\mathbf b
\cdot\widehat{\nabla}\hat u
+
\mathbb J^2\hat s\,\hat u
=\mathbb J^2 \hat f,
\end{equation}
where
\begin{equation}\label{eq:scaled_coefficients}
\begin{aligned}
&\tilde {\mathbf A}
=\mathbb J^2 \hat  {\mathbf A}=
\mathbb J^2
\mathbf J^{-1}
\mathbf A(\mathbf x(\boldsymbol\xi))
\mathbf J^{-\intercal}:=
(\tilde a_{ij}(\bm \xi))_{i,j=1}^{2},\quad \mathbf b=
\mathbb J^2\hat{\mathbf r}
-
\mathbb J\,
\widehat{\nabla}\cdot
\bigl(\mathbb J\hat{\mathbf A}\bigr),
\end{aligned}
\end{equation}
With the separation  in \eqref{JinvA}-\eqref{JinvADet}, we can rewrite 
$$
\tilde {\mathbf A}=\begin{bmatrix}
\tfrac12 & \tfrac{1+\xi_1}{4}\\[2mm]
0 & \tfrac{1-\xi_2}{4}
\end{bmatrix}  \begin{bmatrix}
\breve a_{11} & \breve a_{12}\\[2mm]
\breve a_{12} & \breve a_{22}
\end{bmatrix} \begin{bmatrix}
\tfrac12 & 0\\[2mm]
 \tfrac{1+\xi_1}{4} & \tfrac{1-\xi_2}{4}
\end{bmatrix}, \quad \breve {\mathbf A}:=
\mathbb J^2_{\rm a}
\mathbf J^{-1}_{\rm a}
\mathbf A
\mathbf J^{-\intercal}_{\rm a},
$$
and direct calculation leads to 
\begin{equation}\label{breveA}
\begin{split}
\tilde a_{11}
&=
\tfrac{1}{16}
\left[
4\breve a_{11}
+4(1+\xi_1)\breve a_{12}
+(1+\xi_1)^2\breve a_{22}
\right],\\[1mm]
\tilde a_{12}
&=
\tfrac{1-\xi_2}{16}
\left[
2\breve a_{12}
+(1+\xi_1)\breve a_{22}
\right],\quad 
\tilde a_{22}
=
\tfrac{(1-\xi_2)^2}{16}\breve a_{22}.
\end{split}
\end{equation}
It is important to note that $\breve {\mathbf A}$ is SPD, which does not involve the degenerate factor $1-\xi_2.$ Moreover, we can verify similarly  that there is no negative power of \(1-\xi_2\) in
\(\mathbb J\widehat{\nabla}\cdot(\mathbb J\hat{\mathbf A})\) and $\mathbf b.$ 

With the above preparations, 
we are now ready to construct  the preconditioners  in \eqref{defn38} for the corresponding Lagrange  and Birkhoff collocation systems arising from \eqref{eq:scaled_transformed_PDE}-\eqref{eq:scaled_coefficients}.
Define the averaged coefficients to be incorporated in the diagonalisation:
\begin{subequations}\label{eq:general_geometry_averages}
\begin{align}
& \bar a_{11}(\xi_1)
:=
\frac1{32}\int_{-1}^{1}
\left[
4\breve a_{11}
+4(1+\xi_1)\breve a_{12}
+(1+\xi_1)^2\breve a_{22}
\right]\,{\rm d}\xi_2,
\\
&\bar a_{22}(\xi_2)
:=
\frac{(1-\xi_2)^2}{32} \int_{-1}^1 \breve a_{22} \,{\rm d}\xi_1,
\end{align}
\end{subequations}
so we derive  the preconditioners \eqref{defn38}
by setting $a=\bar a_{11}$ and 
$b=\bar a_{22}.$

We now compare the collocation schemes for 
solving \eqref{eq:PDEs_physicalDomain} on the triangle $\mathcal T$  with vertices $A(0,0)$, $B(1,0),$ and $C(\cot\omega, 1),$ where $\omega=\angle A$ and the area $|\mathcal T|=1/2.$ 
We consider the problem \eqref{eq:PDEs_physicalDomain} 
 with a smooth source term:
\(
f(x,y)
=
10^4
\exp(-\tfrac{x^2+y^2}{0.05^2}),
\)
and two sets of coefficients:  Case (i):
\(
\mathbf A=\mathbf I_2,
\mathbf r=\mathbf 0,
s=100; 
\)
and Case (ii):
\begin{equation*}
\mathbf A
=
\begin{bmatrix}
3+\sin(\pi x)\cos(\pi y) & 0.2x^2y^2\\[4pt]
0.2x^2y^2 & 2+e^{xy}
\end{bmatrix},
\;\;
\mathbf r
=
\begin{bmatrix}
-\sin(\pi y)\cos(\pi x)\\[4pt]
\sin(\pi x)\cos(\pi y)
\end{bmatrix},
\;\;
s=1+x^2+y^2.
\end{equation*}
The reference solution is computed by the PBCOL method with \(N=4096\).

In the comparison, the collocation schemes:
LCOL$^{\star}$ and BCOL$^{\star}$ refer to the Lagrange and Birkhoff collocation methods for \eqref{eq:Transformed_PDEs_referenceDomain} (without multiplying the Jacobian $\mathbb J^2$), while the other four schemes (i.e., LCOL, BCOL, PLCOL, and PBCOL) are for  \eqref{eq:scaled_transformed_PDE}.

\begin{figure}[!h]
    \centering
\includegraphics[width=0.3\linewidth]{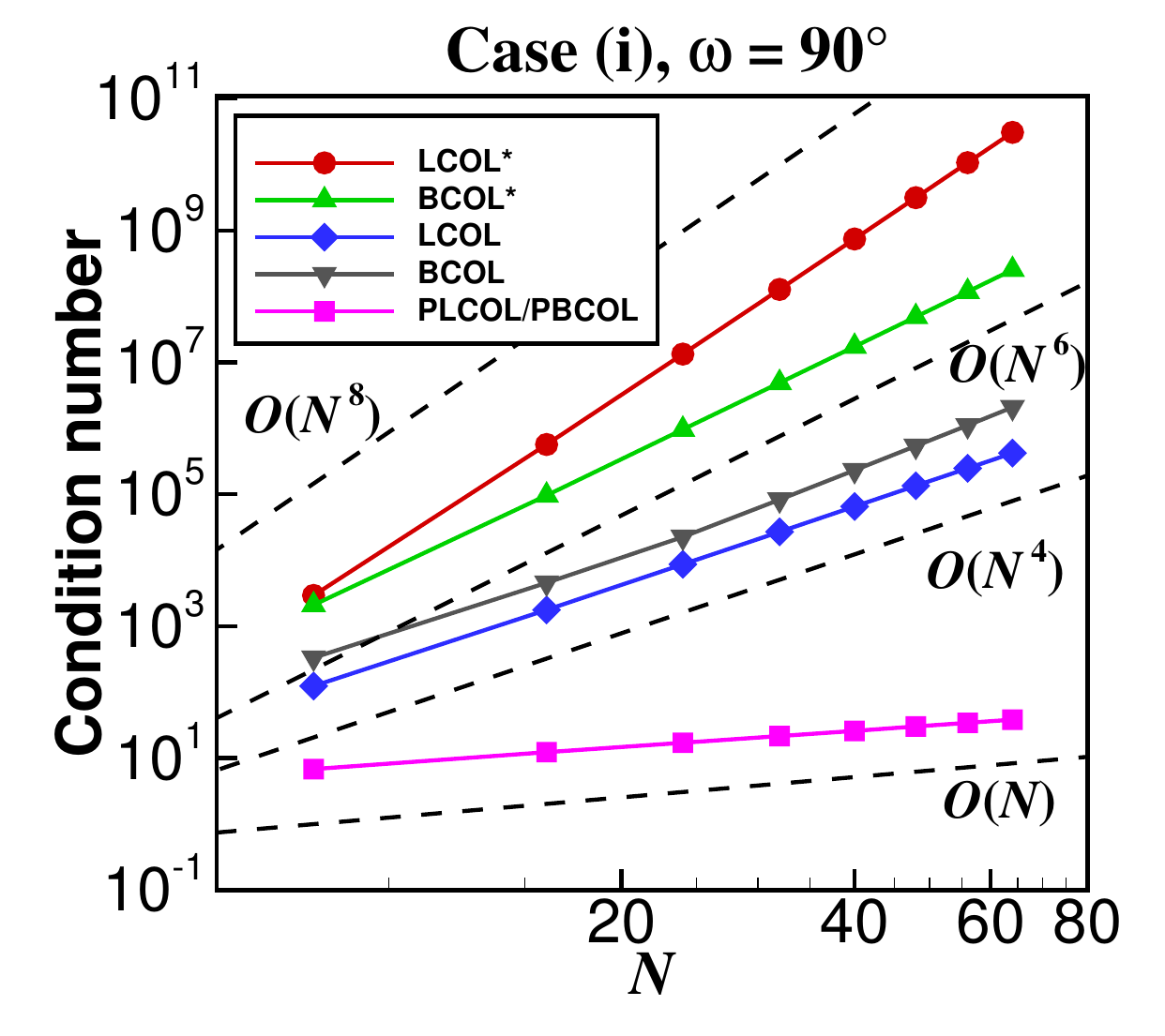} \quad 
\includegraphics[width=0.3\linewidth]{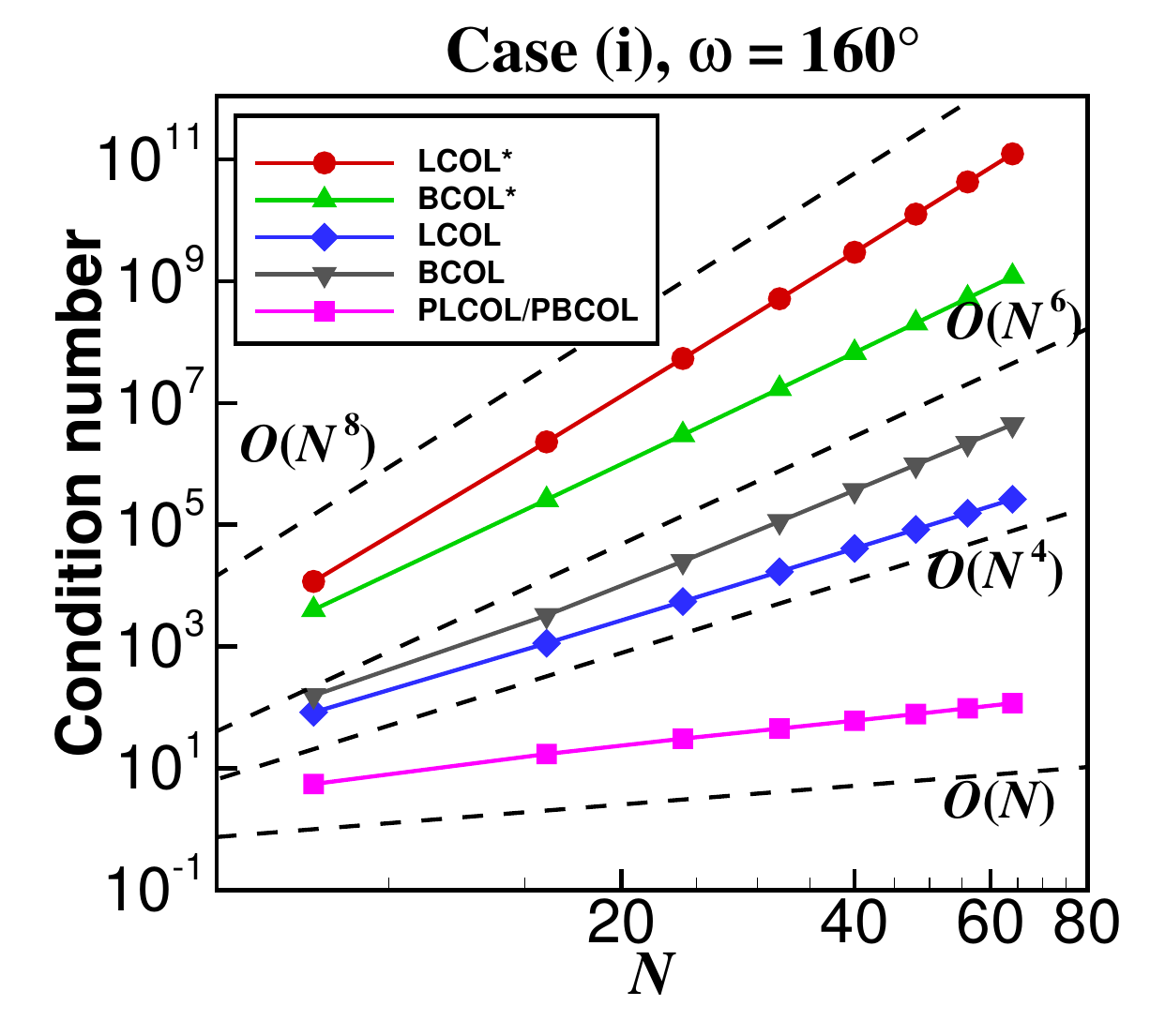}\\[6pt]
\includegraphics[width=0.3\linewidth]{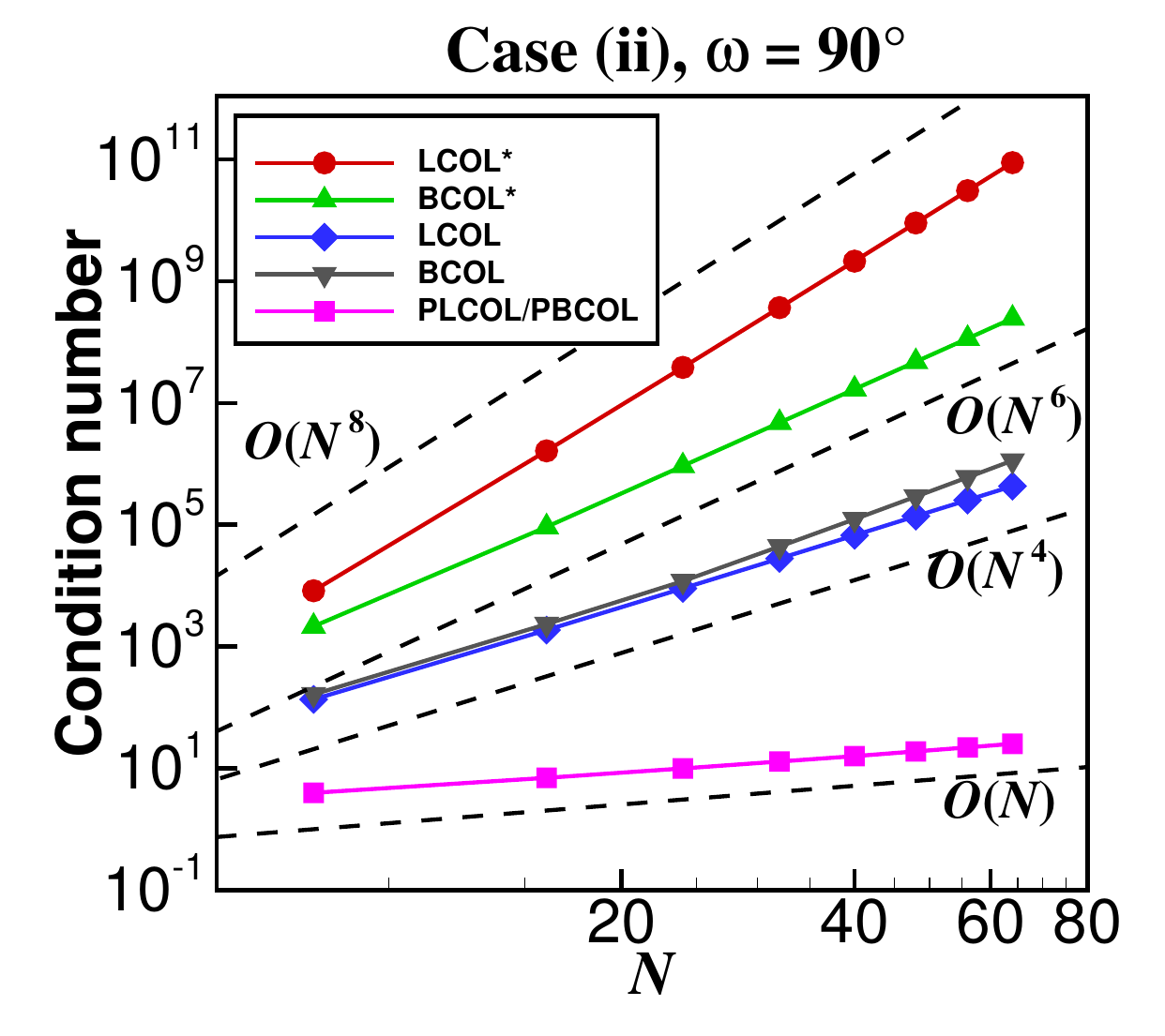} \quad 
\includegraphics[width=0.3\linewidth]{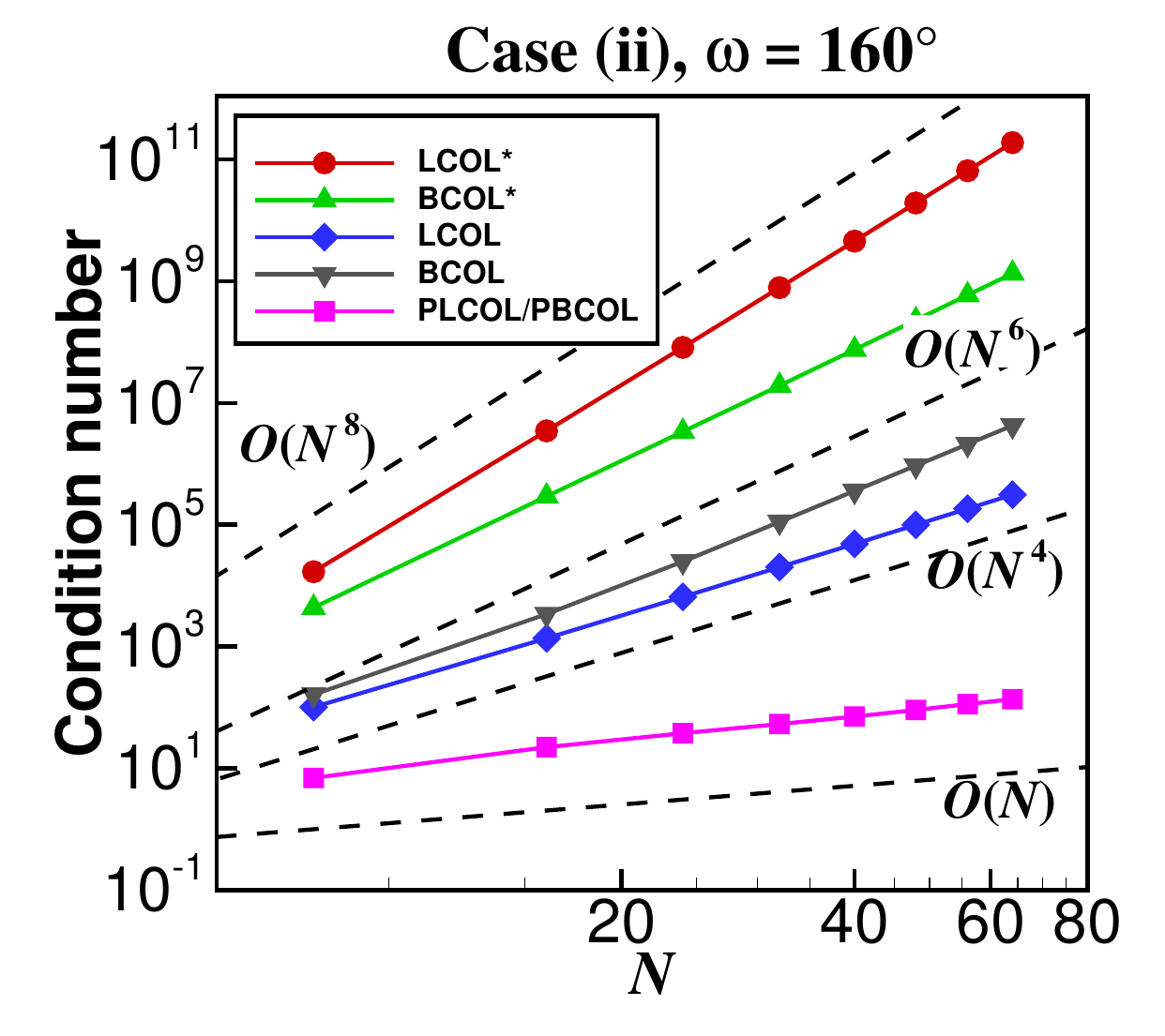}
    \caption{Condition numbers of the collocation matrices for problem \eqref{eq:PDEs_physicalDomain} on the triangle  with one interior angle $\omega = 90^\circ$ and $\omega = 160^\circ$. Left: the constant coefficient Case {\rm (i)}. Right: the variable-coefficient Case {\rm (ii)}.}
   \label{fig:Cond2_Triangle_omega160}
\end{figure}

 We compare the condition numbers of the coefficient matrices, and
 GMRES iteration counts, computational time (in seconds) for  six schemes for two sets of coefficients and two choices of $\omega=90^\circ, 160^\circ.$ When $\omega=160^\circ,$  the obtuse triangle $\mathcal T$ has the interior angles: $\angle A=160^\circ, \angle B\approx 15^\circ,$ and $\angle C\approx 5^{\circ},$ so $\mathcal T$ is highly skewed and strongly deformed. 
 This provides a stringent test of the robustness of the collocation schemes with respect to geometric distortion. Moreover, even for smooth $f$, the solution may have limited regularity due to corner singularities. Near $A=(0,0)$, the leading singular behavior is $u(r,\theta)\sim r^{\pi/\omega}\sin(\pi\theta/\omega)$, where $(r,\theta)$ are local polar coordinates. Hence, for $\omega=160^\circ$, the singular exponent is $\pi/\omega=9/8$, while for $\omega=90^\circ$, a resonant term of the form $r^2\log r$ may arise (see e.g., \cite{Dauge1988,FornbergFlyer2011}).

In Figure~\ref{fig:Cond2_Triangle_omega160}, we depict the condition numbers of six schemes, and observe that condition numbers of  PLCOL and PBCOL grow like $\mathcal{O}(N),$ while the other two pairs  behave like $\mathcal{O}(N^4),$ $\mathcal{O}(N^6)$ or $ \mathcal{O}(N^8)$ for all cases. Indeed, this shows our Birkhoff preconditioners remain highly effective even for this challenging, strongly deformed geometry and the associated singular transformation. 
The results in 
Table \ref{tab:2d_triangle_omega90} (for $\omega=90^\circ$) and   Table \ref{tab:2d_triangle_omega160} 
(for $\omega=160^\circ$) clearly indicate the necessity of preconditioning.


\begin{table}[htbp]
\centering
\caption{\footnotesize  GMRES iteration counts, CPU time (s) and relative $L^\infty$-errors/convergence order of PBCOL for \eqref{eq:PDEs_physicalDomain} on $\mathcal T$ with $\omega = 90^\circ$}
\label{tab:2d_triangle_omega90}
\footnotesize
\setlength{\tabcolsep}{3.0pt}

\begin{tabular}{
cc
cc cc cc cc cc ccc c
}
\toprule
&
&
\multicolumn{2}{c}{LCOL$^{\star}$}
& \multicolumn{2}{c}{BCOL$^{\star}$}
& \multicolumn{2}{c}{LCOL}
& \multicolumn{2}{c}{BCOL}
& \multicolumn{2}{c}{PLCOL}
& \multicolumn{4}{c}{PBCOL}
\\

\cmidrule(lr){3-4}
\cmidrule(lr){5-6}
\cmidrule(lr){7-8}
\cmidrule(lr){9-10}
\cmidrule(lr){11-12}
\cmidrule(lr){13-16}

& $N$
& Iter & Time
& Iter & Time
& Iter & Time
& Iter & Time
& Iter & Time
& Iter & Time & Error
& Rate
\\
\midrule

\multirow{6}{*}{Case (i)}
& 64
& 1666 & 15.14
& 1938 & 20.56
& 1317 & 9.673
& 937  & 5.007
& 23   & 0.036
& 23   & 0.044
& 8.78e$-$05
& -
\\

& 128
& 5367 & 725.7
& 5771 & 846.1
& 3789 & 360.2
& 2537 & 166.6
& 23   & 0.066
& 23   & 0.069
& 5.57e$-$06
& 3.98
\\

& 256
& / & /
& / & /
& / & /
& / & /
& 23   & 0.386
& 23   & 0.502
& 3.50e$-$07
& 3.99
\\

& 512
& / & /
& / & /
& / & /
& / & /
& 22   & 1.332
& 23   & 1.402
& 2.15e$-$08
& 4.02
\\

& 1024
& / & /
& / & /
& / & /
& / & /
& 22   & 8.054
& 23   & 9.208
& 1.31e$-$09
& 4.04
\\

& 2048
& / & /
& / & /
& / & /
& / & /
& 23   & 62.15
& 24   & 73.58
& 8.22e$-$11
& 3.99
\\

\midrule

\multirow{6}{*}{Case (ii)}
& 64
& 1856 & 18.81
& 1935 & 20.42
& 1498 & 10.83
& 941  & 6.567
& 21   & 0.036
& 21   & 0.020
& 8.44e$-$05
& -
\\

& 128
& 5987 & 905.8
& 5768 & 844.3
& 4295 & 444.9
& 2556 & 197.8
& 22   & 0.058
& 22   & 0.066
& 5.35e$-$06
& 3.98
\\

& 256
& / & /
& / & /
& / & /
& / & /
& 23   & 0.383
& 22   & 0.410
& 3.36e$-$07
& 3.99
\\

& 512
& / & /
& / & /
& / & /
& / & /
& 22   & 1.302
& 23   & 1.239
& 2.07e$-$08
& 4.03
\\

& 1024
& / & /
& / & /
& / & /
& / & /
& 23   & 8.044
& 24   & 8.442
& 1.26e$-$09
& 4.04
\\

& 2048
& / & /
& / & /
& / & /
& / & /
& 24   & 58.52
& 25   & 64.27
& 7.89e$-$11
& 3.99
\\

\bottomrule
\end{tabular}
\end{table}

\begin{table}[!htbp]
\centering
\caption{\footnotesize GMRES iteration counts, CPU time (s) and  relative $L^\infty$-errors/convergence order of PBCOL for \eqref{eq:PDEs_physicalDomain} on $\mathcal T$ with $\omega = 160^\circ$}
\label{tab:2d_triangle_omega160}
\footnotesize
\setlength{\tabcolsep}{3.0pt}

\begin{tabular}{
cc
cc cc cc cc cc ccc c
}
\toprule
&
&
\multicolumn{2}{c}{LCOL$^{\star}$}
& \multicolumn{2}{c}{BCOL$^{\star}$}
& \multicolumn{2}{c}{LCOL}
& \multicolumn{2}{c}{BCOL}
& \multicolumn{2}{c}{PLCOL}
& \multicolumn{4}{c}{PBCOL}
\\

\cmidrule(lr){3-4}
\cmidrule(lr){5-6}
\cmidrule(lr){7-8}
\cmidrule(lr){9-10}
\cmidrule(lr){11-12}
\cmidrule(lr){13-16}

& $N$
& Iter & Time
& Iter & Time
& Iter & Time
& Iter & Time
& Iter & Time
& Iter & Time & Error
& Rate
\\
\midrule

\multirow{6}{*}{Case (i)}
& 64
& 2304 & 25.34
& 2463 & 32.56
& 1341 & 8.811
& 1141 & 7.472
& 88   & 0.094
& 88   & 0.098
& 4.65e$-$03
& -
\\

& 128
& / & /
& / & /
& 3851 & 356.7
& 3225 & 253.2
& 92   & 0.436
& 93   & 0.352
& 9.85e$-$04
& 2.24
\\

& 256
& / & /
& / & /
& / & /
& / & /
& 95   & 2.040
& 96   & 1.839
& 2.08e$-$04
& 2.24
\\

& 512
& / & /
& / & /
& / & /
& / & /
& 98   & 4.839
& 99   & 4.606
& 4.37e$-$05
& 2.25
\\

& 1024
& / & /
& / & /
& / & /
& / & /
& 100  & 33.90
& 100  & 37.41
& 8.78e$-$06
& 2.32
\\

& 2048
& / & /
& / & /
& / & /
& / & /
& 102  & 234.5
& 102  & 274.1
& 1.54e$-$06
& 2.52
\\

\midrule

\multirow{6}{*}{Case (ii)}
& 64
& 2399 & 31.79
& 2414 & 29.37
& 1489 & 12.17
& 1116 & 7.042
& 90   & 0.101
& 90   & 0.139
& 4.34e$-$03
& -
\\

& 128
& / & /
& / & /
& 4256 & 447.2
& 3163 & 260.1
& 94   & 0.380
& 95   & 0.351
& 9.20e$-$04
& 2.24
\\

& 256
& / & /
& / & /
& / & /
& / & /
& 96   & 1.939
& 97   & 1.848
& 1.94e$-$04
& 2.24
\\

& 512
& / & /
& / & /
& / & /
& / & /
& 99   & 4.701
& 100  & 4.820
& 4.09e$-$05
& 2.25
\\

& 1024
& / & /
& / & /
& / & /
& / & /
& 101  & 30.69
& 101  & 37.46
& 8.21e$-$06
& 2.32
\\

& 2048
& / & /
& / & /
& / & /
& / & /
& 103  & 237.3
& 103  & 258.2
& 1.43e$-$06
& 2.52
\\
\bottomrule
\end{tabular}
\end{table}

We also observe from the last column of two tables that the convergence rates of PBCOL are roughly $\mathcal{O}(N^{-4})$ and $\mathcal{O}(N^{-2.25})$ for $\omega=90^\circ, 160^\circ,$ respectively. Under the transformation \eqref{eq:Duffy_transformation}, the solution of \eqref{eq:scaled_transformed_PDE}: $\hat u\sim (1-\xi_2)^{\frac \pi \omega}$ on $(-1,1)^2,$ so the tensorial Legendre polynomial approximation has a convergence $O(N^{-\frac {2\pi} \omega})$ in $L^\infty$-norm (see e.g., \cite{Sidi2009}), i.e., the convergence order for $\omega=160^\circ$ is about $\mathcal{O}(N^{-\frac {9} 4})$ as shown in Table  \ref{tab:2d_triangle_omega160}.  For $\omega=90^\circ,$ we have logarithmic singularity $\hat u\sim (1-\xi_2)^2\log (1-\xi_2),$ so the best possible convergence order in $L^\infty$-norm is $\mathcal{O}(N^{-4}),$ according to 
\cite{Sidi2009}, which has a good agreement with the last column in Table  \ref{tab:2d_triangle_omega90}.

\subsubsection{Application to Allen-Cahn equation} 
It is known that  time discretisation of many time-dependent problems by implicit-explicit (IMEX) schemes requires the solution of a second-order elliptic problem at each time step. The proposed approach makes collocation methods practical and compelling alternatives for such applications. As an illustration, we consider the Allen-Cahn equation and demonstrate that it  is competitive with the adaptive spectral methods in \cite{ShenYang2009}, while offering clear advantages for more complex initial data.

Consider the Allen-Cahn equation  as in \cite{ShenYang2009}:
\begin{equation}\label{eq:AC}
    u_t
    =
    \gamma\big(
    \Delta u
    -\tfrac{1}{\eta^2}f(u)
    \big),
    \quad
    f(u)=u^3-u,
    \quad
    (\bm{x},t)\in\Omega\times(0,T],
\end{equation}
where $\Omega=(-1,1)^2$. The equation is supplemented with the homogeneous Dirichlet boundary condition and the initial condition
\(
    u(\bm{x},0)=u_0(\bm{x}).
\)
Here $\gamma>0$ is a time-scaling parameter, and the parameter $\eta>0$ controls the diffuse-interface thickness.

Let $t_n=n\tau$ and denote by $u^n$ the approximation to $u(\cdot,t_n)$. We adopt the same temporal discretisation as in \cite{ShenYang2009}:
\begin{equation}\label{eq:AC-BDF2}
\begin{aligned}
    \tfrac{3u^{n+1}-4u^n+u^{n-1}}{2\tau}
    &+
    \tfrac{\gamma S}{\eta^2}
    (
        u^{n+1}-2u^n+u^{n-1}
    )
    -\gamma\Delta u^{n+1}
    =
    -\tfrac{\gamma}{\eta^2}
    [
        2f(u^n)-f(u^{n-1})
    ],
\end{aligned}
\end{equation}
 where $S>0$ is the stabilization parameter. 
Thus, at each time step, we solve
\begin{equation}\label{eq:AC-Helmholtz}
    (
        \tfrac{3}{2\tau}
        +\tfrac{\gamma S}{\eta^2}
        -\gamma\Delta
    )u^{n+1}
    =
    F^n,
\end{equation}
where
\[
    F^n
    =
    (
        \tfrac{2}{\tau}
        +\tfrac{2\gamma S}{\eta^2})u^n
    -
    (
        \tfrac{1}{2\tau}
        +\tfrac{\gamma S}{\eta^2}
    )u^{n-1}
    -
    \tfrac{\gamma}{\eta^2}
    [
        2f(u^n)-f(u^{n-1})
    ].
\]
Here, we use the Birkhoff collocation scheme for \eqref{eq:AC-Helmholtz}. According to Remark \ref{DirectInv_constant},  we can explicitly invert the system using the stable diagonalisation, precomputed only once. As a result, PBCOL becomes a direct solver, and 
the time-marching is very  efficient and stable for large $N$.

We first test the shrinkage of a circular domain under the same setting 
as  the moving-mesh spectral method in \cite[Subsection 4.1]{ShenYang2009}, where the
physical domain is $[0,256]^2;$  the initial radius is $R_0=100,$ and $\gamma=\eta=1.$ Note that the exact radius evolves as $R^2(t)=R_0^2-2t$. After mapping the physical domain to $[-1,1]^2$, the parameters become  $\gamma=\tfrac{1}{128^2} \approx 6.10351\times10^{-5}$ and 
$\eta=\tfrac 1{128}\approx 0.0078$. Similar to \cite{ShenYang2009}, we take $S=1, \tau=10^{-2}$ and the initial data to be 
\[
u_0(x,y) = \tanh \big(\!-\tfrac{\sqrt{x^2 + y^2} - R_0/128}{\sqrt{2}\eta}\big)(1-x^4)(1-y^4).
\]
In Figure~\ref{fig:singlebubble}, we compare  the shrinkage of the circular domain between PBCOL 
  (with a 
$300^2$ grid) and  the moving mesh method (with a 
$65^2$ grid) in \cite{ShenYang2009} in comparable  computational time up to $T=5000$. Our approach takes about $2748$ seconds, while the moving mesh takes about $3000$ seconds. Moreover, we observe from Figure~\ref{fig:singlebubble} (middle) that PBCOL perfectly fits the exact radius 
$R^2(t)=R_0^2-2t,$ so it is more accurate. It is also evident that the collocation approach is much easier to implement.


\begin{figure}[htbp]
    \centering

\begin{minipage}[c]{0.32\textwidth}
    \centering
    \includegraphics[width=0.48\linewidth]{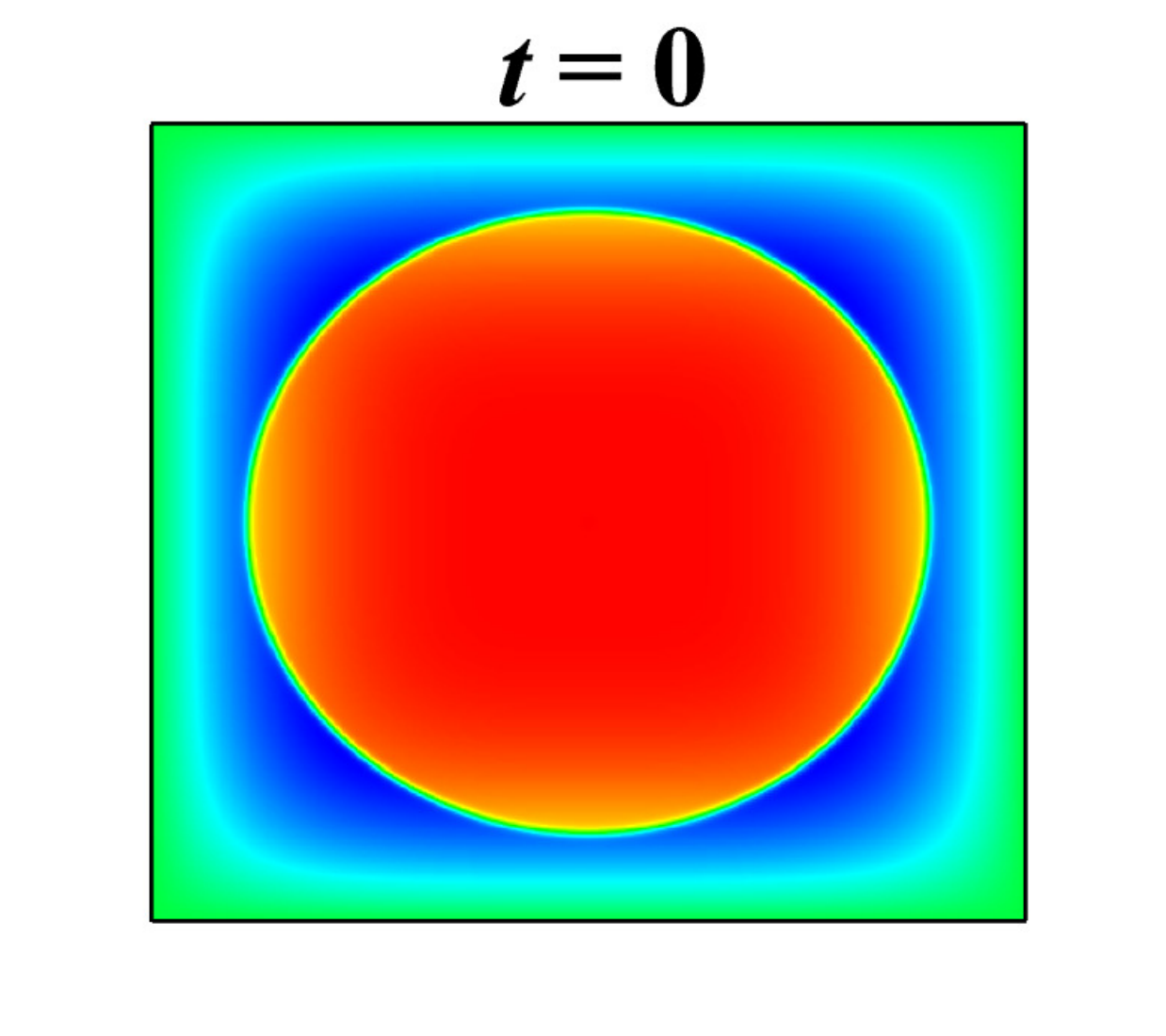}
    \includegraphics[width=0.48\linewidth]{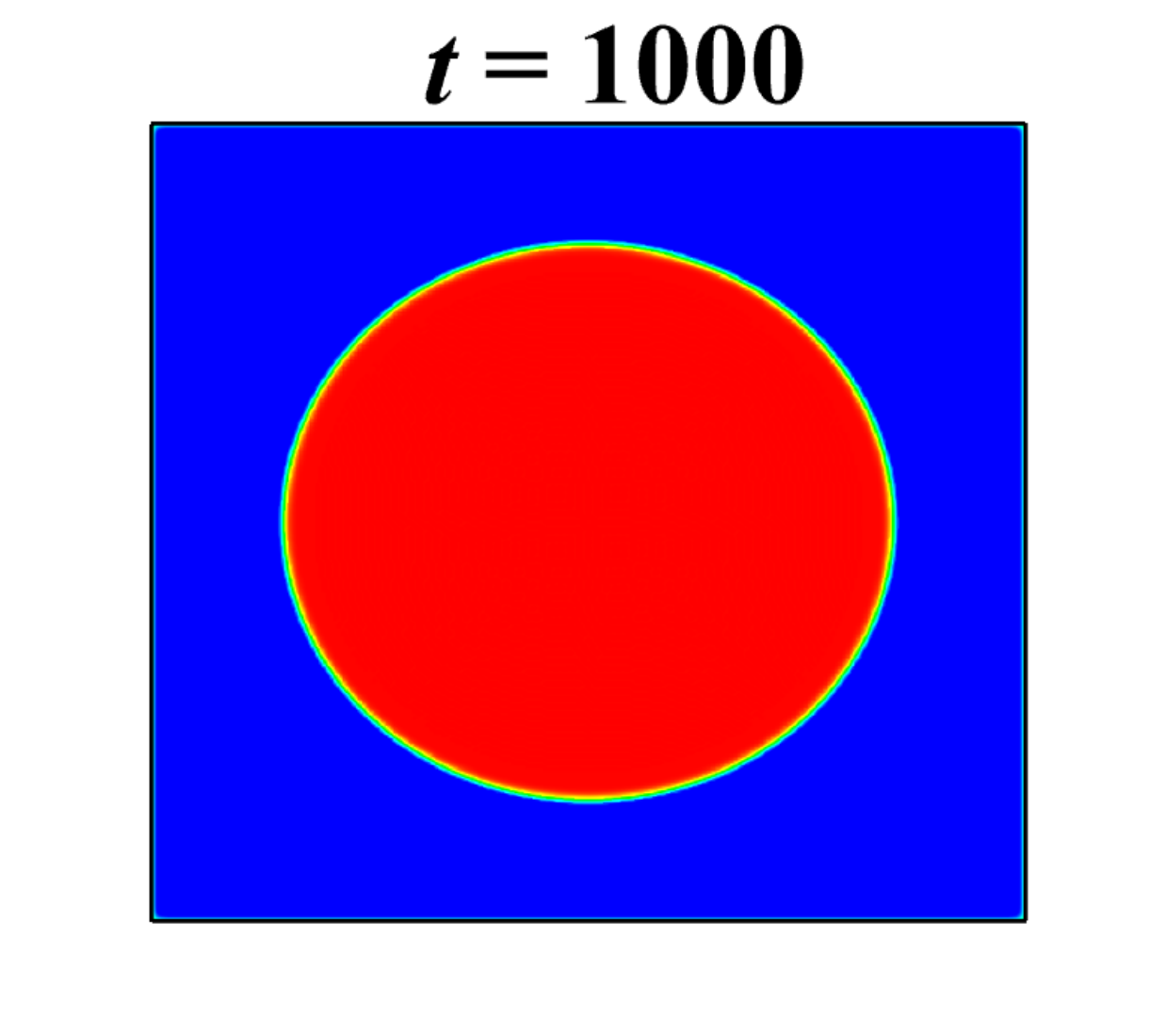}\\
    \includegraphics[width=0.48\linewidth]{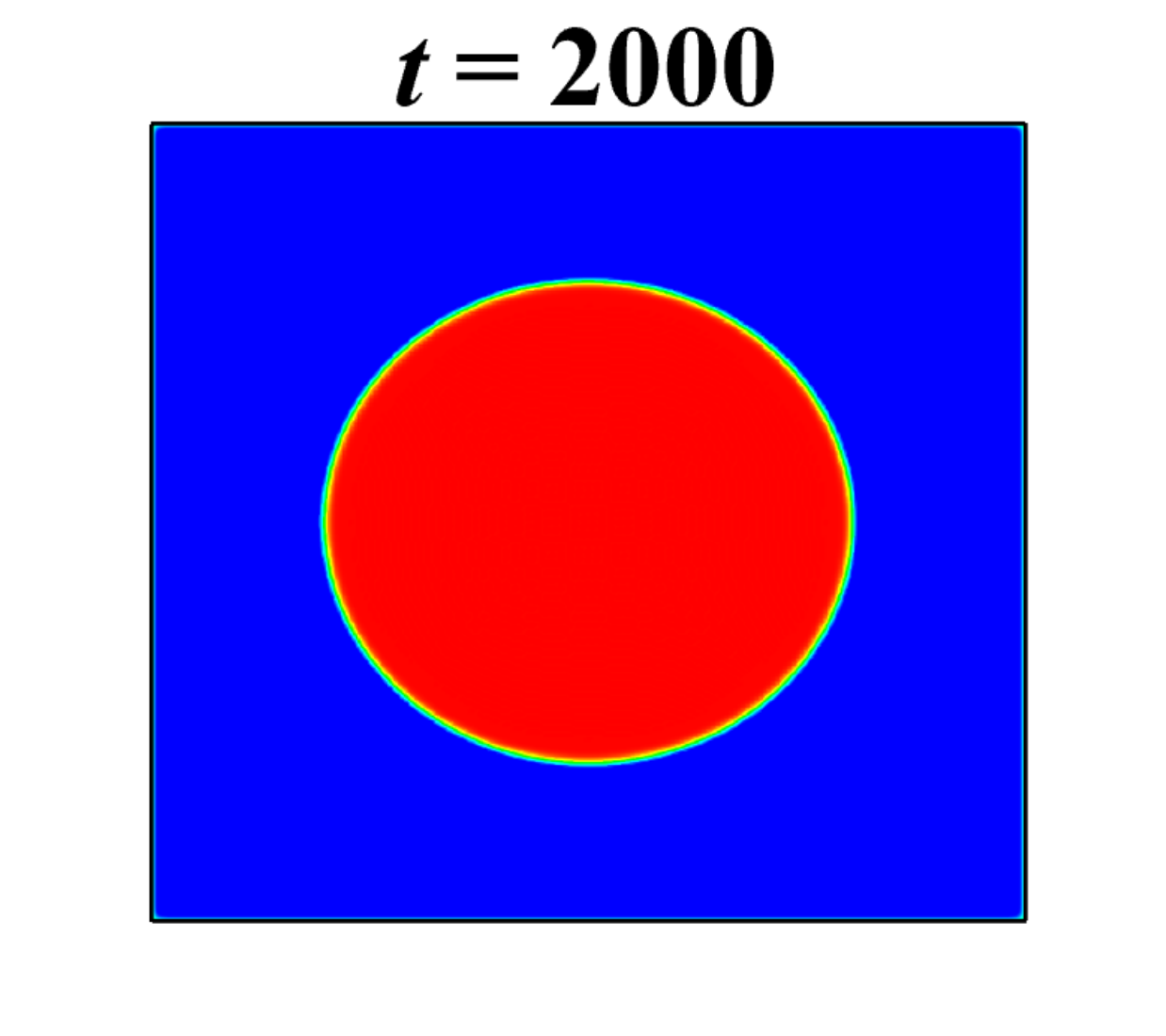}
    \includegraphics[width=0.48\linewidth]{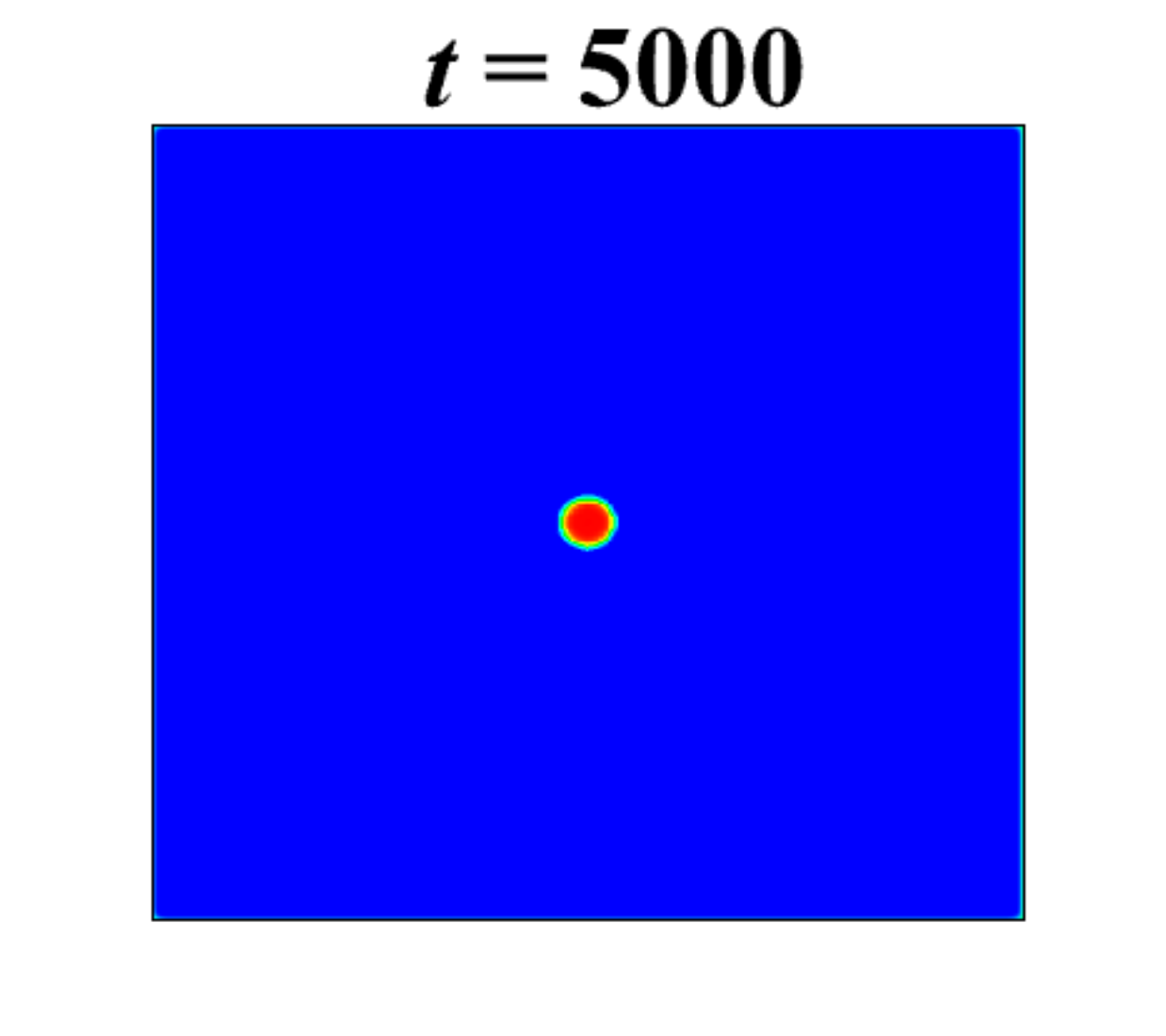}
\end{minipage}%
\begin{minipage}[c]{0.33\textwidth}
    \centering
    \includegraphics[width=\linewidth]{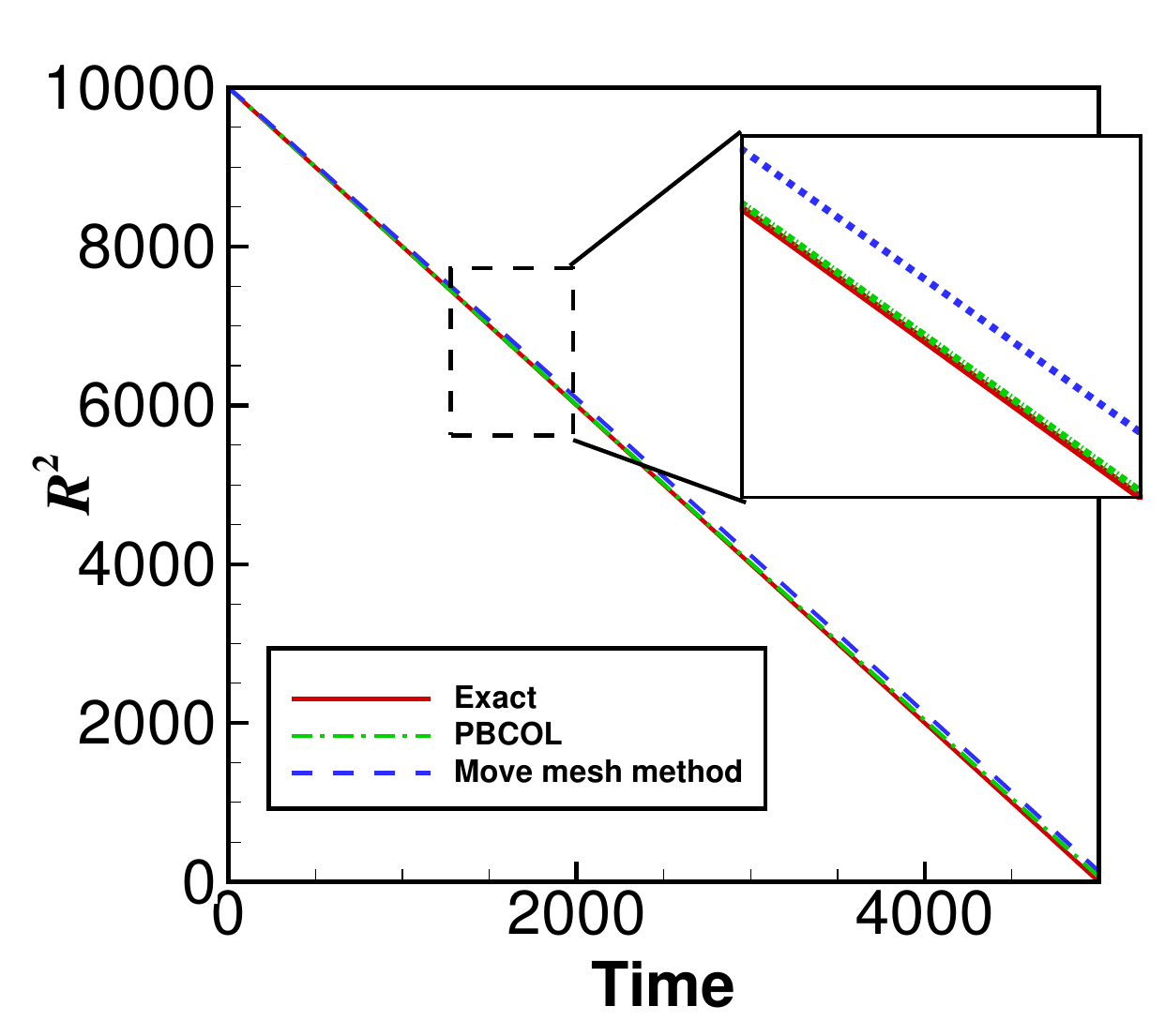}
\end{minipage}%
\begin{minipage}[c]{0.32\textwidth}
    \centering
    \includegraphics[width=0.48\linewidth]{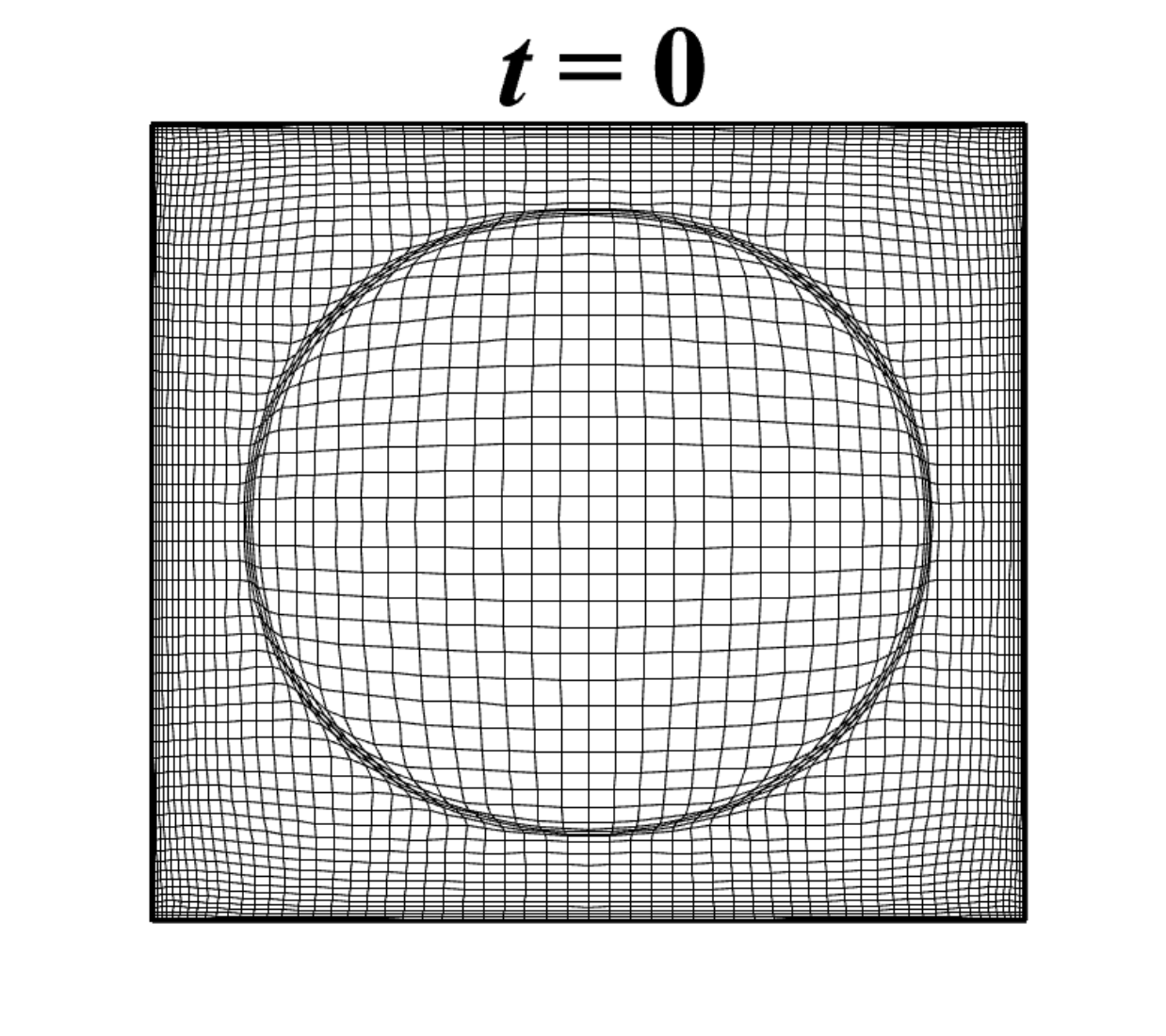}
    \includegraphics[width=0.48\linewidth]{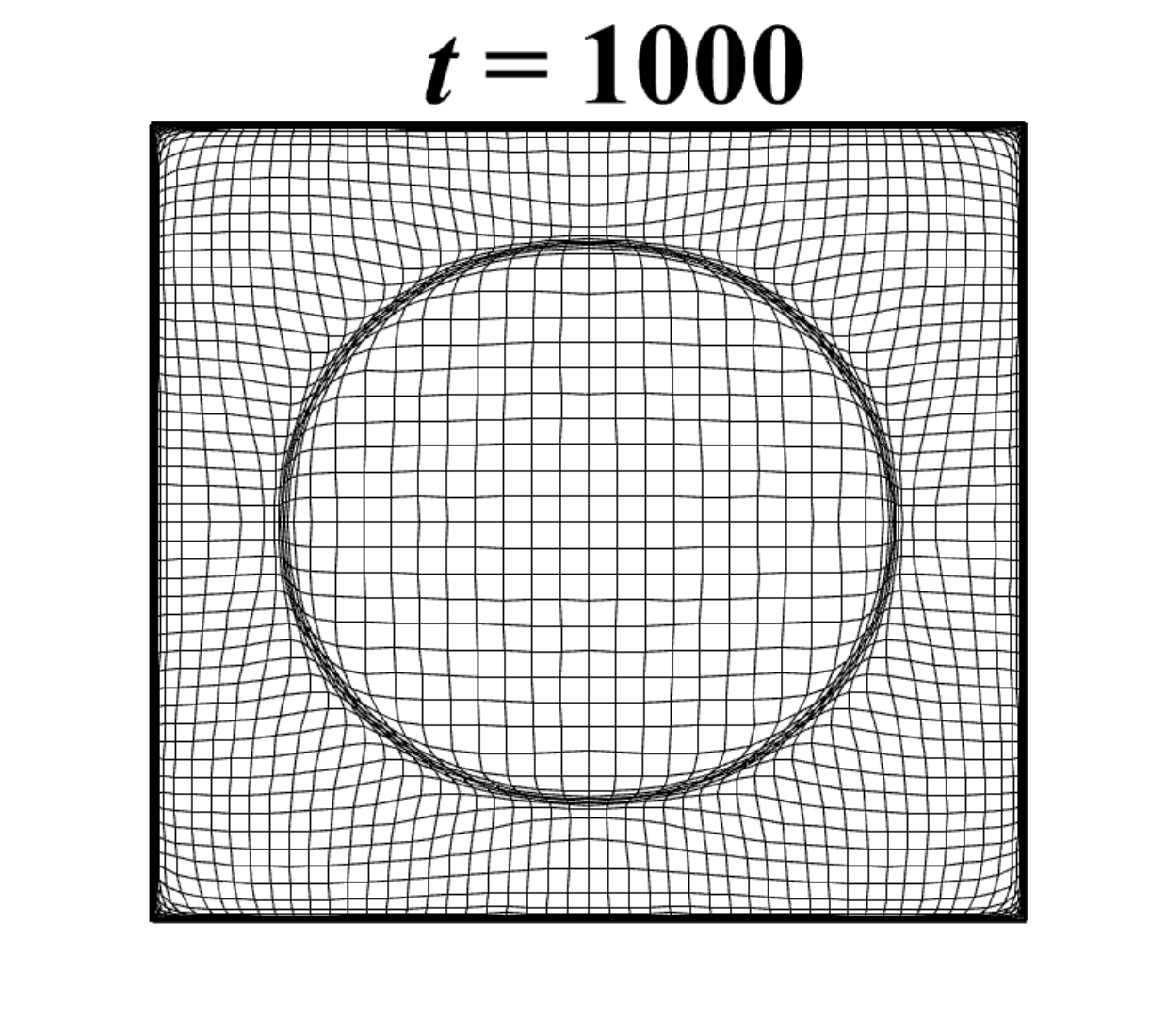}\\
    \includegraphics[width=0.48\linewidth]{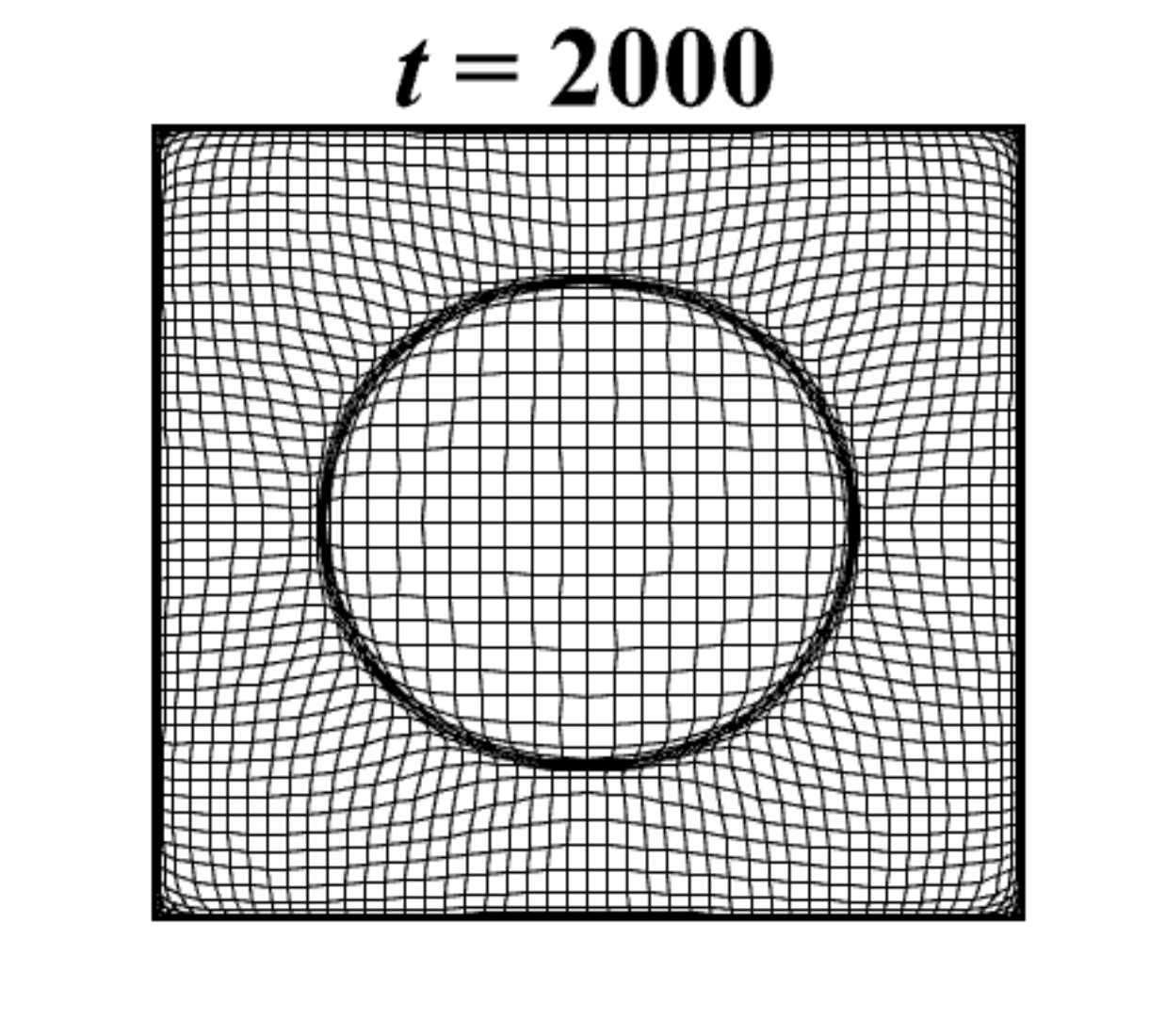}
    \includegraphics[width=0.48\linewidth]{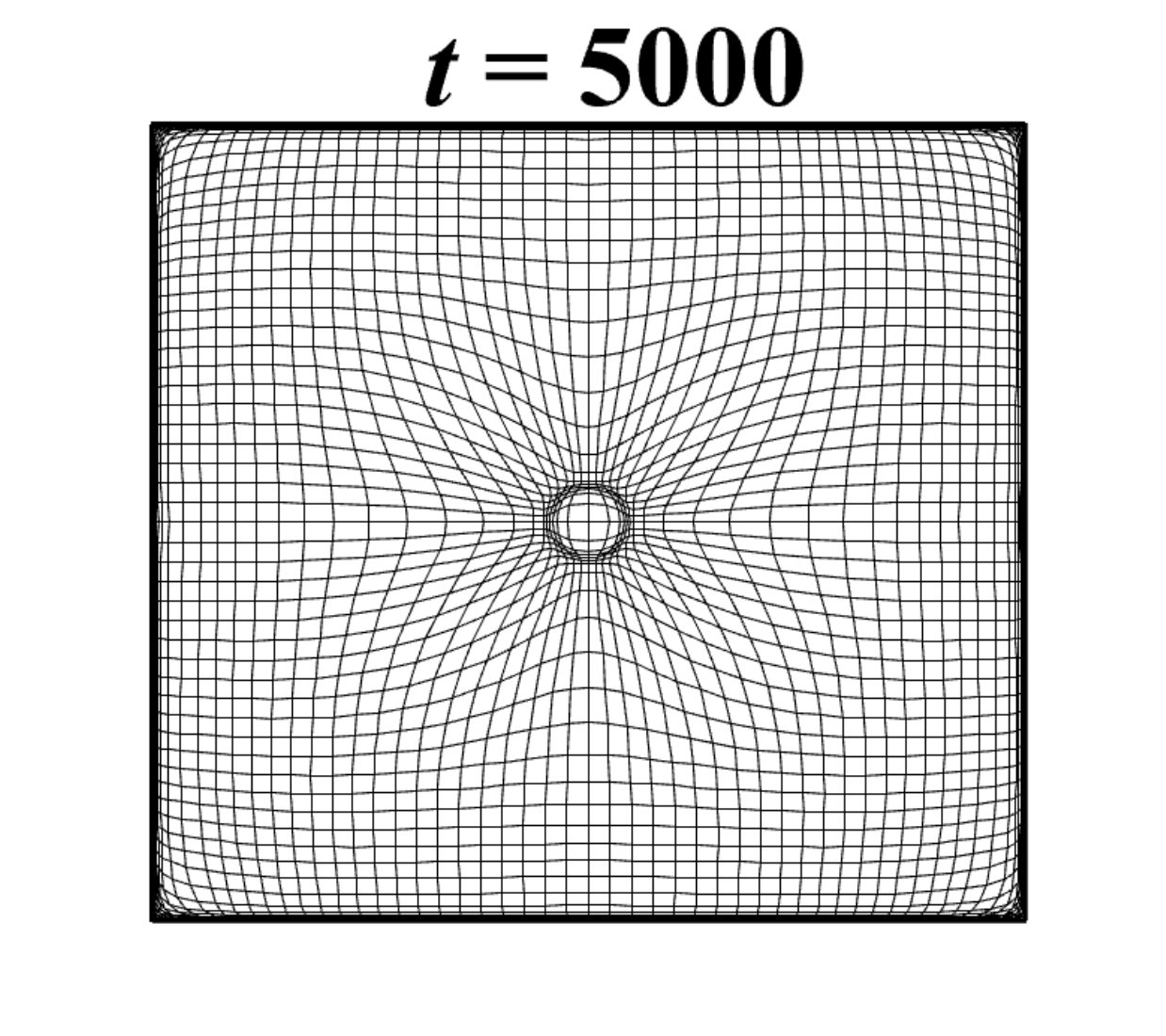}
\end{minipage}

    \caption{Shrinkage of a circular domain. Left: evolution computed by PBCOL on a $300^2$ grid at $t=0,1000,2000,$ and $5000$. Middle: evolution of the radius $R^2(t)$ by PBCOL {\rm(}$300^2$ grid{\rm)} and the moving-mesh method {\rm(}$65^2$ grid{\rm)}, together with the exact line. Right: evolution of the moving mesh corresponding to the moving mesh simulation at the same time instances as in \cite[Figure 3]{ShenYang2009}.}
\label{fig:singlebubble}
\end{figure}

 We next conduct a more challenging test for an initial data involving \(40\) randomly distributed circular domains:
 \[
u_0(x,y)=\tanh(-\tfrac{d(x,y)}{\sqrt{2}\eta})(1-x^4)(1-y^4),\quad d(x,y)=\min_{1\le k\le 40}\big\{\sqrt{(x-x_k)^2+(y-y_k)^2}-R_k\big\},
\]
 where the centers \((x_k,y_k)\) are randomly sampled in \([-0.7,0.7]^2\) and likewise for the radii \(R_k\in[0.05,0.13]\). In such a complicated situation, we find the moving-mesh spectral method 
 becomes unstable with so many interior interfaces to track and move the meshes towards. However, as shown in Figure \ref{fig:AC_coarsening_energy}, PBCOL with a $300^2$ grid produces very accurate results, and 
 the (numerical) free energy decreases monotonically throughout the simulation.  The total computational  time up to $T=800$ (with $\tau=0.01$ and $80000$ time steps) is about $7$ minutes.


\begin{figure}[!h]
    \centering
\includegraphics[width=0.9\linewidth]{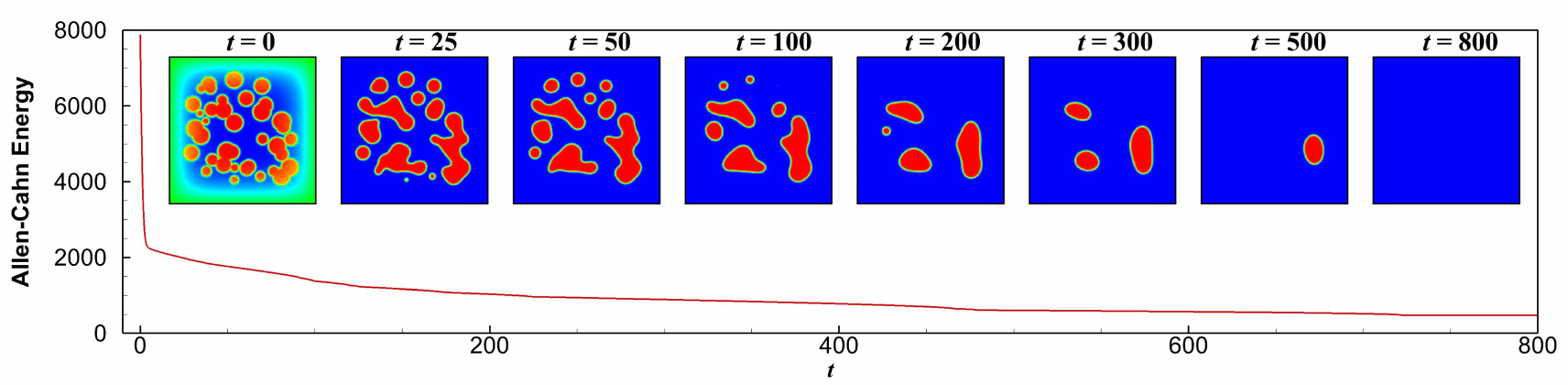}
    \caption{Long-time evolution of the Allen-Cahn phase field and the corresponding decay of the free energy.}
   \label{fig:AC_coarsening_energy}
\end{figure}

\section{Birkhoff preconditioners in three dimensions}\label{sec:extension}
In this section, we extend the two-dimensional preconditioning framework to three-dimensional elliptic problems. 
Similarly, we first construct the preconditioners for separable variable problems and then consider general variable-coefficient problems, followed by numerical experiments.

\subsection{Construction of Birkhoff preconditioners}\label{sec:3D_preconditioner}
We first consider 
\begin{equation}\label{eq:3D_separable_operator}
\mathcal L_3[u]
=
-a(x)u_{xx}
-b(y)u_{yy}
-c(z)u_{zz}
+q(x,y,z)u,
\quad
(x,y,z)\in \Omega=(-1,1)^3,
\end{equation}
where  the coefficients are given continuous functions such that
\(
0<\alpha\leq a(x),\,b(y),\,c(z)\leq\beta,
\) and $0\leq q_{\rm min}\leq q(x,y,z)\leq q_{\rm max}$.
Let
\[
\mathbf C_a=\operatorname{diag}\bigl(a(x_i)\bigr),\quad
\mathbf C_b=\operatorname{diag}\bigl(b(y_j)\bigr),\quad
\mathbf C_c=\operatorname{diag}\bigl(c(z_k)\bigr)
\]
be the corresponding coefficient matrices, and 
\(
\mathbf C_q
=
\operatorname{diag}
\bigl(q(x_i,y_j,z_k)\bigr)
\)
denote the three-dimensional diagonal evaluation matrix of \(q\).
With the same tensor-product ordering as in Section~\ref{sec: 2D_Preconditioner}, the
Lagrange-collocation and Birkhoff-collocation matrices are
\begin{subequations}\label{PLPB3D}
    \begin{align}
\mathbf A_{\rm L}^{(3)}
={}&
-\mathbf I_{N-1}\otimes\mathbf I_{N-1}
 \otimes(\mathbf C_a\mathbf D^{(2)})
-\mathbf I_{N-1}\otimes
 (\mathbf C_b\mathbf D^{(2)})\otimes\mathbf I_{N-1}
\label{eq:3D_AL}\\
&-(\mathbf C_c\mathbf D^{(2)})
 \otimes\mathbf I_{N-1}\otimes\mathbf I_{N-1}
+\mathbf C_q,\nonumber\\
\mathbf A_{\rm B}^{(3)}
={}&
-\mathbf B\otimes\mathbf B\otimes\mathbf C_a
-\mathbf B\otimes\mathbf C_b\otimes\mathbf B
-\mathbf C_c\otimes\mathbf B\otimes\mathbf B+\mathbf C_q
(\mathbf B\otimes\mathbf B\otimes\mathbf B).
\label{eq:3D_AB}
\end{align}
\end{subequations}

As in the two-dimensional case, we construct the right
preconditioners from the second-order principal parts of
\eqref{PLPB3D} as in \eqref{defn38}, that is,  
\begin{subequations}\label{3Dprecon}
\begin{align}
\mathbf P_{\rm L}^{(3)}
={}&
-\big[
\mathbf I_{N-1}\otimes\mathbf I_{N-1}
 \otimes(\mathbf C_a\mathbf D^{(2)})
+
\mathbf I_{N-1}\otimes
 (\mathbf C_b\mathbf D^{(2)})\otimes\mathbf I_{N-1}
\label{eq:3D_PL}\\
&\hspace{2.0cm}
+
(\mathbf C_c\mathbf D^{(2)})
 \otimes\mathbf I_{N-1}\otimes\mathbf I_{N-1}
\big]^{-1},
\nonumber\\
\mathbf P_{\rm B}^{(3)}
={}&
-\big[
\mathbf B\otimes\mathbf B\otimes\mathbf C_a
+
\mathbf B\otimes\mathbf C_b\otimes\mathbf B
+
\mathbf C_c\otimes\mathbf B\otimes\mathbf B
\big]^{-1}.
\label{eq:3D_PB}
\end{align}
\end{subequations}
The diagonalized representations follow  from the
one-dimensional construction in Theorem \ref{WinD2in_eq_D2inTWin}:
\[
\mathbf C_\nu\mathbf D^{(2)}
=
\mathbf V_\nu\mathbf\Sigma_\nu^{-1}\mathbf V_\nu^{-1},
\quad
\mathbf B\mathbf C_\nu^{-1}
=
\mathbf V_\nu\mathbf\Sigma_\nu\mathbf V_\nu^{-1}, \quad \nu=a,b,c,
\]
where
\(
\mathbf\Sigma_\nu
=
\operatorname{diag}
(\sigma_{\nu,1},\ldots,\sigma_{\nu,N-1}).
\)
Then as a direct extension of Theorem \ref{prop:explicit_preconditioners}, we have 
\begin{subequations}
\begin{align}
    &\mathbf P_{\rm L}^{(3)}
=
-(\mathbf V_c\otimes\mathbf V_b\otimes\mathbf V_a)
\mathbf\Lambda_{\rm L}^{(3)}
(\mathbf V_c^{-1}\otimes\mathbf V_b^{-1}\otimes\mathbf V_a^{-1}),\label{eq:3D_PL_diag}\\
&\mathbf P_{\rm B}^{(3)}
=
-
(\mathbf C_c^{-1}\mathbf V_c
 \otimes\mathbf C_b^{-1}\mathbf V_b
 \otimes\mathbf C_a^{-1}\mathbf V_a)
\mathbf\Lambda_{\rm B}^{(3)}
(\mathbf V_c^{-1}\otimes\mathbf V_b^{-1}\otimes\mathbf V_a^{-1}),\label{eq:3D_PB_diag}
\end{align}
\end{subequations}
where
\begin{equation}\label{eq:3D_LambdaLB}
\mathbf\Lambda_{\rm L}^{(3)}
=
\operatorname{diag}\big(
\tfrac{
\sigma_{a,i}\sigma_{b,j}\sigma_{c,k}
}{
\sigma_{b,j}\sigma_{c,k}
+\sigma_{a,i}\sigma_{c,k}
+\sigma_{a,i}\sigma_{b,j}
}
\big)_{i,j,k=1}^{N-1}, 
\quad \mathbf\Lambda_{\rm B}^{(3)}
=
\operatorname{diag}\big(
\tfrac{1}{
\sigma_{b,j}\sigma_{c,k}
+\sigma_{a,i}\sigma_{c,k}
+\sigma_{a,i}\sigma_{b,j}
}
\big)_{i,j,k=1}^{N-1}. 
\end{equation}
Therefore, the three-dimensional separable construction requires only the
three one-dimensional stable diagonalisations associated with
\(a, b\) and \(c\), while the remaining operations are diagonal
tensor-product operations.

For general variable-coefficient problem, i.e., \eqref{eq:general_elliptic} in three dimensions.
We construct the preconditioners  by taking  the diagonal principal coefficients using their
directional averages. Like \eqref{eq:general_avg_coeff}, we set
\begin{equation}\label{eq:3D_average_coeff}
a(x)=\bar a_{11}(x)
=
\frac{1}{4}
\int_{-1}^{1}\int_{-1}^{1}
a_{11}(x,y,z)\,{\rm d}y\,{\rm d}z,
\end{equation}
and likewise,  \(b(y)=\bar a_{22}(y)\) and \(c(z)=\bar a_{33}(z)\).
The corresponding one-dimensional coefficient matrices
\(\mathbf C_a\), \(\mathbf C_b\), and \(\mathbf C_c\) are then constructed
from these averaged coefficients and substituted directly into the
three-dimensional preconditioners \eqref{3Dprecon}. Thus, the
non-separable variable-coefficient case retains the same computational
structure as the separable construction above.

\begin{rmk}\label{Rmk:3D-theory}
The analysis of  eigenvalue distributions and conditioning, and estimates in two dimensions in Subsection \ref{Subsec:eigenanl} can be extended to three dimensions without significant difficulty.     \end{rmk}

\subsection{Numerical results}

We now assess the performance of the proposed preconditioned collocation methods, and focus on solving variable-coefficient problems resulting from coordinate transformations of the three-dimensional modified Helmholtz problems:
\begin{equation}\label{eq:3D_Helmholtz}
-\Delta u+\gamma u=f \quad \text{in } \; \widehat{\Omega}; \quad u=0 \quad \text{on }\; \partial\widehat{\Omega},\quad \gamma \ge 0,
\end{equation}
where $\widehat{\Omega}$ is  a tetrahedron or a curved hexahedron, as illustrated in Figure \ref{fig:Duffy_transformations3D}. 
We adopt the 3D Duffy 
and Gordon-Hall transformations to covert \eqref{eq:3D_Helmholtz} to the reference hexahedron $\Omega=(-1,1)^3.$


\begin{figure}[htbp]
    \centering
    \includegraphics[width=0.48\linewidth]{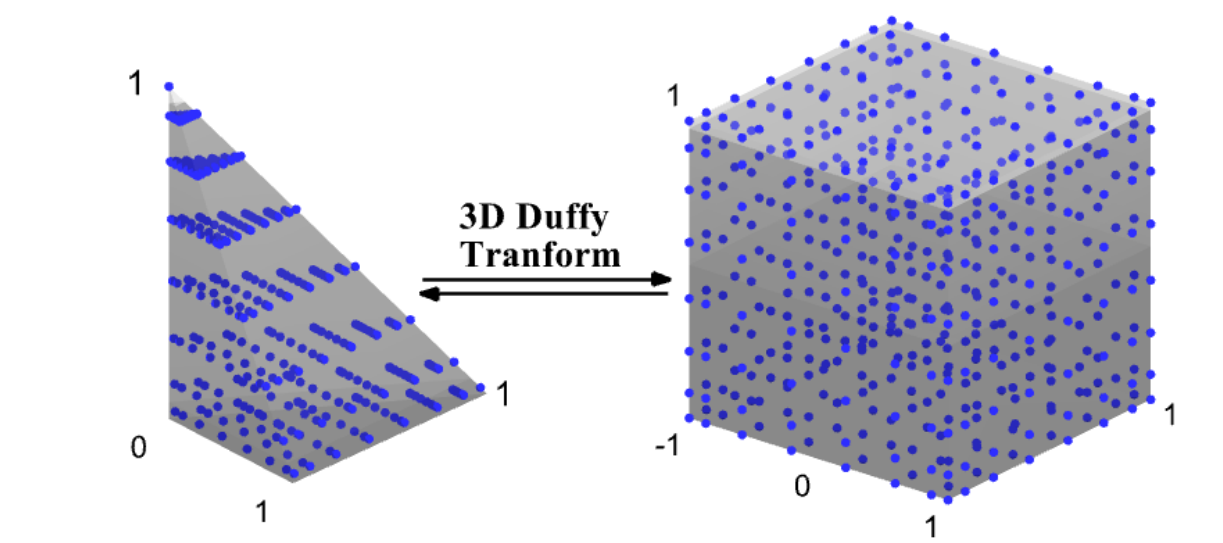} 
    \quad \includegraphics[width=0.48\linewidth]{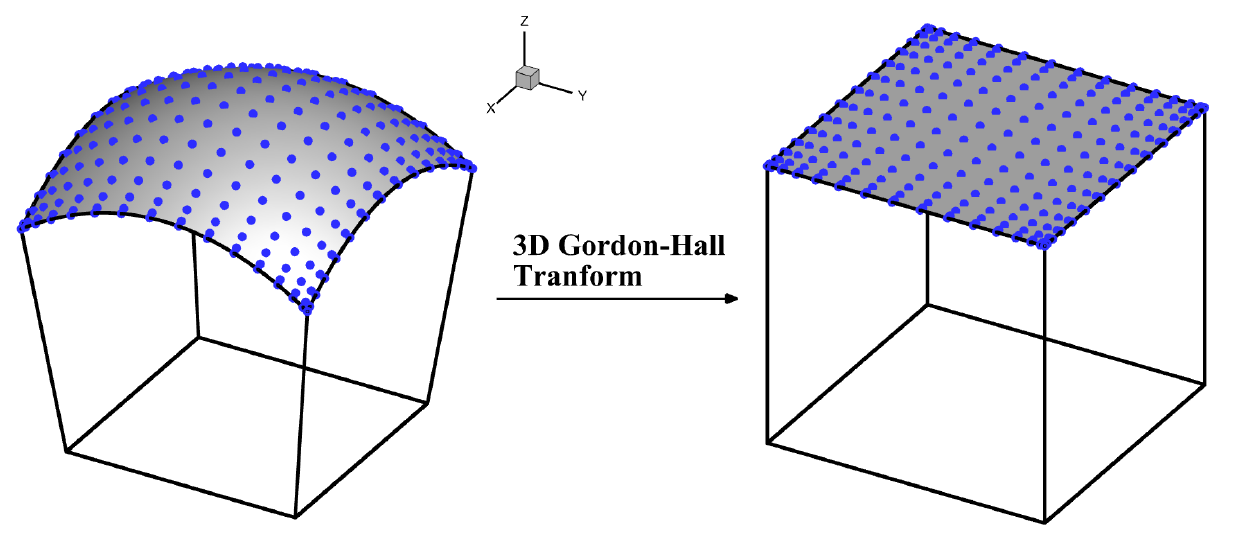} 
    \caption{Transformations from physical domains to the reference cube: the 3D Duffy transformation for a tetrahedron  (left) and the 3D Gordon-Hall transformation for a curved hexahedron (right).}
\label{fig:Duffy_transformations3D}
\end{figure}

We first consider \eqref{eq:3D_Helmholtz} on the tetrahedron
\(
\widehat{\Omega}=\operatorname{conv}\{(0,0,0),(1,0,0),(0,1,0),(0,0,1)\},
\)
with
\(
\gamma=100,f(x,y,z)=10^4\exp\big(\!-\tfrac{x^2+y^2+z^2}{0.05^2}\big).
\)
The reference solution is computed by PBCOL with $N=512$. 
Like \eqref{eq:Duffy_transformation},
the three-dimensional Duffy transformation reads
\begin{equation}\label{eq:3D_duffy}
\mathbf x(\boldsymbol\xi)
=
\begin{bmatrix}
\dfrac{(1+\xi_1)(1-\xi_2)(1-\xi_3)}{8}
,
\dfrac{(1+\xi_2)(1-\xi_3)}{4}
,
\dfrac{1+\xi_3}{2}
\end{bmatrix}^\intercal,
\quad \bm{\xi}=(\xi_1,\xi_2,\xi_3)^{\intercal}\in\Omega,
\end{equation}
which collapses the top surface of 
$\Omega$ to the vertex $(0,0,1)$ of 
$\widehat \Omega$ (see Figure
\ref{fig:Duffy_transformations3D}).
Following the same process as in
Subsection \ref{subsub:Duffy2D}, we can obtain the mapped variable coefficient problems on $\Omega$ and construct the collocation systems. Here, we omit the details.




\begin{figure}[htbp]
    \centering
\includegraphics[width=0.56\linewidth]{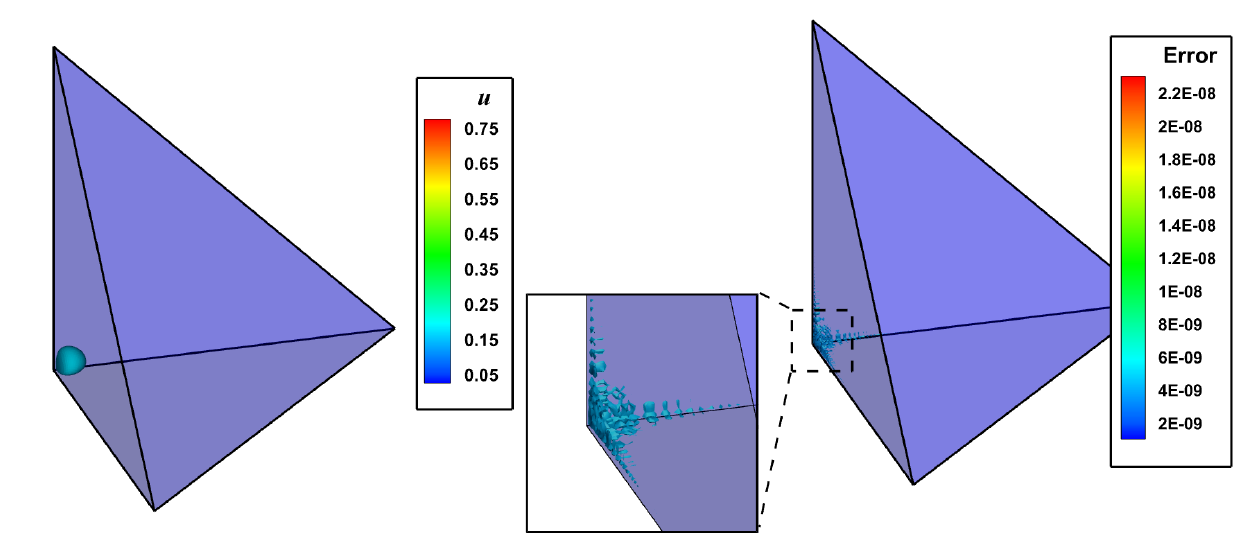} 
\quad     \includegraphics[width=0.3\linewidth]{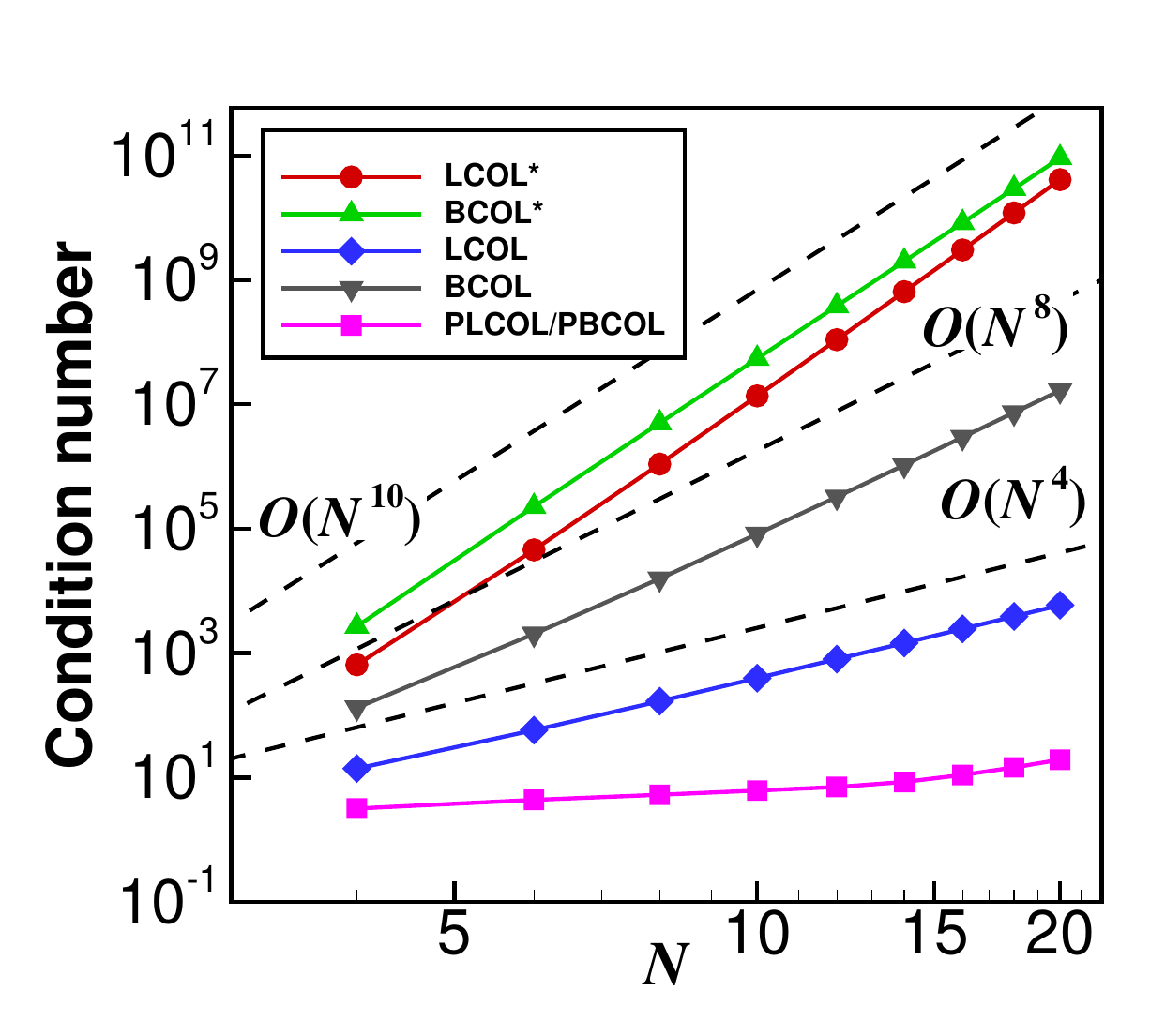} 
    \caption{Numerical results for the three-dimensional Duffy-transformed
problem. Left: solution isosurface at \(u=0.2\) for \(N=512\).
Middle: error isosurface at \(4\times10^{-9}\) for \(N=128\).
Right: condition-number growth of the six collocation systems.}
\label{fig:3Dduffy_angle}
\end{figure}

Figure~\ref{fig:3Dduffy_angle} (left) shows the numerical results for the
three-dimensional Duffy-transformed problem. The left panel shows the
isosurface of the numerical solution at \(u=0.2\) for \(N=512\), which
captures the localized corner singularity near the origin. The middle panel
displays the error isosurface at \(4\times10^{-9}\) for \(N=128\),
showing that the numerical error remains small even near the collapsed
region of the Duffy transformation. More importantly, the right panel shows that the condition numbers of $\mathrm{LCOL}^{\star}$ and $\mathrm{BCOL}^{\star}$ grow approximately as $\mathcal{O}(N^{10})$ and $\mathcal{O}(N^8)$, respectively, while those of LCOL and BCOL exhibit a much slower $\mathcal{O}(N^4)$ growth. By contrast, the condition numbers of PLCOL and PBCOL remain substantially smaller and increase only mildly with $N$, demonstrating the effectiveness of the proposed preconditioners for the three-dimensional Duffy-transformed problem.












\begin{table}[!htbp]
\centering
\caption{\footnotesize GMRES iteration counts, and CPU time (s) and relative $L^{\infty}$-errors/convergence order of PBCOL for \eqref{eq:3D_Helmholtz} on $\widehat{\Omega}$.}
\label{tab:duffy_3d_error_iter_cpu}
\footnotesize
\setlength{\tabcolsep}{3.0pt}
\sbox{\ReferenceTableBox}{
\begin{tabular}{c cc cc cc cc cc cccc}
\toprule
\multirow{2}{*}{$N$} 
& \multicolumn{2}{c}{LCOL$^\star$} 
& \multicolumn{2}{c}{BCOL$^\star$} 
& \multicolumn{2}{c}{LCOL} 
& \multicolumn{2}{c}{BCOL} 
& \multicolumn{2}{c}{PLCOL} 
& \multicolumn{4}{c}{PBCOL} \\

\cmidrule(lr){2-3}
\cmidrule(lr){4-5}
\cmidrule(lr){6-7}
\cmidrule(lr){8-9}
\cmidrule(lr){10-11}
\cmidrule(lr){12-15}

& Iter & Time
& Iter & Time
& Iter & Time
& Iter & Time
& Iter & Time
& Iter & Time & Error & Rate \\
\midrule

8
& 222  & 0.078
& 326  & 0.136
& 98   & 0.020
& 221  & 0.064
& 22   & 0.017
& 22   & 0.022
& 8.04e$-$01
& - \\

16
& 1350 & 8.722
& 2593 & 35.30
& 309  & 0.523
& 1259 & 7.932
& 24   & 0.022
& 24   & 0.034
& 6.96e$-$02
& 3.53 \\

32
& /    & /
& /    & /
& 1077 & 47.79
& /    & /
& 25   & 0.226
& 25   & 0.440
& 5.05e$-$03
& 3.79 \\

64
& /    & /
& /    & /
& 3868 & 2573
& /    & /
& 25   & 0.924
& 25   & 1.821
& 3.16e$-$04
& 4.00 \\

128
& /    & /
& /    & /
& /    & /
& /    & /
& 24   & 7.367
& 24   & 10.79
& 2.07e$-$05
& 3.93 \\

256
& /    & /
& /    & /
& /    & /
& /    & /
& 24   & 79.49
& 23   & 114.9
& 1.23e$-$06
& 4.07 \\

\bottomrule
\end{tabular}
}
\xdef\SharedTableScale{%
  \fpeval{\number\textwidth/\number\wd\ReferenceTableBox}%
}
\scalebox{\SharedTableScale}{\usebox{\ReferenceTableBox}}
\end{table}

Table~\ref{tab:duffy_3d_error_iter_cpu} reports the comparison similar to   
Table~\ref{tab:2d_triangle_omega90} in two dimensions. Indeed, we also observe the good performance of PLCOL and PBCOL, and particularly,   PBCOL captures and resolves the leading corner singularity with the expected order ${\mathcal O}(N^{-4})$ as in the last column of  Table~\ref{tab:2d_triangle_omega90}. 


Finally, we consider \eqref{eq:3D_Helmholtz} on a curved hexahedral domain $\widehat{\Omega}$ with
$f=\cosh(4x)\cosh(4y)\cosh(z)$ and $\gamma=1,$ where  
$\widehat{\Omega}$ is generated from the reference cube $\Omega=(-1,1)^3$ by the Gordon-Hall transformation (see Figure \ref{fig:Duffy_transformations3D} (right)):
\begin{equation}\label{eq:3D_GH}
\mathbf x
=
\mathbf T(\boldsymbol\xi)
=
\big(
r(\xi_3)\xi_1
,\;
r(\xi_3)\xi_2
,\;
\xi_3+
\tfrac{\alpha}{2}(1+\xi_3)
\big(
1-\tfrac{\xi_1^2+\xi_2^2}{2}
\big)\big)^\intercal, \;\;\; 
r(\xi_3)
=
\tfrac{1-\xi_3}{2}
+
\rho\tfrac{1+\xi_3}{2},
\end{equation}
where $\alpha,\rho>0$ are two tuning geometric parameters. 
In the test, we take
\(\alpha=0.85\) and \(
\rho=1.25.\)


Note that unlike the Duffy transformation, the Gordon-Hall mapping \eqref{eq:3D_GH} is regular, so we directly formulate the scheme and construct the preconditioners based on the transformed problem (see   
\eqref{eq:Transformed_PDEs_referenceDomain}) without multiplying the Jacobian (see \eqref{eq:scaled_transformed_PDE}).



 The reference solution is computed by  PBCOL  with \(N=512\). The left and middle panels in Figure~\ref{fig:3D_GordonHall}
display representative solution slices and the pointwise error
distribution, respectively, showing that the numerical solution remains
accurate over the curved domain. More importantly, the right panel indicates 
that the condition numbers of LCOL and BCOL grow approximately as
\(\mathcal{O}(N^4)\) and \(\mathcal{O}(N^8)\), respectively, whereas those of PLCOL and
PBCOL remain nearly constant. This demonstrates the effectiveness of the
proposed preconditioners for the nonseparable coefficients induced by the
Gordon-Hall transformation.

\begin{figure}[htbp]
    \centering
    \includegraphics[width=0.3\linewidth]{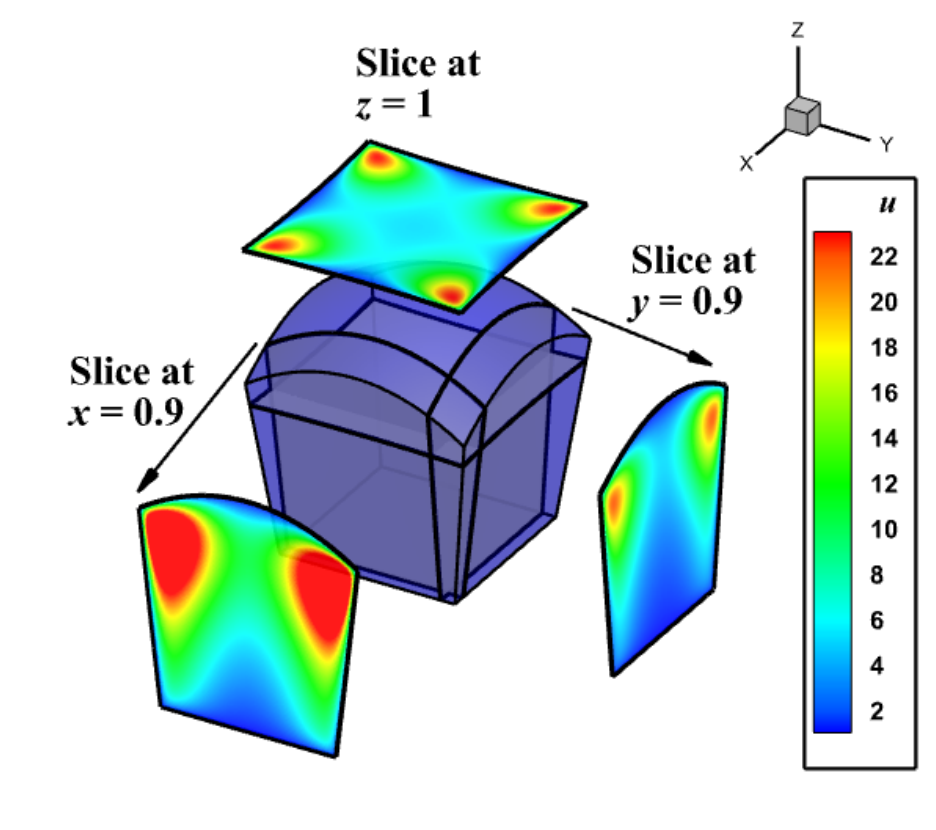}  \quad 
\includegraphics[width=0.3\linewidth]{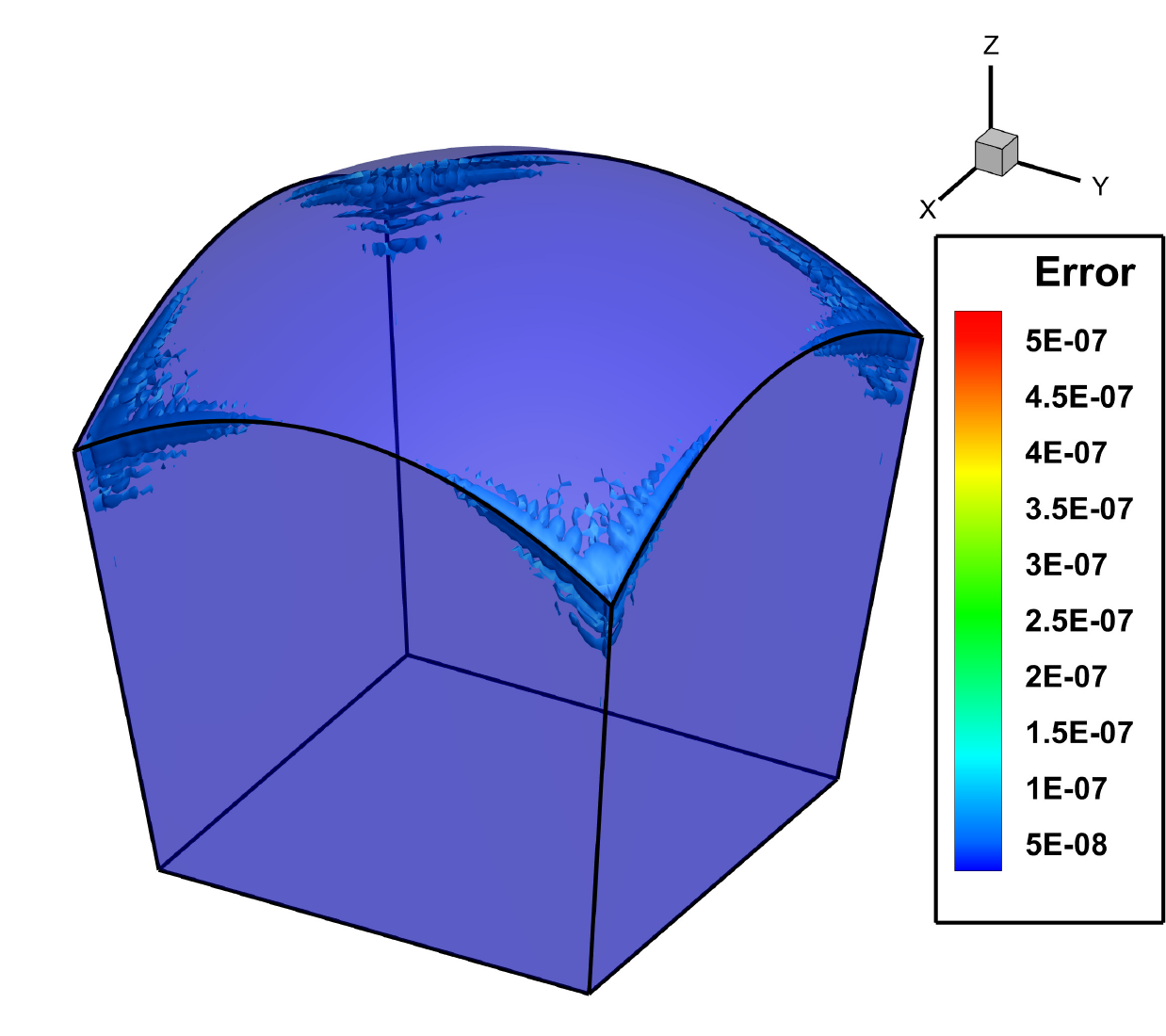}  
\quad \includegraphics[width=0.3\linewidth]{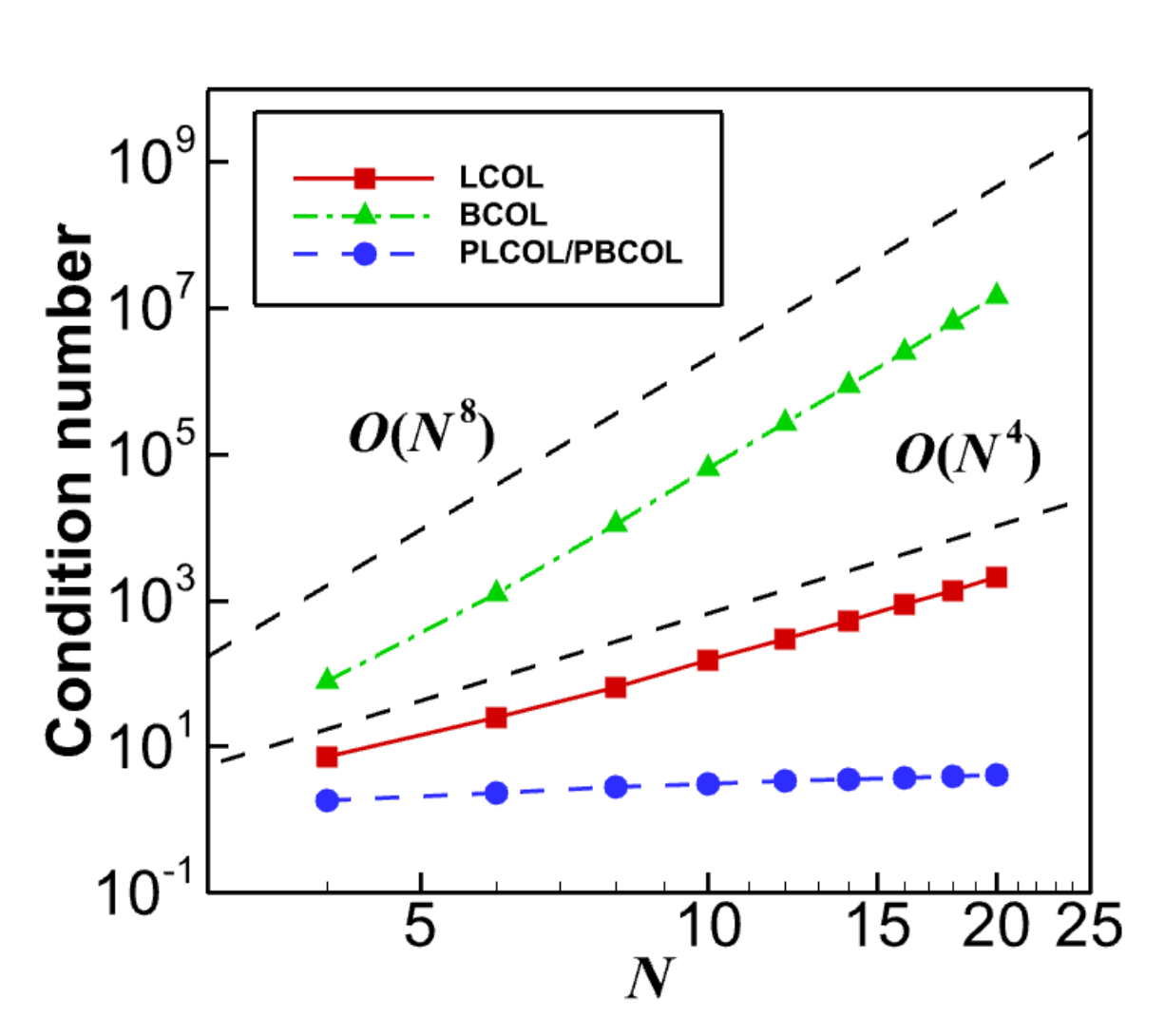}  
    \caption{
Numerical results for the three-dimensional Gordon-Hall problem.
Left: representative solution slices.
Middle: pointwise error distribution.
Right: condition-number growth.
}
\label{fig:3D_GordonHall}
\end{figure}

We provide in Table~\ref{tab:GH_3d_error_iter_cpu} a more quantitative comparison of the collocation schemes. As in the two-dimensional cases, the results again demonstrate the effectiveness of the proposed preconditioning techniques, which accurately resolve the corner singularity within the optimal approximability of  tensorial polynomial bases.   

\begin{table}[!htbp]
\centering
\caption{\footnotesize Errors, GMRES iteration counts, and CPU time (s) for the 3D Gordon-Hall problem.}
\label{tab:GH_3d_error_iter_cpu}
\footnotesize
\setlength{\tabcolsep}{3.0pt}

\scalebox{\SharedTableScale}{%
\begin{tabular}{c cc cc cc cccc}
\toprule
\multirow{2}{*}{$N$}
& \multicolumn{2}{c}{LCOL}
& \multicolumn{2}{c}{BCOL}
& \multicolumn{2}{c}{PLCOL}
& \multicolumn{4}{c}{PBCOL} \\

\cmidrule(lr){2-3}
\cmidrule(lr){4-5}
\cmidrule(lr){6-7}
\cmidrule(lr){8-11}

& Iter & Time
& Iter & Time
& Iter & Time
& Iter & Time & Error & Rate \\
\midrule

8
& 52   & 0.015
& 144  & 0.040
& 18   & 0.243
& 18   & 0.271
& 1.81e$-$01
& - \\

16
& 202  & 0.213
& 1098 & 5.649
& 23   & 0.504
& 23   & 0.521
& 1.88e$-$02
& 3.27 \\

32
& 763  & 25.13
& /    & /
& 25   & 1.204
& 25   & 1.354
& 1.74e$-$03
& 3.43 \\

64
& 2872 & 1394
& /    & /
& 27   & 3.171
& 27   & 3.838
& 1.83e$-$04
& 3.25 \\

128
& /    & /
& /    & /
& 28   & 14.21
& 28   & 18.29
& 1.73e$-$05
& 3.39 \\

256
& /    & /
& /    & /
& 32   & 135.3
& 28   & 152.6
& 1.98e$-$06
& 3.13 \\

\bottomrule
\end{tabular}%
}
\end{table}

\section{Conclusion}\label{sec:conclusion}
In this work, we resolve the long-standing issue of ill-conditioning of multidimensional Legendre collocation methods for second-order elliptic problems. The key to the success of the proposed approach is the stable diagonalisation of the underlying differentiation and integration matrices through a judiciously constructed similar symmetric matrix that properly incorporates the variable coefficients of the principal part.  We justify the effectiveness of the proposed preconditioning technique through spectral and conditioning analyses, and  demonstrate its robustness with extensive numerical tests on a variety of challenging elliptic-type problems.  

This work is our first attempt in this direction, with significant implications and some follow-up issues and aspects worthy of in-depth investigation, for instance, 
\vskip 3pt
\begin{itemize}
\item[(i)] Here, we only considered the second-order elliptic problems with Dirichlet boundary, but the idea and framework can be extended to other boundary conditions, and fourth-order problems (for which the usual collocation method suffers from even severer ill-conditioning). 
\vskip 3pt
\item[(ii)] The preconditioning  on triangles and tetrahedra sheds light on, and may provide a foundation for, effective preconditioning of spectral element methods on unstructured meshes \cite{KarniadakisSherwin2005,Khurana2026LOR}. Indeed, effective preconditioners are needed for  high-order element methods in various formulations (see e.g., \cite{Fortunato2024SurfacePDE,chalmers2018low,DiosadyMurman2019,bello2019scalable,Pablo2022scalable}), and our approach may offer a more natural high-order alternative to the low-order preconditioning strategies employed therein.  
 \end{itemize}
\vskip 3pt
\noindent We shall report further developments along these lines in future work.




\clearpage
\appendix
\renewcommand{\theHsection}{appendix.\Alph{section}}

\section{Proof of Theorem \ref{thm:general_spectral_bound}}\label{app:general_spectral_bound}

In this section, we provide the detailed proof of Theorem~\ref{thm:general_spectral_bound}. We first decompose the Lagrange-collocation operator into its second-order principal part, first-order part, and zeroth-order term. We then establish two auxiliary estimates in two lemmas: the first gives a uniform equivalence between the second-order principal part and the separable reference operator, while the second controls the first-order part in terms of the same reference operator. Finally, these estimates are combined with the stability assumption to derive the uniform spectral bounds for the preconditioned matrix.

We first introduce two auxiliary estimates needed in the proof.
In view of Proposition~\ref{prop:preconditioned_identity}, it is
sufficient to consider
\(\mathbf{A}_{\rm L}\mathbf{P}_{\rm L}\).
We decompose the Lagrange-collocation matrix in
\eqref{eq:general_AL} as
\(
\mathbf{A}_{\rm L}
=
\mathbf{A}_{{\rm L},2}
+
\mathbf{A}_{{\rm L},1}
+
\breve{\mathbf{S}},
\)
where
\begin{subequations}\label{supp-eq:AL_parts}
\begin{align}
\mathbf{A}_{{\rm L},2}
={}&
-\mathbf{C}_{11}
(\mathbf{I}_{N-1}\otimes\mathbf{D}^{(2)})
-2\mathbf{C}_{12}
(\mathbf{D}\otimes\mathbf{D})
-\mathbf{C}_{22}
(\mathbf{D}^{(2)}\otimes\mathbf{I}_{N-1}),
\label{supp-eq:AGL2}
\\
\mathbf{A}_{{\rm L},1}
={}&
\mathbf{Q}_1
(\mathbf{I}_{N-1}\otimes\mathbf{D})
+
\mathbf{Q}_2
(\mathbf{D}\otimes\mathbf{I}_{N-1}).
\label{supp-eq:AGL1}
\end{align}
\end{subequations}
The analysis relies on two auxiliary estimates. The first establishes
the uniform equivalence between the second-order principal part
\(\mathbf{A}_{{\rm L},2}\) and the separable reference operator
\(\mathbf{P}_{\rm L}^{-1}\), while the second controls the first-order
part \(\mathbf{A}_{{\rm L},1}\) in terms of the same reference operator.
Throughout this section, the superscript $*$ denotes complex conjugation,
while the superscript ${\rm H}$ denotes the conjugate transpose.

\subsection{Uniform equivalence between \(\mathbf{A}_{{\rm L},2}\) and \(\mathbf{P}_{\rm L}^{-1}\)}

We first establish a uniform equivalence between the full second-order
principal part \(\mathbf{A}_{{\rm L},2}\) and the separable reference
operator \(\mathbf{P}_{\rm L}^{-1}\). This estimate provides the main
control of the principal part in the subsequent spectral analysis.

\begin{lem}\label{lem:principal_uniform_equivalence}
For the general variable-coefficient problem
\eqref{eq:general_elliptic}, define
\begin{equation}\label{defR}
\mathbf A_0(x,y)
=
\operatorname{diag}
\bigl(
\bar a_{11}(x),\bar a_{22}(y)
\bigr),
\quad
\mathbf R(x,y)
=
\mathbf A_0^{-\frac12}
\mathbf A
\mathbf A_0^{-\frac12}.
\end{equation}
Assume that both \(\mathbf A\) and \(\mathbf A_0\) are uniformly
positive definite. Then there exist positive constants
\(c_1, c_2\), independent of \(N\), such that
\begin{equation}\label{eq:principal_uniform_equivalence}
c_1\|\mathbf P_{\rm L}^{-1}\boldsymbol v\|_{{\mathbf H}}
\leq
\|\mathbf A_{{\rm L},2}\boldsymbol v\|_{{\mathbf H}}
\leq
c_2\|\mathbf P_{\rm L}^{-1}\boldsymbol v\|_{{\mathbf H}},
\quad
\forall\,\boldsymbol v\in\mathbb C^{(N-1)^2},
\end{equation}
where
\(
{\mathbf H}
=
(\mathbf W\mathbf C_b^{-1})
\otimes
(\mathbf W\mathbf C_a^{-1})\) and \(
\|\boldsymbol v\|_{{\mathbf H}}
=
\sqrt{\boldsymbol v^{\rm H}{\mathbf H}\boldsymbol v}.
\)
\end{lem}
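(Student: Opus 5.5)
The plan is to reduce both sides of \eqref{eq:principal_uniform_equivalence} to weighted discrete norms of the Hessian of a polynomial. Then a discrete Miranda--Talenti identity and a pointwise Cordes-type inequality handle the variable, non-separable coefficient $\mathbf A$.

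\textbf{Step 1: pass to polynomials.} Given $\boldsymbol v$, let $P\in\mathbb P_N^0\otimes\mathbb P_N^0$ be its tensorial Lagrange interpolant, as in the proof of Lemma~\ref{WinD2in_eqA}. Write $a=\bar a_{11}$, $b=\bar a_{22}$ and, for a function $g$,
\[
\|g\|_h^2:=\sum_{i,j=1}^{N-1}\omega_i\omega_j\frac{|g(x_i,y_j)|^2}{a(x_i)b(y_j)}.
\]
Since $\mathbf P_{\rm L}^{-1}=\mathbf K_{\rm L}$ and $\mathbf H$ is diagonal with entries $\omega_i\omega_j/(a(x_i)b(y_j))$, we have
\[
\|\mathbf K_{\rm L}\boldsymbol v\|_{\mathbf H}=\|aP_{xx}+bP_{yy}\|_h,\qquad \|\mathbf A_{{\rm L},2}\boldsymbol v\|_{\mathbf H}=\|\mathbf A:D^2P\|_h .
\]
The first identity uses that $\mathbf K_{\rm L}$ does not involve $\mathbf D$. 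The second uses $(\mathbf D\otimes\mathbf D)\boldsymbol v=P_{xy}$ at the nodes.

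Now introduce the scaled Hessian $\mathbf M:=\mathbf A_0^{1/2}D^2P\,\mathbf A_0^{1/2}$. Then $\operatorname{tr}\mathbf M=aP_{xx}+bP_{yy}$ and $\mathbf R:\mathbf M=\mathbf A:D^2P$, with $\mathbf R$ as in \eqref{defR}. Moreover, pointwise,
\[
\frac{|\mathbf M|^2}{ab}=\frac ab P_{xx}^2+2P_{xy}^2+\frac ba P_{yy}^2 .
\]

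\textbf{Step 2: discrete Miranda--Talenti identity.} Expanding $(\operatorname{tr}\mathbf M)^2/(ab)$, the only cross term is $2\sum_{i,j}\omega_i\omega_jP_{xx}P_{yy}$, with no coefficient weight left. The boundary nodes contribute zero since $P$ vanishes there. In the $y$-sum, $P_{xx}(x_i,\cdot)P_{yy}(x_i,\cdot)$ has degree at most $2N-2$, and symmetrically in $x$, so \eqref{exactLGL} makes the double sum exact. Integrating by parts twice, using $P=0$ on $\partial\Omega$, gives $\int_\Omega P_{xx}P_{yy}=\int_\Omega P_{xy}^2$. The latter is again reproduced exactly by the LGL sum, since $P_{xy}^2$ has degree $\le 2N-2$ in each variable. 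Hence
\[
\|\operatorname{tr}\mathbf M\|_h=\||\mathbf M|\|_h \quad\text{exactly.}
\]
This gives the upper bound at once:
\[
\|\mathbf A:D^2P\|_h=\|\mathbf R:\mathbf M\|_h\le \sup|\mathbf R|\,\||\mathbf M|\|_h=\sup|\mathbf R|\,\|\operatorname{tr}\mathbf M\|_h .
\]
This yields $c_2$, which depends only on the uniform bounds for $\mathbf A$ and $\mathbf A_0$.

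\textbf{Step 3: lower bound via a Cordes-type inequality.} I expect this step to be the main obstacle, since $\mathbf A$ is neither separable nor diagonal. The idea is to exploit that in two dimensions every uniformly SPD matrix field satisfies the Cordes condition $|\mathbf R|^2/(\operatorname{tr}\mathbf R)^2\le 1/(1+\varepsilon)$ with some $\varepsilon>0$, here depending only on the eigenvalue bounds of $\mathbf R$. With $\gamma:=\operatorname{tr}\mathbf R/|\mathbf R|^2$ one has the standard pointwise estimate
\[
|\gamma\,\mathbf R:\mathbf M-\operatorname{tr}\mathbf M|\le\sqrt{1-\varepsilon}\,|\mathbf M| \quad\text{for all symmetric }\mathbf M .
\]
Evaluating at the nodes and taking $\|\cdot\|_h$ gives
\[
\|\operatorname{tr}\mathbf M\|_h\le\gamma_{\max}\|\mathbf R:\mathbf M\|_h+\sqrt{1-\varepsilon}\,\||\mathbf M|\|_h .
\]
Step~2 then absorbs the last term, yielding
\[
\bigl(1-\sqrt{1-\varepsilon}\bigr)\|\mathbf K_{\rm L}\boldsymbol v\|_{\mathbf H}\le\gamma_{\max}\|\mathbf A_{{\rm L},2}\boldsymbol v\|_{\mathbf H}.
\]
This is the constant $c_1$, independent of $N$.

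\textbf{Details to check.} I need to verify that the exactness argument in Step~2 is genuinely unaffected by the weights $a(x_i)$ and $b(y_j)$: they cancel only in the cross term, while in the diagonal terms they simply appear in both $\|\operatorname{tr}\mathbf M\|_h$ and $\||\mathbf M|\|_h$. The extension to complex $\boldsymbol v$ is immediate by treating real and imaginary parts, since all the matrices involved are real.
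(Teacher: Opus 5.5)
Your argument is essentially the paper's proof. Your scaled Hessian $\mathbf M$ is the paper's $\mathscr H_{0,h}v_N$, your $\gamma=\operatorname{tr}\mathbf R/|\mathbf R|^2$ is its $\varphi$, and your pointwise Cordes constant $\sqrt{1-\varepsilon}$ is its $\rho_A=\sup\|\varphi\mathbf R-\mathbf I_2\|_F<1$. The only cosmetic difference is that you get the upper bound directly from $|\mathbf R:\mathbf M|\le|\mathbf R|\,|\mathbf M|$, whereas the paper gets both bounds from the single estimate $\|\varphi\mathbf A_{{\rm L},2}\boldsymbol v-\mathbf P_{\rm L}^{-1}\boldsymbol v\|_{\mathbf H}\le\rho_A\|\mathbf P_{\rm L}^{-1}\boldsymbol v\|_{\mathbf H}$ together with $M^{-1}<\varphi<m^{-1}$.

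One claim in Step~2 is wrong, though harmless: the identity $\|\operatorname{tr}\mathbf M\|_h=\||\mathbf M|\|_h$ does not hold exactly.

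\begin{itemize}
\item The cross term is handled correctly. $P_{xx}=0$ on $y=\pm1$ and $P_{yy}=0$ on $x=\pm1$, so the interior sum of $P_{xx}P_{yy}$ equals $\int_\Omega P_{xx}P_{yy}=\int_\Omega P_{xy}^2$.
\item The problem is the other term. The full LGL sum over $0\le i,j\le N$ reproduces $\int_\Omega P_{xy}^2$, but $\|\cdot\|_h$ only sums over interior nodes.
\item $P_{xy}$ need not vanish on $\partial\Omega$. For example, $P=(1-x^2)(1-y^2)$ gives $P_{xy}=4xy$, which equals $4$ at $(1,1)$.
\end{itemize}

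What actually holds is
\[
\|\operatorname{tr}\mathbf M\|_h^2-\||\mathbf M|\|_h^2=2\sum_{\text{boundary nodes}}\omega_i\omega_j\,|P_{xy}(x_i,y_j)|^2\ \ge 0 .
\]
So you only have $\||\mathbf M|\|_h\le\|\operatorname{tr}\mathbf M\|_h$. This is exactly the paper's inequality \eqref{H0hvNHFineq}. It is also precisely the direction you use in both the upper bound and the absorption step of Step~3, so your proof goes through once you replace ``exactly'' with this inequality.
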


\begin{proof}
    By the definition of \(\mathbf R(x,y)\) in \eqref{defR}, we know that it is SPD, thus
if we denote the two eigenvalues of $\mathbf R$ as $\mu_1(x,y)$ and $\mu_2(x,y)$, then there exist positive constants
\(
m,M
\)
such that $m<\mu_1,\mu_2<M$ holds for all $(x,y)\in\Omega$. Here we define
\[
\varphi(x,y)
:=
\frac{\operatorname{tr}\mathbf R(x,y)}
{\|\mathbf R(x,y)\|_F^2},
\quad{\rm and}\quad
\rho_A
:=
\sup_{(x,y)\in\Omega}
\|\varphi\mathbf R-\mathbf I_2\|_F,
\]
where $\|\cdot\|_F$ is the Frobenius norm. Since \(\mathbf R(x,y)\) is SPD with eigenvalues \(\mu_1\) and \(\mu_2\), it is clear that
\begin{equation}\label{varphibound}
\operatorname{tr}\mathbf R=\mu_1+\mu_2,\quad \|\mathbf R(x,y)\|_F^2=\mu_1^2+\mu_2^2\quad \Rightarrow \quad \varphi=\tfrac{\mu_1+\mu_2}{\mu_1^2+\mu_2^2}.
\end{equation}
Then, the eigenvalues of the shifted matrix \(\varphi\mathbf R-\mathbf I_2\) are precisely \(\varphi\mu_1-1\) and \(\varphi\mu_2-1\) and
\[
\|\varphi\mathbf R-\mathbf I_2\|_F^2
=
(\varphi\mu_1-1)^2+(\varphi\mu_2-1)^2=\dfrac{(\mu_1-\mu_2)^2}{(\mu_1^2+\mu_2^2)},
\]
which leads to $\rho_A<1$. 

For
\(
\boldsymbol v\in\mathbb C^{(N-1)^2},
\)
let \(v_N\in\mathbb P_N\otimes\mathbb P_N\cap H_0^1(\Omega)\)
be the tensor-product Lagrange polynomial associated with
\(\boldsymbol v\) through
\eqref{eq:2d_Lagrange_Birkhoff_expansions}. We further define
\begin{equation}\label{defHoh}
\mathscr H_{0,h}v_N
=
\begin{bmatrix}
\bar a_{11}v_{N,xx}
&
\sqrt{\bar a_{11}\bar a_{22}}\,v_{N,xy}
\\
\sqrt{\bar a_{11}\bar a_{22}}\,v_{N,xy}
&
\bar a_{22}v_{N,yy}
\end{bmatrix},
\end{equation}
and for a matrix-valued grid function \(\mathbf G\), set
\[
\|\mathbf G\|_{{\mathbf H},F}^2
:=
\sum_{i,j=1}^{N-1}
\frac{\omega_i\omega_j}
{\bar a_{11}(x_i)\bar a_{22}(y_j)}
\|\mathbf G(x_i,y_j)\|_F^2.
\]
By the definition of \(\mathbf P_{\rm L}^{-1}\) in \eqref{def:PL_PB_var_coeff}, at each interior node
\((x_i,y_j)\), we have
\begin{equation}\label{invPLnodes}
(\mathbf P_{\rm L}^{-1}\boldsymbol v)_{ij}
=
-
a_i v_{N,xx}(x_i,y_j)
-
b_j v_{N,yy}(x_i,y_j),
\end{equation}
where \(
a_i:=\bar a_{11}(x_i),
b_j:=\bar a_{22}(y_j).
\)
By the definition of \({\mathbf H}\),
it is clear that \({\mathbf H}\) is diagonal, and its diagonal entry corresponding to the tensor-product
node \((x_i,y_j)\) is
\(
\frac{\omega_i\omega_j}{a_i b_j}.
\)
Therefore,
\begin{equation}\label{PLvnormH}
\begin{aligned}
\|\mathbf P_{\rm L}^{-1}\boldsymbol v\|_{{\mathbf H}}^2
&=
\sum_{i,j=1}^{N-1}
\frac{\omega_i\omega_j}{a_i b_j}
\left|
a_i v_{N,xx}(x_i,y_j)
+
b_j v_{N,yy}(x_i,y_j)
\right|^2\\
&=
\sum_{i,j=1}^{N-1}
\omega_i\omega_j
\Big(
\frac{a_i}{b_j}|v_{N,xx}|^2
+
\frac{b_j}{a_i}|v_{N,yy}|^2
+
2\operatorname{Re}
\bigl(v_{N,xx}v_{N,yy}^{*}\bigr)
\Big)(x_i,y_j).
\end{aligned}
\end{equation}
On the other hand, by the definition of $\mathscr H_{0,h}v_N$ in \eqref{defR} and \eqref{defHoh}, we obtain that at each interior node
\((x_i,y_j)\),
\[
\|\mathscr H_{0,h}v_N(x_i,y_j)\|_F^2
=
a_i^2|v_{N,xx}|^2
+
2a_ib_j|v_{N,xy}|^2
+
b_j^2|v_{N,yy}|^2,
\]
which leads to
\begin{equation}\label{HohvNH}
\|\mathscr H_{0,h}v_N\|_{{\mathbf H},F}^2
=
\sum_{i,j=1}^{N-1}
\omega_i\omega_j
\Big(
\frac{a_i}{b_j}|v_{N,xx}|^2
+
2|v_{N,xy}|^2
+
\frac{b_j}{a_i}|v_{N,yy}|^2
\Big)(x_i,y_j).
\end{equation}
Since
\(
v_N|_{\partial\Omega}=0,
\)
integration by parts twice gives
\begin{equation}\label{integrals}
\int_\Omega |v_{N,xy}|^2\,dxdy
=\Re
\int_\Omega
v_{N,xx}v_{N,yy}^{*}\,dxdy=
\Re
\sum_{i,j=1}^{N-1}
\omega_i\omega_j
v_{N,xx}(x_i,y_j)
v_{N,yy}^{*}(x_i,y_j),
\end{equation}
where the first equality follows from integration by parts and the homogeneous boundary condition, while the second follows from the exactness of the tensor-product LGL quadrature.
In addition, the boundary-node contributions of
\(|v_{N,xy}|^2\) are nonnegative, and therefore
\[
\sum_{i,j=1}^{N-1}
\omega_i\omega_j
|v_{N,xy}(x_i,y_j)|^2
\leq
\int_\Omega |v_{N,xy}|^2\,dxdy=\Re 
\sum_{i,j=1}^{N-1}
\omega_i\omega_j
v_{N,xx}(x_i,y_j)
v_{N,yy}^{*}(x_i,y_j).
\]
Consequently, by \eqref{PLvnormH}--\eqref{HohvNH}, we have
\begin{equation}\label{H0hvNHFineq}
    \|\mathscr H_{0,h}v_N\|_{{\mathbf H},F}
\leq
\|\mathbf P_{\rm L}^{-1}\boldsymbol v\|_{{\mathbf H}}.
\end{equation}

Moreover, we define \(
\mathbf\varphi
=
\operatorname{diag}
(
\varphi(x_i,y_j)
)_{i,j=1}^{N-1}.
\) By \eqref{defR} and \eqref{defHoh} one has
\[
\operatorname{tr}
\left(
(\varphi\mathbf R-\mathbf I_2)^\intercal
\mathscr H_{0,h}v_N
\right)
=
(\varphi a_{11}-\bar a_{11})v_{N,xx}
+
2\varphi a_{12}v_{N,xy}
+
(\varphi a_{22}-\bar a_{22})v_{N,yy}.
\]
According to the definition of \(\mathbf A_{{\rm L},2}\) in \eqref{supp-eq:AGL2}, at each interior LGL node one has
\[
(\mathbf A_{{\rm L},2}\boldsymbol v)_{ij}
=
-\bigl(
a_{11}v_{N,xx}
+
2a_{12}v_{N,xy}
+
a_{22}v_{N,yy}
\bigr)(x_i,y_j),
\]
Therefore, combining \eqref{invPLnodes}, we obtain that
\[
\begin{aligned}
(
\mathbf\varphi\mathbf A_{{\rm L},2}\boldsymbol v
-
\mathbf P_{\rm L}^{-1}\boldsymbol v
)_{ij}
=
-[
(\varphi a_{11}-\bar a_{11})v_{N,xx}
+
2\varphi a_{12}v_{N,xy}+
(\varphi a_{22}-\bar a_{22})v_{N,yy}
](x_i,y_j),
\end{aligned}
\]
which implies that $-(
\mathbf\varphi\mathbf A_{{\rm L},2}\boldsymbol v
-
\mathbf P_{\rm L}^{-1}\boldsymbol v
)_{ij}
=\operatorname{tr}
\left(
(\varphi\mathbf R-\mathbf I_2)^\intercal
\mathscr H_{0,h}v_N
\right).$
By the Frobenius Cauchy--Schwarz inequality, we have
\[
\left|
\operatorname{tr}
\left(
(\varphi\mathbf R-\mathbf I_2)^\intercal
\mathscr H_{0,h}v_N
\right)
\right|
\leq
\|\varphi\mathbf R-\mathbf I_2\|_F
\|\mathscr H_{0,h}v_N\|_F.
\]
By the definition of \(\rho_A\), it leads to
\begin{equation}\label{rhoAineq}
\left|
(
\mathbf\varphi\mathbf A_{{\rm L},2}\boldsymbol v
-
\mathbf P_{\rm L}^{-1}\boldsymbol v
)_{ij}
\right|
\leq
\rho_A
\|\mathscr H_{0,h}v_N(x_i,y_j)\|_F.
\end{equation}
Squaring both sides of \eqref{rhoAineq}, multiplying by the weight
\(
\frac{\omega_i\omega_j}{a_i b_j}
\),
and summing over \(i,j\), we obtain
\[
\begin{aligned}
\|
\mathbf\varphi\mathbf A_{{\rm L},2}\boldsymbol v
-
\mathbf P_{\rm L}^{-1}\boldsymbol v
\|_{\mathbf H}^2
\leq
\rho_A^2
\sum_{i,j=1}^{N-1}
\frac{\omega_i\omega_j}{a_i b_j}
\|\mathscr H_{0,h}v_N(x_i,y_j)\|_F^2
=
\rho_A^2
\|\mathscr H_{0,h}v_N\|_{\mathbf H,F}^2.
\end{aligned}
\]
Using \eqref{H0hvNHFineq} we finally obtain
\begin{equation}\label{rhoAineq2}
\|
\mathbf\varphi\mathbf A_{{\rm L},2}\boldsymbol v
-
\mathbf P_{\rm L}^{-1}\boldsymbol v
\|_{\mathbf H}
\leq
\rho_A
\|\mathbf P_{\rm L}^{-1}\boldsymbol v\|_{\mathbf H}.
\end{equation}
By \eqref{varphibound}, we find that $M^{-1}<\varphi<m^{-1}$, then the definition of $\mathbf\varphi$ leads to 
\begin{equation}\label{varphiALbound}
M^{-1}\|\mathbf{A}_{{\rm L},2}\bm{v}\|_{\mathbf{H}} \leq \|\mathbf{\varphi}\mathbf{A}_{{\rm L},2}\bm{v}\|_{\mathbf{H}} \leq m^{-1}\|\mathbf{A}_{{\rm L},2}\bm{v}\|_{\mathbf{H}}.
\end{equation}
Combining \eqref{rhoAineq2}--\eqref{varphiALbound} and the fact of $\rho_A<1$ we proved above, we obtain
\[
m(1-\rho_A)\|\mathbf{P}_{\rm L}^{-1}\bm{v}\|_{\mathbf{H}}
\leq
\|\mathbf{A}_{{\rm L},2}\bm{v}\|_{\mathbf{H}}
\leq
M(1+\rho_A)\|\mathbf{P}_{\rm L}^{-1}\bm{v}\|_{\mathbf{H}},
\]
which ends the proof.

\end{proof}

\subsection{Bound of the operator \(\mathbf{A}_{{\rm L},1}\)}\label{prooflemma2}

We next derive a relative bound for the first-order part
\(\mathbf{A}_{{\rm L},1}\) in terms of the reference operator
\(\mathbf{P}_{\rm L}^{-1}\) and the weighted discrete norm. This
estimate is used to control the lower-order contribution in the
spectral analysis.

\begin{lem}\label{lem:first_order_relative_bound}
For the general variable-coefficient problem
\eqref{eq:general_elliptic}, assume that
\[
0<\alpha_0
\leq
\bar a_{11}(x),\,\bar a_{22}(y)
\leq
\beta_0<\infty.
\] 
Let
\(
q_\infty
=
\|
\sqrt{q_1^2+q_2^2}
\|_{L^\infty(\Omega)}.
\)
Then, for every
\(
\boldsymbol v\in\mathbb C^{(N-1)^2},
\)
\begin{equation}\label{eq:first_order_relative_bound}
\|\mathbf A_{{\rm L},1}\boldsymbol v\|_{{\mathbf H}}
\leq
\frac{q_\infty}{\sqrt{\alpha_0}}
\|\mathbf P_{\rm L}^{-1}\boldsymbol v\|_{{\mathbf H}}^{\frac12}\, 
\|\boldsymbol v\|_{{\mathbf H}}^{\frac12}.
\end{equation}
\end{lem}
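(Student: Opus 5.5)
Let $v_N\in\mathbb P_N\otimes\mathbb P_N\cap H_0^1(\Omega)$ be the tensor-product Lagrange interpolant of $\boldsymbol v$, as in the proof of Lemma~\ref{lem:principal_uniform_equivalence}. The plan has three steps: bound $\|\mathbf A_{{\rm L},1}\boldsymbol v\|_{\mathbf H}$ pointwise by a weighted discrete gradient norm; convert that norm into an exact integral via LGL quadrature and one integration by parts; then close with Cauchy--Schwarz against $\mathbf P_{\rm L}^{-1}\boldsymbol v$.

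\emph{Step 1 (pointwise bound).} At each interior node,
\[
(\mathbf A_{{\rm L},1}\boldsymbol v)_{ij}=\bigl(q_1v_{N,x}+q_2v_{N,y}\bigr)(x_i,y_j).
\]
By the pointwise Cauchy--Schwarz inequality, this is bounded by $q_\infty\bigl(|v_{N,x}|^2+|v_{N,y}|^2\bigr)^{1/2}(x_i,y_j)$. Inserting the diagonal weights $\omega_i\omega_j/(a_ib_j)$ of $\mathbf H$ and using $a_i,b_j\ge\alpha_0$ only once gives
\[
\|\mathbf A_{{\rm L},1}\boldsymbol v\|_{\mathbf H}^2\le \frac{q_\infty^2}{\alpha_0}\sum_{i,j=1}^{N-1}\omega_i\omega_j\Bigl(\frac{|v_{N,x}|^2}{b_j}+\frac{|v_{N,y}|^2}{a_i}\Bigr)(x_i,y_j).
\]
The key is to keep the cross weights $1/b_j$ on the $x$-derivative and $1/a_i$ on the $y$-derivative. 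These are exactly the weights that appear naturally in Step~2.

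\emph{Step 2 (quadrature and integration by parts).} Extend the sums to all LGL nodes; the added boundary terms are nonnegative, so the sums only increase. The function $b(y)^{-1}|v_{N,x}|^2$ is not a polynomial in $y$. The plan is therefore to split the sum as $\sum_j\omega_j b_j^{-1}\bigl(\sum_i\omega_i|v_{N,x}(x_i,y_j)|^2\bigr)$. The inner sum is exact in $x$, since $|v_{N,x}|^2$ has degree $2N-2$ in $x$. One integration by parts in $x$ (using $v_N(\pm1,y_j)=0$) then gives
\[
\sum_i\omega_i|v_{N,x}(x_i,y_j)|^2=-\operatorname{Re}\sum_i\omega_i\, v_{N,xx}v_N^*(x_i,y_j),
\]
again by exactness. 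This rewrites the whole expression as
\[
-\operatorname{Re}\sum_{i,j}\omega_i\omega_j\Bigl(\frac{v_{N,xx}}{b_j}+\frac{v_{N,yy}}{a_i}\Bigr)v_N^*
=-\operatorname{Re}\sum_{i,j}\frac{\omega_i\omega_j}{a_ib_j}\bigl(a_iv_{N,xx}+b_jv_{N,yy}\bigr)v_N^*.
\]
The boundary nodes drop out because $v_N$ vanishes there. By \eqref{invPLnodes}, the last expression equals $\operatorname{Re}\,(\mathbf P_{\rm L}^{-1}\boldsymbol v,\boldsymbol v)_{\mathbf H}$.

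\emph{Step 3 (closing).} Apply the Cauchy--Schwarz inequality in the $\mathbf H$-inner product:
\[
\operatorname{Re}\,(\mathbf P_{\rm L}^{-1}\boldsymbol v,\boldsymbol v)_{\mathbf H}\le\|\mathbf P_{\rm L}^{-1}\boldsymbol v\|_{\mathbf H}\,\|\boldsymbol v\|_{\mathbf H}.
\]
Combining with Step~1 and taking square roots yields \eqref{eq:first_order_relative_bound}.

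The main obstacle I expect is Step~2. The weights $1/b_j$ are not polynomial, so exactness must be applied only in the one direction where the integrand is polynomial. The correct pairing of weights is what makes the discrete energy identity match $\mathbf H\mathbf P_{\rm L}^{-1}$ exactly. If this pairing failed, I would instead use $\alpha_0\le a_i,b_j\le\beta_0$ to compare the two weightings, at the cost of a factor $\beta_0/\alpha_0$ in the constant.
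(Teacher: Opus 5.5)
Your proposal is correct and follows the paper's proof essentially step for step. The argument is the same throughout: the pointwise Cauchy--Schwarz bound keeps the cross-weights $1/b_j$ on $|v_{N,x}|^2$ and $1/a_i$ on $|v_{N,y}|^2$, line-by-line LGL exactness plus one integration by parts identifies that weighted discrete gradient energy with $(\mathbf P_{\rm L}^{-1}\boldsymbol v,\boldsymbol v)_{\mathbf H}$, and a final Cauchy--Schwarz in the $\mathbf H$-inner product closes it. Your explicit treatment of the real part for complex $\boldsymbol v$ and of the nonnegative added boundary-node terms is, if anything, slightly more careful than the paper's write-up.
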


\begin{proof}
For
\(
\boldsymbol v\in\mathbb C^{(N-1)^2},
\) let
\(
v_N\in\mathbb P_N\otimes\mathbb P_N\cap H_0^1(\Omega)
\)
be the tensor-product Lagrange polynomial associated with
\(\boldsymbol v\) through
\eqref{eq:2d_Lagrange_Birkhoff_expansions}. At every interior
LGL node, by the definition of $\mathbf A_{\rm L}$ in \eqref{supp-eq:AGL1}, we have
\[
(\mathbf A_{{\rm L},1}\boldsymbol v)_{ij}
=
q_1(x_i,y_j)v_{N,x}(x_i,y_j)
+
q_2(x_i,y_j)v_{N,y}(x_i,y_j).
\]
Since \(\mathbf{H}=(\mathbf{W}\mathbf{C}_b^{-1})\otimes(\mathbf{W}\mathbf{C}_a^{-1})\),
and denoting \(a_i=\bar a_{11}(x_i)\) and \(b_j=\bar a_{22}(y_j)\) as the proof of Lemma \ref{lem:principal_uniform_equivalence}, we have
\begin{equation}\label{AL1vHnorm}
\|\mathbf{A}_{{\rm L},1}\bm{v}\|_{\mathbf{H}}^2
=
\sum_{i,j=1}^{N-1}
\frac{\omega_i\omega_j}{a_i b_j}
\left|
q_1(x_i,y_j)v_{N,x}(x_i,y_j)
+
q_2(x_i,y_j)v_{N,y}(x_i,y_j)
\right|^2.
\end{equation}
By the Cauchy--Schwarz inequality, it leads to
\begin{equation}\label{qCSineq}
|q_1v_{N,x}+q_2v_{N,y}|^2
\leq
(q_1^2+q_2^2)
\left(|v_{N,x}|^2+|v_{N,y}|^2\right)
\leq
q_\infty^2
\left(|v_{N,x}|^2+|v_{N,y}|^2\right).
\end{equation}
Therefore, combining \eqref{AL1vHnorm}--\eqref{qCSineq}, we obtain that
\[
\|\mathbf{A}_{{\rm L},1}\bm{v}\|_{\mathbf{H}}^2
\leq
q_\infty^2
\sum_{i,j=1}^{N-1}
\frac{\omega_i\omega_j}{a_i b_j}
\left(
|v_{N,x}|^2+|v_{N,y}|^2
\right).
\]
Using \(a_i,b_j\geq\alpha_0\), we have
\begin{equation}\label{AL1vHnormineq}
\|\mathbf{A}_{{\rm L},1}\bm{v}\|_{\mathbf{H}}^2
\leq
\frac{q_\infty^2}{\alpha_0}
\sum_{i,j=1}^{N-1}
\omega_i\omega_j
\left(
\frac{|v_{N,x}(x_i,y_j)|^2}{\bar a_{22}(y_j)}
+
\frac{|v_{N,y}(x_i,y_j)|^2}{\bar a_{11}(x_i)}
\right).
\end{equation}

On the other hand, from the definition of $\mathbf{P}_{\rm L}$ in \eqref{def:PL_PB_var_coeff} and
\(
\mathbf{H}
=
(\mathbf{W}\mathbf{C}_b^{-1})
\otimes
(\mathbf{W}\mathbf{C}_a^{-1}),
\)
we obtain
\[
\begin{aligned}
(\mathbf{P}_{\rm L}^{-1}\bm{v},\bm{v})_{\mathbf{H}}
={}&
-\bm{v}^{\rm H}
[
(\mathbf{W}\mathbf{C}_b^{-1})
\otimes
(\mathbf{W}\mathbf{D}^{(2)})
]\bm{v}
-
\bm{v}^{\rm H}
[
(\mathbf{W}\mathbf{D}^{(2)})
\otimes
(\mathbf{W}\mathbf{C}_a^{-1})
]\bm{v}.
\end{aligned}
\]
For each fixed \(y_j\), since \(v_N\) satisfies the homogeneous Dirichlet
boundary condition, \(v_N(\pm1,y_j)=0\). The exactness of the LGL quadrature and integration by parts implies that
\[
-\sum_{i=1}^{N-1}
\omega_i v_N^*(x_i,y_j)v_{N,xx}(x_i,y_j)
=
\int_{-1}^{1}|v_{N,x}(x,y_j)|^2\,{\rm d}x
=
\sum_{i=0}^{N}
\omega_i|v_{N,x}(x_i,y_j)|^2.
\]
Similarly, for each fixed \(x_i\),
\[
-\sum_{j=1}^{N-1}
\omega_j v_N^*(x_i,y_j)v_{N,yy}(x_i,y_j)
=
\sum_{j=0}^{N}
\omega_j|v_{N,y}(x_i,y_j)|^2.
\]
Therefore,
\begin{equation}\label{PLinvvineq}
(\mathbf{P}_{\rm L}^{-1}\bm{v},\bm{v})_{\mathbf{H}}
=
\sum_{i,j=0}^{N}
\omega_i\omega_j
\left(
\frac{|v_{N,x}(x_i,y_j)|^2}{\bar a_{22}(y_j)}
+
\frac{|v_{N,y}(x_i,y_j)|^2}{\bar a_{11}(x_i)}
\right).
\end{equation}
Since all the terms in the summation are nonnegative, combining \eqref{AL1vHnormineq}--\eqref{PLinvvineq} yields
\[
\begin{aligned}
\|\mathbf{A}_{{\rm L},1}\bm{v}\|_{\mathbf{H}}^2
\leq
\frac{q_\infty^2}{\alpha_0}
(\mathbf{P}_{\rm L}^{-1}\bm{v},\bm{v})_{\mathbf{H}}
\leq
\frac{q_\infty^2}{\alpha_0}
\|\mathbf{P}_{\rm L}^{-1}\bm{v}\|_{\mathbf{H}}
\|\bm{v}\|_{\mathbf{H}},
\end{aligned}
\]
where the last inequality uses the Cauchy--Schwarz inequality with respect to the \(\mathbf{H}\)-inner product. This ends the proof.
\end{proof}

\subsection{Detailed proof}

With the estimates for the second- and first-order parts established
above, we now prove the uniform spectral bounds for the preconditioned
collocation matrix.

\begin{proof}
    By Proposition~\ref{prop:preconditioned_identity}, it suffices to
consider \(\mathbf A_{\rm L}\mathbf P_{\rm L}\). We first consider the upper bound in \eqref{eq:general_preconditioned_annular_bound}. Arguing as in the proof of
Theorem~\ref{thm:separable_spectral_bound}, let
\(
\mathbf K_{\rm L}=\mathbf P_{\rm L}^{-1}
\)
and consider an eigenpair
\(
\mathbf A_{\rm L}\boldsymbol v
=
\lambda\mathbf K_{\rm L}\boldsymbol v.
\) 
By Lemmas~\ref{lem:principal_uniform_equivalence} and
\ref{lem:first_order_relative_bound}, we have
\begin{equation}\label{lemmasresult}
    \|\mathbf{A}_{{\rm L},2}\bm{v}\|_{\mathbf{H}}
\leq
C\|\mathbf{K}_{\rm L}\bm{v}\|_{\mathbf{H}},
\quad
\|\mathbf{A}_{{\rm L},1}\bm{v}\|_{\mathbf{H}}
\leq
\frac{q_\infty}{\sqrt{\alpha_0}}
\|\mathbf{K}_{\rm L}\bm{v}\|_{\mathbf{H}}^{\frac12}
\|\bm{v}\|_{\mathbf{H}}^{\frac12}.
\end{equation}
Here we also set 
\(
\widehat{\mathbf K}_{\rm L}
:=\mathbf H^{\frac12}\mathbf K_{\rm L}\mathbf H^{-\frac12}
\)
which is SPD and has the same spectrum as
\(\mathbf K_{\rm L}\). Let \(\bm{w}=\mathbf{H}^{\frac12}\bm{v}\). Then it is clear that
\[
\|\mathbf{K}_{\rm L}\bm{v}\|_{\mathbf{H}}
=
\|\mathbf{H}^{\frac12}\mathbf{K}_{\rm L}\bm{v}\|_2
=
\|\widehat{\mathbf{K}}_{\rm L}\bm{w}\|_2
\geq
\lambda_{\min}(\widehat{\mathbf{K}}_{\rm L})\|\bm{w}\|_2.
\]
Therefore, \eqref{spectralKL} and the fact of \(\|\bm{w}\|_2=\|\bm{v}\|_{\mathbf{H}}\) lead to
\begin{equation}\label{klhbound}
    \|\mathbf{K}_{\rm L}\bm{v}\|_{\mathbf{H}}
\geq
\frac{\alpha_0\pi^2}{2}
\|\bm{v}\|_{\mathbf{H}}.
\end{equation}
Thus combining \eqref{lemmasresult}--\eqref{klhbound}, we have
\begin{equation}\label{al1vbound}
    \|\mathbf{A}_{{\rm L},1}\bm{v}\|_{\mathbf{H}}
\leq
\frac{\sqrt{2}q_\infty}{\alpha_0\pi}
\|\mathbf{K}_{\rm L}\bm{v}\|_{\mathbf{H}}.
\end{equation}
Since \(\breve{\mathbf S}\) is a diagonal matrix composed of
\(s(x_i,y_j)\) and \(0<s\leq s_{\max}\), by \eqref{klhbound} we have
\begin{equation}\label{Svbound}
    \|\breve{\mathbf S}\bm{v}\|_{\mathbf{H}}
\leq
s_{\max}\|\bm{v}\|_{\mathbf{H}}.
\end{equation}
Recall that 
\(
\mathbf A_{\rm L}
=
\mathbf A_{{\rm L},2}
+
\mathbf A_{{\rm L},1}
+
\breve{\mathbf S},
\)
combining the above three estimates \eqref{lemmasresult}, \eqref{al1vbound} and \eqref{Svbound} yields
\[
\|\mathbf{A}_{\rm L}\bm{v}\|_{\mathbf{H}}
\leq
\left(
C+
\frac{\sqrt{2}q_\infty}{\alpha_0\pi}
+
\frac{2s_{\max}}{\alpha_0\pi^2}
\right)
\|\mathbf{K}_{\rm L}\bm{v}\|_{\mathbf{H}},
\]
which proves the upper bound in \eqref{eq:general_preconditioned_annular_bound}.

For the lower bound, we argue by contradiction.
Suppose that the eigenvalues of the preconditioned matrices are not
uniformly bounded away from the origin as $N$ increases.
Then there exists a sequence of polynomial degrees $N_k\to\infty$
and corresponding eigenpairs
\[
\mathbf A_{\rm L}^k \bm v_k=\lambda_k\mathbf K_{\rm L}^k\bm  v_k,
\quad |\lambda_k|\to0.
\]
normalized by
\(
\|\mathbf K_{\rm L}^{k}\boldsymbol v_k\|_{{\mathbf H}_k}
=1.
\)
Then one has
\(
\|\mathbf A_{\rm L}^{k}\boldsymbol v_k\|_{{\mathbf H}_k}
=
|\lambda_k|
\to0.
\)

For
\(
\boldsymbol v\in\mathbb C^{(N-1)^2},
\) let
\(
v_N\in\mathbb P_N\otimes\mathbb P_N\cap H_0^1(\Omega)
\)
be the tensor-product Lagrange polynomial associated with
\(\boldsymbol v\) through
\eqref{eq:2d_Lagrange_Birkhoff_expansions}.  Because the bound \eqref{klhbound} holds for any $k$, then by 
\(
\|\mathbf K_{\rm L}^{k}\boldsymbol v_k\|_{{\mathbf H}_k}
=1
\)
and the Cauchy--Schwarz inequality, we have
\begin{equation}\label{disenergybound}
\|\boldsymbol v_k\|_{{\mathbf H}_k}
\leq
\frac{2}{\alpha_0\pi^2},\quad {\rm and} \quad (\mathbf K_{\rm L}^{k}\boldsymbol v_k,\boldsymbol v_k
)_{{\mathbf H}_k}
\leq \frac{2}{\alpha_0\pi^2}.
\end{equation}
From \eqref{PLinvvineq}, we have
\[
(\mathbf{K}_{\rm L}^k\bm{v}_k,\bm{v}_k)_{\mathbf{H}_k}
=
\sum_{i,j=0}^{N_k}
\omega_i\omega_j
\left(
\frac{|v_{k,x}(x_i,y_j)|^2}{\bar a_{22}(y_j)}
+
\frac{|v_{k,y}(x_i,y_j)|^2}{\bar a_{11}(x_i)}
\right).
\]
Since \(\bar a_{11}(x),\bar a_{22}(y)\leq\beta_0\), it leads to
\[
(\mathbf{K}_{\rm L}^k\bm{v}_k,\bm{v}_k)_{\mathbf{H}_k}
\geq
\frac{1}{\beta_0}
\sum_{i,j=0}^{N_k}
\omega_i\omega_j
\left(
|v_{k,x}(x_i,y_j)|^2
+
|v_{k,y}(x_i,y_j)|^2
\right).
\]
Thus by \eqref{disenergybound} it follows that
\[
\sum_{i,j=0}^{N_k}
\omega_i\omega_j
\left(
|v_{k,x}(x_i,y_j)|^2
+
|v_{k,y}(x_i,y_j)|^2
\right)
\leq \frac{2\beta_0}{\alpha_0\pi^2}.
\]
Moreover, by the uniform equivalence between the tensor-product LGL discrete norm and the continuous \(L^2\)-norm, there exists a constant \(C_{\rm LGL}\) independent of \(N_k\), such that
\[
\|v_{k,x}\|_{L^2(\Omega)}^2
\leq
C_{\rm LGL}
\sum_{i,j=0}^{N_k}
\omega_i\omega_j
|v_{k,x}(x_i,y_j)|^2,
\quad
\|v_{k,y}\|_{L^2(\Omega)}^2
\leq
C_{\rm LGL}
\sum_{i,j=0}^{N_k}
\omega_i\omega_j
|v_{k,y}(x_i,y_j)|^2.
\]
Consequently, there exists a constant $C$ independent of \(N_k\) such that
\begin{equation}\label{poincare}
\|\nabla v_k\|_{L^2(\Omega)}^2
=
\|v_{k,x}\|_{L^2(\Omega)}^2
+
\|v_{k,y}\|_{L^2(\Omega)}^2
\leq C.
\end{equation}
Thus, by the Poincar\'e inequality,
\(\{v_k\}\) is uniformly bounded in \(H_0^1(\Omega)\).
Hence, after passing to a subsequence, one has
\[
v_k\rightharpoonup v
\quad\text{in }H_0^1(\Omega),
\quad
v_k\to v
\quad\text{in }L^2(\Omega).
\]

We then define the standard tensor-product LGL discrete inner product by
\begin{equation}\label{LGLinner}
\langle \bm f,\bm g\rangle_{N_k}
=
\sum_{i,j=0}^{N_k}
\omega_i\omega_j f(x_i,y_j)g^*(x_i,y_j),
\end{equation}
where $\bm f,\bm g$ are the vector of the values of \(f,g\) at the tensor-product LGL nodes.
Let
\(
\mathbf{C}_k
=
\mathbf{C}_b^k\otimes\mathbf{C}_a^k.
\)
Since
\(
\mathbf{H}_k
=
(\mathbf{W}(\mathbf{C}_b^k)^{-1})
\otimes
(\mathbf{W}(\mathbf{C}_a^k)^{-1}),
\)
we have
\(
\mathbf{H}_k\mathbf{C}_k
=
\mathbf{W}\otimes\mathbf{W}.
\)
Let \(\phi\) be an arbitrary function in \(C_0^\infty(\Omega)\)
and let \(\bm{\phi}_k\) denote the vector of the values of \(\phi\) at the
interior tensor-product LGL nodes. 
Then by the definitions of \(\mathbf{H}_k\) and \(\mathbf{C}_k\), we have
\begin{equation}\label{ALvkHk}
(
\mathbf{A}_{\rm L}^k\bm{v}_k,
\mathbf{C}_k\bm{\phi}_k
)_{\mathbf{H}_k}
=
\bm{\phi}_k^*(\mathbf{W}\otimes\mathbf{W})(\mathbf{A}_{\rm L}^k\bm{v}_k)
=
\sum_{i,j=1}^{N_k-1}
\omega_i\omega_j
(\mathbf{A}_{\rm L}^k\bm{v}_k)_{ij}
\phi^*(x_i,y_j).
\end{equation}
Since \(\phi\) vanishes at all boundary LGL
nodes,
then combining the definition of the discrete LGL inner product \eqref{LGLinner} and \eqref{ALvkHk}, it gives
\begin{equation}\label{NkHkeq}
\langle
\mathbf{A}_{\rm L}^k\bm{v}_k,\bm{\phi}_k
\rangle_{N_k}
=
(
\mathbf{A}_{\rm L}^k\bm{v}_k,
\mathbf{C}_k\bm{\phi}_k
)_{\mathbf{H}_k}.
\end{equation}
Moreover, we have
\[
\begin{aligned}
\|\mathbf{C}_k\bm{\phi}_k\|_{\mathbf{H}_k}^2
&=
\sum_{i,j=1}^{N_k-1}
\omega_i\omega_j
\bar a_{11}(x_i)\bar a_{22}(y_j)
|\phi(x_i,y_j)|^2
\leq
\beta_0^2
\sum_{i,j=0}^{N_k}
\omega_i\omega_j
|\phi(x_i,y_j)|^2
\leq
4\beta_0^2
\|\phi\|_{L^\infty(\Omega)}^2.
\end{aligned}
\]
Recall the assumption 
\(
\|\mathbf A_{\rm L}^{k}\boldsymbol v_k\|_{{\mathbf H}_k}
=
|\lambda_k|
\to0.
\)
Therefore, by \eqref{NkHkeq} and using the Cauchy--Schwarz inequality, we have
\begin{equation}\label{ALvkinf}
\left|
\langle \mathbf{A}_{\rm L}^k\bm{v}_k,\bm{\phi}_k\rangle_{N_k}
\right|
\leq
\|\mathbf{A}_{\rm L}^k\bm{v}_k\|_{\mathbf{H}_k}
\|\mathbf{C}_k\bm{\phi}_k\|_{\mathbf{H}_k}
\leq
2\beta_0
\|\phi\|_{L^\infty(\Omega)}
\|\mathbf{A}_{\rm L}^k\bm{v}_k\|_{\mathbf{H}_k}
\longrightarrow 0.
\end{equation}

For any continuous function \(g\) on \(\overline{\Omega}\), define the
tensor-product LGL interpolation operator \(\mathcal{I}_k\) by
\[
\mathcal{I}_k g(x,y)
=
\sum_{i,j=0}^{N_k}
g(x_i,y_j)
\ell_i(x)\ell_j(y)
\in
\mathbb{P}_{N_k}\otimes\mathbb{P}_{N_k}.
\]
Since \(\phi\in C_0^\infty(\Omega)\),
the functions \(a_{11}\phi\), \(a_{12}\phi\), and \(a_{22}\phi\)
vanish on \(\partial\Omega\). By the discrete summation-by-parts
identities, we have
\begin{equation}\label{a11inte}
-\langle
a_{11}v_{k,xx},\phi
\rangle_{N_k}
=
\left\langle
v_{k,x},
\partial_x\mathcal{I}_k(a_{11}\phi)
\right\rangle_{N_k},
\quad
-\langle
a_{22}v_{k,yy},\phi
\rangle_{N_k}
=
\left\langle
v_{k,y},
\partial_y\mathcal{I}_k(a_{22}\phi)
\right\rangle_{N_k}.
\end{equation}
For the mixed derivative term, applying the summation-by-parts
identity in the two coordinate directions gives
\begin{equation}\label{a12inte}
-2\langle
a_{12}v_{k,xy},\phi
\rangle_{N_k}
=
\left\langle
v_{k,x},
\partial_y\mathcal{I}_k(a_{12}\phi)
\right\rangle_{N_k}
+
\left\langle
v_{k,y},
\partial_x\mathcal{I}_k(a_{12}\phi)
\right\rangle_{N_k}.
\end{equation}
Therefore, using \eqref{a11inte}--\eqref{a12inte}, we have
\[
\begin{aligned}
\langle
\mathbf{A}_{\rm L}^k\bm{v}_k,\bm{\phi}_k
\rangle_{N_k}
={}&
\left\langle
v_{k,x},
\partial_x\mathcal{I}_k(a_{11}\phi)
+
\partial_y\mathcal{I}_k(a_{12}\phi)
\right\rangle_{N_k}
+
\left\langle
v_{k,y},
\partial_x\mathcal{I}_k(a_{12}\phi)
+
\partial_y\mathcal{I}_k(a_{22}\phi)
\right\rangle_{N_k}
\\
&+
\left\langle
q_1v_{k,x}+q_2v_{k,y}+sv_k,
\phi
\right\rangle_{N_k}.
\end{aligned}
\]
Thus, the right-hand side is precisely the discrete weak form
associated with \(\mathcal{L}[v_k]\).
Hence, \eqref{ALvkinf} leads to
\(
\langle \mathcal{L}[v_k],\phi\rangle_{N_k}\longrightarrow0
\)
for every \(\phi\in C_0^\infty(\Omega)\). Moreover, by the standard consistency of the tensor-product LGL
interpolation and quadrature, together with the weak convergence
\(v_k\rightharpoonup v\) in \(H_0^1(\Omega)\) and the strong convergence
\(v_k\to v\) in \(L^2(\Omega)\), passing to the limit in the above
discrete weak identity yields the corresponding continuous weak
identity for \(\mathcal{L}[v]=0\). Then the stability assumption
\eqref{eq:continuous_H2_stability} therefore implies \(v=0\).

By the definition of \(
\mathbf{H}_k=
(\mathbf{W}(\mathbf{C}_b^k)^{-1})
\otimes
(\mathbf{W}(\mathbf{C}_a^k)^{-1}),\) 
and the lower bounds
\(\bar a_{11},\bar a_{22}\geq\alpha_0\), we have
\[
\begin{aligned}
\|\bm{v}_k\|_{\mathbf{H}_k}^2
&=
\sum_{i,j=1}^{N_k-1}
\frac{\omega_i\omega_j}
{\bar a_{11}(x_i)\bar a_{22}(y_j)}
|v_k(x_i,y_j)|^2
\leq
\frac{1}{\alpha_0^2}
\sum_{i,j=0}^{N_k}
\omega_i\omega_j
|v_k(x_i,y_j)|^2.
\end{aligned}
\]
Here the boundary terms vanish since \(v_k\in H_0^1(\Omega)\).
By the uniform equivalence between the tensor-product LGL discrete norm
and the continuous \(L^2\)-norm, we obtain
\[
\sum_{i,j=0}^{N_k}
\omega_i\omega_j
|v_k(x_i,y_j)|^2
\leq
C_{\rm LGL}\|v_k\|_{L^2(\Omega)}^2\longrightarrow0,
\]
where \(C_{\rm LGL}\) is independent of \(N_k\). Consequently,
\(
\|\bm{v}_k\|_{\mathbf{H}_k}\longrightarrow0.
\)
By Lemma~\ref{lem:first_order_relative_bound} and the normalization
\(
\|\mathbf{K}_{\rm L}^{k}\bm{v}_k\|_{\mathbf{H}_k}=1,
\)
we have
\begin{equation}\label{AL1bound}
\|\mathbf{A}_{{\rm L},1}^{k}\bm{v}_k\|_{\mathbf{H}_k}
\leq
\frac{q_\infty}{\sqrt{\alpha_0}}
\|\bm{v}_k\|_{\mathbf{H}_k}^{\frac12}
\longrightarrow0.
\end{equation}
Moreover, since \(0<s\leq s_{\max}\), it is clear that
\begin{equation}\label{Sbound}
\|\breve{\mathbf{S}}^{k}\bm{v}_k\|_{\mathbf{H}_k}
\leq
s_{\max}\|\bm{v}_k\|_{\mathbf{H}_k}
\longrightarrow0.
\end{equation}
Finally, by the eigenvalue relation
\(
\mathbf{A}_{\rm L}^{k}\bm{v}_k
=
\lambda_k\mathbf{K}_{\rm L}^{k}\bm{v}_k
\)
and the normalization above,
\begin{equation}\label{ALbound}
\|\mathbf{A}_{\rm L}^{k}\bm{v}_k\|_{\mathbf{H}_k}
=
|\lambda_k|
\|\mathbf{K}_{\rm L}^{k}\bm{v}_k\|_{\mathbf{H}_k}
=
|\lambda_k|
\longrightarrow0.
\end{equation}
Therefore, combining \eqref{AL1bound}--\eqref{ALbound}, by the definition of 
\(
\mathbf{A}_{{\rm L},2}^{k}
=
\mathbf{A}_{\rm L}^{k}
-
\mathbf{A}_{{\rm L},1}^{k}
-
\breve{\mathbf{S}}^{k},
\)
we have
\[
\|\mathbf A_{{\rm L},2}^{k}\boldsymbol v_k\|_{{\mathbf H}_k}
\leq\|\mathbf{A}_{{\rm L},1}^{k}\bm{v}_k\|_{\mathbf{H}_k}
+
\|\breve{\mathbf{S}}^{k}\bm{v}_k\|_{\mathbf{H}_k}
+
\|\mathbf{A}_{\rm L}^{k}\bm{v}_k\|_{\mathbf{H}_k}
\longrightarrow0.
\]
On the other hand, Lemma~\ref{lem:principal_uniform_equivalence}
implies
\[
\|\mathbf A_{{\rm L},2}^{k}\boldsymbol v_k\|_{{\mathbf H}_k}
\geq
c_1\|\mathbf K_{\rm L}^{k}\boldsymbol v_k\|_{{\mathbf H}_k}
=c_1,
\]
which is a contradiction. This proves the lower bound and hence
\eqref{eq:general_preconditioned_annular_bound}.

\end{proof}

\section{Proof of Theorem~\ref{thm:uniform_condition_number}}
\label{app:uniform_condition_number}

In this section, we provide the detailed proof of the \(N\)-uniform Euclidean \(2\)-norm condition-number bound stated in Theorem~\ref{thm:uniform_condition_number}. We first establish a one-dimensional pointwise estimate for the weighted eigenvectors of the variable-coefficient differentiation matrices in a lemma. We then combine this estimate with the tensor-product eigenstructure of the two-dimensional operator to derive \(N\)-uniform bounds for \(\mathbf P_{\rm L}\) and \(\mathbf A_{\rm L}^{-1}\), which lead to the desired condition-number estimate.

\subsection{A pointwise estimate for the weighted eigenvectors}

We first establish a one-dimensional pointwise estimate for the weighted
eigenvectors of \(-\mathbf C_\nu\mathbf D^{(2)}\), \(\nu=a,b\).
By Lemma \ref{WinD2in_eqA}, we know that
\(
\mathbf H_\nu
(-\mathbf C_\nu\mathbf D^{(2)})
=
-\mathbf W\mathbf D^{(2)}
\)
is symmetric positive definite, 
\(-\mathbf C_\nu\mathbf D^{(2)}\) admits an
\(\mathbf H_\nu\)-orthonormal eigenbasis with positive eigenvalues. This estimate will be the key ingredient in the subsequent
two-dimensional condition-number analysis.

\begin{lemma}\label{lem:pointwise_eigenvector_estimate}
For \(\nu=a,b\), let
\(
\mathbf H_\nu=\mathbf W\mathbf C_\nu^{-1},
\)
and let
\(\{\bm\phi_k^\nu\}_{k=1}^{N-1}\) be an
\(\mathbf H_\nu\)-orthonormal eigenbasis of
\(-\mathbf C_\nu\mathbf D^{(2)}\), satisfying
\[
-\mathbf C_\nu\mathbf D^{(2)}\bm\phi_k^\nu
=
\lambda_k^\nu\bm\phi_k^\nu,
\quad
(\bm\phi_k^\nu)^{\intercal}
\mathbf H_\nu
\bm\phi_\ell^\nu
=
\delta_{k\ell},
\quad
\lambda_k^\nu>0.
\]
Then, for each interior LGL point \(x_i\),
\[
\sum_{k=1}^{N-1}
\frac{|\phi_k^\nu(i)|^2}{\lambda_k^\nu}
\le
\frac{1-x_i^2}{2},
\quad
1\le i\le N-1,
\quad
\nu=a,b.
\]
\end{lemma}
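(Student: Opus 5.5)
The plan is to reduce the weighted eigenvector sum to a diagonal entry of an explicit matrix that does not depend on $\mathbf C_\nu$, and then bound that entry by a variational (Sobolev-type) argument.

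\textbf{Step 1 (spectral identity).} Collect the eigenvectors into $\mathbf\Phi=[\bm\phi_1^\nu,\ldots,\bm\phi_{N-1}^\nu]$ and set $\mathbf\Lambda=\operatorname{diag}(\lambda_k^\nu)$. The $\mathbf H_\nu$-orthonormality $\mathbf\Phi^\intercal\mathbf H_\nu\mathbf\Phi=\mathbf I_{N-1}$ gives $\mathbf\Phi^{-1}=\mathbf\Phi^\intercal\mathbf H_\nu$. Hence
$$(-\mathbf C_\nu\mathbf D^{(2)})^{-1}=\mathbf\Phi\mathbf\Lambda^{-1}\mathbf\Phi^\intercal\mathbf H_\nu,$$
so
$$\mathbf\Phi\mathbf\Lambda^{-1}\mathbf\Phi^\intercal=(-\mathbf C_\nu\mathbf D^{(2)})^{-1}\mathbf H_\nu^{-1}=-\mathbf B\mathbf C_\nu^{-1}\mathbf C_\nu\mathbf W^{-1}=(-\mathbf W\mathbf D^{(2)})^{-1},$$
using \eqref{D2B_eq_I}. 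The left-hand side of the claimed inequality is exactly the $(i,i)$ entry of $\mathbf\Phi\mathbf\Lambda^{-1}\mathbf\Phi^\intercal$. The coefficient $\mathbf C_\nu$ drops out entirely, so it suffices to bound $\big((-\mathbf W\mathbf D^{(2)})^{-1}\big)_{ii}$.

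\textbf{Step 2 (variational characterisation).} By Lemma~\ref{WinD2in_eqA} and \eqref{uWDv}, $\mathbf M:=-\mathbf W\mathbf D^{(2)}$ is SPD. For an SPD matrix one has
$$(\mathbf M^{-1})_{ii}=\sup_{\bm y\neq\bm 0}\frac{y_i^2}{\bm y^\intercal\mathbf M\bm y},$$
which follows from Cauchy--Schwarz in the $\mathbf M$-inner product. For $\bm y$ with interpolant $P\in\mathbb P_N^0$, the exactness \eqref{exactLGL} applies because $PP''\in\mathbb P_{2N-2}$; integrating by parts with $P(\pm1)=0$ then gives
$$\bm y^\intercal\mathbf M\bm y=-\int_{-1}^1PP''\,{\rm d}x=\int_{-1}^1|P'|^2\,{\rm d}x.$$
Therefore
$$\big(\mathbf M^{-1}\big)_{ii}=\sup_{0\ne P\in\mathbb P_N^0}\frac{P(x_i)^2}{\|P'\|_{L^2(-1,1)}^2}.$$

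\textbf{Step 3 (continuous bound).} We enlarge the supremum to $H_0^1(-1,1)$. Write $P(x_i)=\int_{-1}^{x_i}P'$ and $P(x_i)=-\int_{x_i}^1P'$. Cauchy--Schwarz gives $P(x_i)^2\le(1+x_i)\int_{-1}^{x_i}|P'|^2$ and $P(x_i)^2\le(1-x_i)\int_{x_i}^1|P'|^2$. Divide these by $1+x_i$ and $1-x_i$ respectively and add:
$$P(x_i)^2\Big(\frac1{1+x_i}+\frac1{1-x_i}\Big)\le\|P'\|^2,$$
that is, $P(x_i)^2\le\frac{1-x_i^2}{2}\|P'\|^2$. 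This is precisely the Green's function value $G(x_i,x_i)$ for $-\partial_x^2$ with Dirichlet conditions, which yields the lemma for both $\nu=a,b$.

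\textbf{Main obstacle.} The only delicate point is Step 1: verifying that the $\mathbf H_\nu$-orthonormal eigenbasis reconstructs $(-\mathbf W\mathbf D^{(2)})^{-1}$ with the correct placement of $\mathbf H_\nu^{-1}$. It is this placement that makes $\mathbf C_\nu$ cancel. Once that is in place, Steps 2 and 3 are routine consequences of LGL exactness and the one-dimensional Sobolev inequality.
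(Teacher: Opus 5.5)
Your proof is correct and follows essentially the same route as the paper. Both rely on the energy identity $\bm v^\intercal(-\mathbf W\mathbf D^{(2)})\bm v=\int_{-1}^1|p'|^2\,{\rm d}x$ from LGL exactness, on a Cauchy--Schwarz characterisation of $\sum_k|\phi_k^\nu(i)|^2/\lambda_k^\nu$ as $\sup_{\bm v}|v_i|^2/\bm v^\intercal(-\mathbf W\mathbf D^{(2)})\bm v$, and on the bound $|p(x)|^2\le\frac{1-x^2}{2}\|p'\|^2$. The differences are cosmetic: you pass through the identity $\mathbf\Phi\mathbf\Lambda^{-1}\mathbf\Phi^\intercal=(-\mathbf W\mathbf D^{(2)})^{-1}$, which nicely shows the sum equals $-B_i(x_i)/\omega_i$ independently of $\mathbf C_\nu$, whereas the paper expands directly in the eigenbasis; and you get the pointwise bound from two one-sided Cauchy--Schwarz estimates rather than one with the Green's kernel.
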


\begin{proof}
For any \(\bm v\in\mathbb R^{N-1}\), let
\(p\in\mathbb P_N^0\) be the corresponding Lagrange interpolant, satisfying
\(
p(x_i)=v_i,
1\le i\le N-1.
\)
Since
\(
\mathbf H_\nu
(-\mathbf C_\nu\mathbf D^{(2)})
=
-\mathbf W\mathbf D^{(2)},
\)
it follows from \eqref{uWDv} that
\[
\bm v^{\intercal}\mathbf H_\nu
\bigl(-\mathbf C_\nu\mathbf D^{(2)}\bigr)\bm v
=
-\bm v^{\intercal}\mathbf W\mathbf D^{(2)}\bm v
=
\int_{-1}^1 |p'(x)|^2\,dx.
\]
For any \(x\in(-1,1)\), using \(p(-1)=p(1)=0\), we have
\[
p(x)
=
\frac{1-x}{2}\int_{-1}^x p'(t)\,dt
-
\frac{1+x}{2}\int_x^1 p'(t)\,dt.
\]
Applying the Cauchy--Schwarz inequality gives
\[
\begin{aligned}
|p(x)|^2
&\le
\left[
\frac{(1-x)^2}{4}(1+x)
+
\frac{(1+x)^2}{4}(1-x)
\right]
\int_{-1}^1 |p'(t)|^2\,dt
=
\frac{1-x^2}{2}
\int_{-1}^1 |p'(t)|^2\,dt.
\end{aligned}
\]
Therefore, at each interior LGL point \(x_i\),
\begin{equation}\label{supineq}
\sup_{\bm v\ne\bm 0}
\frac{|v_i|^2}
{\bm v^{\intercal}\mathbf H_\nu
(-\mathbf C_\nu\mathbf D^{(2)})\bm v}
\le
\frac{1-x_i^2}{2}.
\end{equation}

We next express the supremum in terms of the eigenvectors.
Expanding \(\bm v\) in the
\(\mathbf H_\nu\)-orthonormal eigenbasis gives
\[
\bm v
=
\sum_{k=1}^{N-1}v_k^\nu\bm\phi_k^\nu,
\quad
v_k^\nu
=
(\bm\phi_k^\nu)^{\intercal}\mathbf H_\nu\bm v.
\]
By the orthonormality and the eigenvalue relation,
\[
\bm v^{\intercal}\mathbf H_\nu
(-\mathbf C_\nu\mathbf D^{(2)})\bm v
=
\sum_{k=1}^{N-1}
\lambda_k^\nu |v_k^\nu|^2.
\]
Meanwhile, denoting by \(v_i\) the \(i\)-th component of \(\bm v\), which satisfies \(v_i=p(x_i)\), we have
\(\displaystyle
v_i
=
\sum_{k=1}^{N-1}
v_k^\nu\phi_k^\nu(i).
\)
Hence, by the Cauchy--Schwarz inequality, it leads to
\[
|v_i|^2
\le
\left(
\sum_{k=1}^{N-1}
\lambda_k^\nu |v_k^\nu|^2
\right)
\left(
\sum_{k=1}^{N-1}
\frac{|\phi_k^\nu(i)|^2}{\lambda_k^\nu}
\right).
\]
It follows that
\[
\sup_{\bm v\ne\bm 0}
\frac{|v_i|^2}
{\bm v^{\intercal}\mathbf H_\nu
(-\mathbf C_\nu\mathbf D^{(2)})\bm v}
\le
\sum_{k=1}^{N-1}
\frac{|\phi_k^\nu(i)|^2}{\lambda_k^\nu}.
\]
On the other hand, taking
\[
v_k^\nu
=
\frac{\phi_k^\nu(i)}{\lambda_k^\nu},
\quad
1\le k\le N-1,
\]
attains equality. Therefore,
\begin{equation}\label{supeq}
\sup_{\bm v\ne\bm 0}
\frac{|v_i|^2}
{\bm v^{\intercal}\mathbf H_\nu
(-\mathbf C_\nu\mathbf D^{(2)})\bm v}
=
\sum_{k=1}^{N-1}
\frac{|\phi_k^\nu(i)|^2}{\lambda_k^\nu}.
\end{equation}
Combining \eqref{supineq}--\eqref{supeq} yields
\[
\sum_{k=1}^{N-1}
\frac{|\phi_k^\nu(i)|^2}{\lambda_k^\nu}
\le
\frac{1-x_i^2}{2},
\]
which completes the proof.
\end{proof}

\subsection{Detailed proof}

We now prove Theorem~\ref{thm:uniform_condition_number}.

\begin{proof}
By Proposition~\ref{prop:preconditioned_identity}, it suffices to
consider \(\mathbf A_{\rm L}\mathbf P_{\rm L}\).
Under the assumption, it follows from
\eqref{AL-var-coeff} and \eqref{def:PL_PB_var_coeff} that
\begin{subequations}\label{AL1}
\begin{align}
\mathbf K_{\rm L}
&=
\mathbf P_{\rm L}^{-1}
=
\mathbf I_{N-1}\otimes
(-\mathbf C_a\mathbf D^{(2)})
+
(-\mathbf C_b\mathbf D^{(2)})
\otimes\mathbf I_{N-1},
\label{AL1a}
\\
\mathbf A_{\rm L}
&=
\mathbf K_{\rm L}
+
s\mathbf I_{(N-1)^2}.
\label{AL1b}
\end{align}
\end{subequations}
With the notation in
Lemma~\ref{lem:pointwise_eigenvector_estimate}, we define
\(
\mathbf H
=
\mathbf H_b\otimes\mathbf H_a.
\)
Since
\(\{\bm\phi_n^a\}_{n=1}^{N-1}\) and
\(\{\bm\phi_m^b\}_{m=1}^{N-1}\)
are respectively \(\mathbf H_a\)- and
\(\mathbf H_b\)-orthonormal, the tensor-product vectors
\(
\bm\phi_m^b\otimes\bm\phi_n^a,
1\le m,n\le N-1,
\)
form an \(\mathbf H\)-orthonormal basis of
\(\mathbb R^{(N-1)^2}\).
Moreover,
\begin{equation}\label{eigenKL}
\mathbf K_{\rm L}
(\bm\phi_m^b\otimes\bm\phi_n^a\bigr)
=
(\lambda_m^b+\lambda_n^a)
(\bm\phi_m^b\otimes\bm\phi_n^a\bigr).
\end{equation}
For any \(\bm f\in\mathbb R^{(N-1)^2}\), let
\(
\bm u
=
\mathbf P_{\rm L}\bm f
=
\mathbf K_{\rm L}^{-1}\bm f.
\)
Expanding \(\bm f\) in the above \(\mathbf H\)-orthonormal basis gives
\[
\bm f
=
\sum_{m,n=1}^{N-1}
\widehat f_{mn}
(\bm\phi_m^b\otimes\bm\phi_n^a),
\]
where
\(
\widehat f_{mn}
=
(\bm\phi_m^b\otimes\bm\phi_n^a)^{\intercal}
\mathbf H\bm f.
\)
By the \(\mathbf H\)-orthonormality, we have
\[
\sum_{m,n=1}^{N-1}
|\widehat f_{mn}|^2
=
\|\bm f\|_{\mathbf H}^2,
\quad
\|\bm f\|_{\mathbf H}^2
=
\bm f^{\intercal}\mathbf H\bm f.
\]
Therefore, it leads to
\[
\bm u
=
\sum_{m,n=1}^{N-1}
\frac{\widehat f_{mn}}
{\lambda_m^b+\lambda_n^a}
\bigl(\bm\phi_m^b\otimes\bm\phi_n^a\bigr).
\]
Denoting by \(u_{ij}\) the component of \(\bm u\)
corresponding to the tensor-product node \((x_i,y_j)\), we have
\[
u_{ij}
=
\sum_{m,n=1}^{N-1}
\frac{\widehat f_{mn}}
{\lambda_m^b+\lambda_n^a}
\phi_m^b(j)\phi_n^a(i).
\]
Hence, by the Cauchy--Schwarz inequality,
\[
|u_{ij}|^2
\le
\left(
\sum_{m,n=1}^{N-1}
\frac{
|\phi_m^b(j)|^2
|\phi_n^a(i)|^2}
{(\lambda_m^b+\lambda_n^a)^2}
\right)
\|\bm f\|_{\mathbf H}^2.
\]
Since
\(
(\lambda_m^b+\lambda_n^a)^2
\ge
4\lambda_m^b\lambda_n^a,
\)
Lemma~\ref{lem:pointwise_eigenvector_estimate} yields
\begin{equation}\label{uijbound}
\begin{aligned}
|u_{ij}|^2
&\le
\frac{1}{4}
\left(
\sum_{m=1}^{N-1}
\frac{|\phi_m^b(j)|^2}{\lambda_m^b}
\right)
\left(
\sum_{n=1}^{N-1}
\frac{|\phi_n^a(i)|^2}{\lambda_n^a}
\right)
\|\bm f\|_{\mathbf H}^2
\le
\frac{(1-x_i^2)(1-y_j^2)}{16}
\|\bm f\|_{\mathbf H}^2.
\end{aligned}
\end{equation}
Summing \eqref{uijbound} over all interior tensor-product nodes gives
\[
\begin{aligned}
\|\bm u\|_2^2
&=
\sum_{i,j=1}^{N-1}|u_{ij}|^2
\le
\frac{1}{16}
\left(
\sum_{i=1}^{N-1}(1-x_i^2)
\right)
\left(
\sum_{j=1}^{N-1}(1-y_j^2)
\right)
\|\bm f\|_{\mathbf H}^2
\le
\frac{(N-1)^2}{16}
\|\bm f\|_{\mathbf H}^2.
\end{aligned}
\]
Since
\(
\mathbf H
=
(\mathbf W\mathbf C_b^{-1})
\otimes
(\mathbf W\mathbf C_a^{-1}),
\)
assumption~\eqref{var-coeff-assumption2} implies
\[
\|\bm f\|_{\mathbf H}^2
\le
\frac{\omega_{\max}^2}{\alpha^2}
\|\bm f\|_2^2,
\]
where
\(\displaystyle
\omega_{\max}
=
\max_{1\le i\le N-1}\omega_i.
\)
Recalling the LGL weight estimate used in
Corollary~\ref{Proposition:Cond_S_V},
we have
\(
\omega_{\max}\le \frac{c_2}{N},
\)
where $c_2$ is defined in Corollary~\ref{Proposition:Cond_S_V} and independent of $N$.
Consequently, by \(
\bm u
=
\mathbf P_{\rm L}\bm f
=
\mathbf K_{\rm L}^{-1}\bm f
\), we have
\begin{equation}\label{PLnormbound}
\|\mathbf P_{\rm L}\bm f\|_2
=
\|\bm u\|_2
\le
\frac{(N-1)\omega_{\max}}{4\alpha}
\|\bm f\|_2\quad \Rightarrow \quad \|\mathbf P_{\rm L}\|_2
\le
\frac{c_2}{4\alpha}.
\end{equation}
Since \(s\ge0\), it is clear that
\(
\lambda_m^b+\lambda_n^a+s
\ge
\lambda_m^b+\lambda_n^a.
\)
Therefore, the same argument used to derive \eqref{PLnormbound} yields
\begin{equation}\label{ALinvnorm}
\|\mathbf A_{\rm L}^{-1}\bm f\|_2
\le
\frac{(N-1)\omega_{\max}}{4\alpha}
\|\bm f\|_2\quad \Rightarrow \quad 
\|\mathbf A_{\rm L}^{-1}\|_2
\le
\frac{c_2}{4\alpha}.
\end{equation}

It remains to estimate the condition number of
\(\mathbf A_{\rm L}\mathbf P_{\rm L}\).
By \eqref{AL1} it is clear that
\(
\mathbf A_{\rm L}\mathbf P_{\rm L}
=
\mathbf I_{(N-1)^2}
+
s\mathbf P_{\rm L}.
\)
Therefore, by \eqref{PLnormbound} we have
\begin{equation}\label{ALPLbound}
\|\mathbf A_{\rm L}\mathbf P_{\rm L}\|_2
\le
1+s\|\mathbf P_{\rm L}\|_2
\le
1+\frac{s c_2}{4\alpha}.
\end{equation}
For the inverse, using \eqref{AL1b}, we have
\[
\begin{aligned}
(\mathbf A_{\rm L}\mathbf P_{\rm L})^{-1}
=
(\mathbf A_{\rm L}-s\mathbf I_{(N-1)^2})
\mathbf A_{\rm L}^{-1}
=
\mathbf I_{(N-1)^2}
-
s\mathbf A_{\rm L}^{-1}.
\end{aligned}
\]
Therefore, \eqref{ALinvnorm} leads to
\begin{equation}\label{ALPLinvnorm}
\|(\mathbf A_{\rm L}\mathbf P_{\rm L})^{-1}\|_2
\le
1+s\|\mathbf A_{\rm L}^{-1}\|_2
\le
1+\frac{s c_2}{4\alpha}.
\end{equation}
Hence, combining \eqref{ALPLbound}--\eqref{ALPLinvnorm} completes the proof.

\end{proof}

\bibliographystyle{siamplain}
\bibliography{refpapers}


\end{document}